\newcommand {\bea}{\begin{align}}
\newcommand {\ea}{\end{align}}
\newtheorem{proposition}{Proposition}[section]
\newtheorem{theorem}{Theorem}[section]
\newtheorem{Assumption}{Assumption}[section]
\newtheorem{lemma}{Lemma}[section]
\newtheorem{remark}{Remark}[section]
\newtheorem{corollary}{Corollary}[section]
\newcommand{\thmref}[1]{{Theorem~\ref{#1}}}
\newcommand{\lemref}[1]{{Lemma~\ref{#1}}}
\newcommand{\secref}[1]{{Section~\ref{#1}}}
\newcommand{\assref}[1]{{Assumption~\ref{#1}}}
\newcommand{\rmref}[1]{{Remark~\ref{#1}}}
\newcommand{\eps}{\varepsilon}
\newcommand{\normal}{\mathbf{n}}
\newcommand{\revl}[1]{#1}
\newcommand{\revd}[1]{#1}
\begin{document}
\title[Approximation of the stochastic Cahn-Hilliard equation with white noise]
{Numerical approximation of the stochastic Cahn-Hilliard equation with space-time white noise near the sharp-interface limit}

\author{\v{L}ubom\'{i}r Ba\v{n}as}
\address{Department of Mathematics, Bielefeld University, 33501 Bielefeld, Germany}
\email{banas@math.uni-bielefeld.de}
\author{Jean Daniel Mukam}
\address{Department of Mathematics, Bielefeld University, 33501 Bielefeld, Germany}
\email{jmukam@math.uni-bielefeld.de}

\begin{abstract}
We consider the stochastic Cahn-Hilliard equation with additive space-time white noise $\eps^{\gamma}\dot{W}$ in dimension $d=2,3$, where $\eps>0$ is an interfacial width parameter.  
We study  a numerical approximation of the equation which combines a structure preserving implicit time-discretization scheme with a discrete approximation of the space-time white noise.
We derive a strong error estimate for the considered numerical approximation which  is robust with respect to the inverse 
of the interfacial width parameter $\eps$.
Furthermore,  by a splitting approach, we show that for sufficiently large scaling parameter $\gamma$, the numerical approximation of the stochastic Cahn-Hilliard equation converges uniformly to the deterministic Hele-Shaw/Mullins-Sekerka problem in the sharp-interface limit $\eps\rightarrow 0$.
%To deal  with the low regularity of the space-time white noise, we split the nonlinear SPDE into a random PDE and a linear SPDE as well as their corresponding approximations. 
\end{abstract}

\maketitle

%\begin{keyword}
%Stochastic Cahn-Hilliard equation  \sep Hele-Shaw/Mullins-Sekerka problem \sep Sharp-interface limit \sep Space-time white noise \sep Errors estimate.
%% keywords here, in the form: keyword \sep keyword

%% MSC codes here, in the form: \MSC code \sep code
%% or \MSC[2008] code \sep code (2000 is the default)
%\end{keyword}

\section{Introduction}
\label{Introduction}
We consider the stochastic Cahn-Hilliard equation with additive space-time white noise:
\begin{align}
\label{model1}
& \mathrm{d} u  = \Delta\left(-\eps\Delta u+\frac{1}{\eps}f(u)\right)dt+\eps^{\gamma}\mathrm{d}W\quad &&\text{in}\; \mathcal{D}_T:=(0, T)\times\mathcal{D},\\
%\mathrm{d} u =\Delta\Big{(}-\varepsilon\Delta u+\frac{1}{\varepsilon}f(u)\Big{)}\mathrm{d} t +\varepsilon^{\gamma}g\, \mathrm{d} W\;\;\;\;&&\mbox{in}\;\; {\mathcal D}_T := (0,T) \times \mathcal{D}\,\\
& \partial_{\normal} u  = \partial_{\normal} \Delta u =0\;\;\;\;&&\mbox{on}\;\; (0,T) \times \partial \mathcal{D},\\
& u(0) = u_0^\eps\;\;\;\;&&\mbox{in}\;\;\mathcal{D},
\end{align}
where $\mathcal{D}=[0, 1]^d$, $d=2,3$, $\normal$ is the outward normal unit vector to $\partial\mathcal{D}$, $\gamma>0$ is a noise scaling parameter, $\eps>0$ is a (small) interfacial width parameter 
and $W$ is the space-time white noise.  
The nonlinearity $f$ in (\ref{model1}) is given by $f(u)=F'(u)=u^3-u$, where $F(u)=\frac{1}{4}(u^2-1)^2$ is a double-well potential.
Equation (\ref{model1}) can be interpreted as a stochastically perturbed $\mathbb{H}^{-1}$-gradient flow of the  Ginzburg-Landau free energy
 \begin{align}
 \label{energy0}
\mathcal{E}(u):=\int_{\mathcal{D}}\left(\frac{\eps}{2}\vert \nabla u\vert^2+\frac{1}{\eps}F(u)\right)dx=\frac{\eps}{2}\Vert \nabla u\Vert^2_{\mathbb{L}^2}+\frac{1}{\eps}\Vert F(u)\Vert_{\mathbb{L}^1}. 
\end{align}
The Cahn-Hilliard equation is a classical model for phase separation and
coarsening phenomena in binary alloys.
In the seminal paper \cite{abc94} it is shown that the sharp-interface limit (i.e., the limit for $\eps \rightarrow 0$) of the deterministic Cahn-Hilliard equation is the (deterministic) Hele-Shaw/Mullins-Sekerka problem.
The study of the sharp-interface limit of the stochastic Cahn-Hilliard equation is a relatively recent topic.
The sharp-interface limit of the stochastic Cahn-Hilliard equation with smooth noise was considered in \cite{abk18} where it is shown that for
sufficiently strong scaling of the noise the stochastic problem converges to the deterministic Hele-Shaw/Mullins-Sekerka problem.
Analogous results for the stochastic Cahn-Hilliard equation with singular noises (including the space-time white noise) have been obtained recently in \cite{BYZ22,BM24}.
%In the deterministic setting the sharp-interface limit of the implicit Euler method has been studied in \cite{fp05,fw08}.
Sharp-interface limit of numerical approximation of the stochastic Cahn-Hilliard equation with smooth noise
and uniform convergence to the deterministic Hele-Shaw/Mullins-Sekerka problem for $\eps \rightarrow 0$
has been shown in \cite{Banas19} in spatial dimension $d=2$. 
 We also mention the recent work \cite{sch_aposter} which derives robust a posteriori error estimates for the numerical approximation
of the stochastic Cahn-Hilliard equation with smooth noise, relaxing the assumption of asymptotically small noise.

In the present work we generalize the result of \cite{Banas19} to spatial dimension $d=3$
and consider the physically relevant case of space-time white noise.

%However, when \eqref{model1} is driven by a space-time white noise,  only the sharp-interface limit of the continuous problem has been investigated, see .
%To the best of our knowledge, the sharp-interface limit of their numerical counter part  is not yet investigated.
%We  provide a contribution in that direction  by studying the sharp-interface limit of the implicit Euler  approximation of \eqref{model1}.

Formally, the space-time white noise can be written as (see e.g., \cite{DaPratoZabczyk})
 \begin{align}
\label{decompoNoise}
W(t, x)=\sum_{k\in\mathbb{N}^d}e_k(x)\beta_k(t),\quad (t,x)\in \mathcal{D}_T,
\end{align}
 where $(e_k)_{k\in \mathbb{N}^d}$  is an orthonormal basis of $\mathbb{L}^2_0:=  \{ v\in \mathbb{L}^2;\;\; \int_{\mathcal{D}}v(x)\mathrm{d}x = 0\}$ consisting of eigenvectors of the Neumann Laplacian associated with positive eigenvalues
 and  $(\beta_k)_{k\in \mathbb{N}^d}$ are independent Brownian motions on a given probability space $(\Omega,\, \mathcal{F},\, \mathbb{P})$. 
 Note that the representation \eqref{decompoNoise} is formal since the series do not converge in $\mathbb{L}^2$, $\mathbb{P}$-a.s.
 The space-time white noise \eqref{decompoNoise} satisfies $\mathbb{P}$-a.s. $\int_{\mathcal{D}}W(t,x)dx=0$ for $t\in [0, T]$ which ensures the mass conservation property $\int_{\mathcal{D}} u(t, x) dx  = \int_{\mathcal{D}} u_0^\eps(x) dx$ for $t\in [0, T]$, $\mathbb{P}$-a.s..
To simplify the notation we assume throughout the paper without loss of generality that $\int_{\mathcal{D}}u^{\eps}_0dx=0$.

Numerical experiments in \cite{Banas19} indicate that the convergence to the sharp interface limit also holds in the case of the space-time white noise. Nevertheless,
due to the limited regularity in the white noise case, the analysis \cite{Banas19} is not applicable in the present setting.
 In particular, the regularity of the solution of (\ref{model1}) with space-time white noise is limited to the weakest $\mathbb{H}^{-1}$-regularity setting, 
cf. \cite{BYZ22}, which is not sufficient to deduce uniform convergence to the sharp-interface limit.

The essential ingredients in the present work to cover the case of the space-time white noise in the spatial dimension $d=3$ are the following.
 \begin{itemize}
\item
The proposed numerical approximation  \eqref{scheme1b} of (\ref{model1})
combines a structure preserving time discretization scheme with a practical discrete approximation of the space-time white noise \eqref{Noiseapprox1}.
The considered discrete noise approximation allows to control the singularity of the space-time white noise by a suitable choice of discretization parameters,
cf. Remark~\ref{remarkmomenta}.
\item  
We adopt the approach of \cite{Banas19} which makes use of a spectral estimate of the linearized deterministic Cahn-Hilliard equation.
Similarly to \cite{Banas19} we employ a discrete stopping time argument combined with solution dependent probability subsets.
To treat the problem in spatial dimension dimension $d=3$ we make use of suitable interpolation inequalities \cite[Lemma 4.5]{BM24}.
Hence, with suitable scaling of the discretization parameters in the approximation of the white noise, in \thmref{mainresult1}
we obtain error estimates for the numerical approximation which are robust w.r.t. the interfacial width parameter (i.e., they depend polynomially on $\eps^{-1}$).

\item
%The approximation $\Delta_j\overline{W}$ of the space-time white noise, given by \eqref{Noiseapprox2}  is such that  \revd{$\Delta_j\overline{W}\in \mathbb{H}^1$  $\mathbb{P}$-a.s.}.
%However, \revd{the higher regularity  $\Delta_j\overline{W}\in \mathbb{H}^2$ $\mathbb{P}$-a.s. is not satisfied}. Therefore the approach of \cite{Banas19} is not directly applicable in the present setting.
%{\color{red} This low regularity of the noise prevents  us to follow line by line the arguments of \cite{Banas19} in the study of the sharp-interface limit. In \cite{Banas19} the SCHE is driven by the standard Brownian motion and the diffusion function belongs to $C^{\infty}(\mathcal{D})$, making the noisy part   sufficiently smooth.   }

%To overcome the difficulties caused by the  low regularity of the \revd{considered} noise, we split the numerical solution as $X^j=\widehat{X}^j+\widetilde{X}^j$, where $\widehat{X}^j$ and $\widetilde{X}^j$ are the numerical approximations of the random partial differential equation (PDE) %\eqref{randomPDE} and of the linear stochastic PDE (SPDE) \eqref{linearPDE}, respectively. We study the translated difference $\widehat{Z}^j:=X^j-X^j_{\mathrm{CH}}-\widetilde{X}^j$, where $X^j_{\mathrm{CH}}$ is the numerical approximation of the deterministic Cahn-Hilliard equation.

A major obstacle to show uniform convergence of the proposed numerical approximation to the sharp-interface limit is the low regularity of the considered noise approximation \eqref{Noiseapprox2}.
To overcome this issue, we split the numerical solution \eqref{scheme1b} as $X^j=\widehat{X}^j+\widetilde{X}^j$, where $\widehat{X}^j$ and $\widetilde{X}^j$
solve \eqref{linearscheme} and \eqref{randompdescheme}, respectively.
The respective numerical schemes \eqref{linearscheme} and \eqref{randompdescheme} can be interpreted as implicit Euler approximations of a corresponding linear stochastic partial differential equation (SPDE) and a corresponding random partial differential equation (PDE),
cf. \cite{BYZ22, BM24}.
For sufficiently large $\gamma$, 
it is possible to treat the solution $\widetilde{X}^j$ as a small perturbation in terms of $\eps$ which is estimated in \lemref{regularitynoise}.
Hence, we study the error $\widehat{Z}^j:=X^j-X^j_{\mathrm{CH}}-\widetilde{X}^j$, where $X^j_{\mathrm{CH}}$ is the numerical approximation of the deterministic Cahn-Hilliard equation (i.e. \eqref{model1} with $W\equiv 0$).
% We then derive an  estimate for the error $ \widehat{Z}^j$ in  $L^{\infty}(0, T; \mathbb{L}^{\infty})$
%on a subset of high probability as $\eps\rightarrow 0$, see \lemref{maintheorem2a}.
The estimate of $\widehat{Z}^j$ is complicated by the fact that one needs to handle the (cubic) nonlinearity
$f(\widehat{X}^j+\widetilde{X}^j)-f(X^j_{\mathrm{CH}})$. Our strategy to control  this term consists in introducing {the subset $\Omega_{\kappa, J}$ in \eqref{SetXa} along with the subset} $\Omega_{\widetilde{W}}$ in \eqref{SetW} and estimate
$\widehat{Z}^j$  on  $\Omega_{\widetilde{W}}\cap\Omega_{\kappa, J}$, see \lemref{maintheorem2a}.
%We use the fact  that $\Omega_{\widetilde{W}}\cap\Omega_{\kappa, J}$ remains a \revd{subset} of high probability to obtain the sharp-interface limit of $X^j$, see \thmref{maintheorem2}.
This  $L^{\infty}(0, T; \mathbb{L}^{\infty})$ estimate for $\widehat{Z}^j$ along with the $\mathbb{L}^\infty$-estimate \eqref{LinfinityXtilde} of $\widetilde{X}^j$
allow us to conclude a $L^{\infty}(0, T; \mathbb{L}^{\infty})$ estimate for the error ${Z}^j$ in \thmref{LinfinityZ}
which is the key ingredient to show the convergence in probability of the numerical scheme to the deterministic Hele-Shaw/Mullins-Sekerka problem in \thmref{maintheorem2}.
\end{itemize}
\revd{We note that in contrast to \cite[Lemma 5.1]{Banas19}, thanks to the improved time regularity of $\widehat{X}^j$ along with the bound for $\widetilde{X}^j$, the splitting $X^j=\widehat{X}^j+\widetilde{X}^j$
enables us to derive a $\tau$-independent $\mathbb{L}^{\infty}$-estimate of the numerical solution $X^j$ on a subset of high probability, see \lemref{Linfinitya}. Hence, we show the convergence of the numerical solution $X^j$ to the Hele-Shaw/Mullins-Sekerka
problem with less restrictive assumptions than in \cite{Banas19}.}

The remainder of the paper is organized as follows. In \secref{preliminaries} we introduce notation an preliminary results on the analytical properties of (\ref{model1}). In \secref{timeapproximation} we propose the numerical approximation of (\ref{model1}) and analyze its stability properties. 
Error estimates for the numerical approximation are derived in \secref{convergenceanalysis}. The sharp-interface limit of the approximation
is studied in \secref{SharpLimit}  where it is  shown that the proposed numerical approximation converges uniformly
to the deterministic  Hele-Shaw/Mullins-Sekerka problem for $\eps\rightarrow 0$.

\section{Notation and preliminaries}
\label{preliminaries}
Throughout the paper by $C$, $C_1$, $C_2$, $\dots$ we denote generic positive constants that may depend on the data $T$, $\mathcal{D}$, but are independent of other parameters (the interfacial width parameter $\eps$, the time-step $\tau$, the mesh size $h$).
%\footnote{dependence on $u_0^\eps$ implies dependence on $\eps$, check this}
\subsection{Function spaces}
\label{Notations}
For $p\in [1, \infty]$, we denote by $(\mathbb{L}^p, \Vert .\Vert_{\mathbb{L}^p})$ the standard space of   functions on $\mathcal{D}$ that are  $p$-th order integrable. We denote by $(., .)$ the inner product on $\mathbb{L}^2$ and by $\Vert .\Vert=\Vert .\Vert_{\mathbb{L}^2}$ its corresponding norm. For $k\in\mathbb{N}$, $(\mathbb{H}^k, \Vert.\Vert_{\mathbb{H}^k})$ stands for the standard Sobolev space of functions which and their up to $k$-th weak derivatives belong to $\mathbb{L}^2$, and $\mathbb{H}^s:=H^s(\mathcal{D})$, $s>0$ is the standard fractional Sobolev space.  For $r\geq 0$,  $\mathbb{H}^{-r}:=(\mathbb{H}^r)^*$ is the dual space of $\mathbb{H}^r$, and
\begin{align*}
\mathbb{H}_0^{-r}:=\{v\in \mathbb{H}^{-r}:\; \langle v, 1\rangle_r=0\},
\end{align*}
where $\langle., .\rangle_r$ stands for the duality pairing between $\mathbb{H}^r$ and $\mathbb{H}^{-r}$.

For $v\in \mathbb{L}^2$, we denote by $m(v)$ the mean of $v$, i.e.,
\begin{align*}
%\label{average}
m(v):=\frac{1}{\vert \mathcal{D}\vert}\int_{\mathcal{D}}v(x)dx,\quad v\in \mathbb{L}^2.
\end{align*}
and define the space $\mathbb{L}^2_0=\{\varphi\in \mathbb{L}^2:\; m(\varphi)=0\}$.
%For any $v\in \mathbb{L}^2$, we denote its zero mean counterpart by $\overline{v}\in \mathbb{L}^2_0$, i.e., $\overline{v}:=v-\frac{1}{\vert\mathcal{D}\vert}\int_{\mathcal{D}}v(x)dx$. 
For  $v\in \mathbb{L}^2_0$, let $v_1:=(-\Delta)^{-1}v\in \mathbb{H}^2\cap\mathbb{L}^2_0$ be the unique variational solution to:
\begin{align*}
-\Delta v_1=v\quad \text{in}\; \mathcal{D},\quad \partial_{ \normal} v_1=0\quad \text{on}\; \partial\mathcal{D}. 
\end{align*}
In particular,  $(\nabla(-\Delta)^{-1}\overline{v}, \nabla\varphi)=(\overline{v}, \varphi)$ for all $\varphi\in \mathbb{H}^1$, $\overline{v}\in \mathbb{L}^2_0$. We define $\Delta^{-\frac{1}{2}}v$ as
\begin{align*}
\Delta^{-\frac{1}{2}}v:=\nabla v_1=\nabla(-\Delta)^{-1}v.
\end{align*}
%We also introduce $\underline{\mathbb{H}}^r:=\{\varphi\in\mathbb{H}^r:\; \frac{\partial\varphi}{\partial \normal}=0\;\text{on}\; \partial\mathcal{D}\}$ for any $r\geq 0$.  
Using  Cauchy-Schwarz's inequality and the embedding $\mathbb{H}^1\hookrightarrow\mathbb{L}^2$ yields
\begin{align}
\label{equiv1}
\Vert \Delta^{-1/2}\overline{v}\Vert=\Vert \overline{v}\Vert_{\mathbb{H}^{-1}}=\sup_{g\in \mathbb{H}^1}\frac{\vert (\overline{v}, g)\vert}{\Vert g\Vert_{\mathbb{H}^1}}\leq \sup_{g\in \mathbb{H}^1}\frac{\Vert \overline{v}\Vert\Vert g\Vert}{\Vert g\Vert_{\mathbb{H}^1}}\leq C\Vert \overline{v}\Vert\quad \forall \overline{v}\in \mathbb{L}^2_0.
\end{align}
Using Poincar\'{e}'s inequality, the definition of the inverse Laplace $\Delta^{-1}$ and  Cauchy-Schwarz's inequality we deduce
\revd{
\begin{align*}
\Vert (-\Delta)^{-1}\overline{v}\Vert^2&\leq C_P\Vert \nabla(-\Delta)^{-1}\overline{v}\Vert^2 =C_P\left(\nabla(-\Delta)^{-1}\overline{v}, \nabla(-\Delta)^{-1}\overline{v}\right)=C_P\left(\overline{v}, (-\Delta)^{-1}\overline{v}\right)\nonumber\\
&\leq C_P\Vert \overline{v}\Vert\Vert(-\Delta)^{-1}\overline{v}\Vert\quad \forall \overline{v}\in\mathbb{L}^2_0.
\end{align*}
}
It therefore follows from the preceding estimate that
\begin{align}
\label{equiv2}
\Vert (-\Delta)^{-1}\overline{v}\Vert\leq C_P\Vert \overline{v}\Vert \quad\forall \overline{v}\in \mathbb{L}^2_0. 
\end{align}
The inner product on $\mathbb{H}^{-1}$ is defined as
\begin{align*}
(u,v)_{-1}:=\langle u, (-\Delta)^{-1}v\rangle=\langle v, (-\Delta)^{-1}u\rangle=(\nabla (-\Delta)^{-1}u, \nabla(-\Delta)^{-1}v)\quad u, v\in \mathbb{H}^{-1}.
\end{align*}
Note that $\mathbb{L}^2\hookrightarrow\mathbb{H}^{-1}\hookrightarrow\mathbb{L}^2$ defines a Gelfand triple.

The operator $-\Delta$ with domain $ D(-\Delta)=\{v\in \mathbb{H}^2:\;\partial_{\normal} v=0\;\text{on}\; \partial\mathcal{D}\}$ is self-adjoint, positive and has compact resolvent.  It possesses a basis of eigenvectors $\{e_j\}$, with corresponding eigenvalues $\{\lambda_j\}$ such that
$
0=\lambda_0<\lambda_1\leq \lambda_2\leq \cdots \lambda_j\longrightarrow +\infty.
$
Note that for $k=(k_1,\cdots, k_d)\in \mathbb{Z}^d$, $\lambda_k$ satisfies $\lambda_k\simeq \vert k\vert^2$, where $\vert k\vert^2=\lambda_1^2+\cdots+\lambda_d^2$.   

 For $s\in \mathbb{R}$ the fractional power $(-\Delta)^s$ is defined as
\begin{align*}
(-\Delta)^su=\sum_{j\in \mathbb{N}^d}\lambda^{s}_j(u, e_j)e_j\quad u\in \mathbb{L}^2,
\end{align*}
see e.g., \cite[Section 1.2]{Debussche1}. 
For $s\in\mathbb{R}$, the domain of $D((-\Delta)^{\frac{s}{2}})$ is given by (see e.g.,   \cite[Section 1.2]{Debussche1})
\begin{align*}
D((-\Delta)^{\frac{s}{2}}):=\left\{u=\sum_{j\in\mathbb{N}^d}(u, e_j)e_j:\;\sum_{j\in\mathbb{N}^d}\lambda_j^s\vert(u, e_j)\vert^2<\infty\right\}.
\end{align*}
We endow $D((-\Delta)^{\frac{s}{2}})$ with the semi-norm and semi-scalar product
\begin{align*}
\vert v\vert_s=\Vert (-\Delta)^{\frac{s}{2}}v\Vert\quad\text{and}\quad (u, v)_s=\left( (-\Delta)^{\frac{s}{2}}u, (-\Delta)^{\frac{s}{2}}v\right),\quad u, v\in D((-\Delta)^{\frac{s}{2}}).
\end{align*} 
We equip $D((-\Delta)^{\frac{s}{2}})$  with the norm $\Vert v\Vert_s=\left(\vert v\vert^2_s+m^2(v)\right)^{\frac{1}{2}}$,  $v\in D((-\Delta)^{\frac{s}{2}})$. For $s\in[0, 2]$, $D((-\Delta)^{\frac{s}{2}})$ is a closed subspace of $ \mathbb{H}^s$ and on $D((-\Delta)^{\frac{s}{2}})$ the norm $\Vert.\Vert_s$ is equivalent to the usual norm $\Vert .\Vert_{\mathbb{H}^s}$ of $\mathbb{H}^s$, see e.g.,  \cite[Section 1.2]{Debussche1}.
For $s>0$, $(-\Delta)^{-s}$ is a bounded linear operator in $\mathbb{L}^{2}$. It therefore follows that for $s\in[0, 2]$, on $D((-\Delta)^{\frac{s}{2}})$,   $\Vert(-\Delta)^{\frac{s}{2}}.\Vert$ is equivalent  to the standard norm $\Vert .\Vert_{\mathbb{H}^s}$ of $\mathbb{H}^s$. In fact, for all $v\in D((-\Delta)^{\frac{s}{2}})$, on one hand it holds that
\begin{align*}
\Vert v\Vert^2_{\mathbb{H}^s}&\leq C(\Vert (-\Delta)^{\frac{s}{2}}v\Vert^2+m^2(v))\leq C\Vert(-\Delta)^{\frac{s}{2}}v\Vert^2+C\Vert v\Vert^2\nonumber\\
&\leq C \Vert(-\Delta)^{\frac{s}{2}}v\Vert^2+C\Vert(-\Delta)^{-\frac{s}{2}}\Vert^2_{\mathcal{L}(\mathbb{L}^2)}\Vert (-\Delta)^{\frac{s}{2}}v\Vert^2\nonumber\\
&\leq C \Vert (-\Delta)^{\frac{s}{2}}v\Vert^2+C\Vert(-\Delta)^{\frac{s}{2}}v\Vert^2\leq C\Vert (-\Delta)^{\frac{s}{2}}v\Vert^2.
\end{align*}
On the other hand, it obviously holds that
$
\Vert (-\Delta)^{\frac{s}{2}}v\Vert^2\leq \vert v\vert^2_s+m^2(v)\leq C\Vert v\Vert^2_{\mathbb{H}^s}.
$

  \subsection{Existence and regularity results} 
  In this subsection, we summarize  the existence and some regularity results of the unique  strong variational solution to \eqref{model1}. 
\begin{proposition} (\cite[Theorem 2.1]{Debussche1}\& \cite[Theorem 3.1]{BM24})
\label{PropExistence}
Let the initial value $u^{\eps}_0$ be $\mathcal{F}_0$-measurable and $u^{\eps}_0\in \mathbb{H}^{-1}$, then \eqref{model1} has a unique strong variational solution, i.e., there exists a unique stochastic process $u\in C([0, T], \mathbb{H}^{-1})$ $\mathbb{P}$-a.s., such that for  $t\in [0, T]$ it holds
\begin{align*}
(u(t), \varphi)=(u^{\eps}_0, \varphi)+\int_0^t\left(-\eps\Delta u+\frac{1}{\eps}f(u), \Delta \varphi\right)ds+\left(\int_0^t\mathrm{d}W(s), \varphi\right)\quad \forall\varphi\in\mathbb{H}^2\quad \mathbb{P}\text{-a.s.}
\end{align*}
In addition, the   solution $u\in L^2\left(\Omega, \{\mathcal{F}\}_t,\mathbb{P}; C([0, T]; \mathbb{H}^{-1})\right)\cap L^4\left(\Omega, \{\mathcal{F}_t\}_t,\mathbb{P}; L^4(0, T; \mathbb{L}^4)\right)$  is  mass preserving, that is, $m(u)=0$. Moreover,
  \begin{align*}
  \mathbb{E}\left[\Vert u\Vert^2_{L^{\infty}(0, T;\mathbb{H}^{-1})}+\frac{1}{\eps}\Vert u\Vert^4_{L^4(0, T; \mathbb{L}^4)}\right]\leq C\left(1+\eps^{-1}+\eps^{4\gamma-3}\right).
  \end{align*}
\end{proposition}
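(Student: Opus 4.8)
The plan is to treat this as a regularity statement refining the existence and uniqueness theory of \cite{Debussche1,BM24}: the existence of a unique mass-preserving strong variational solution $u\in C([0,T];\mathbb{H}^{-1})$ can be taken from those references (obtained via a spectral Galerkin approximation together with a local-monotonicity argument for the drift), so the real work is to establish the $\eps$-explicit a priori bound. The first point to notice is that one cannot obtain this bound by applying It\^o's formula directly to either $\tfrac12\Vert u\Vert_{\mathbb{H}^{-1}}^2$ or to the energy $\mathcal{E}(u)$: since the noise is white in space, the It\^o correction term is, respectively, $\tfrac12\eps^{2\gamma}\sum_{k}\lambda_k^{-1}$ and $\tfrac12\eps^{2\gamma}\sum_k\left(\eps\lambda_k+\tfrac1\eps(f'(u)e_k,e_k)\right)$, and both series diverge for $d=2,3$ because $\lambda_k\simeq|k|^2$. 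The biharmonic smoothing of the linear part must therefore be exploited, which I would do by splitting $u=z+v$, where $z$ is the stochastic convolution solving the linear SPDE $\mathrm{d}z=-\eps\Delta^2 z\,\mathrm{d}t+\eps^{\gamma}\mathrm{d}W$ with $z(0)=0$, and $v:=u-z$ solves the random PDE $\partial_t v=-\eps\Delta^2 v+\tfrac1\eps\Delta f(v+z)$ with $v(0)=u_0^\eps$.

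For the Gaussian part $z(t)=\eps^{\gamma}\int_0^t e^{-\eps(t-s)\Delta^2}\,\mathrm{d}W(s)$ I would use the explicit spectral representation to bound its moments: the pointwise variance satisfies $\mathbb{E}|z(t,x)|^2\le \eps^{2\gamma}\sum_k|e_k(x)|^2\tfrac{1}{2\eps\lambda_k^2}\le C\eps^{2\gamma-1}$, because $\sum_k\lambda_k^{-2}\simeq\sum_{k}|k|^{-4}<\infty$ precisely for $d\le 3$, which is the decisive point where the dimension enters. By Gaussianity $\mathbb{E}|z(t,x)|^4=3\left(\mathbb{E}|z(t,x)|^2\right)^2\le C\eps^{4\gamma-2}$, and integrating in $x$ and $t$ gives $\tfrac1\eps\mathbb{E}\Vert z\Vert_{L^4(0,T;\mathbb{L}^4)}^4\le C\eps^{4\gamma-3}$; the analogous computation in $\mathbb{H}^{-1}$ produces the remaining $\eps$-powers. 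These moment bounds are essentially those of \cite[Lemma 4.5]{BM24} (and \cite{BYZ22}) and account for the $\eps^{4\gamma-3}$ contribution in the claimed estimate.

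The remainder $v$ is estimated pathwise, so no It\^o correction appears. Testing the equation for $v$ with $(-\Delta)^{-1}v$ in the $\mathbb{H}^{-1}$ inner product and integrating by parts yields
\begin{align*}
\frac12\frac{\mathrm{d}}{\mathrm{d}t}\Vert v\Vert_{\mathbb{H}^{-1}}^2+\eps\Vert\nabla v\Vert^2+\frac1\eps(f(v+z),v+z)=\frac1\eps(f(v+z),z),
\end{align*}
where I have written $v=(v+z)-z$ to recover the dissipative term $(f(w),w)=\Vert w\Vert_{\mathbb{L}^4}^4-\Vert w\Vert^2$ with $w=v+z$. The lower-order term $-\tfrac1\eps\Vert w\Vert^2$ is absorbed through the pointwise bound $w^2\le\tfrac12 w^4+\tfrac12$, while the cross term $\tfrac1\eps(f(w),z)$ is controlled by Young's inequality, absorbing a fraction of $\tfrac1\eps\Vert w\Vert_{\mathbb{L}^4}^4$ and leaving powers of $z$ on the right-hand side that are integrable by the moment bounds above. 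Integrating in time, taking the supremum and then the expectation, and finally recombining $u=z+v$ via $\Vert u\Vert_{\mathbb{H}^{-1}}\le\Vert v\Vert_{\mathbb{H}^{-1}}+\Vert z\Vert_{\mathbb{H}^{-1}}$ and $\Vert u\Vert_{\mathbb{L}^4}^4\le C(\Vert v\Vert_{\mathbb{L}^4}^4+\Vert z\Vert_{\mathbb{L}^4}^4)$ delivers the stated bound, the three terms $1$, $\eps^{-1}$ and $\eps^{4\gamma-3}$ arising from the normalized initial energy, the absorbed lower-order terms, and the stochastic convolution, respectively.

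The main obstacle, as already flagged, is that the naive energy identities do not close because of the divergent It\^o correction; the whole argument hinges on isolating the rough Gaussian component and showing that the biharmonic regularization keeps $\sum_k\lambda_k^{-2}$ summable in $d\le 3$. To make the computations rigorous I would carry out the splitting and all the above estimates at the level of the spectral Galerkin approximation $u_N=z_N+v_N$ with the truncated noise $W_N=\sum_{k\le N}e_k\beta_k$ (for which the traces are finite and It\^o's formula is elementary), obtain the bounds uniformly in $N$, and pass to the limit using lower semicontinuity of the norms, weak-$*$ convergence, and the uniqueness already provided by \cite{Debussche1,BM24}.
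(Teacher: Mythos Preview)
The paper does not supply its own proof of this proposition: it is stated as a known result with explicit citations to \cite[Theorem 2.1]{Debussche1} and \cite[Theorem 3.1]{BM24}, and the text immediately moves on. Your proposal is therefore not competing with a proof in the paper but rather reconstructing the argument behind the cited references, and it does so correctly. The splitting $u=z+v$ into stochastic convolution plus remainder, the moment bound on $z$ via the summability of $\sum_k\lambda_k^{-2}$ in $d\le 3$ (which is exactly the mechanism by which the biharmonic operator compensates for the roughness of white noise), and the pathwise $\mathbb{H}^{-1}$ energy estimate on $v$ are precisely the ingredients of the proofs in \cite{BM24} and \cite{BYZ22}. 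Your identification of the obstruction---the divergent It\^o correction if one works directly with $u$---and of the remedy is the essential conceptual point.

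Two minor remarks. First, your reference to \cite[Lemma 4.5]{BM24} for the moment bounds on $z$ is likely a misattribution; in the present paper that lemma is invoked as an interpolation inequality (cf.\ the proof of \lemref{Mepsilonlemma}), not as a stochastic-convolution estimate. Second, it is worth noting that the paper itself reuses exactly your splitting strategy at the discrete level in Section~\ref{SharpLimit}, writing $X^j=\widetilde{X}^j+\widehat{X}^j$ with $\widetilde{X}^j$ the discrete stochastic convolution \eqref{discreteconv}; this confirms that the decomposition you propose is the canonical one for this problem.
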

\revd{
 To establish convergence of the iterated numerical approximation $X^j$ in \eqref{scheme1b} to the strong variational solution $u$ (cf. \thmref{mainresult1}), we need  an estimate  of $u-u_{\mathrm{CH}}$, where $u_{\mathrm{CH}}$ is the unique weak solution to the deterministic Cahn-Hilliard equation, that is, the weak solution of \eqref{model1} with $ W\equiv 0$. An  estimate of $u-u_{\mathrm{CH}}$ was obtained in \cite[Corollary 4.1]{BM24}. Such  estimate will be used here. But to make the paper self-contained, we  briefly recall it.

A central ingredient in deriving an  estimate for $u-u_{\mathrm{CH}}$ is the use  of a stopping time argument to control the drift nonlinearity. The stopping time in \cite{BM24} is defined as
\begin{align}
\label{StopT}
T_{\eps}:= T\wedge \inf\left\{t>0: \; \int_0^t\Vert u(s)-u_{\mathsf{A}}(s)-Z^{\eps}(s)\Vert^3_{\mathbb{L}^3}ds>\eps^{\sigma_0}\right\},
\end{align}
for some constant $\sigma_0>0$, where  $Z^{\eps}$ is the stochastic convolution, given by $Z^{\eps}(t)=\eps^{\gamma}\int_0^te^{-(t-s)\eps^2\Delta^2}dW(s)$.
The function $u_{\mathsf{A}}$ is an approximation of $u_{\mathrm{CH}}$ constructed in \cite{abc94} which satisfies (cf. \cite[Theorem 2.1]{abc94})
\begin{align}
\label{Alikakos}
\Vert u_{\mathrm{CH}}- u_{\mathsf{A}}\Vert_{L^p(0, T; \mathbb{L}^p)}\leq C\eps^k\quad \text{for}\; p\in\left(2, \frac{2d+8}{d+2}\right],
\end{align}
for some Constant $C$, independent of $\eps$ and  for
\begin{align*}
k>(d+2)\frac{d^2+6d+10}{4d+16}. 
\end{align*}
Moreover, the approximation $u_{\mathsf{A}}$ satisfies a spectral  estimate
\begin{equation}
\label{Spectral1}
\inf_{0\leq t\leq T}\inf_{w=(-\Delta)^{-1}\psi}\frac{\eps\Vert\nabla\psi\Vert^2+\frac{1}{\eps}\left(f'(u_{\mathsf{A}})\psi, \psi\right)}{\Vert\nabla w\Vert^2}\geq -C_0,
\end{equation}
where the constant $C_0>0$ does not depend of $\eps>0$.
 
  The stopping time \eqref{StopT} enables the derivation of an estimate of $u-u_{\mathsf{A}}-Z^{\eps}$ up to $T_{\eps}$ on a large sample subset
\begin{align}
\label{HigherSet}
\Omega_{\delta_0, \eta_0, \eps}:=\left\{\omega\in\Omega:\; \Vert Z^{\eps}\Vert_{C(\mathcal{D}_T)}\leq \eps^{\gamma-\frac{1}{4}-2\delta_0-2\eta_0}\right\}
\end{align}
that satisfies $\mathbb{P}[\Omega_{\delta_0, \eta_0, \eps}]\rightarrow 1$ for $\eps\rightarrow 0$, for some $\delta_0, \eta_0>0$ and for $\gamma>0$ large enough. More precesily, it is proved in \cite[Lemma 4.4]{BM24} that for $t\in[0, T_{\eps}]$ and $\omega\in\Omega_{\delta_0, \eta_0, \eps}$, it holds 
\begin{align}
\label{LuboJd}
&\sup_{s\in[0,t]}\Vert u(s)-u_{\mathsf{A}}(s)-Z^{\eps}(s)\Vert^2_{\mathbb{H}^{-1}}+\frac{13}{18\eps}\int_0^t\Vert u(s)-u_{\mathsf{A}}(s)-Z^{\eps}(s)\Vert^4_{\mathbb{L}^4}ds\nonumber\\
&\quad \leq C\left(\eps^{\sigma_0-1}+\eps^{\frac{4}{3}\left(\gamma-\frac{1}{4}-2\delta_0-2\eta_0\right)-1}+\eps^{\frac{3K-5}{2}}\right).
\end{align}
Under \assref{assumption1} below, it can be shown that $T_{\eps}\equiv T$. Using \eqref{LuboJd} and \eqref{Alikakos}, one can derive  an  estime for $u- u_{\mathrm{CH}}$, see \cite[Corollary 4.1]{BM24}.
%Let $u_{\mathrm{CH}}$ be the unique weak solution to the deterministic Cahn-Hilliard equation (i.e., the weak solution of \eqref{model1} with $ W\equiv 0$). 
 The derivation of such  estimate requires  the parameters $\gamma$, $\sigma_0$, $\delta_0$ and $\eta_0$ to satisfy the following assumption, see \cite{BM24} for more details.
 }
\begin{Assumption}
\label{assumption1}
Let $\mathcal{E}(u^{\eps}_0)<C$. Assume that for fixed \revl{$0 < \alpha < 7$, $2<  \delta \leq \frac{8}{3}$} the parameters $(\eta_0, \delta_0, \sigma_0,  \gamma)$ satisfy
\begin{align*}
  \sigma_0>\frac{(7-\alpha) \delta +6\alpha-8}{ \delta -2},\quad \gamma>\frac{3}{4}\sigma_0+\frac{1}{4}+2\delta_0+2\eta_0.
\end{align*}
\end{Assumption}
The following lemma gives an error bound  for the difference $u-u_{\mathrm{CH}}$, which is a consequence of \cite[Corollary 4.1]{BM24}. 
\begin{lemma}
\label{analyticCahnHilliard}
Let \assref{assumption1} be fulfilled  and assume that $\delta_0+\eta_0\geq \frac{4}{3}\sigma_0+1$. Then the following error estimates hold
\begin{align*}
\mathbb{E}\left[\Vert u-u_{\mathrm{CH}}\Vert^2_{L^{\infty}(0, T; \mathbb{H}^{-1})}+\frac{1}{\eps}\Vert u-u_{\mathrm{CH}}\Vert^4_{L^4(0, T; \mathbb{L}^4)}\right]\leq C\eps^{\frac{2\sigma_0}{3}},
\end{align*}
where $u$ is the strong variational solution to the stochastic Cahn-Hilliard equation \eqref{model1}  and $u_{\mathrm{CH}}$ is the unique weak solution to the deterministic Cahn-Hilliard equation.   
\end{lemma}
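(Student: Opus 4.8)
The estimate is essentially the content of \cite[Corollary 4.1]{BM24}, and the plan is to assemble it from the pathwise bound \eqref{LuboJd}, the approximation estimate \eqref{Alikakos} and the definition \eqref{HigherSet} of the high-probability set $\Omega_{\delta_0,\eta_0,\eps}$ recalled above. I would start from the splitting
\begin{align*}
u-u_{\mathrm{CH}}=(u-u_{\mathsf{A}}-Z^{\eps})+Z^{\eps}+(u_{\mathsf{A}}-u_{\mathrm{CH}}),
\end{align*}
and bound $\Vert u-u_{\mathrm{CH}}\Vert^2_{L^{\infty}(0,T;\mathbb{H}^{-1})}+\tfrac1\eps\Vert u-u_{\mathrm{CH}}\Vert^4_{L^4(0,T;\mathbb{L}^4)}$ by the sum of the three corresponding contributions, using the continuous embeddings $\mathbb{L}^{\infty}\hookrightarrow\mathbb{L}^4\hookrightarrow\mathbb{L}^2\hookrightarrow\mathbb{H}^{-1}$ valid on the bounded domain $\mathcal{D}$. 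The expectation is then split over $\Omega_{\delta_0,\eta_0,\eps}$ and its complement.

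On $\Omega_{\delta_0,\eta_0,\eps}$ I would first argue, as in \cite{BM24} and under \assref{assumption1}, that the stopping time \eqref{StopT} satisfies $T_{\eps}\equiv T$, so that the pathwise estimate \eqref{LuboJd} holds on all of $[0,T]$; this step rests on the spectral estimate \eqref{Spectral1} and the interpolation inequalities and is the analytic heart of the argument. Granting $T_{\eps}=T$, the first summand is controlled directly by \eqref{LuboJd}, which bounds both $\sup_{s}\Vert u-u_{\mathsf{A}}-Z^{\eps}\Vert^2_{\mathbb{H}^{-1}}$ and $\tfrac1\eps\int_0^T\Vert u-u_{\mathsf{A}}-Z^{\eps}\Vert^4_{\mathbb{L}^4}$. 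The $Z^{\eps}$-contribution is bounded on $\Omega_{\delta_0,\eta_0,\eps}$ by the defining inequality $\Vert Z^{\eps}\Vert_{C(\mathcal{D}_T)}\leq \eps^{\gamma-\frac14-2\delta_0-2\eta_0}$. For the deterministic remainder $u_{\mathsf{A}}-u_{\mathrm{CH}}$ I would use \eqref{Alikakos}; since the admissible range $p\in(2,\frac{2d+8}{d+2}]$ does not reach $p=4$, I would interpolate the $\mathbb{L}^4$-norm against the uniform $\mathbb{L}^{\infty}$-bounds of the smooth $O(1)$ profiles $u_{\mathsf{A}}$ and $u_{\mathrm{CH}}$, and pass to $L^{\infty}(0,T;\mathbb{H}^{-1})$ through the regularity of these deterministic functions rather than from the time-integrated \eqref{Alikakos} alone. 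Since \assref{assumption1} forces $\gamma-\frac14-2\delta_0-2\eta_0>\frac34\sigma_0$ and $\sigma_0>3$ (hence $\sigma_0-1>\frac{2\sigma_0}3$), and since the Alikakos order $K$ may be taken as large as needed so that $\frac{3K-5}{2}\geq\frac{2\sigma_0}3$, each resulting exponent is $\geq\frac{2\sigma_0}3$, giving the good-set bound $\leq C\eps^{2\sigma_0/3}$.

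On the complement $\Omega\setminus\Omega_{\delta_0,\eta_0,\eps}$ I would apply the Cauchy--Schwarz inequality to write the expectation as $\mathbb{P}[\Omega\setminus\Omega_{\delta_0,\eta_0,\eps}]^{1/2}$ times a fourth-moment factor. The latter is bounded polynomially in $\eps^{-1}$ by the moment estimates of \propref{PropExistence} (together with the corresponding deterministic bounds for $u_{\mathrm{CH}}$), while the probability of the complement is controlled through the Gaussian tail of the stochastic convolution $Z^{\eps}$ (Chebyshev with arbitrarily high moments, or Borell--TIS), which yields an arbitrarily large power of $\eps$ governed by $\delta_0+\eta_0$. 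The hypothesis $\delta_0+\eta_0\geq\frac43\sigma_0+1$ is precisely what ensures that this power beats the $\eps^{-1}$-type moment blow-up and leaves a residual factor $\eps^{2\sigma_0/3}$.

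The main obstacle is twofold. Analytically, the crux is the pathwise estimate \eqref{LuboJd} and the identity $T_{\eps}=T$, inherited from \cite{BM24} and resting on the spectral estimate \eqref{Spectral1}; all the $\eps$-robustness is encoded there. Within the present assembly the delicate points are (i) reaching the $\mathbb{L}^4$- and $L^{\infty}(0,T;\mathbb{H}^{-1})$-norms of $u_{\mathsf{A}}-u_{\mathrm{CH}}$, for which the time-integrated \eqref{Alikakos} with $p\leq\frac{2d+8}{d+2}<4$ must be supplemented by the uniform bounds on the deterministic profiles, and (ii) balancing the complement-set probability against the moment growth, where the sharp role of $\delta_0+\eta_0\geq\frac43\sigma_0+1$ becomes visible.
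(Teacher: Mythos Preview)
Your proposal is correct and follows exactly the approach the paper indicates: the lemma is stated as a consequence of \cite[Corollary~4.1]{BM24}, and the paper only recalls the ingredients \eqref{StopT}--\eqref{LuboJd} rather than giving a self-contained proof, so your reconstruction from the splitting $u-u_{\mathrm{CH}}=(u-u_{\mathsf{A}}-Z^{\eps})+Z^{\eps}+(u_{\mathsf{A}}-u_{\mathrm{CH}})$, the good-set/bad-set decomposition via $\Omega_{\delta_0,\eta_0,\eps}$, and the fact that $T_{\eps}=T$ under \assref{assumption1} is precisely what the paper intends. The only point to flag is that the complement-set step needs higher moments than those stated in \propref{PropExistence}; these are available in \cite{BM24} but you should cite them rather than \propref{PropExistence} alone.
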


%\revd{
%\begin{proof}
%Along the same lines as those in  \cite[Corollary 4.1]{BM24}, we have
%\begin{align*}
%%\label{stchcontinuous}
%\mathbb{E}\left[\Vert u-u_{\mathrm{CH}}\Vert^2_{L^{\infty}(0, T; \mathbb{H}^{-1})}+\frac{1}{\eps}\Vert u-u_{\mathrm{CH}}\Vert^4_{L^4(0, T; \mathbb{L}^4)}\right]\leq C\eps^{\gamma-1}+C\varepsilon^{\frac{\delta_0+\eta_0-1}{2}}.
%\end{align*}
%Since $\delta_0+\eta_0\geq \frac{4}{3}\sigma_0+1$, we have
%$\frac{\delta_0+\eta_0-1}{2}\geq \frac{2}{3}\sigma_0$. Since $\gamma>\frac{3}{4}\sigma_0+\frac{1}{4}+2\delta_0+2\eta_0$, we have $\gamma-1\geq \frac{2}{3}\sigma_0$.  
%\end{proof}
%} 

%\begin{remark}
% \lemref{analyticCahnHilliard} extends the result in \cite{BYZ22} to 
% dimension $d=3$. \lemref{analyticCahnHilliard} also gives an estimate of the error in smaller spaces, namely $\mathbb{E}[\Vert.\Vert_{L^2(0, T; \mathbb{H}^{2-\frac{d}{2}-\vartheta})}]$ and $\mathbb{E}[\Vert .\Vert_{L^4(0, T; \mathbb{L}^4)}]$, which were not achieved in \cite{BYZ22}. Analogous estimate in stronger norm (namely $\mathbb{E}[\Vert .\Vert_{L^2(0, T; \mathbb{H}^1)}]$) for trace class noise can be derived from \cite{abk18}. 
%\end{remark}

In \secref{convergenceanalysis} we perform an analogous analysis as above on the discrete level by using a stopping index $J_{\eps}$ in \eqref{Stopindex}, and a set $\Omega_2$ in \eqref{SetOmega2}, which are  discrete counterparts of $T_{\eps}$ and $\Omega_{\delta_0, \eta_0, \eps}$ respectivly. 

We provide in the next section the numerical approximation  and its a priori estimates.

%%%%%%%%%%%%%%%%%%%%%%%%%%%%%%%%%%%%%%%%%%%%%%%%%%%%%%%%%%%%%%%%%%%%%%%%%%%%%%%%%%%%%%%%%%%%%%%%%%%%%%%%%%%%%%%%
%%%%%%%%%%%%%%%%%%%%%%%%%%%%%%%%%%%%%%%%%%%%%%%%%%%%%%%%%%%%%%%%%%%%%%%%%%%%%%%%%%%%%%%%%%%%%%%%%%%%%%%%%%%%%%%%
%%%%%%%%%%%%%%%%%%%%%%%%%%%%%%%%%%%%%%%%%%%%%%%%%%%%%%%%%%%%%%%%%%%%%%%%%%%%%%%%%%%%%%%%%%%%%%%%%%%%%%%%%%%%%%%%
%%%%%%%%%%%%%%%%%%%%%%%%%%%%%%%%%%%%%%%%%%%%%%%%%%%%%%%%%%%%%%%%%%%%%%%%%%%%%%%%%%%%%%%%%%%%%%%%%%%%%%%%%%%%%%%%
%%%%%%%%%%%%%%%%%%%%%%%%%%%%%%%%%%%%%%%%%%%%%%%%%%%%%%%%%%%%%%%%%%%%%%%%%%%%%%%%%%%%%%%%%%%%%%%%%%%%%%%%%%%%%%%%
%%%%%%%%%%%%%%%%%%%%%%%%%%%%%%%%%%%%%%%%%%%%%%%%%%%%%%%%%%%%%%%%%%%%%%%%%%%%%%%%%%%%%%%%%%%%%%%%%%%%%%%%%%%%%%%%

\section{Numerical approximation}
\label{timeapproximation}  

In this section we construct a semi-discrete numerical approximation scheme for \eqref{model1} and analyze its stability properties.

We construct a discrete approximation of the noise as follows. Let $\mathcal{T}_h$ be a quasi-uniform triangulation of $\mathcal{D}$ with mesh-size $h=\max_{T\in \mathcal{T}_h}\mathrm{diam} (T)$ and $\mathbb{V}_h\subset\mathbb{H}^1$ be the space of globally continuous functions that are piecewise linear on $\mathcal{T}_h$, i.e.,
\begin{align*}
\mathbb{V}_h:=\{v_h\in C(\overline{\mathcal{D}}):\; v_h|_K\in \mathcal{P}_1(K)\quad \forall K\in \mathcal{T}_h\}.
\end{align*}

 We consider the basis $\{\phi_l\}_{l=1}^L$ of $\mathbb{V}_h$ such that $\mathbb{V}_h=\mathrm{span}\{\phi_l,\; l=1, \cdots, L\}$.
Following \cite{sllg_book,BanasBrzezniakProhl2013} we the consider the following $\mathbb{V}_h$-valued approximation of the space-time white noise
\begin{align}
\label{Noiseapprox1}
\Delta_jW(x):=\sum_{l=1}^L\frac{\phi_l(x)}{\sqrt{(d+1)^{-1}\vert (\phi_l, 1)\vert}}\Delta_j\beta_l,\quad x\in \overline{\mathcal{D}}\subset\mathbb{R}^d,
\end{align}
where  $\{\beta_l\}_{l=1}^L$ are standard real-valued Brownian motions and $\Delta_j\beta_l:=\beta_l(t_j)-\beta_l(t_{j-1})$.
\begin{remark}
The discrete noise \eqref{Noiseapprox1} was considered in \cite{sllg_book,BanasBrzezniakProhl2013} as an approximation of the space-time white noise, cf. \cite[Remark A.1]{BanasBrzezniakProhl2013}.
Approximation \eqref{Noiseapprox1} can also be interpreted as the $\mathbb{L}^2$-projection on $\mathbb{V}_h$ of the higher-dimensional  analogue of the piecewise constant approximation of the space-time white noise from \cite{AllenNovoselZhang1998}.
% For linear stochastic Cahn-Hilliard equation, the resulting equation obtained by replacing the space-time white noise with the approximation \eqref{Noiseapprox1} converges to the solution of the original problem as $h\rightarrow 0$, see e.g.,   \cite[Theorem 5]{Kossioris2013}. 
\end{remark}
To preserve the zero mean value property of the space-time white noise on the discrete level we normalize the approximation \eqref{Noiseapprox1} as follows  
\begin{align}
\label{Noiseapprox2}
\Delta_j\overline{W}:=\Delta_j{W}-m(\Delta_jW).
\end{align}
%where for $v\in \mathbb{L}^2$, $m(v)$ is given by \eqref{average}.

We consider the following semi-discrete numerical approximation of \eqref{model1} which
combines the implicit Euler time-discretisation with the noise approximation
(\ref{Noiseapprox2}): given $J\in \mathbb{N}$, $\mathbb{V}_h$ set $\tau=T/J$, $X^0 = u^{\eps}_0$  and determine $\{X^j\}_{j=1}^J$ as
 \begin{align}
\label{scheme1b}
\begin{array}{rclll}
(X^j-X^{j-1}, \varphi)+\tau(\nabla w^j, \nabla\varphi) & = &\eps^{\gamma}(\Delta_j\overline{W}, \varphi)\quad & \varphi\in\mathbb{H}^1,\\
\eps(\nabla X^j, \nabla\psi)+\displaystyle\frac{1}{\eps}(f(X^j), \psi) & =& (w^j, \psi)\quad\quad\quad\quad\quad & \psi\in\mathbb{H}^1.
\end{array}
\end{align}
For $\tau\leq \frac{1}{2}\eps^3$ the solutions of the implicit scheme \eqref{scheme1b} exist and are $\mathbb{P}$-a.s. unique for $j=1,\dots, J$, and $X^j$ is $\mathcal{F}_{t_j}$-measurable,
see \thmref{existencescheme} below.

We recall in the following lemma some  basic properties of nodal basis functions $(\phi_l)_{l=1}^L$ of $\mathbb{V}_h$ for quasi-uniform triangles, easy to verify and useful in the rest of the paper. 
\begin{lemma}
\label{Lemmabasis}
The following hold  for all $\phi_l\in \mathbb{V}_h$, $l=1, \cdots, L$ uniformly in $h$:
\begin{itemize}
\item[(i)]   $\Vert \phi_l\Vert_{\mathbb{L}^{\infty}}=1$, $ C_1h^d\leq\vert(\phi_l, 1)\vert\leq C_2h^{d}$, $L=dim(\mathbb{V}_h)\leq Ch^{-d}$,
\item[(ii)] $\Vert \phi_l\Vert\leq Ch^{\frac{d}{2}}$ and $\Vert\nabla \phi_l\Vert\leq Ch^{-1}\Vert \phi_l\Vert$.
\end{itemize}
%where the constant $C>0$  depends only on the shape of $\mathcal{T}_h$. 
\end{lemma}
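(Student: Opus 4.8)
The plan is to verify each estimate directly from the definition of $\phi_l$ as the nodal (hat) basis function associated with the $l$-th vertex $x_l$ of $\mathcal{T}_h$, i.e.\ the unique function in $\mathbb{V}_h$ with $\phi_l(x_m)=\delta_{lm}$, together with the shape-regularity and quasi-uniformity of $\mathcal{T}_h$ (so that every element $K$ satisfies $|K|\simeq h^d$ and $h_K\simeq h$, and each vertex belongs to a number of elements bounded independently of $h$). All the claimed bounds are local in nature and reduce to element-wise computations summed over the vertex patch $\omega_l$ of elements meeting $x_l$.

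For part (i), I would first treat the $\mathbb{L}^{\infty}$ bound: on each simplex $K$ the restriction of $\phi_l$ is affine and interpolates nodal values in $\{0,1\}$, so $0\le \phi_l\le 1$ on $\overline{\mathcal{D}}$ with the value $1$ attained at $x_l$; hence $\Vert \phi_l\Vert_{\mathbb{L}^{\infty}}=1$. For the integral $(\phi_l,1)=\int_{\mathcal{D}}\phi_l\,dx\ge 0$ I would use that on each $K\subset\omega_l$ the function $\phi_l$ coincides with the barycentric coordinate of $x_l$, whose integral over $K$ equals exactly $|K|/(d+1)$; summing gives $(\phi_l,1)=\sum_{K\subset\omega_l}|K|/(d+1)$, and the two-sided bound $C_1h^d\le (\phi_l,1)\le C_2 h^d$ then follows from $|K|\simeq h^d$ together with the uniform bound on the cardinality of $\omega_l$. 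Finally, $L=\dim(\mathbb{V}_h)$ equals the number of vertices; the lower bound $|K|\ge c\,h^d$ forces the number $N$ of elements to satisfy $N\le |\mathcal{D}|/(c\,h^d)=Ch^{-d}$, and since each element has $d+1$ vertices we conclude $L\le (d+1)N\le Ch^{-d}$.

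For part (ii), the $\mathbb{L}^2$-bound is an immediate consequence of positivity of $\phi_l$ and part (i), since $\Vert\phi_l\Vert^2=\int_{\mathcal{D}}\phi_l^2\,dx\le \Vert\phi_l\Vert_{\mathbb{L}^{\infty}}(\phi_l,1)\le C_2 h^d$, whence $\Vert\phi_l\Vert\le Ch^{d/2}$. The gradient estimate is the standard local inverse inequality for piecewise affine functions: rescaling each $K$ to a fixed reference simplex and using the equivalence of norms on the finite-dimensional space of affine functions there yields $\Vert\nabla\phi_l\Vert_{\mathbb{L}^2(K)}\le C\,h_K^{-1}\Vert\phi_l\Vert_{\mathbb{L}^2(K)}$; squaring, summing over $K\subset\omega_l$, and invoking $h_K\simeq h$ gives $\Vert\nabla\phi_l\Vert\le Ch^{-1}\Vert\phi_l\Vert$.

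The only point requiring care, rather than a genuine obstacle, is to keep the two geometric hypotheses cleanly separated: quasi-uniformity supplies $|K|\simeq h^d$ and $h_K\simeq h$ (used for the volume estimates and for the scaling constant in the inverse inequality to be $h$-independent), while shape-regularity bounds the number of elements in each patch $\omega_l$ uniformly in $h$ (used for the upper bound on $(\phi_l,1)$ and for the reference-element constant to be uniform). With these hypotheses the resulting constants $C,C_1,C_2$ are independent of both $h$ and the index $l$, as asserted.
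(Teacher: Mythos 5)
Your proof is correct, and since the paper states this lemma without proof (calling the properties ``easy to verify'' for quasi-uniform meshes), your argument is exactly the standard verification the authors implicitly rely on: barycentric coordinates give $\int_K \phi_l\,dx = |K|/(d+1)$, quasi-uniformity gives $|K|\simeq h^d$ and a uniform bound on the vertex-patch cardinality, and the gradient bound is the usual reference-element inverse inequality. Nothing is missing; in particular your exact identity $(\phi_l,1)=\sum_{K\subset\omega_l}|K|/(d+1)$ is consistent with the factor $(d+1)^{-1}$ appearing in the paper's noise normalization \eqref{Noiseapprox1}.
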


Let us close this subsection by noting that the  nonlinearity $f$ satisfies the following identity, which will be used throughout the paper
\begin{align}
\label{nonlinearident}
f(a)-f(b)&=(a-b)f'(a)+(a-b)^3-3(a-b)^2a\nonumber\\
&=3(a-b)a^2-(a-b)+(a-b)^3-3(a-b)^2a.
\end{align}

 To obtain energy estimates for the numerical approximation \eqref{scheme1b}, we need the following preparatory lemma. 
\begin{lemma}
\label{LemmaBruit0}
The following estimates hold
\begin{align*} 
\mathbb{E}[\vert m(\Delta_jW)\vert^2]\leq C\tau,\quad \mathbb{E}[\Vert \Delta_j\overline{W}\Vert^2]\leq Ch^{-d}\tau+C\tau,\quad \mathbb{E}[\vert m(\Delta_jW)\vert^4]\leq Ch^{-2d}\tau^2. 
\end{align*}
\end{lemma}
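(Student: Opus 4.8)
The plan is to reduce all three bounds to elementary moment computations for Gaussian increments, exploiting that $\Delta_j\beta_l=\beta_l(t_j)-\beta_l(t_{j-1})$ are independent, centered and Gaussian with $\mathbb{E}[(\Delta_j\beta_l)^2]=\tau$ and $\mathbb{E}[(\Delta_j\beta_l)^4]=3\tau^2$. Writing $c_l:=\bigl((d+1)^{-1}\vert(\phi_l,1)\vert\bigr)^{-1/2}$, so that $c_l^2=(d+1)/\vert(\phi_l,1)\vert$, the two facts I would record at the outset are: (a) by \lemref{Lemmabasis}(i)--(ii), $c_l^2\Vert\phi_l\Vert^2\le C\,h^{-d}h^{d}=C$ uniformly in $l$ and $h$, with $L\le Ch^{-d}$; and (b) the nodal basis is a partition of unity, $\sum_{l=1}^L\phi_l\equiv 1$, hence $\sum_{l=1}^L(\phi_l,1)=(1,1)=\vert\mathcal{D}\vert=1$. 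Fact (b) is what collapses the mean-value estimates to a clean constant.

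For the first and third estimates I would first observe that, since $\vert\mathcal{D}\vert=1$,
\[
m(\Delta_jW)=(\Delta_jW,1)=\sum_{l=1}^L c_l\,(\phi_l,1)\,\Delta_j\beta_l=\sum_{l=1}^L\sqrt{(d+1)(\phi_l,1)}\;\Delta_j\beta_l,
\]
which is a centered Gaussian variable with variance $\sigma^2=\sum_l(d+1)(\phi_l,1)\,\tau=(d+1)\tau$ by fact (b). The first bound is then immediate from $\mathbb{E}[\vert m(\Delta_jW)\vert^2]=\sigma^2=(d+1)\tau\le C\tau$, and the third from the Gaussian fourth-moment identity $\mathbb{E}[\vert m(\Delta_jW)\vert^4]=3\sigma^4=3(d+1)^2\tau^2$, which is in fact $\le C\tau^2\le Ch^{-2d}\tau^2$; alternatively, without invoking Gaussianity, I would expand $\bigl(\sum_l a_l\Delta_j\beta_l\bigr)^4$ with $a_l=\sqrt{(d+1)(\phi_l,1)}$ and keep only the even terms to obtain $3\tau^2\bigl(\sum_l a_l^2\bigr)^2$, giving the same value.

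For the second estimate I would use $\Vert a-b\Vert^2\le 2\Vert a\Vert^2+2\Vert b\Vert^2$ with $a=\Delta_jW$ and $b=m(\Delta_jW)$ (a constant, so $\Vert m(\Delta_jW)\Vert^2=\vert m(\Delta_jW)\vert^2\vert\mathcal{D}\vert=\vert m(\Delta_jW)\vert^2$), which reduces the claim to bounding $\mathbb{E}[\Vert\Delta_jW\Vert^2]$, the second term producing the $+C\tau$ contribution being exactly the already-proven first estimate. Expanding $\Vert\Delta_jW\Vert^2=\sum_{l,m}c_lc_m(\phi_l,\phi_m)\Delta_j\beta_l\Delta_j\beta_m$ and taking expectation, independence kills every off-diagonal term and leaves $\mathbb{E}[\Vert\Delta_jW\Vert^2]=\tau\sum_{l}c_l^2\Vert\phi_l\Vert^2$; by fact (a) each summand is $\le C$ and there are $L\le Ch^{-d}$ of them, so $\mathbb{E}[\Vert\Delta_jW\Vert^2]\le Ch^{-d}\tau$ and hence $\mathbb{E}[\Vert\Delta_j\overline W\Vert^2]\le Ch^{-d}\tau+C\tau$.

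There is no genuine obstacle here---everything is a direct moment computation---so the only points needing care are bookkeeping ones: correctly tracking the normalization $c_l$ so that the cancellation $\sum_l(\phi_l,1)=1$ is visible, and using independence and centering of the $\Delta_j\beta_l$ to discard the off-diagonal cross terms in $\Vert\Delta_jW\Vert^2$ and the odd terms in the fourth-moment expansion. The mildly delicate step, if any, is the fourth moment, where one must either appeal to Gaussianity or carry out the explicit fourth-power expansion; both routes yield $C\tau^2$, comfortably inside the stated $Ch^{-2d}\tau^2$.
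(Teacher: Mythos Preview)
Your proof is correct and follows essentially the same strategy as the paper: expand the sums, use independence and the second/fourth moments of Brownian increments, and apply the basis-function bounds of \lemref{Lemmabasis}. The one refinement you add is the partition-of-unity identity $\sum_l\phi_l\equiv 1$, which gives $\sum_l(\phi_l,1)=1$ exactly and hence $\mathbb{E}[\vert m(\Delta_jW)\vert^4]=3(d+1)^2\tau^2\le C\tau^2$; the paper instead bounds each $(\phi_l,1)$ by $Ch^d$ and sums over $L\le Ch^{-d}$ terms, stating the weaker (but sufficient) $Ch^{-2d}\tau^2$. Your sharper constant is harmless here, and the partition-of-unity trick is a clean way to see why the mean does not blow up with $h^{-1}$.
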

\begin{proof}
Using \lemref{Lemmabasis} and the fact that $\mathbb{E}[(\Delta_j\beta_k)(\Delta_j\beta_l)]=0$ for $k\neq l$,  $\mathbb{E}[(\Delta_k\beta_l)^2]=\tau$,
%it follows from the expression of $m(\Delta_jW)$ \eqref{average} that
we estimate the mean of the noise increment as
\begin{align*}
\mathbb{E}[\vert m(\Delta_jW)\vert^2]\leq C\sum_{l=1}^L\frac{(\phi_l, 1)^2}{h^d}\mathbb{E}[\vert\Delta_j\beta_l\vert^2]\leq C\tau.
\end{align*}
Using the definition of $\Delta_jW$ \eqref{Noiseapprox1}, the fact that $\mathbb{E}[(\Delta_j\beta_k)(\Delta_j\beta_l)]=0$ for $k\neq l$, $\mathbb{E}[(\Delta_k\beta_l)^2]=\tau$  and \lemref{Lemmabasis}, it follows that
\begin{align*}
\mathbb{E}[\Vert\Delta_jW\Vert^2]&=\mathbb{E}\left[\int_{\mathcal{D}}\left(\sum_{l=1}^L\frac{\phi_l(x)}{\sqrt{(d+1)^{-1}\vert (\phi_l, 1)\vert}}\Delta_j\beta_l\right)^2{dx}\right]\nonumber\\
&=\tau\int_{\mathcal{D}}\sum_{l=1}^L\frac{\phi_l^2(x)}{(d+1)^{-1}\vert (\phi_l, 1)\vert}dx=\tau\sum_{l=1}^L\frac{\Vert\phi_l\Vert^2}{(d+1)^{-1}\vert (\phi_l, 1)\vert}\leq Ch^{-d}\tau. 
\end{align*}
Using triangle inequality and the preceding estimates, it follows that
\begin{align*}
 \mathbb{E}\Vert\Delta_j\overline{W}\Vert^2\leq 2\mathbb{E}[\Vert\Delta_jW\Vert^2]+2\mathbb{E}[\vert m(\Delta_j{W})\vert^2]\leq Ch^{-d}\tau+C\tau. 
\end{align*}
Along the same lines as above, we obtain
\begin{align*}
\mathbb{E}[\vert m(\Delta_jW)\vert^4]\leq Ch^{-2d}\tau^2. 
\end{align*}
\end{proof}

In Lemmas \ref{momenta}, \ref{moment} and \ref{MomentLemma} below we derive energy estimates of the numerical approximation \eqref{scheme1b}.
\begin{lemma}  
\label{momenta}
Let  $0<\eps_0<<1$, $\eps\in(0, \eps_0)$ and $\tau\leq \frac{1}{2}\eps^3$. Then  the  scheme \eqref{scheme1b} satisfies
\begin{align*}
&\max_{1\leq j\leq J}\mathbb{E}[\mathcal{E}(X^j)]+\frac{\tau}{2}\sum_{j=1}^J\mathbb{E}[\Vert \nabla w^j\Vert^2]\\
&\leq C\left[\mathcal{E}(u^{\eps}_0)+\eps^{2\gamma+1}h^{-2-2d}+\eps^{4\gamma-1}h^{-6d}\tau+\eps^{2\gamma-3}h^{-d}+\eps^{2\gamma-1}h^{-3d}\right]\exp\left(\eps^{2\gamma-2} h^{-3d}\right),
\end{align*}
where the constant $C$ depends on $T$, but is independent of $\varepsilon$, $\tau$ and $h$.
\end{lemma}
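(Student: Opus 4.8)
The plan is to derive a discrete energy balance. I would test the first equation in \eqref{scheme1b} with $\varphi=w^j$ and the second with $\psi=\delta^j:=X^j-X^{j-1}$, and subtract; the shared term $(\delta^j,w^j)$ cancels, leaving $\eps(\nabla X^j,\nabla\delta^j)+\tfrac1\eps(f(X^j),\delta^j)+\tau\Vert\nabla w^j\Vert^2=\eps^\gamma(\Delta_j\overline{W},w^j)$. Using the polarization identity $\eps(\nabla X^j,\nabla\delta^j)=\tfrac\eps2(\Vert\nabla X^j\Vert^2-\Vert\nabla X^{j-1}\Vert^2+\Vert\nabla\delta^j\Vert^2)$ and a Taylor expansion of the quartic $F$ (equivalently \eqref{nonlinearident}) I would bound the potential term from below by $(f(X^j),\delta^j)\ge (F(X^j)-F(X^{j-1}),1)-\tfrac12\Vert\delta^j\Vert^2$, since the cubic cross term is dominated by the positive quartic remainder (the associated quadratic form in $\delta^j$ has negative discriminant). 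This gives the per-step balance $\mathcal E(X^j)-\mathcal E(X^{j-1})+\tfrac\eps2\Vert\nabla\delta^j\Vert^2+\tau\Vert\nabla w^j\Vert^2\le\tfrac1{2\eps}\Vert\delta^j\Vert^2+\eps^\gamma(\Delta_j\overline{W},w^j)$, with $\mathcal E$ as in \eqref{energy0}. The two right-hand terms must be controlled.

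For the increment term I would use $\Vert\delta^j\Vert^2\le\Vert\delta^j\Vert_{\mathbb H^{-1}}\Vert\nabla\delta^j\Vert$ (valid because testing \eqref{scheme1b} with $\varphi=1$ shows $m(\delta^j)=0$, as $\Delta_j\overline{W}$ has zero mean by \eqref{Noiseapprox2}) together with $\Vert\delta^j\Vert_{\mathbb H^{-1}}\le\tau\Vert\nabla w^j\Vert+\eps^\gamma\Vert\Delta_j\overline{W}\Vert_{\mathbb H^{-1}}$ read off from the first equation. Young's inequality then absorbs a fraction of $\tfrac\eps2\Vert\nabla\delta^j\Vert^2$ and, crucially using $\tau\le\tfrac12\eps^3$, a fraction of $\tfrac\tau2\Vert\nabla w^j\Vert^2$ into the left-hand side, leaving a remainder $\lesssim\eps^{2\gamma-3}\Vert\Delta_j\overline{W}\Vert_{\mathbb H^{-1}}^2$; summing its expectation via \lemref{LemmaBruit0} yields the $\eps^{2\gamma-3}h^{-d}$ contribution.

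The noise term is the heart of the matter, and the naive bound $\eps^\gamma(\Delta_j\overline{W},w^j-m(w^j))\lesssim\tfrac\tau2\Vert\nabla w^j\Vert^2+\tfrac{\eps^{2\gamma}}\tau\Vert\Delta_j\overline{W}\Vert^2$ fails, since the resulting $\tau^{-1}$ is not uniform for $\tau\le\tfrac12\eps^3$. Instead I would test the second equation with $\psi=\Delta_j\overline{W}\in\mathbb V_h\subset\mathbb H^1$ to get $(\Delta_j\overline{W},w^j)=\eps(\nabla X^j,\nabla\Delta_j\overline{W})+\tfrac1\eps(f(X^j),\Delta_j\overline{W})$, and in each factor write $X^j=X^{j-1}+\delta^j$ and $f(X^j)=f(X^{j-1})+(f(X^j)-f(X^{j-1}))$. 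Since $X^{j-1}$, $f(X^{j-1})$ are $\mathcal F_{t_{j-1}}$-measurable while $\mathbb E[\Delta_j\overline{W}\mid\mathcal F_{t_{j-1}}]=0$, the predictable parts are martingale increments that drop once expectations are taken. It then remains to estimate the increment contributions: $\eps^{\gamma+1}(\nabla\delta^j,\nabla\Delta_j\overline{W})$ is handled by Young (absorbing $\Vert\nabla\delta^j\Vert^2$, with the inverse estimate $\Vert\nabla\Delta_j\overline{W}\Vert\le Ch^{-1}\Vert\Delta_j\overline{W}\Vert$ from \lemref{Lemmabasis} yielding the $\eps^{2\gamma+1}h^{-2-2d}$ term), while the cubic term $\eps^{\gamma-1}(f(X^j)-f(X^{j-1}),\Delta_j\overline{W})$ is expanded through \eqref{nonlinearident} and each monomial estimated by Hölder plus the interpolation inequalities of \cite[Lemma 4.5]{BM24}, again absorbing $\Vert\nabla\delta^j\Vert^2$ and using $\tau\le\tfrac12\eps^3$. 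The dominant monomial $(X^j)^2\delta^j$, after splitting $X^j=X^{j-1}+\delta^j$ and using $\tfrac1\eps\Vert X^{j-1}\Vert^4_{\mathbb L^4}\le C\mathcal E(X^{j-1})+C$ with the independence of $\Delta_j\overline{W}$ from $\mathcal F_{t_{j-1}}$, produces precisely a Gronwall term $b_j\,\mathbb E[\mathcal E(X^{j-1})]$ with $b_j\sim\eps^{2\gamma-2}\mathbb E\Vert\Delta_j\overline{W}\Vert_{\mathbb L^\infty}^2$ whose sum gives the exponent $\eps^{2\gamma-2}h^{-3d}$; the leftover purely-noise contributions account for the $\eps^{4\gamma-1}h^{-6d}\tau$ and $\eps^{2\gamma-1}h^{-3d}$ terms.

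Finally I would collect the per-step inequality, take expectations so that all martingale increments vanish, and apply a discrete Gronwall lemma to $\mathbb E[\mathcal E(X^j)]$: the accumulated coefficient $\sum_j b_j\sim\eps^{2\gamma-2}h^{-3d}$ produces the factor $\exp(\eps^{2\gamma-2}h^{-3d})$, the summed forcing terms give the bracketed prefactor, and the retained $\tfrac\tau2\sum_j\Vert\nabla w^j\Vert^2$ survives on the left. I expect the main obstacle to be exactly the nonlinear increment–noise terms: choosing Hölder exponents and interpolation so the estimate is expressed through the \emph{first} moment $\mathbb E[\mathcal E(X^{j-1})]$ only (not higher moments), absorbing the correct power of $\Vert\nabla\delta^j\Vert$ under $\tau\le\tfrac12\eps^3$, and tracking the powers of $h$ that emerge from the noise moment bounds of \lemref{LemmaBruit0}.
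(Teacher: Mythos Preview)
Your overall architecture matches the paper exactly: test with $\varphi=w^j$, $\psi=X^j-X^{j-1}$, control $\tfrac1{2\eps}\Vert\delta^j\Vert^2$ via the $\mathbb H^{-1}$ interpolation and the first equation (this is where $\tau\le\tfrac12\eps^3$ enters), then rewrite $(\Delta_j\overline W,w^j)$ through the second equation, split off the $\mathcal F_{t_{j-1}}$-measurable martingale pieces, and Gronwall. The handling of $\Vert\delta^j\Vert_{\mathbb H^{-1}}$ and of the gradient term $\eps^{\gamma+1}(\nabla\delta^j,\nabla\Delta_j\overline W)$ is also correct.

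There is, however, a genuine gap in the nonlinear increment--noise term. Your Taylor lower bound $(f(X^j),\delta^j)\ge (F(X^j)-F(X^{j-1}),1)-\tfrac12\Vert\delta^j\Vert^2$ is correct but too weak: the paper uses the sharper inequality (with $\mathfrak f(u):=u^2-1$, so $f(u)=\mathfrak f(u)u$ and $\Vert F(u)\Vert_{\mathbb L^1}=\tfrac14\Vert\mathfrak f(u)\Vert^2$)
\[
(f(X^j),\delta^j)\ \ge\ \tfrac14\Vert\mathfrak f(X^j)\Vert^2-\tfrac14\Vert\mathfrak f(X^{j-1})\Vert^2+\tfrac14\Vert\mathfrak f(X^j)-\mathfrak f(X^{j-1})\Vert^2-\tfrac12\Vert\delta^j\Vert^2,
\]
which leaves the extra positive term $\tfrac1{4\eps}\Vert\mathfrak f(X^j)-\mathfrak f(X^{j-1})\Vert^2$ on the left. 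This term is not cosmetic: the paper then writes
\[
f(X^j)-f(X^{j-1})=(\mathfrak f(X^j)-\mathfrak f(X^{j-1}))X^j+\mathfrak f(X^{j-1})\,\delta^j,
\]
so that $\eps^{\gamma-1}(f(X^j)-f(X^{j-1}),\Delta_j\overline W)$ splits into two pieces, each \emph{linear} in an increment quantity ($\mathfrak f(X^j)-\mathfrak f(X^{j-1})$ or $\delta^j$) that is available on the left. Both are absorbed by a standard quadratic Young inequality; the remainders carry only $\Vert X^j\Vert^2$ or $\Vert\mathfrak f(X^{j-1})\Vert^2\le C\eps\,\mathcal E(X^{j-1})$, which is what produces the Gronwall coefficient and the stated powers of $h$.

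Your route through \eqref{nonlinearident} instead yields $f(X^j)-f(X^{j-1})=3(X^{j-1})^2\delta^j+3X^{j-1}(\delta^j)^2+(\delta^j)^3-\delta^j$. The terms $3X^{j-1}(\delta^j)^2$ and $(\delta^j)^3$ are quadratic and cubic in $\delta^j$; after Sobolev embedding in $d\le3$ they scale like $\Vert\nabla X^{j-1}\Vert\,\Vert\nabla\delta^j\Vert^2$ and $\Vert\nabla\delta^j\Vert^3$ times a noise factor. Neither can be absorbed into the single quadratic reservoir $\tfrac\eps2\Vert\nabla\delta^j\Vert^2$ by Young without either a priori control on $\Vert\nabla\delta^j\Vert$ (circular) or a pathwise bound on $\mathcal E(X^{j-1})$ and $\Vert\Delta_j\overline W\Vert$ (not available at this stage). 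The interpolation inequalities of \cite[Lemma~4.5]{BM24} do not help here: they trade $\mathbb L^3$ against $\mathbb H^{-1}$ and $\mathbb H^1$, but still leave a super-quadratic power of the increment. So your plan to ``absorb $\Vert\nabla\delta^j\Vert^2$'' for these monomials does not close. The fix is exactly the paper's: strengthen the potential lower bound to retain $\tfrac1{4\eps}\Vert\mathfrak f(X^j)-\mathfrak f(X^{j-1})\Vert^2$, and use the $\mathfrak f$-splitting of $f(X^j)-f(X^{j-1})$ rather than \eqref{nonlinearident}.
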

\begin{proof}
Taking $\varphi=w^j(\omega)$ and $\psi=(X^j-X^{j-1})(\omega)$ in  \eqref{scheme1b}, with $\omega\in \Omega$  fixed and summing the resulting equations  yields 
\begin{align}
\label{bound1}
&\frac{\eps}{2}\left(\Vert\nabla X^j\Vert^2-\Vert \nabla X^{j-1}\Vert^2+\Vert \nabla(X^j-X^{j-1})\Vert^2\right)+\tau\Vert\nabla w^j\Vert^2+\frac{1}{\eps}(f(X^j), X^j-X^{j-1})\nonumber\\
&=\eps^{\gamma}(\Delta_j\overline{W}, w^j)\quad \mathbb{P}\text{-a.s.}
\end{align}
Setting $\mathfrak{f}(u):=\vert u\vert^2-1$ (hence $f(X^j)=\mathfrak{f}(X^j)X^j$), we recall  from  \cite[Section 3.1]{FengProhl} that
\begin{align}
\label{bound2}
\frac{1}{\eps}\left(f(X^j), X^j-X^{j-1}\right)\geq& \frac{1}{4\eps}\Vert\mathfrak{f}(X^j)\Vert^2-\frac{1}{4\eps}\Vert\mathfrak{f}(X^{j-1})\Vert^2+\frac{1}{4\eps}\Vert \mathfrak{f}(X^j)-\mathfrak{f}(X^{j-1})\Vert^2\nonumber\\
&-\frac{1}{2\eps}\Vert X^j-X^{j-1}\Vert^2. 
\end{align}
To estimate the third term on the left hand side of \eqref{bound1}, we take  $\varphi=(-\Delta)^{-1}(X^j-X^{j-1})(\omega)$ in the first   equation of \eqref{scheme1b}, with $\omega\in\Omega$  fixed. This yields
\begin{align}
\label{bound2b}
&\Vert \Delta^{-1/2}(X^j-X^{j-1})\Vert^2+\tau\left(\nabla w^j,  \nabla(-\Delta)^{-1}(X^j-X^{j-1})\right)\nonumber\\
&=\eps^{\gamma}\left(\Delta_j\overline{W}, (-\Delta)^{-1}(X^j-X^{j-1})\right).
\end{align}
Using Cauchy-Schwarz's  and triangle inequalities, it follows from \eqref{bound2b} that
\begin{align*}
\Vert \Delta^{-1/2}(X^j-X^{j-1})\Vert^2\leq \left(\tau\Vert\nabla w ^j\Vert+\eps^{\gamma}\Vert\Delta^{-1/2}\Delta_j\overline{W}\Vert\right)\Vert \Delta^{-1/2}(X^j-X^{j-1})\Vert. 
\end{align*}
Using the fact that $\Delta^{-1/2}$ is a linear bounded operator on $\mathbb{L}^2_0$ (cf. \eqref{equiv1}), it follows that
\begin{align}
\label{bound2a}
\Vert \Delta^{-1/2}(X^j-X^{j-1})\Vert\leq \tau\Vert\nabla w^j\Vert+C\eps^{\gamma}\Vert\Delta_j\overline{W}\Vert. 
\end{align}
Squaring both sides of \eqref{bound2a} and using Young's inequality
yields
\begin{align*}
\Vert \Delta^{-1/2}(X^j-X^{j-1})\Vert^2\leq 2\tau^2\Vert\nabla w^j\Vert^2+2C\eps^{2\gamma}\Vert\Delta_j\overline{W}\Vert^2. 
\end{align*}
Using Cauchy-Schwarz's inequality and the preceding estimate leads to
\begin{align}
\label{bound3}
\frac{1}{2\eps}\Vert X^j-X^{j-1}\Vert^2&=\frac{1}{2\eps}\left(\nabla(-\Delta)^{-1}(X^j-X^{j-1}), \nabla(X^j-X^{j-1})\right)\nonumber\\
&\leq \frac{1}{4\eps^3}\Vert \Delta^{-1/2}(X^j-X^{j-1})\Vert^2+\frac{\eps}{4}\Vert \nabla(X^j-X^{j-1})\Vert^2\\
&\leq \frac{\tau^2}{2\eps^3}\Vert\nabla w^j\Vert^2+C\eps^{2\gamma-3}\Vert\Delta_j\overline{W}\Vert^2+\frac{\eps}{4}\Vert \nabla(X^j-X^{j-1})\Vert^2.\nonumber
\end{align}
Substituting \eqref{bound3} and \eqref{bound2} in \eqref{bound1} yields
\begin{align}
\label{bound4}
&\frac{\eps}{2}\left(\Vert\nabla X^j\Vert^2-\Vert\nabla X^{j-1}\Vert^2\right)+\frac{\eps}{4}\Vert\nabla(X^j-X^{j-1})\Vert^2+\left(\tau-\frac{\tau^2}{2\eps^3}\right)\Vert\nabla w^j\Vert^2\nonumber\\
&+\frac{1}{4\eps}\left(\Vert \mathfrak{f}(X^j)\Vert^2-\Vert \mathfrak{f}(X^{j-1})\Vert^2+\Vert \mathfrak{f}(X^j)-\mathfrak{f}(X^{j-1})\Vert^2\right)\\
&\leq C\eps^{2\gamma-3}\Vert\Delta_j\overline{W}\Vert^2+\eps^{\gamma}(\Delta_j\overline{W}, w^j). \nonumber
\end{align}
In order to keep  the term involving $\Vert\nabla w^j\Vert^2$ on the left hand side of \eqref{bound4} positive, we require $\tau< 2\eps^3$. To estimate the second term on the right hand side of \eqref{bound4}, we  note that
\begin{align}
\label{Warman1} 
\eps^{\gamma}(\Delta_j\overline{W}, w^j)=\eps^{\gamma}(\Delta_jW, w^j)-\eps^{\gamma}(w^j, 1)m(\Delta_jW). 
\end{align}
The second equation in \eqref{scheme1b} with $\psi=1$  yields
\begin{align}
\label{Warman2}
\eps^{\gamma}(w^j, 1)m(\Delta_jW)&=\eps^{\gamma-1}[(f(X^j)-f(X^{j-1}), 1)+(f(X^{j-1}), 1)]m(\Delta_jW)\nonumber\\
&=:A_1+A_2.
\end{align}
Note that $\mathbb{E}[A_2]=0$. Next, on recalling $f(X^j)=\mathfrak{f}(X^j)X^j$, we can rewritte $A_1$ as follows. 
\begin{align*}
A_1&=\eps^{\gamma-1}\left([\mathfrak{f}(X^j)-\mathfrak{f}(X^{j-1})], X^j\right)m(\Delta_jW)+\eps^{\gamma-1}\left(\mathfrak{f}(X^{j-1}),[X^j-X^{j-1}]\right)m(\Delta_jW)\nonumber\\
&=:A_{1,1}+A_{1,2}.
\end{align*}
Using the embedding $\mathbb{L}^s\hookrightarrow\mathbb{L}^r$ $(r\leq s)$,  Cauchy-Schwarz and Young's inequalities yields
\begin{align*}
A_{1,1}&\leq \frac{1}{16\eps}\Vert\mathfrak{f}(X^j)-\mathfrak{f}(X^{j-1})\Vert^2+C\eps^{2\gamma-1}\Vert \vert X^j\vert^2\Vert_{\mathbb{L}^1}\vert m(\Delta_jW)\vert^2\nonumber\\
&\leq \frac{1}{16\eps}\Vert\mathfrak{f}(X^j)-\mathfrak{f}(X^{j-1})\Vert^2+C\eps^{2\gamma-1}\left(\Vert\mathfrak{f}(X^j)-\mathfrak{f}(X^{j-1})\Vert_{\mathbb{L}^1}+\Vert X^{j-1}\Vert^2\right)\vert m(\Delta_jW)\vert^2\nonumber\\
&\leq \frac{1}{8\eps}\Vert\mathfrak{f}(X^j)-\mathfrak{f}(X^{j-1})\Vert^2+C\eps^{4\gamma-1}\vert m(\Delta_jW)\vert^4+C\eps^{2\gamma-1}\left(\Vert\mathfrak{f}(X^{j-1}\Vert^2+1\right)\vert m(\Delta_jW)\vert^2. 
\end{align*}
 We  estimate  $A_{1,2}$ by Cauchy-Schwarz,  Poincar\'{e} and Young's inequalities as
\begin{align*}
A_{1,2}&\leq \eps^{\gamma-1}\Vert \mathfrak{f}(X^{j-1})\Vert\Vert X^j-X^{j-1}\Vert\vert m(\Delta_jW)\vert\nonumber\\
&\leq C_{\mathcal{D}} \eps^{\gamma-1}\Vert \mathfrak{f}(X^{j-1})\Vert\Vert \nabla(X^j-X^{j-1})\Vert\vert m(\Delta_jW)\vert\nonumber\\
&\leq C\eps^{2\gamma-3}\Vert\mathfrak{f}(X^{j-1})\Vert^2\vert m(\Delta_jW)\vert^2+\frac{\eps}{16}\Vert\nabla(X^j-X^{j-1})\Vert^2. 
\end{align*}
We use the above estimates of $A_{1,1}$ and $A_{1,2}$ to obtain an estimate of $A_1$. Substituting the estimate of $A_1$ in \eqref{Warman2} yields
\begin{align}
\label{Warman3}
\eps^{\gamma}(w^j,1)m(\Delta_jW)\leq & \frac{1}{8\eps}\Vert \mathfrak{f}(X^j)-\mathfrak{f}(X^{j-1})\Vert^2+C\eps^{4\gamma-1}\vert m(\Delta_jW)\vert^4\nonumber\\
&+C\eps^{2\gamma-1}\left(\Vert\mathfrak{f}(X^{j-1})\Vert^2+1\right)\vert m(\Delta_jW)\vert^2+C\eps^{2\gamma-3}\Vert\mathfrak{f}(X^{j-1})\Vert^2\vert m(\Delta_jW)\vert^2\\
&+\frac{\eps}{16}\Vert \nabla(X^j-X^{j-1}\Vert^2+A_2.\nonumber
\end{align}
 Substituting \eqref{Warman3} in \eqref{Warman1} and substituting the resulting estimate in \eqref{bound4} yields
\begin{align}
\label{moment1}
&\frac{\eps}{2}\left(\Vert\nabla X^j\Vert^2-\Vert\nabla X^{j-1}\Vert^2\right)+\frac{3\eps}{16}\Vert\nabla(X^j-X^{j-1})\Vert^2+\frac{\tau}{2}\Vert\nabla w^j\Vert^2\nonumber\\
&\quad+\frac{1}{4\eps}\left(\Vert \mathfrak{f}(X^j)\Vert^2-\Vert \mathfrak{f}(X^{j-1})\Vert^2\right)+\frac{1}{8\eps}\Vert \mathfrak{f}(X^j)-\mathfrak{f}(X^{j-1})\Vert^2\nonumber\\
&\leq C\eps^{2\gamma-3}\Vert\Delta_j\overline{W}\Vert^2+C\eps^{4\gamma-1}\vert m(\Delta_jW)\vert^4+C\eps^{2\gamma-1}\left(\Vert\mathfrak{f}(X^{j-1}\Vert^2+1\right)\vert m(\Delta_jW)\vert^2\\
&\quad+C\eps^{2\gamma-3}\Vert\mathfrak{f}(X^{j-1})\Vert^2\vert m(\Delta_jW)\vert^2+\eps^{\gamma}(\Delta_jW, w^j)+A_2. \nonumber
\end{align}
Taking  $\psi=\phi_l$ in the second equation in \eqref{scheme1b} leads to
\begin{align*}
\eps^{\gamma}(w^j, \phi_l)\Delta_j\beta_l=&\eps^{\gamma+1}(\nabla X^j, \nabla\phi_l)\Delta_j\beta_l+\eps^{\gamma-1}(f(X^j), \phi_l)\Delta_j\beta_l\nonumber\\
=&\eps^{\gamma+1}(\nabla X^j, \nabla\phi_l)\Delta_j\beta_l+\eps^{\gamma-1}(f(X^j)-f(X^{j-1}), \phi_l)\Delta_j \beta_l\nonumber\\
&+\eps^{\gamma-1}(f(X^{j-1}), \phi_l)\Delta_j\beta_l\quad l=1,\cdots, L. 
\end{align*}
Taking into account the preceding identity, it follows from \eqref{Noiseapprox1} that
\begin{align*}
\eps^{\gamma}(\Delta_jW, w^j)
&=\frac{\eps^{\gamma}}{(d+1)^{-\frac{1}{2}}}\sum_{l=1}^L\frac{1}{\sqrt{\vert (\phi_l,1)\vert}}(w^j, \phi_l)\Delta_j\beta_l\nonumber\\
&=\eps^{\gamma+1}(\nabla X^j,\nabla\Delta_jW)+\revd{\eps^{\gamma-1}(f(X^j)-f(X^{j-1}), \Delta_jW)}+\eps^{\gamma-1}(f(X^{j-1}),\Delta_jW)\nonumber\\
&=: B_1+B_2+B_3,
\end{align*}
where we used the notation 
\begin{align*}
\nabla\Delta_jW:=\frac{1}{(d+1)^{-\frac{1}{2}}}\sum_{l=1}^L\frac{1}{\sqrt{\vert (\phi_l,1)\vert}}\nabla\phi_l\Delta_j\beta_l.
\end{align*}
Note that $\mathbb{E}[B_{3}]=0$. 
In order to estimate $B_1$, we split it  as follows
\begin{align}
\label{DecompB1}
B_1=\eps^{\gamma+1}\left(\nabla(X^j-X^{j-1}), \nabla\Delta_jW\right)+\eps^{\gamma+1}(\nabla X^{j-1}, \nabla \Delta_jW)=:B_{1,1}+B_{1,2}.
\end{align}
Note that $\mathbb{E}[B_{1,2}]=0$. Using  Cauchy-Schwarz's inequality and \lemref{Lemmabasis},    it follows that
\begin{align}
\label{EstimationB11}
B_{1,1}&\leq C\eps^{\gamma+1}\sum_{l=1}^L\frac{1}{\sqrt{\vert (\phi_l, 1)\vert}}\Vert \nabla(X^j-X^{j-1})\Vert\Vert \nabla\phi_l\Vert\vert\Delta_j\beta_l\vert\nonumber\\
&\leq C\eps^{\gamma+1} h^{-1}\Vert\nabla(X^j-X^{j-1})\Vert\sum_{l=1}^L\vert\Delta_j\beta_l\vert\nonumber\\
&\leq\frac{\eps}{16}\Vert\nabla(X^j-X^{j-1})\Vert^2+C\eps^{2\gamma+1}h^{-2}\left(\sum_{l=1}^L\vert\Delta_j\beta_l\vert\right)^2\\
&\leq \frac{\eps}{16}\Vert\nabla(X^j-X^{j-1})\Vert^2+C\eps^{2\gamma+1}h^{-2-d}\sum_{l=1}^L\vert\Delta_j\beta_l\vert^2.\nonumber
\end{align}
In order to estimate $B_2$, we use the identity $f(X^j)=\mathfrak{f}(X^j)X^j$ to split it as follows
\begin{align*}
B_2=\eps^{\gamma-1}\left((\mathfrak{f}(X^j)-\mathfrak{f}(X^{j-1}))X^j, \Delta_jW\right)+\eps^{\gamma-1}\left(\mathfrak{f}(X^{j-1})(X^j-X^{j-1}), \Delta_jW\right)=:B_{2,1}+B_{2,2}.
\end{align*}
Using Cauchy-Schwarz's inequality, the embedding $\mathbb{L}^s\hookrightarrow \mathbb{L}^r$ $(r\leq s)$ and \lemref{Lemmabasis} yields
\begin{align*}
B_{2,1}
&\leq C\eps^{\gamma-1}h^{-\frac{d}{2}}\sum_{l=1}^L\Vert \mathfrak{f}(X^j)-\mathfrak{f}(X^{j-1})\Vert\Vert X^j\Vert\Vert\phi_l\Vert_{\mathbb{L}^{\infty}}\vert\Delta_j\beta_l\vert\nonumber\\
&\leq \frac{1}{16\eps}\Vert \mathfrak{f}(X^j)-\mathfrak{f}(X^{j-1})\Vert^2+C\eps^{2\gamma-1}\Vert\vert X^j\vert^2\Vert_{\mathbb{L}^1}h^{-d}\left(\sum_{l=1}^L\vert\Delta_j\beta_l\vert\right)^2.
\end{align*}
Using Young's inequality and \lemref{Lemmabasis} yields
\begin{align}
\label{EstimationB21}
B_{2,1}\leq& \frac{1}{16\eps}\Vert \mathfrak{f}(X^j)-\mathfrak{f}(X^{j-1})\Vert^2\nonumber\\
&+C\eps^{2\gamma-1}h^{-2d}\left(\Vert\mathfrak{f}(X^j)-\mathfrak{f}(X^{j-1})\Vert_{\mathbb{L}^1}+\Vert X^{j-1}\Vert^2\right)\sum_{l=1}^L\vert\Delta_j\beta_l\vert^2\nonumber\\
\leq& \frac{1}{8\eps}\Vert \mathfrak{f}(X^j)-\mathfrak{f}(X^{j-1})\Vert^2+C\eps^{4\gamma-1}h^{-5d}\sum_{l=1}^L\vert\Delta_j\beta_l\vert^4\\
&+C\eps^{2\gamma-1}h^{-2d}\left(\Vert\mathfrak{f}(X^{j-1})\Vert^2+1\right)\sum_{l=1}^L\vert\Delta_j\beta_l\vert^2.\nonumber
\end{align}
 Next, we use Cauchy-Schwarz,  Young, Poincar\'{e}'s inequalities and \lemref{Lemmabasis} to obtain
\begin{align}
\label{EstimationB22}
B_{2, 2}&\leq C\eps^{\gamma-1}h^{-\frac{d}{2}}\sum_{l=1}^L\Vert\mathfrak{f}(X^{j-1})\Vert\Vert X^j-X^{j-1}\Vert\Vert\phi_l\Vert_{\mathbb{L}^{\infty}}\vert\Delta_j\beta_l\vert\nonumber\\
%&\leq CC_{\mathcal{D}}\eps^{\gamma-1}h^{-\frac{d}{2}}\Vert\mathfrak{f}(X^{j-1})\Vert\Vert\nabla(X^j-X^{j-1})\Vert\sum_{l=1}^L\Vert\phi_l\Vert_{\mathbb{L}^{\infty}}\vert\Delta_j\beta_l\vert\nonumber\\
&\leq \frac{\eps}{16}\Vert \nabla(X^j-X^{j-1})\Vert^2+C\eps^{2\gamma-3} h^{-d}\Vert\mathfrak{f}(X^{j-1})\Vert^2\left(\sum_{l=1}^L\vert\Delta_j\beta_l\vert\right)^2\\
&\leq\frac{\eps}{16}\Vert\nabla(X^j-X^{j-1})\Vert^2+C\eps^{2\gamma-3}h^{-2d}\Vert\mathfrak{f}(X^{j-1})\Vert^2\sum_{l=1}^L\vert\Delta_j\beta_l\vert^2. \nonumber
\end{align}
Substituting \eqref{EstimationB22}, \eqref{EstimationB21},  \eqref{EstimationB11}, \eqref{DecompB1} in \eqref{moment1}, noting  $\Vert F(u)\Vert_{\mathbb{L}^1}=\frac{1}{4}\Vert\mathfrak{f}(u)\Vert^2$ and \eqref{energy0},  yields
\begin{align}
\label{energy1} 
&\mathcal{E}(X^j)-\mathcal{E}(X^{j-1})+\frac{\tau}{2}\Vert\nabla w^j\Vert^2+\frac{\eps}{16}\Vert\nabla(X^j-X^{j-1})\Vert^2\nonumber\\
&\leq C\eps^{2\gamma-3}\Vert\Delta_j\overline{W}\Vert^2+C\eps^{4\gamma-1}\vert m(\Delta_jW)\vert^4+C\eps^{2\gamma+1}h^{-2-d}\sum_{l=1}^L\vert\Delta_j\beta_l\vert^2\\
&\quad+C\left(\eps^{2\gamma-1}\left(\Vert\mathfrak{f}(X^{j-1}\Vert^2+1\right)+\eps^{2\gamma-3}\Vert\mathfrak{f}(X^{j-1})\Vert^2\right)\vert m(\Delta_jW)\vert^2\nonumber\\
&\quad+C\eps^{4\gamma-1}h^{-5d}\sum_{l=1}^L\vert\Delta_j\beta_l\vert^4+C\eps^{2\gamma-3}h^{-2d}\Vert\mathfrak{f}(X^{j-1})\Vert^2\sum_{l=1}^L\vert\Delta_j\beta_l\vert^2\nonumber\\
&\quad+C\eps^{2\gamma-1}h^{-2d}\left(\Vert\mathfrak{f}(X^{j-1})\Vert^2+1\right)\sum_{l=1}^L\vert\Delta_j\beta_l\vert^2+A_2+B_{1,2}+B_3.\nonumber
\end{align}
Summing \eqref{energy1} over $j$,  taking the expectation,  recalling that $\mathbb{E}[A_2]=\mathbb{E}[B_{1,2}]=\mathbb{E}[B_3]=0$,  using \lemref{Lemmabasis}, the independence of $X^{i-1}$ and $\Delta_i\beta_l$,   yields
\begin{align*}
&\mathbb{E}[\mathcal{E}(X^j)]+\frac{\tau}{2}\sum_{i=1}^j\mathbb{E}[\Vert\nabla w^i\Vert^2]\nonumber\\
&\leq \mathbb{E}[\mathcal{E}(u^{\eps}_0)]+ C\eps^{2\gamma+1}h^{-2-2d}+C\eps^{4\gamma-1}h^{-6d}\tau+C\eps^{2\gamma-3}\sum_{i=1}^j\mathbb{E}[\Vert\Delta_i\overline{W}\Vert^2]\nonumber\\
&\quad+C\eps^{4\gamma-1}\sum_{i=1}^j\mathbb{E}[\vert m(\Delta_iW)\vert^4]+C\eps^{2\gamma-1}\sum_{i=1}^j\left(\mathbb{E}[\Vert\mathfrak{f}(X^{i-1}\Vert^2]+1\right)\mathbb{E}[\vert m(\Delta_iW)\vert^2]\nonumber\\
&\quad+C\eps^{2\gamma-3}\sum_{i=1}^j\mathbb{E}[\Vert\mathfrak{f}(X^{i-1})\Vert^2]\mathbb{E}[\vert m(\Delta_iW)\vert^2]+C\eps^{2\gamma-3}h^{-3d}\sum_{i=1}^j\mathbb{E}[\Vert\mathfrak{f}(X^{i-1})\Vert^2]\nonumber\\
&\quad+C\eps^{2\gamma-1}h^{-3d}\tau\sum_{i=1}^j\left(\mathbb{E}[\Vert\mathfrak{f}(X^{i-1})\Vert^2]+1\right).
\end{align*}
Using \lemref{LemmaBruit0}, it follows from the preceding estimate that
\begin{align}
\label{energy1a}
&\mathbb{E}[\mathcal{E}(X^j)]+\frac{\tau}{2}\sum_{i=1}^j\mathbb{E}[\Vert\nabla w^i\Vert^2]\nonumber\\
&\leq \mathcal{E}(u^{\eps}_0)+ C\eps^{2\gamma+1}h^{-2-2d}+C\eps^{4\gamma-1}h^{-6d}\tau+C\eps^{2\gamma-3}h^{-d}+C\eps^{4\gamma-1}h^{-2d}\tau+C\eps^{2\gamma-1}h^{-3d}\\
&\quad+C\left(\eps^{2\gamma-1}+ \eps^{2\gamma-3}+\eps^{2\gamma-3} h^{-3d}+\eps^{2\gamma-1}h^{-3d}\right)\tau\sum_{i=1}^j\mathbb{E}[\Vert\mathfrak{f}(X^{j-1})\Vert^2].\nonumber
\end{align}
Recalling that $\mathcal{E}(u)=\displaystyle\frac{\eps}{2}\Vert \nabla u\Vert^2_{\mathbb{L}^2}+\frac{1}{\eps}\Vert F(u)\Vert_{\mathbb{L}^1}$, $\Vert F(u)\Vert_{\mathbb{L}^1}=\frac{1}{4}\Vert\mathfrak{f}(u)\Vert^2_{\mathbb{L}^2}$, applying the discrete Gronwall lemma to \eqref{energy1a}, using the fact that $\eps>0$ and $h>0$ yields the desired result. 
\end{proof}

\begin{remark}
\label{remarkmomenta}
To control the exponential term on the right-hand side of the estimate in \lemref{momenta}
one may choose $h=\eps^{\eta}$ with $2\gamma-2-3\eta d\geq 0$, i.e., $0<\eta\leq \frac{2}{3d}\gamma-\frac{2}{3d}$, which ensures that
 \begin{align*}
 \eps^{2\gamma+1}h^{-2-2d}+\eps^{4\gamma-1}h^{-6d}+ \eps^{2\gamma-1}h^{-3d}\leq C\eps^{\beta}\quad \text{ for some}\; \beta\geq 0.
 \end{align*}
 One can also  check that if $0<\gamma<\frac{5}{2}$ then  $\eps^{2\gamma-3}h^{-d}\leq \eps^{-\alpha}$ for some $\alpha>0$ and if  $\gamma\geq \frac{5}{2}$ then  $ \eps^{2\gamma-3}h^{-d}\leq \eps^{\delta}$ for some $\delta\geq 0$. 
  In fact, for $h=\eps^{\eta}$, $\eps^{2\gamma-3}h^{-d}=\eps^{2\gamma-3-\eta d}$ and if $0<\eta\leq \frac{2}{3d}\gamma-\frac{2}{3d}$, $\gamma\geq\frac{5}{2}$, then $2\gamma-3-\eta d\geq 0$. 
Furthermore, if $0<\eta\leq \frac{1}{d}\gamma-\frac{3}{2d}$ then $ \eps^{2\gamma+1}h^{-2-2d}+\eps^{4\gamma-1}h^{-6d}+\eps^{2\gamma-3}h^{-d}\leq C\eps^{\beta}$ for some $\beta\geq 0$. 
 
Hence, under the addition condition $h=\eps^{\eta}$ with $0<\eta\leq \frac{2}{3d}\gamma-\frac{3}{2d}$, we deduce from \lemref{moment} by the above arguments that
there exists $\alpha, \beta, \delta>0$ such that
 \begin{align*}
\max\limits_{1\leq j\leq J}\revd{\mathbb{E}}[\mathcal{E}(X^j)]+\displaystyle\frac{\tau}{2}\sum\limits_{j=1}^J\mathbb{E}[\Vert \nabla w^j\Vert^2]\leq \left\{\begin{array}{ll}
C(\mathcal{E}(u^{\eps}_0)+\eps^{\beta}+\eps^{-\alpha})\;&\text{if} \; \gamma<\frac{5}{2},\\
C(\mathcal{E}(u^{\eps}_0)+\eps^{\beta}+\eps^{\delta})\;&\text{if} \; \gamma\geq \frac{5}{2}.
\end{array}\right.
\end{align*}
Note, that under the above condition, the estimate in  \lemref{moment} may still depend on polynomially on $1/\eps$.
This is analogous to \cite[Lemma 3.2]{Banas19}, where the condition $\gamma>\frac{3}{2}$ is imposed to obtained an $\eps$-independent estimate. 
In the present case, to obtain an $\eps$ independent estimate requires slightly stronger condition $\gamma\geq \frac{5}{2}$. 
\end{remark}

\begin{lemma}
\label{moment}
Let the assumptions in \lemref{momenta} be fulfilled. Let $\gamma\geq\frac{5}{2}$ and let the mesh-size be such that $h=\eps^{\eta}$, with $0<\eta\leq \frac{2}{3d}\gamma-\frac{3}{2d}$, then there exists $\alpha, \beta, \delta>0$ such that
\begin{align*}
\mathbb{E}\left[\max\limits_{1\leq j\leq J}\mathcal{E}(X^j)\right]+\frac{\tau}{2}\sum_{j=1}^J\mathbb{E}[\Vert\nabla w^j\Vert^2]\leq 
C\left(\mathcal{E}(u^{\eps}_0)+\eps^{\beta}+\eps^{\delta}\right).
\end{align*}
\end{lemma}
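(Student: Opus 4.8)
The plan is to upgrade the estimate of \lemref{momenta}, where the maximum sits \emph{outside} the expectation, to one with the maximum \emph{inside}, by controlling the supremum of the martingale part of the energy balance through a maximal inequality rather than by the identity $\mathbb{E}[\cdot]=0$. First I would start from the one-step inequality \eqref{energy1} derived in the proof of \lemref{momenta}, sum it over $i=1,\dots,j$ and telescope the energy, obtaining
\[
\mathcal{E}(X^j)+\tfrac{\tau}{2}\textstyle\sum_{i=1}^j\|\nabla w^i\|^2 \le \mathcal{E}(u_0^\eps)+\sum_{i=1}^j R_i + M_j,
\]
where $\sum_i R_i$ collects the nonnegative, purely noise/energy-dependent terms on the right-hand side of \eqref{energy1}, and $M_j:=\sum_{i=1}^j(A_2^i+B_{1,2}^i+B_3^i)$ is the sum of the three mean-zero increments that in \lemref{momenta} were discarded using $\mathbb{E}[A_2]=\mathbb{E}[B_{1,2}]=\mathbb{E}[B_3]=0$. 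Taking $\max_{1\le j\le J}$ and then $\mathbb{E}$ (using that the dissipation sum is monotone in $j$), the problem reduces to bounding $\mathbb{E}\big[\sum_{i=1}^J R_i\big]$, which is \emph{exactly} the quantity already controlled in \lemref{momenta}, and the genuinely new term $\mathbb{E}\big[\max_{1\le j\le J}|M_j|\big]$.

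The crux is that a naive maximal bound for $M_j$ fails, because the quadratic variation of $B_3^i=\eps^{\gamma-1}(f(X^{i-1}),\Delta_i W)$ involves $\|f(X^{i-1})\|_{\mathbb{L}^2}^2$, which through $\mathbb{H}^1\hookrightarrow\mathbb{L}^6$ is only controllable by $\mathcal{E}(X^{i-1})^3$ and would require a \emph{second} moment of the energy that \lemref{momenta} does not provide. To circumvent this I would test the second equation of \eqref{scheme1b} at index $i-1$ with $\psi=\Delta_i W\in\mathbb{V}_h\subset\mathbb{H}^1$, which gives the pathwise identity $(f(X^{i-1}),\Delta_i W)=\eps(w^{i-1},\Delta_i W)-\eps^2(\nabla X^{i-1},\nabla\Delta_i W)$ and hence the key cancellation
\[
B_{1,2}^i+B_3^i=\eps^{\gamma}(w^{i-1},\Delta_i W).
\]
Thus $M_j=\sum_{i=1}^j\big(A_2^i+\eps^{\gamma}(w^{i-1},\Delta_i W)\big)$ is a martingale whose increments depend only on $(f(X^{i-1}),1)$ and $\|w^{i-1}\|$ rather than on $\|f(X^{i-1})\|_{\mathbb{L}^2}$.

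I would then apply the discrete Burkholder--Davis--Gundy inequality in its $L^1$ form, $\mathbb{E}[\max_{1\le j\le J}|M_j|]\le C\,\mathbb{E}[\langle M\rangle_J^{1/2}]$, with the predictable quadratic variation $\langle M\rangle_J=\sum_{i=1}^J\mathbb{E}[(A_2^i+\eps^\gamma(w^{i-1},\Delta_i W))^2\mid\mathcal{F}_{t_{i-1}}]$; using Doob's $L^2$ inequality here would instead reintroduce a second energy moment and must be avoided. The conditional variances follow from the independence of $X^{i-1},w^{i-1}$ from $\Delta_i W$, the bound $\mathbb{E}[m(\Delta_i W)^2]\le C\tau$ of \lemref{LemmaBruit0}, and the finite-overlap/scaling properties in \lemref{Lemmabasis}, which yield $\sum_l |(v,\phi_l)|^2/|(\phi_l,1)|\le C\|v\|^2$. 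Combining this with $(f(X^{i-1}),1)^2\le C\mathcal{E}(X^{i-1})^2$, $m(w^{i-1})^2\le C\eps^{-2}(f(X^{i-1}),1)^2$ and $\|w^{i-1}\|^2\le C(\|\nabla w^{i-1}\|^2+m(w^{i-1})^2)$ gives
\[
\langle M\rangle_J\le C\eps^{2\gamma}\tau\textstyle\sum_{i=1}^J\|\nabla w^{i-1}\|^2+C\eps^{2\gamma-2}\tau\sum_{i=1}^J\mathcal{E}(X^{i-1})^2.
\]
Taking the square root, bounding $\big(\tau\sum_i\mathcal{E}(X^{i-1})^2\big)^{1/2}\le(\max_i\mathcal{E}(X^{i-1}))^{1/2}(\tau\sum_i\mathcal{E}(X^{i-1}))^{1/2}$ and applying Young's inequality lets me absorb $\tfrac14\mathbb{E}[\max_{1\le j\le J}\mathcal{E}(X^j)]$ into the left-hand side, while the leftover factor $\eps^{2\gamma-2}\tau\sum_i\mathbb{E}[\mathcal{E}(X^{i-1})]\le C\eps^{2\gamma-2}T\max_i\mathbb{E}[\mathcal{E}(X^i)]$ and the dissipation term $\eps^{2\gamma}\tau\sum_i\mathbb{E}\|\nabla w^{i-1}\|^2$ are both bounded by \lemref{momenta}.

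Finally, $\mathbb{E}\big[\sum_i R_i\big]$ is estimated exactly as in the proof of \lemref{momenta}, via \lemref{LemmaBruit0} and $\mathbb{E}\|\mathfrak{f}(X^{i-1})\|^2\le C\eps\,\mathbb{E}[\mathcal{E}(X^{i-1})]$, and the choice $h=\eps^\eta$ with $0<\eta\le\frac{2}{3d}\gamma-\frac{3}{2d}$ together with $\gamma\ge\frac52$ collapses all the resulting $\eps$- and $h$-powers to the form $C(\mathcal{E}(u_0^\eps)+\eps^\beta+\eps^\delta)$, precisely as in \rmref{remarkmomenta}. The hard part will be exactly the treatment of the cubic increment $B_3$: the identity $B_{1,2}^i+B_3^i=\eps^\gamma(w^{i-1},\Delta_i W)$ is what makes the martingale's quadratic variation quadratic (rather than cubic) in the energy, so that the maximal term can be absorbed using only the first-moment bound of \lemref{momenta}; a minor bookkeeping point is the $i=1$ increment, for which $w^0$ is defined directly from $u_0^\eps$ through the second equation of the scheme.
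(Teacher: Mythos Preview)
Your argument is correct and complete, but it takes a genuinely different route from the paper's (very brief) proof. The paper simply says to apply the discrete Burkholder--Davis--Gundy inequality directly to each of the three martingale increments $A_2^i$, $B_{1,2}^i$, $B_3^i$. For $B_3^i=\eps^{\gamma-1}(f(X^{i-1}),\Delta_iW)$, the intended bound is the $L^1$--$L^\infty$ pairing $|(f(X^{i-1}),\phi_l)|\le \|f(X^{i-1})\|_{\mathbb{L}^1}\le \|X^{i-1}\|\,\|\mathfrak f(X^{i-1})\|\le C\mathcal E(X^{i-1})$ (this is exactly the estimate used in \eqref{Ville2}), which makes the quadratic variation only \emph{quadratic} in the energy at the price of an extra factor $h^{-2d}$; the split $(\tau\sum_i\mathcal E^2)^{1/2}\le(\max_i\mathcal E)^{1/2}(\tau\sum_i\mathcal E)^{1/2}$ plus Young's inequality then absorbs $\tfrac14\mathbb E[\max_i\mathcal E(X^i)]$ and leaves $C\eps^{2\gamma-2}h^{-2d}\tau\sum_i\mathbb E[\mathcal E(X^{i-1})]$, which is controlled by \lemref{momenta}. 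So your diagnosis that ``a naive maximal bound fails'' is not quite accurate: it only fails if one insists on the $L^2$ finite-overlap bound for $f(X^{i-1})$; the paper's $L^1$ route avoids the cubic energy dependence without any rewriting.

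Your alternative --- the identity $B_{1,2}^i+B_3^i=\eps^\gamma(w^{i-1},\Delta_iW)$, obtained by testing the second equation of \eqref{scheme1b} at step $i-1$ with $\psi=\Delta_iW$ --- is elegant and correct. It replaces the nonlinearity by the chemical potential, whose quadratic variation is controlled by $\|w^{i-1}\|^2\le C\|\nabla w^{i-1}\|^2+C\eps^{-2}(f(X^{i-1}),1)^2$, and both pieces feed back into \lemref{momenta}. The advantage is that you never pick up the $h^{-2d}$ from the crude pairing, so the resulting $\eps$-powers are slightly cleaner. The trade-off is the extra bookkeeping you noted: $w^0$ is not defined by the scheme, and to use your identity at $i=1$ you must either introduce $w^0:=-\eps\Delta u_0^\eps+\eps^{-1}f(u_0^\eps)$ (then $\|\nabla w^0\|$ needs $u_0^\eps\in\mathbb H^3$, which is \emph{not} assumed in \lemref{moment}), or --- more in line with the hypotheses --- bound $|B_{1,2}^1|+|B_3^1|$ directly, which is a single term and easily controlled by $\mathcal E(u_0^\eps)$ and \lemref{LemmaBruit0}. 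Either way the proof closes.
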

\begin{proof}
 The proof goes along the same lines as that of \lemref{momenta} by summing \eqref{energy1} and taking the maximum before applying the expectation. Additional terms involving the noise  can be handled by using the discrete Burkholder-Davis-Gundy inequality \cite[Lemma 3.3]{Banas19}. 

\end{proof}

\begin{lemma}
\label{MomentLemma}
Let the assumptions of \lemref{momenta} be fulfilled. Then it holds that
\begin{align}
\label{Allenergy}
\max\limits_{1\leq j\leq J}\mathbb{E}[\mathcal{E}(X^{j})^2]\leq C\left((\mathcal{E}(u^{\eps}_0))^2+\mathcal{N}(\eps,\gamma,h,\tau,d)\right)\exp\left(CT\mathcal{M}(\eps,\gamma,h,\tau,d)\right), 
\end{align}
where 
\begin{align*}
\mathcal{N}(\eps,\gamma,h,\tau,d):=&\eps^{2\gamma-3}h^{-2-3d}+\eps^{4\gamma-1}h^{-2d}\tau+\eps^{2\gamma-2}+\eps^{2\gamma-1}h^{-2-2d}+\eps^{4\gamma-1}h^{-6d}\tau\nonumber\\
&+\eps^{4\gamma-6}h^{-4d}\tau+\eps^{8\gamma-1}h^{-4d}\tau^3+\eps^{4\gamma-2}h^{-2d}\tau+ \eps^{4\gamma+2}h^{-6d}\tau\nonumber\\
&+\eps^{2\gamma+2}h^{-2-2d}+\eps^{8\gamma-2}h^{-12d}\tau^3+\eps^{2\gamma-2}h^{-3d}+\eps^{4\gamma-2}h^{-6d}\tau,
\end{align*}
and 
\begin{align*}
\mathcal{M}(\eps,\gamma,h,\tau,d):=\eps^{4\gamma-2}h^{-2d}\tau+\eps^{2\gamma-2}+\eps^{2\gamma-3}h^{-3d}+\eps^{4\gamma-6}h^{-6d}\tau.
\end{align*}
%{\color{red} If in addition  $\gamma\geq \frac{5}{2}$ and there exists $\eta$ such that $h=\eps^{\eta}$, with }
If in addition  $\gamma\geq \frac{5}{2}$ and $h=\eps^{\eta}$ for
\begin{align}
\label{Requirementmeshsize}
0<\eta\leq\min\left\{\frac{2\gamma-3}{2+3d}, \frac{2\gamma-6}{3d}\right\}, 
\end{align} 
 then it holds that
 \begin{itemize}
 \item[i)]  $\max\limits_{1\leq j\leq J}\mathbb{E}[\mathcal{E}(X^{j})^2]\leq C((\mathcal{E}(u^{\eps}_0))^2+1)$, 
 \item[ii)] $\mathbb{E}[\max\limits_{1\leq j\leq J}\mathcal{E}(X^{j})^2]\leq C((\mathcal{E}(u^{\eps}_0))^2+1)$.
 \end{itemize}
\end{lemma}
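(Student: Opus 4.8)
The plan is to square the one-step energy inequality \eqref{energy1} established in the proof of \lemref{momenta}, exploit the martingale structure of the noise increments to kill the dangerous cross terms, and then run a discrete Gronwall argument to obtain \eqref{Allenergy}; the refined bounds (i)--(ii) follow by inserting $h=\eps^{\eta}$ and, for (ii), by a maximal inequality.

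First I would rewrite \eqref{energy1} (dropping the nonnegative $\Vert\mathfrak f(X^j)-\mathfrak f(X^{j-1})\Vert^2$ and gradient terms on the left) in the abbreviated form $\mathcal{E}(X^j)\leq\mathcal{E}(X^{j-1})+D_j+M_j$, where $M_j:=A_2+B_{1,2}+B_3$ collects the three increments that are conditionally centered given $\mathcal{F}_{t_{j-1}}$ (each is linear in the increment $\Delta_j\beta_l$, which is independent of $\mathcal{F}_{t_{j-1}}$, so $\mathbb{E}[M_j\mid\mathcal{F}_{t_{j-1}}]=0$), and $D_j$ collects the remaining drift terms. Since $\mathcal{E}(X^j)\geq 0$, the pathwise inequality forces $\mathcal{E}(X^{j-1})+D_j+M_j\geq\mathcal{E}(X^j)\geq 0$, so I may square both sides. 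Expanding and taking expectations, the cross term $\mathbb{E}[\mathcal{E}(X^{j-1})M_j]$ vanishes by the tower property and the $\mathcal{F}_{t_{j-1}}$-measurability of $\mathcal{E}(X^{j-1})$, leaving
\begin{align*}
\mathbb{E}[\mathcal{E}(X^j)^2]\leq\mathbb{E}[\mathcal{E}(X^{j-1})^2]+2\mathbb{E}[\mathcal{E}(X^{j-1})D_j]+\mathbb{E}[D_j^2]+2\mathbb{E}[D_jM_j]+\mathbb{E}[M_j^2].
\end{align*}

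To organize the right-hand side I would use, as in \lemref{momenta}, the identity $\Vert F(u)\Vert_{\mathbb{L}^1}=\tfrac14\Vert\mathfrak f(u)\Vert^2$ together with \eqref{energy0} and $\eps<1$ to replace every occurrence of $\Vert\mathfrak f(X^{j-1})\Vert^2$ by $4\mathcal{E}(X^{j-1})$ (hence $\Vert\mathfrak f(X^{j-1})\Vert^4$ by $16\mathcal{E}(X^{j-1})^2$). As all noise increments in $D_j,M_j$ are independent of $\mathcal{F}_{t_{j-1}}$, the expectations factorize, and the required second moments of $\Vert\Delta_j\overline{W}\Vert^2$, $|m(\Delta_jW)|^2$, $\sum_l|\Delta_j\beta_l|^2$ and $\sum_l|\Delta_j\beta_l|^4$ are Gaussian computations carried out exactly as in \lemref{LemmaBruit0} and \lemref{Lemmabasis} (e.g. $\mathbb{E}[(\sum_l|\Delta_j\beta_l|^2)^2]\leq Ch^{-2d}\tau^2$ and $\mathbb{E}[|m(\Delta_jW)|^4]\leq Ch^{-2d}\tau^2$), while odd-moment contributions in $\mathbb{E}[D_jM_j]$ vanish by symmetry. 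Routing every solution-dependent term into a factor $\mathbb{E}[\mathcal{E}(X^{j-1})^2]$ and the purely stochastic terms into sources (the pure-noise cross terms $\mathbb{E}[\mathcal{E}(X^{j-1})]\,\mathbb{E}[D_j^{(a)}]$ being split by Young's inequality into a Gronwall part and a source) yields a per-step recursion $\mathbb{E}[\mathcal{E}(X^j)^2]\leq(1+C\tau\mathcal{M})\,\mathbb{E}[\mathcal{E}(X^{j-1})^2]+C\tau\mathcal{R}_j$ with $\sum_{j=1}^J\tau\mathcal{R}_j\leq C\mathcal{N}$. The discrete Gronwall lemma then gives \eqref{Allenergy}, the factor $\exp(CT\mathcal{M})$ arising from $\prod_j(1+C\tau\mathcal{M})\leq\exp(C\tau J\mathcal{M})=\exp(CT\mathcal{M})$. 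I expect the main obstacle to be precisely this bookkeeping: one must verify that, after the substitution $\Vert\mathfrak f(X^{j-1})\Vert^2\leq 4\mathcal{E}(X^{j-1})$ and factorization, each solution-dependent contribution is genuinely proportional to $\mathbb{E}[\mathcal{E}(X^{j-1})^2]$ (so that it feeds the rate $\mathcal{M}$ and not an uncontrolled source), while the remaining stochastic terms assemble into exactly the powers recorded in $\mathcal{N}$; every such term follows the pattern already displayed for $A_{1,1},A_{1,2},B_{1,1},B_{2,1},B_{2,2}$ in \lemref{momenta}.

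For (i), I would set $h=\eps^{\eta}$ and use $\tau\leq\tfrac12\eps^3$: a termwise inspection shows that \eqref{Requirementmeshsize} together with $\gamma\geq\tfrac52$ makes the $\eps$-exponent of every term of $\mathcal{N}$ nonnegative and renders $T\mathcal{M}$ bounded, so $\exp(CT\mathcal{M})\leq C$ and \eqref{Allenergy} collapses to $\max_j\mathbb{E}[\mathcal{E}(X^j)^2]\leq C((\mathcal{E}(u^{\eps}_0))^2+1)$; the two fractions in \eqref{Requirementmeshsize} are the binding constraints, e.g. $\tfrac{2\gamma-3}{2+3d}$ comes from the term $\eps^{2\gamma-3}h^{-2-3d}$. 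For (ii) I would not take expectations pointwise in $j$: instead I sum \eqref{energy1} over $j$, take the pathwise maximum over $1\leq j\leq J$, square and then take expectations, bounding the drift sum $(\sum_i|D_i|)^2$ by Cauchy--Schwarz in the discrete time measure and the martingale maximum $\max_j|\sum_{i\leq j}M_i|^2$ by the discrete Burkholder--Davis--Gundy inequality (as in the proof of \lemref{moment}), which controls it by $C\sum_i\mathbb{E}[M_i^2]$. Closing both contributions with the pointwise bound (i) and \lemref{momenta} then yields $\mathbb{E}[\max_j\mathcal{E}(X^j)^2]\leq C((\mathcal{E}(u^{\eps}_0))^2+1)$.
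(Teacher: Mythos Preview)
Your approach is correct and close in spirit to the paper's, but the execution differs in one key step. The paper does not square \eqref{energy1}; instead it \emph{multiplies} \eqref{energy1} by $\mathcal{E}(X^{j})$ and uses the identity $(a-b)a=\tfrac12[a^2-b^2+(a-b)^2]$, which produces the positive term $\tfrac12|\mathcal{E}(X^j)-\mathcal{E}(X^{j-1})|^2$ on the left. Because the right-hand side then carries the ``wrong'' factor $\mathcal{E}(X^j)$ (destroying the martingale property of the noise terms), the paper must split $\mathcal{E}(X^j)=\mathcal{E}(X^{j-1})+(\mathcal{E}(X^j)-\mathcal{E}(X^{j-1}))$ in each of the products $\mathcal{E}(X^j)A_2$, $\mathcal{E}(X^j)B_{1,2}$, $\mathcal{E}(X^j)B_3$ and in $\widetilde{A}_0$, and then absorb the increment piece via Young's inequality into that positive left-hand term (this is the content of \eqref{Ville1}--\eqref{Ville3} and \eqref{EstimateA0a}--\eqref{EstimateA0b}). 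After this manoeuvre the surviving terms are exactly of the form $\mathcal{E}(X^{j-1})\cdot(\text{noise})$ or $\mathcal{E}(X^{j-1})^2\cdot(\text{noise})$, and the bookkeeping leading to $\mathcal{N}$ and $\mathcal{M}$ proceeds as you describe.

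Your route---square the inequality $\mathcal{E}(X^j)\le \mathcal{E}(X^{j-1})+D_j+M_j$ directly---sidesteps the split-and-absorb argument entirely: the cross term $2\mathcal{E}(X^{j-1})M_j$ is already a martingale increment, and you pay instead with the quadratic remainder $(D_j+M_j)^2$, whose expectation you correctly reduce (via independence, vanishing odd Gaussian moments for $\mathbb{E}[D_jM_j]$, and the estimates of \lemref{LemmaBruit0}) to the same collection of sources. This is a legitimate and slightly more elementary alternative; it loses the explicit control of $\sum_j|\mathcal{E}(X^j)-\mathcal{E}(X^{j-1})|^2$ that the paper obtains for free, but that quantity is not used later. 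For part (ii) both you and the paper invoke the discrete Burkholder--Davis--Gundy inequality together with the already established pointwise bound (i); the paper only sketches this step, and your outline matches that sketch.
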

\begin{proof}
\revl{We multiply \eqref{energy1} by $\mathcal{E}(X^{j})$ and obtain using the identity $(a-b)a=\frac{1}{2}[a^2-b^2+(a-b)^2]$ on the left-hand side of the resulting inequality that
\begin{align}\label{energy1_hm}
& \frac{1}{2}\left[\vert\mathcal{E}(X^{j})\vert^2-\vert\mathcal{E}(X^{j-1})\vert^2+\vert\mathcal{E}(X^j)-\mathcal{E}(X^{j-1})\vert^2\right]
\\ \nonumber
& \qquad \qquad  \leq
\widetilde{A}_0 +
\mathcal{E}(X^{j}) A_2+ \mathcal{E}(X^{j}) B_{1,2}+ \mathcal{E}(X^{j})B_3,
\end{align}
where
\begin{align*}
\widetilde{A}_0 := & \mathcal{E}(X^{j})\Bigg(C\eps^{2\gamma-3}\Vert\Delta_j\overline{W}\Vert^2+C\eps^{4\gamma-1}\vert m(\Delta_jW)\vert^4+C\eps^{2\gamma+1}h^{-2-d}\sum_{l=1}^L\vert\Delta_j\beta_l\vert^2\\
&\quad+C\left(\eps^{2\gamma-1}\left(\Vert\mathfrak{f}(X^{j-1}\Vert^2+1\right)+\eps^{2\gamma-3}\Vert\mathfrak{f}(X^{j-1})\Vert^2\right)\vert m(\Delta_jW)\vert^2\nonumber\\
&\quad+C\eps^{4\gamma-1}h^{-5d}\sum_{l=1}^L\vert\Delta_j\beta_l\vert^4+C\eps^{2\gamma-3}h^{-2d}\Vert\mathfrak{f}(X^{j-1})\Vert^2\sum_{l=1}^L\vert\Delta_j\beta_l\vert^2\nonumber\\
&\quad+C\eps^{2\gamma-1}h^{-2d}\left(\Vert\mathfrak{f}(X^{j-1})\Vert^2+1\right)\sum_{l=1}^L\vert\Delta_j\beta_l\vert^2\Bigg).
\end{align*}
}
\revd{
We estimate the four resulting terms on the right-hand side of \eqref{energy1_hm} separately. We start with the estimate of $\widetilde{A}_0$.
We can rewrite $\widetilde{A}_0$ as follows
\begin{align*}
\widetilde{A}_0=&C\left(\mathcal{E}(X^{j})-\mathcal{E}(X^{j-1})\right)\Bigg(\eps^{2\gamma-3}\Vert\Delta_j\overline{W}\Vert^2+\eps^{4\gamma-1}\vert m(\Delta_jW)\vert^4+\eps^{2\gamma+1}h^{-2-d}\sum_{l=1}^L\vert\Delta_j\beta_l\vert^2\\
&\quad+\left(\eps^{2\gamma-1}\left(\Vert\mathfrak{f}(X^{j-1}\Vert^2+1\right)+\eps^{2\gamma-3}\Vert\mathfrak{f}(X^{j-1})\Vert^2\right)\vert m(\Delta_jW)\vert^2\nonumber\\
&\quad+\eps^{4\gamma-1}h^{-5d}\sum_{l=1}^L\vert\Delta_j\beta_l\vert^4+\eps^{2\gamma-3}h^{-2d}\Vert\mathfrak{f}(X^{j-1})\Vert^2\sum_{l=1}^L\vert\Delta_j\beta_l\vert^2\nonumber\\
&\quad+\eps^{2\gamma-1}h^{-2d}\left(\Vert\mathfrak{f}(X^{j-1})\Vert^2+1\right)\sum_{l=1}^L\vert\Delta_j\beta_l\vert^2\Bigg)+\widetilde{A}_{0,1},
\end{align*}
where 
\begin{align*}
\widetilde{A}_{0,1}:=& C\mathcal{E}(X^{j-1})\Bigg(\eps^{2\gamma-3}\Vert\Delta_j\overline{W}\Vert^2+\eps^{4\gamma-1}\vert m(\Delta_jW)\vert^4+\eps^{2\gamma+1}h^{-2-d}\sum_{l=1}^L\vert\Delta_j\beta_l\vert^2\\
&\quad+\left(\eps^{2\gamma-1}\left(\Vert\mathfrak{f}(X^{j-1}\Vert^2+1\right)+\eps^{2\gamma-3}\Vert\mathfrak{f}(X^{j-1})\Vert^2\right)\vert m(\Delta_jW)\vert^2\nonumber\\
&\quad+\eps^{4\gamma-1}h^{-5d}\sum_{l=1}^L\vert\Delta_j\beta_l\vert^4+\eps^{2\gamma-3}h^{-2d}\Vert\mathfrak{f}(X^{j-1})\Vert^2\sum_{l=1}^L\vert\Delta_j\beta_l\vert^2\nonumber\\
&\quad+\eps^{2\gamma-1}h^{-2d}\left(\Vert\mathfrak{f}(X^{j-1})\Vert^2+1\right)\sum_{l=1}^L\vert\Delta_j\beta_l\vert^2\Bigg).
\end{align*}
Using Young's inequality, we estimate $\widetilde{A}_0$ as follows
\begin{align}
\label{EstimateA0a}
\widetilde{A}_0 \leq \frac{1}{32}\vert \mathcal{E}(X^{j})-\mathcal{E}(X^{j-1})\vert^2+\widetilde{A}_{0,1}+\widetilde{A}_{0,2},
\end{align}
where 
\begin{align*}
\widetilde{A}_{0,2}&=C\Bigg(\eps^{2\gamma-3}\Vert\Delta_j\overline{W}\Vert^2+\eps^{4\gamma-1}\vert m(\Delta_jW)\vert^4+\eps^{2\gamma+1}h^{-2-d}\sum_{l=1}^L\vert\Delta_j\beta_l\vert^2\\
&\quad+\left(\eps^{2\gamma-1}\left(\Vert\mathfrak{f}(X^{j-1}\Vert^2+1\right)+\eps^{2\gamma-3}\Vert\mathfrak{f}(X^{j-1})\Vert^2\right)\vert m(\Delta_jW)\vert^2\nonumber\\
&\quad+\eps^{4\gamma-1}h^{-5d}\sum_{l=1}^L\vert\Delta_j\beta_l\vert^4+\eps^{2\gamma-3}h^{-2d}\Vert\mathfrak{f}(X^{j-1})\Vert^2\sum_{l=1}^L\vert\Delta_j\beta_l\vert^2\nonumber\\
&\quad+\eps^{2\gamma-1}h^{-2d}\left(\Vert\mathfrak{f}(X^{j-1})\Vert^2+1\right)\sum_{l=1}^L\vert\Delta_j\beta_l\vert^2\Bigg)^2.
\end{align*}
Note that the following estimate holds
\begin{align}
\label{Lowerenergy}
\displaystyle\eps\Vert \nabla X^j\Vert^2+\frac{1}{\eps}\Vert \mathfrak{f}(X^j)\Vert^2\leq C\mathcal{E}(X^j)\quad  j=1,\cdots, J.
\end{align}
Using \eqref{Lowerenergy}, considering only the leading factors of $\eps^{-1}$, using Young's inequality and \lemref{Lemmabasis}, we estimate $\widetilde{A}_{0,2}$ as follows
\begin{align}
\label{EstiA02}
\widetilde{A}_{0,2}&\leq C\Bigg(\eps^{2\gamma-3}\Vert\Delta_j\overline{W}\Vert^2+\eps^{4\gamma-1}\vert m(\Delta_jW)\vert^4+\eps^{2\gamma+1}h^{-2-d}\sum_{l=1}^L\vert\Delta_j\beta_l\vert^2\nonumber\\
&\quad+\eps^{2\gamma-1}\vert m(\Delta_jW)\vert^2+\eps^{2\gamma-2}\mathcal{E}(X^{j-1})\vert m(\Delta_jW)\vert^2+\eps^{4\gamma-1}h^{-5d}\sum_{l=1}^L\vert\Delta_j\beta_l\vert^4\nonumber\\
&\quad+\eps^{2\gamma-1}h^{-2d}\sum_{l=1}^L\vert\Delta_j\beta_l\vert^2+\eps^{2\gamma-2}h^{-2d}\mathcal{E}(X^{j-1})\sum_{l=1}^L\vert\Delta_j\beta_l\vert^2\Bigg)^2\nonumber\\
&\leq C\Bigg(\eps^{4\gamma-6}\Vert\Delta_j\overline{W}\Vert^4+\eps^{8\gamma-2}\vert m(\Delta_jW)\vert^8+\eps^{4\gamma+2}h^{-4-3d}\sum_{l=1}^L\vert\Delta_j\beta_l\vert^4\\
&\quad+\eps^{4\gamma-2}\vert m(\Delta_jW)\vert^4+\eps^{4\gamma-4}\vert\mathcal{E}(X^{j-1})\vert^2\vert m(\Delta_jW)\vert^4+\eps^{8\gamma-2}h^{-11d}\sum_{l=1}^L\vert\Delta_j\beta_l\vert^8\nonumber\\
&\quad+\eps^{4\gamma-2}h^{-5d}\sum_{l=1}^L\vert\Delta_j\beta_l\vert^4+\eps^{4\gamma-4}h^{-5d}\vert\mathcal{E}(X^{j-1})\vert^2\sum_{l=1}^L\vert\Delta_j\beta_l\vert^4\Bigg).\nonumber
\end{align}
Using \eqref{Lowerenergy} and considering only the leading factors of $\eps^{-1}$ and $h^{-1}$, we estimate $\widetilde{A}_{0,1}$ as follows
\begin{align}
\label{EstiA01}
\widetilde{A}_{0,1} &\leq C\mathcal{E}(X^{j-1})\Bigg(\eps^{2\gamma-3}\Vert\Delta_j\overline{W}\Vert^2+\eps^{4\gamma-1}\vert m(\Delta_jW)\vert^4+\eps^{2\gamma+1}h^{-2-d}\sum_{l=1}^L\vert\Delta_j\beta_l\vert^2\nonumber\\
&\quad+\eps^{2\gamma-1}h^{-2d}\sum_{l=1}^L\vert\Delta_j\beta_l\vert^2+\eps^{2\gamma-1}\vert m(\Delta_jW)\vert^2+\eps^{2\gamma-2}\mathcal{E}(X^{j-1})\vert m(\Delta_jW)\vert^2\nonumber\\
&\quad+\eps^{4\gamma-1}h^{-5d}\sum_{l=1}^L\vert\Delta_j\beta_l\vert^4+\eps^{2\gamma-2}h^{-2d}\mathcal{E}(X^{j-1})\sum_{l=1}^L\vert\Delta_j\beta_l\vert^2\Bigg)\nonumber\\
&\leq  C\Bigg(\eps^{2\gamma-3}\mathcal{E}(X^{j-1})\Vert\Delta_j\overline{W}\Vert^2+\eps^{4\gamma-1}\mathcal{E}(X^{j-1})\vert m(\Delta_jW)\vert^4\\
&\quad+\eps^{2\gamma-1}h^{-2-d}\mathcal{E}(X^{j-1})\sum_{l=1}^L\vert\Delta_j\beta_l\vert^2+\eps^{2\gamma-1}\mathcal{E}(X^{j-1})\vert m(\Delta_jW)\vert^2\nonumber\\
&\quad+\eps^{2\gamma-2}\vert\mathcal{E}(X^{j-1})\vert^2\vert m(\Delta_jW)\vert^2\nonumber\\
&\quad+\eps^{4\gamma-1}h^{-5d}\mathcal{E}(X^{j-1})\sum_{l=1}^L\vert\Delta_j\beta_l\vert^4+\eps^{2\gamma-2}h^{-2d}\vert\mathcal{E}(X^{j-1})\vert^2\sum_{l=1}^L\vert\Delta_j\beta_l\vert^2\Bigg).\nonumber
\end{align}
Substituting \eqref{EstiA01} and \eqref{EstiA02} in \eqref{EstimateA0a}, we obtain
\begin{align}
\label{EstimateA0b}
\widetilde{A}_{0} &\leq \frac{1}{32}\vert \mathcal{E}(X^{j})-\mathcal{E}(X^{j-1})\vert^2+ C\Bigg(\eps^{2\gamma-3}\mathcal{E}(X^{j-1})\Vert\Delta_j\overline{W}\Vert^2+\eps^{4\gamma-1}\mathcal{E}(X^{j-1})\vert m(\Delta_jW)\vert^4\nonumber\\
&\quad+\eps^{2\gamma-1}h^{-2-d}\mathcal{E}(X^{j-1})\sum_{l=1}^L\vert\Delta_j\beta_l\vert^2+\eps^{2\gamma-1}\mathcal{E}(X^{j-1})\vert m(\Delta_jW)\vert^2\nonumber\\
&\quad+\eps^{2\gamma-2}\vert\mathcal{E}(X^{j-1})\vert^2\vert m(\Delta_jW)\vert^2\nonumber\\
&\quad+\eps^{4\gamma-1}h^{-5d}\mathcal{E}(X^{j-1})\sum_{l=1}^L\vert\Delta_j\beta_l\vert^4+\eps^{2\gamma-2}h^{-2d}\vert\mathcal{E}(X^{j-1})\vert^2\sum_{l=1}^L\vert\Delta_j\beta_l\vert^2\Bigg)\nonumber\\
&\quad +C\Bigg(\eps^{4\gamma-6}\Vert\Delta_j\overline{W}\Vert^4+\eps^{8\gamma-2}\vert m(\Delta_jW)\vert^8+\eps^{4\gamma+2}h^{-4-3d}\sum_{l=1}^L\vert\Delta_j\beta_l\vert^4\\
&\quad+\eps^{4\gamma-2}\vert m(\Delta_jW)\vert^4+\eps^{4\gamma-4}\vert\mathcal{E}(X^{j-1})\vert^2\vert m(\Delta_jW)\vert^4+\eps^{8\gamma-2}h^{-11d}\sum_{l=1}^L\vert\Delta_j\beta_l\vert^8\nonumber\\
&\quad+\eps^{4\gamma-2}h^{-5d}\sum_{l=1}^L\vert\Delta_j\beta_l\vert^4+\eps^{4\gamma-4}h^{-5d}\vert\mathcal{E}(X^{j-1})\vert^2\sum_{l=1}^L\vert\Delta_j\beta_l\vert^4\Bigg)\nonumber.
\end{align}

}
Now we  estimate  $\mathcal{E}(X^{j})B_{1,2}$.
%First, multiplying  $B_{1,2}$ \eqref{DecompB1} by $\mathcal{E}(X^{j})$ and using the inequality $2ab\leq a^2+b^2$ yields
Using  Young's inequality we get
\begin{align*}
\mathcal{E}(X^{j})B_{1,2}=&\frac{\eps^{\gamma+1}}{(d+1)^{-\frac{1}{2}}}\sum_{l=1}^L\frac{1}{\sqrt{\vert (\phi_l, 1)\vert}}\mathcal{E}(X^{j-1})(\nabla X^{j-1}, \nabla\phi_l)\Delta_j\beta_l\nonumber\\
&+\frac{\eps^{\gamma+1}\left(\mathcal{E}(X^j)-\mathcal{E}(X^{j-1})\right)}{(d+1)^{-\frac{1}{2}}}\sum_{l=1}^L\frac{1}{\sqrt{\vert (\phi_l, 1)\vert}}(\nabla X^{j-1}, \nabla\phi_l)\Delta_j\beta_l\nonumber\\
\leq&\frac{\eps^{\gamma+1}}{(d+1)^{-\frac{1}{2}}}\sum_{l=1}^L\frac{1}{\sqrt{\vert (\phi_l, 1)\vert}}\mathcal{E}(X^{j-1})(\nabla X^{j-1}, \nabla\phi_l)\Delta_j\beta_l\nonumber\\
&+ \frac{1}{32}\vert \mathcal{E}(X^j)-\mathcal{E}(X^{j-1})\vert^2+C\eps^{2\gamma+2}L\sum_{l=1}^L\frac{1}{\vert (\phi_l, 1)\vert}\vert (\nabla X^{j-1}, \nabla\phi_l)\vert^2\vert \Delta_j\beta_l\vert^2.
\end{align*}
By \lemref{Lemmabasis} and \eqref{Lowerenergy} we estimate
\begin{align}
\label{Ville1}
\mathcal{E}(X^{j})B_{1,2}\leq&\eps^{\gamma+1}\mathcal{E}(X^{j-1})(\nabla X^{j-1}, \nabla\Delta_jW)\nonumber\\
&+ \frac{1}{32}\vert \mathcal{E}(X^j)-\mathcal{E}(X^{j-1})\vert^2+C\eps^{2\gamma+2}h^{-2-d}\sum_{l=1}^L\Vert\nabla X^{j-1}\Vert^2\vert\Delta_j\beta_l\vert^2\nonumber\\
\leq&\eps^{\gamma+1}\mathcal{E}(X^{j-1})(\nabla X^{j-1}, \nabla\Delta_jW)\\
&+ \frac{1}{32}\vert \mathcal{E}(X^j)-\mathcal{E}(X^{j-1})\vert^2+C\eps^{2\gamma+2}h^{-2-d}\mathcal{E}(X^{j-1})\sum_{l=1}^L\vert\Delta_j\beta_l\vert^2.\nonumber
\end{align}
%Next, multiplying  $B_{3}$  by $\mathcal{E}(X^{j})$ and using the inequality $2ab\leq a^2+b^2$ yields
Similarly we get by  Young's inequality
\begin{align*}
\mathcal{E}(X^{j})B_{3}=&\frac{\eps^{\gamma-1}}{(d+1)^{-\frac{1}{2}}}\sum_{l=1}^L\frac{1}{\sqrt{\vert (\phi_l, 1)\vert}}\mathcal{E}(X^{j-1})(f( X^{j-1}), \phi_l)\Delta_j\beta_l\nonumber\\
&+\frac{\eps^{\gamma-1}\left(\mathcal{E}(X^j)-\mathcal{E}(X^{j-1})\right)}{(d+1)^{-\frac{1}{2}}}\sum_{l=1}^L\frac{1}{\sqrt{\vert (\phi_l, 1)\vert}}(f(X^{j-1}), \phi_l)\Delta_j\beta_l\nonumber\\
\leq&\frac{\eps^{\gamma-1}}{(d+1)^{-\frac{1}{2}}}\sum_{l=1}^L\frac{1}{\sqrt{\vert (\phi_l, 1)\vert}}\mathcal{E}(X^{j-1})(f(X^{j-1}), \phi_l)\Delta_j\beta_l\nonumber\\
&+ \frac{1}{32}\vert \mathcal{E}(X^j)-\mathcal{E}(X^{j-1})\vert^2+C\eps^{2\gamma-2}L\sum_{l=1}^L\frac{1}{\vert (\phi_l, 1)\vert}\vert (f(X^{j-1}), \phi_l)\vert^2\vert \Delta_j\beta_l\vert^2.
\end{align*}
Using \lemref{Lemmabasis}, Poincar\'{e}'s inequality, the fact that $f(u)=\mathfrak{f}(u)u$ and \eqref{Lowerenergy} we deduce
\begin{align}
\label{Ville2}
\mathcal{E}(X^{j})B_{3}\leq& \eps^{\gamma-1}\revl{\mathcal{E}(X^{j-1})}\left(f(X^{j-1}), \Delta_jW\right)+\frac{1}{32}\vert \mathcal{E}(X^j)-\mathcal{E}(X^{j-1})\vert^2\nonumber\\
&+C\eps^{2\gamma-2}h^{-2d}\vert\mathcal{E}(X^{j-1})\vert^2\sum_{l=1}^L\vert \Delta_j\beta_l\vert^2. 
\end{align}
Along the same lines as above one can show that
\begin{align}
\label{Ville3}
\mathcal{E}(X^j)A_2=&\eps^{\gamma-1}\mathcal{E}(X^j)\left(f(X^{j-1}), 1\right)m(\Delta_jW)\nonumber\\
\leq & \eps^{\gamma-1}\mathcal{E}(X^{j-1})\left(f(X^{j-1}), 1\right)m(\Delta_jW)+\frac{1}{32}\vert\mathcal{E}(X^j)-\mathcal{E}(X^{j-1})\vert^2\\
&+C\eps^{2\gamma-2}\vert\mathcal{E}(X^{j-1})\vert^2\vert m(\Delta_jW)\vert^2. \nonumber
\end{align}

Substituting \eqref{EstimateA0b}, \eqref{Ville1}, \eqref{Ville2} and \eqref{Ville3} in \eqref{energy1_hm} we obtain
{\small
\begin{align}
\label{bound10}
&\frac{1}{2}\left[\vert\mathcal{E}(X^{j})\vert^2-\vert\mathcal{E}(X^{j-1})\vert^2+\frac{3}{4}\vert\mathcal{E}(X^j)-\mathcal{E}(X^{j-1})\vert^2\right]\nonumber\\
&\leq C\eps^{2\gamma-3}\mathcal{E}(X^{j-1})\Vert\Delta_j\overline{W}\Vert^2+C\eps^{4\gamma-1}\mathcal{E}(X^{j-1})\vert m(\Delta_jW)\vert^4+C\eps^{2\gamma-1}\mathcal{E}(X^{j-1})\vert m(\Delta_jW)\vert^2\nonumber\\
&\quad+C\eps^{2\gamma-1}h^{-2-d}\mathcal{E}(X^{j-1})\sum_{l=1}^L\vert\Delta_j\beta_l\vert^2 +C\eps^{4\gamma-4}\vert\mathcal{E}(X^{j-1})\vert^2\vert\vert m(\Delta_jW)\vert^4\nonumber\\
&\quad+C\eps^{4\gamma-1}h^{-5d}\mathcal{E}(X^{j-1})\sum_{l=1}^L\vert\Delta_j\beta_l\vert^4+C\eps^{2\gamma-2}\vert\mathcal{E}(X^{j-1})\vert^2\vert m(\Delta_jW)\vert^2\nonumber\\
&\quad+C\eps^{2\gamma-3}h^{-2d}\vert\mathcal{E}(X^{j-1})\vert^2\sum_{l=1}^L\vert\Delta_j\beta_l\vert^2+C\eps^{4\gamma-6}h^{-5d}\vert\mathcal{E}(X^{j-1})\vert^2\sum_{l=1}^L\vert\Delta_j\beta_l\vert^4\\
&\quad+C\eps^{4\gamma-6}\Vert\Delta_j\overline{W}\Vert^4+C\eps^{8\gamma-2}\vert m(\Delta_jW)\vert^8+C\eps^{4\gamma-2}\vert m(\Delta_jW)\vert^4+C\eps^{4\gamma+2}h^{-5d}\sum_{l=1}^L\vert \Delta_j\beta_l\vert^4\nonumber\\
&\quad+C\eps^{2\gamma+2}h^{-2-d}\mathcal{E}(X^{j-1})\sum_{l=1}^L\vert\Delta_j\beta_l\vert^2+C\eps^{8\gamma-2}h^{-11d}\sum_{l=1}^L\vert\Delta_j\beta_l\vert^8\nonumber\\
&\quad+C\eps^{2\gamma-1}h^{-2d}\mathcal{E}(X^{j-1})\sum_{l=1}^L\vert\Delta_j\beta_l\vert^2+C\eps^{4\gamma-2}h^{-5d}\sum_{l=1}^L\vert\Delta_j\beta_l\vert^4+\frac{1}{8}\vert\mathcal{E}(X^j)-\mathcal{E}(X^{j-1})\vert^2\nonumber\\
&\quad+\eps^{\gamma+1}\mathcal{E}(X^{j-1})\left(\nabla X^{j-1}, \nabla \Delta_jW\right)+\eps^{\gamma-1}\mathcal{E}(X^{j-1})\left(f(X^{j-1}), \Delta_jW\right)\nonumber\\
&\quad+\eps^{\gamma-1}\mathcal{E}(X^{j-1})\left(f(X^{j-1}), 1\right)m(\Delta_jW).\nonumber
\end{align}
}
We estimate terms in \eqref{bound10} which are multiplied by $\mathcal{E}(X^{j-1})$ using Young's inequality. 
For instance we have
\revd{
\begin{align*}
\eps^{4\gamma-1}\mathcal{E}(X^{j-1})\vert m(\Delta_jW)\vert^4&\leq C \eps^{4\gamma-1}\left(\mathcal{E}(X^{j-1})^2+1\right)\vert m(\Delta_jW)\vert^4,\\
\eps^{2\gamma-1}\mathcal{E}(X^{j-1})\vert m(\Delta_jW)\vert^2&\leq C\eps^{2\gamma-1}\left(\mathcal{E}(X^{j-1})^2+1\right)\vert m(\Delta_jW)\vert^2,\\
\eps^{2\gamma-1}h^{-2-d}\mathcal{E}(X^{j-1})\sum_{l=1}^L\vert\Delta_j\beta_l\vert^2&\leq C\eps^{2\gamma-1}h^{-2-d}\left(\mathcal{E}(X^{j-1})^2+1\right)\sum_{l=1}^L\vert\Delta_j\beta_l\vert^2,
\end{align*}
}
and similarly for the remaining terms.
\revd{ The inequality \eqref{bound10} therefore becomes
{\small
\begin{align}
\label{bound10a}
&\frac{1}{2}\left[\vert\mathcal{E}(X^{j})\vert^2-\vert\mathcal{E}(X^{j-1})\vert^2+\frac{3}{4}\vert\mathcal{E}(X^j)-\mathcal{E}(X^{j-1})\vert^2\right]\nonumber\\
&\leq C\eps^{2\gamma-3}\left(\mathcal{E}(X^{j-1})^2+1\right)\Vert\Delta_j\overline{W}\Vert^2+C\eps^{4\gamma-1}\left(\mathcal{E}(X^{j-1})^2+1\right)\vert m(\Delta_jW)\vert^4\nonumber\\
&\quad+C\eps^{2\gamma-1}\left(\mathcal{E}(X^{j-1})^2+1\right)\vert m(\Delta_jW)\vert^2+C\eps^{2\gamma-1}h^{-2-d}\left(\mathcal{E}(X^{j-1})^2+1\right)\sum_{l=1}^L\vert\Delta_j\beta_l\vert^2\nonumber\\
&\quad +C\eps^{4\gamma-4}\vert\mathcal{E}(X^{j-1})\vert^2\vert\vert m(\Delta_jW)\vert^4+C\eps^{4\gamma-1}h^{-5d}\left(\mathcal{E}(X^{j-1})^2+1\right)\sum_{l=1}^L\vert\Delta_j\beta_l\vert^4\nonumber\\
&\quad+C\eps^{2\gamma-2}\vert\mathcal{E}(X^{j-1})\vert^2\vert m(\Delta_jW)\vert^2+C\eps^{2\gamma-3}h^{-2d}\vert\mathcal{E}(X^{j-1})\vert^2\sum_{l=1}^L\vert\Delta_j\beta_l\vert^2\nonumber\\
&\quad+C\eps^{4\gamma-6}h^{-5d}\vert\mathcal{E}(X^{j-1})\vert^2\sum_{l=1}^L\vert\Delta_j\beta_l\vert^4+C\eps^{4\gamma-6}\Vert\Delta_j\overline{W}\Vert^4+C\eps^{8\gamma-2}\vert m(\Delta_jW)\vert^8\\
&\quad+C\eps^{4\gamma-2}\vert m(\Delta_jW)\vert^4+C\eps^{4\gamma+2}h^{-5d}\sum_{l=1}^L\vert \Delta_j\beta_l\vert^4\nonumber\\
&\quad+C\eps^{2\gamma+2}h^{-2-d}\left(\mathcal{E}(X^{j-1})^2+1\right)\sum_{l=1}^L\vert\Delta_j\beta_l\vert^2+C\eps^{8\gamma-2}h^{-11d}\sum_{l=1}^L\vert\Delta_j\beta_l\vert^8\nonumber\\
&\quad+C\eps^{2\gamma-1}h^{-2d}\left(\mathcal{E}(X^{j-1})^2+1\right)\sum_{l=1}^L\vert\Delta_j\beta_l\vert^2+C\eps^{4\gamma-2}h^{-5d}\sum_{l=1}^L\vert\Delta_j\beta_l\vert^4\nonumber\\
&\quad+\eps^{\gamma+1}\mathcal{E}(X^{j-1})\left(\nabla X^{j-1}, \nabla \Delta_jW\right)+\eps^{\gamma-1}\mathcal{E}(X^{j-1})\left(f(X^{j-1}), \Delta_jW\right)\nonumber\\
&\quad+\eps^{\gamma-1}\mathcal{E}(X^{j-1})\left(f(X^{j-1}), 1\right)m(\Delta_jW).\nonumber
\end{align}
}
Along the same lines as those in the proof of \lemref{LemmaBruit0}, we have
\begin{align}
\label{NoiseEstimate1}
\mathbb{E}[\Vert\Delta_j\overline{W}\Vert^4]\leq Ch^{-4d}\tau^2,\quad \mathbb{E}[\vert m(\Delta_jW)\vert^8]\leq Ch^{-4d}\tau^4\quad \text{and}\quad \mathbb{E}[\Vert\Delta_j\overline{W}\Vert^8]\leq Ch^{-8d}\tau^4.
\end{align}
}

Summing  \eqref{bound10a} over $j$,  taking the expectation in both sides, using \eqref{NoiseEstimate1}, Lemmas \ref{Lemmabasis}, \ref{LemmaBruit0} we conclude that
\begin{align*}
&\frac{1}{2}\mathbb{E}[\mathcal{E}(X^{j})^2]+\frac{3}{8}\sum_{i=1}^{j}\mathbb{E}[\vert\mathcal{E}(X^i)-\mathcal{E}(X^{i-1})\vert^2]\nonumber\\
&\leq \mathcal{E}(u^{\eps}_0)^2+C\eps^{2\gamma-3}h^{-2-3d}+C\eps^{4\gamma-1}h^{-2d}\tau+C\eps^{2\gamma-2}+C\eps^{2\gamma-1}h^{-2-2d}+C\eps^{4\gamma-1}h^{-6d}\tau\nonumber\\
&\quad+C\eps^{4\gamma-6}h^{-4d}\tau+C\eps^{8\gamma-1}h^{-4d}\tau^3+C\eps^{4\gamma-2}h^{-2d}\tau+C\eps^{4\gamma+2}h^{-6d}\tau+C\eps^{2\gamma+2}h^{-2-2d}\nonumber\\
&\quad+ C\eps^{8\gamma-2}h^{-12d}\tau^3+C\eps^{2\gamma-1}h^{-3d}+C\eps^{4\gamma-2}h^{-6d}\tau\nonumber\\
&\quad+C\left[\eps^{4\gamma-2}h^{-2d}\tau+\eps^{2\gamma-2}+\eps^{2\gamma-3}h^{-3d}+\eps^{4\gamma-6}h^{-6d}\tau\right]\tau\sum_{i=0}^{j-1}\mathbb{E}[\mathcal{E}(X^{i})^2].\nonumber
\end{align*}
Applying the discrete Gronwall lemma to the preceding estimate yields the estimate \eqref{Allenergy}.

Then the estimate i) follows from \eqref{Allenergy} under the condition $h=\eps^\eta$.
 
The proof of the estimate ii)  follows analogously to i) by the  modified discrete Burkholder-Davis-Gundy inequality \cite[Lemma 3.3]{Banas19} and \lemref{moment}.
\end{proof}

%%%%%%%%%%%%%%%%%%%%%%%%%%%%%%%%%%%%%%%
%%%%%%%%%%%%%%%%%%%%%%%%%%%%%%%%%%%%%%%
%%%%%%%%%%%%%%%%%%%%%%%%%%%%%%%%%%%%%%%
%%%%%%%%%%%%%%%%%%%%%%%%%%%%%%%%%%%%%%%
%%%%%%%%%%%%%%%%%%%%%%%%%%%%%%%%%%%%%%%

\section{Error analysis}
\label{convergenceanalysis}
\revd{In this section we derive a robust estimate for the approximation error $X^j-u(t_j)$, where $X^j$ is the numerical approximation \eqref{scheme1b} of the strong variational solution $u$ of \eqref{model1}.
To show the error estimate we rewrite the error as
\begin{align*}
X^j-u(t_j)=\left(X^j-X^j_{\mathrm{CH}}\right)+\left(X^j_{\mathrm{CH}}-u_{\mathrm{CH}}(t_j)\right)+\left(u_{\mathrm{CH}}(t_j)-u(t_j)\right),
\end{align*}
and estimate the individual contributions on the right-hind side separately.
An estimate of $u_{\mathrm{CH}}(t_j)-u(t_j)$ is provided in \lemref{analyticCahnHilliard}. An estimate of $X^j_{\mathrm{CH}}-u_{\mathrm{CH}}(t_j)$ was shown in \cite[Corollary 1]{fp04} and is stated
in \lemref{LemmaLubo19} (iv) below.
Here we estimate the remaining term $Z^j:=X^j-X^j_{\mathrm{CH}}$ in \lemref{EstiZ} which allows us to conclude the desired error estimate in \thmref{mainresult1} by the triangle inequality.

In the lemma below we recall the properties of the numerical approximation $X^j_{\mathrm{CH}}$
of the deterministic problem (i.e., $X^j_{\mathrm{CH}}$ satisfies \eqref{scheme1b} with $\Delta_j\overline{W}\equiv 0$) from \cite[Lemma 3.1]{Banas19}.
\begin{lemma}
\label{LemmaLubo19}
Assume that $\mathcal{E}(u^{\epsilon}_0)\leq C$. Let 
$\{(X^j_{\mathrm{CH}}, w^j_{\mathrm{CH}})\}_{j=0}^J\subset[\mathbb{H}^1]^2$ be the solution of \eqref{scheme1b} with $\Delta_j\overline{W}\equiv 0$. For every $0<\beta<\frac{1}{2}$, $\varepsilon\in(0, \varepsilon_0)$, $\tau\leq \varepsilon^3$, and $\mathfrak{p}_{\mathrm{CH}}>0$, there exist $\mathfrak{m}_{\mathrm{CH}}, \mathfrak{n}_{\mathrm{CH}}, C>0$, and $\mathfrak{l}_{\mathrm{CH}}\geq 3$ such that
\begin{enumerate}
\item[(i)] $\max\limits_{1\leq j\leq J}\mathcal{E}(X^j_{\mathrm{CH}})\leq \mathcal{E}(u^{\varepsilon}_0)$. 
\end{enumerate}
 Assume moreover that $\Vert u^{\varepsilon}_0\Vert_{\mathbb{H}^2}\leq C\varepsilon^{-\mathfrak{p}_{\mathrm{CH}}} $. Then
 \begin{enumerate}
\item[(ii)] $
\max\limits_{1\leq j\leq J}\Vert X^j_{\mathrm{CH}}\Vert_{\mathbb{H}^2}\leq C\varepsilon^{-\mathfrak{n}_{\mathrm{CH}}},
$
\item[(iii)] $\max\limits_{1\leq j\leq J}\Vert X^j_{\mathrm{CH}}\Vert_{\mathbb{L}^{\infty}}\leq C$ for $\tau\leq C\varepsilon^{\mathfrak{l}_{\mathrm{CH}}}$.
\end{enumerate}
Assume in addition $\Vert u^{\varepsilon}_0\Vert_{\mathbb{H}^3}\leq C\varepsilon^{-\mathfrak{p}_{\mathrm{CH}}}$ and let $u_{\mathrm{CH}}$ be the unique solution of the deterministic Cahn-Hilliard equation. Then for $\tau\leq C\varepsilon^{\mathfrak{l}_{\mathrm{CH}}}$ and $C_0$ from \eqref{Spectral1} it holds
\begin{enumerate}
\item[(iv)] $\max\limits_{1\leq j\leq J}\Vert u_{\mathrm{CH}}(t_j)-X^j_{\mathrm{CH}}\Vert^2_{\mathbb{H}^{-1}}+\sum\limits_{j=1}^J\tau^{1+\beta}\Vert\nabla[u_{\mathrm{CH}}(t_j)-X^j_{\mathrm{CH}}]\Vert^2\leq C\frac{\tau^{2-\beta}}{\varepsilon^{\mathfrak{m}_{\mathrm{CH}}}}$,
 \item[(v)] $\inf\limits_{0\leq t\leq T}\inf\limits_{\psi\in\mathbb{H}^1, w=(-\Delta)^{-1}\psi}\frac{\varepsilon\Vert\nabla\psi\Vert^2+\frac{1-\varepsilon^3}{\varepsilon}\left(f'(X^j_{\mathrm{CH}})\psi, \psi\right)}{\Vert\nabla w\Vert^2}\geq -(1-\varepsilon^3)(C_0+1)$.
\end{enumerate}
\end{lemma}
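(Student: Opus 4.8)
For (i) I would test the first equation of \eqref{scheme1b} (with $\Delta_j\overline{W}\equiv 0$) with $\varphi=w^j_{\mathrm{CH}}$ and the second with $\psi=X^j_{\mathrm{CH}}-X^{j-1}_{\mathrm{CH}}$ and add, exactly as in the derivation of \eqref{bound1}. Since the right-hand side now vanishes identically, the convexity-splitting inequality \eqref{bound2} together with the $\mathbb{H}^{-1}$-control \eqref{bound3} (valid for $\tau\leq\frac12\eps^3$) yields
\begin{align*}
\mathcal{E}(X^j_{\mathrm{CH}})-\mathcal{E}(X^{j-1}_{\mathrm{CH}})+\tfrac{\tau}{2}\Vert\nabla w^j_{\mathrm{CH}}\Vert^2+\tfrac{\eps}{4}\Vert\nabla(X^j_{\mathrm{CH}}-X^{j-1}_{\mathrm{CH}})\Vert^2\leq0,
\end{align*}
so $j\mapsto\mathcal{E}(X^j_{\mathrm{CH}})$ is nonincreasing and (i) follows at once.

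For (ii)--(iii) I would use the second equation of \eqref{scheme1b}, which in strong form reads $-\eps\Delta X^j_{\mathrm{CH}}+\frac1\eps f(X^j_{\mathrm{CH}})=w^j_{\mathrm{CH}}$ with homogeneous Neumann data. Elliptic regularity then gives $\Vert X^j_{\mathrm{CH}}\Vert_{\mathbb{H}^2}\leq C\eps^{-2}\Vert f(X^j_{\mathrm{CH}})\Vert+C\eps^{-1}\Vert w^j_{\mathrm{CH}}\Vert+C\Vert X^j_{\mathrm{CH}}\Vert$. The energy bound (i) controls $\Vert\nabla X^j_{\mathrm{CH}}\Vert$ and $\Vert\mathfrak{f}(X^j_{\mathrm{CH}})\Vert$ polynomially in $\eps^{-1}$, a time-difference estimate obtained by testing the first equation with $(-\Delta)^{-1}(X^j_{\mathrm{CH}}-X^{j-1}_{\mathrm{CH}})$ (cf. \eqref{bound2a}) bounds $\Vert w^j_{\mathrm{CH}}\Vert$, and the hypothesis $\Vert u^\eps_0\Vert_{\mathbb{H}^2}\leq C\eps^{-\mathfrak{p}_{\mathrm{CH}}}$ propagates through the induction, giving (ii) for a suitable $\mathfrak{n}_{\mathrm{CH}}>0$. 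For the $\eps$-independent bound (iii) the embedding $\mathbb{H}^2\hookrightarrow\mathbb{L}^\infty$ is too lossy; instead I would run the bootstrap argument of \cite[Lemma 3.1]{Banas19}, where the smallness condition $\tau\leq C\eps^{\mathfrak{l}_{\mathrm{CH}}}$ with $\mathfrak{l}_{\mathrm{CH}}\geq3$ is precisely what keeps $X^j_{\mathrm{CH}}$ in the region where the cubic nonlinearity forces boundedness uniformly in $\eps$.

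The core of the lemma is (iv), and this is the step I expect to be hardest. Writing $e^j:=u_{\mathrm{CH}}(t_j)-X^j_{\mathrm{CH}}$, I would subtract the scheme from the time-discretised weak form of the deterministic Cahn-Hilliard equation, test the resulting $\mathbb{H}^{-1}$ error equation against $(-\Delta)^{-1}e^j$, and expand the nonlinear contribution $\frac1\eps(f(u_{\mathrm{CH}})-f(X^j_{\mathrm{CH}}),e^j)$ by means of the identity \eqref{nonlinearident}. The quadratic term $\frac1\eps(f'(u_{\mathsf{A}})e^j,e^j)$ has the wrong sign, and the only mechanism to absorb it against $\eps\Vert\nabla e^j\Vert^2$ is the spectral estimate \eqref{Spectral1} of \cite{abc94}; the remaining cubic and consistency terms are handled using the regularity of $u_{\mathrm{CH}}$ and the bounds (ii)--(iii), after which a discrete Gronwall argument yields the stated rate $C\tau^{2-\beta}\eps^{-\mathfrak{m}_{\mathrm{CH}}}$. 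The delicate points are the truncation/consistency estimates for the implicit Euler step measured in the $\mathbb{H}^{-1}$-metric and the careful bookkeeping of the $\eps$-powers so that the Gronwall constant stays of the form $\exp(C)$ rather than $\exp(C\eps^{-r})$, as established in \cite[Corollary 1]{fp04}.

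Finally, (v) is a perturbation of the continuous spectral estimate \eqref{Spectral1}. Since (iii) together with the approximation \eqref{Alikakos} of \cite{abc94} gives a controlled $\mathbb{L}^\infty$-closeness of $X^j_{\mathrm{CH}}$ to $u_{\mathsf{A}}(t_j)$, and $f'$ is locally Lipschitz, the quadratic form $\frac1\eps(f'(X^j_{\mathrm{CH}})\psi,\psi)$ differs from $\frac1\eps(f'(u_{\mathsf{A}})\psi,\psi)$ by a term whose size is dominated, after using $w=(-\Delta)^{-1}\psi$, by the denominator $\Vert\nabla w\Vert^2$. Rescaling the leading coefficient by the factor $(1-\eps^3)$ and enlarging the lower bound $C_0$ to $(C_0+1)$ then accommodates this perturbation, which is exactly the assertion (v).
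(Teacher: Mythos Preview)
Your sketch is sound and tracks the standard deterministic arguments, but you should be aware that the paper does not actually prove this lemma: it is stated as a recall of \cite[Lemma~3.1]{Banas19} (with (iv) ultimately going back to \cite[Corollary~1]{fp04}), and no proof is given in the paper itself. Your outline for each of (i)--(v) matches the approach taken in those references---the energy-decay for (i), elliptic regularity plus bootstrap for (ii)--(iii), the $\mathbb{H}^{-1}$ error equation combined with the spectral estimate \eqref{Spectral1} and discrete Gronwall for (iv), and the $\mathbb{L}^\infty$-perturbation argument for (v)---so there is no discrepancy to flag.
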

}
%We denote
%\begin{align}
%\label{Errorequation}
%Z^j:=X^j-X^j_{\mathrm{CH}}.
%\end{align} 

%\subsection{{\magenta Error estimate for $X^j-X^j_{\mathrm{CH}}$.}}
 We start by deriving an $\mathbb{P}$-a.s. {a priori} error estimate for $Z^j=X^j-X^j_{\mathrm{CH}}$.
\begin{lemma}
\label{lemmaerror1}
The following estimate holds for all $l= 1,\cdots, J$
\begin{align*}
&\max_{1\leq j\leq l}\Vert\Delta^{-1/2}Z^j\Vert^2+\frac{\eps^4\tau}{2}\sum_{j=1}^l\Vert\nabla Z^j\Vert^2+\frac{\tau}{\eps}\sum_{j=1}^l\Vert Z^j\Vert^4_{\mathbb{L}^4}+\frac{1}{4}\sum_{j=1}^l\Vert\Delta^{-1/2}(Z^j-Z^{j-1})\Vert^2\nonumber\\
&\leq \frac{C\tau}{\eps}\sum_{j=1}^l\Vert Z^j\Vert^3_{\mathbb{L}^3}+\eps^{\gamma}\max_{1\leq j\leq l}\left\vert\sum_{i=1}^j((-\Delta)^{-1}Z^{i-1}, \Delta_i\overline{W})\right\vert+C\eps^{2\gamma}\sum_{j=1}^l\Vert\Delta_j\overline{W}\Vert^2.
\end{align*}
\end{lemma}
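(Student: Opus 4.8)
The plan is to derive a purely pathwise ($\omega$-by-$\omega$) energy identity for $Z^j=X^j-X^j_{\mathrm{CH}}$ in the $\mathbb{H}^{-1}$-metric, in which the Cahn--Hilliard dissipation produces the quartic term $\Vert Z^j\Vert_{\mathbb{L}^4}^4$ and the gradient term, while the nonlinearity and the noise are pushed to the right-hand side. Subtracting the stochastic instance of \eqref{scheme1b} (for $X^j$) from the deterministic one (for $X^j_{\mathrm{CH}}$, i.e.\ $\Delta_j\overline{W}\equiv0$) and setting $W^j:=w^j-w^j_{\mathrm{CH}}$ gives the error system $(Z^j-Z^{j-1},\varphi)+\tau(\nabla W^j,\nabla\varphi)=\eps^{\gamma}(\Delta_j\overline{W},\varphi)$ and $\eps(\nabla Z^j,\nabla\psi)+\frac{1}{\eps}(f(X^j)-f(X^j_{\mathrm{CH}}),\psi)=(W^j,\psi)$. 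Since both $X^j$ and $X^j_{\mathrm{CH}}$ conserve mass and $Z^0=0$, each $Z^j$ has zero mean, so $(-\Delta)^{-1}Z^j$ and $\Delta^{-1/2}Z^j$ are well defined.

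First I would test the first equation with $\varphi=(-\Delta)^{-1}Z^j$ and the second with $\psi=Z^j$. Using $(\nabla W^j,\nabla(-\Delta)^{-1}Z^j)=(W^j,Z^j)$, the polarization identity $(a-b,a)_{-1}=\frac12(\Vert a\Vert_{-1}^2-\Vert b\Vert_{-1}^2+\Vert a-b\Vert_{-1}^2)$, and $\Vert v\Vert_{-1}=\Vert\Delta^{-1/2}v\Vert$, these combine into the per-step identity
\[
\tfrac{1}{2}\big(\Vert\Delta^{-1/2}Z^j\Vert^2-\Vert\Delta^{-1/2}Z^{j-1}\Vert^2+\Vert\Delta^{-1/2}(Z^j-Z^{j-1})\Vert^2\big)+\tau\eps\Vert\nabla Z^j\Vert^2+\tfrac{\tau}{\eps}\big(f(X^j)-f(X^j_{\mathrm{CH}}),Z^j\big)=\eps^{\gamma}\big(\Delta_j\overline{W},(-\Delta)^{-1}Z^j\big).
\]
Telescoping over $j=1,\dots,l$ (with $Z^0=0$) and taking the maximum in $l$ (the right-hand side being monotone in $l$, so that the usual factor is absorbed into $C$) will produce the four terms on the left of the claim, provided the nonlinear and noise terms are controlled as below.

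The heart of the matter is the nonlinear pairing, which I expect to be the main obstacle. Applying \eqref{nonlinearident} with $a=X^j$, $b=X^j_{\mathrm{CH}}$ (so $a-b=Z^j$) and testing against $Z^j$ gives the exact decomposition $(f(X^j)-f(X^j_{\mathrm{CH}}),Z^j)=\Vert Z^j\Vert_{\mathbb{L}^4}^4+3\Vert X^j_{\mathrm{CH}}Z^j\Vert^2+3(X^j_{\mathrm{CH}},(Z^j)^3)-\Vert Z^j\Vert^2$. The first two terms are non-negative: the quartic one is retained on the left, giving the $\frac{\tau}{\eps}\Vert Z^j\Vert_{\mathbb{L}^4}^4$ of the claim, and $3\Vert X^j_{\mathrm{CH}}Z^j\Vert^2$ is discarded. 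The delicate part is to dominate the two \emph{indefinite} contributions $3(X^j_{\mathrm{CH}},(Z^j)^3)$ and $-\Vert Z^j\Vert^2$ by the quartic term already on the left together with an $\mathbb{L}^3$-remainder on the right: the cubic cross-term is bounded by H\"older as $\vert 3(X^j_{\mathrm{CH}},(Z^j)^3)\vert\le 3\Vert X^j_{\mathrm{CH}}\Vert_{\mathbb{L}^{\infty}}\Vert Z^j\Vert_{\mathbb{L}^3}^3$ (this is where the uniform bound $\max_j\Vert X^j_{\mathrm{CH}}\Vert_{\mathbb{L}^{\infty}}\le C$ from \lemref{LemmaLubo19}(iii) enters), yielding after multiplication by $\tau/\eps$ exactly the target $\frac{C\tau}{\eps}\Vert Z^j\Vert_{\mathbb{L}^3}^3$; the remaining lower-order term $\frac{\tau}{\eps}\Vert Z^j\Vert^2$ is controlled by H\"older/Young against the quartic term and a reserved portion of the gradient dissipation (using interpolation such as $\Vert Z^j\Vert^2\le\Vert\nabla Z^j\Vert\,\Vert\Delta^{-1/2}Z^j\Vert$, valid for $d\le 3$). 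It is precisely this reservation of part of the $\tau\eps$-dissipation that leaves only the $\frac{\eps^4\tau}{2}$-weighted gradient term on the left, and carrying this out so that the estimate stays genuinely pathwise --- with no expectation and no spectral estimate yet --- while keeping both the quartic and gradient terms strictly positive, is the step requiring the most care.

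Finally, the noise term is split using the \emph{previous} iterate, $\eps^{\gamma}(\Delta_j\overline{W},(-\Delta)^{-1}Z^j)=\eps^{\gamma}(\Delta_j\overline{W},(-\Delta)^{-1}Z^{j-1})+\eps^{\gamma}(\Delta_j\overline{W},(-\Delta)^{-1}(Z^j-Z^{j-1}))$. The first piece is deliberately written with $Z^{j-1}$: summed over $i\le j$ it is an $\{\mathcal{F}_{t_i}\}$-adapted martingale-type sum, which I carry unchanged to the right-hand side inside the maximum, giving $\eps^{\gamma}\max_{j\le l}\vert\sum_{i=1}^j((-\Delta)^{-1}Z^{i-1},\Delta_i\overline{W})\vert$ (this is the form needed for a later discrete Burkholder--Davis--Gundy argument). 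For the increment piece, Cauchy--Schwarz in the $\mathbb{H}^{-1}$ inner product, the bound $\Vert\Delta^{-1/2}\Delta_j\overline{W}\Vert\le C\Vert\Delta_j\overline{W}\Vert$ from \eqref{equiv1}, and Young's inequality give $\le\frac14\Vert\Delta^{-1/2}(Z^j-Z^{j-1})\Vert^2+C\eps^{2\gamma}\Vert\Delta_j\overline{W}\Vert^2$; the first summand is absorbed by half of the telescoped increment term on the left, leaving the $\frac14\sum_{j}\Vert\Delta^{-1/2}(Z^j-Z^{j-1})\Vert^2$ of the claim, and the second is its last right-hand term. Collecting all contributions, summing over $j=1,\dots,l$, and taking the maximum over $l$ then yields the assertion.
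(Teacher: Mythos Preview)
Your setup of the $\mathbb{H}^{-1}$-energy identity, the handling of the noise increment (splitting off $Z^{j-1}$ and absorbing the increment via \eqref{equiv1}), and the treatment of the cubic cross-term $3(X^j_{\mathrm{CH}},(Z^j)^3)$ via \lemref{LemmaLubo19}(iii) all match the paper. The decomposition $(f(X^j)-f(X^j_{\mathrm{CH}}),Z^j)=\Vert Z^j\Vert_{\mathbb{L}^4}^4+3\Vert X^j_{\mathrm{CH}}Z^j\Vert^2+3(X^j_{\mathrm{CH}},(Z^j)^3)-\Vert Z^j\Vert^2$ is also correct.

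However, your plan to control the residual term $\tfrac{\tau}{\eps}\Vert Z^j\Vert^2$ ``without the spectral estimate'' does not close. If you use the interpolation $\Vert Z^j\Vert^2\le\Vert\nabla Z^j\Vert\,\Vert\Delta^{-1/2}Z^j\Vert$ and Young's inequality to absorb into the gradient term with coefficient $\tfrac{\eps^4\tau}{2}$, the complementary term is $\tfrac{C\tau}{\eps^{6}}\Vert\Delta^{-1/2}Z^j\Vert^2$; summing and applying Gronwall then produces a factor $\exp(CT\eps^{-6})$, so $C$ is no longer independent of $\eps$. If instead you absorb into the quartic term via $\Vert Z^j\Vert^2\le C\Vert Z^j\Vert_{\mathbb{L}^4}^2\le\tfrac12\Vert Z^j\Vert_{\mathbb{L}^4}^4+C$, you pick up $\tfrac{CT}{\eps}$ on the right-hand side, which is not in the claimed estimate. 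Either way the stated inequality (with $\eps$-independent $C$) is not reached.

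The paper avoids this by \emph{not} discarding the term $3\Vert X^j_{\mathrm{CH}}Z^j\Vert^2$: it keeps the full quadratic form $(f'(X^j_{\mathrm{CH}})Z^j,Z^j)=3\Vert X^j_{\mathrm{CH}}Z^j\Vert^2-\Vert Z^j\Vert^2$, splits off an $\eps^3$-fraction, and then couples $(1-\eps^3)\big(\eps\Vert\nabla Z^j\Vert^2+\tfrac{1-\eps^3}{\eps}(f'(X^j_{\mathrm{CH}})Z^j,Z^j)\big)$ with the discrete spectral estimate \lemref{LemmaLubo19}(v). This yields the lower bound $-(C_0+1)\Vert\Delta^{-1/2}Z^j\Vert^2+\eps^4\Vert\nabla Z^j\Vert^2+\eps^2(1-\eps^3)(f'(X^j_{\mathrm{CH}})Z^j,Z^j)$, so the ``bad'' residual is only $C\eps^2\tau\Vert Z^j\Vert^2$ rather than $\tfrac{\tau}{\eps}\Vert Z^j\Vert^2$; this can be interpolated as $\eps^2\Vert Z^j\Vert^2\le C\Vert\Delta^{-1/2}Z^j\Vert^2+\tfrac{\eps^4}{2}\Vert\nabla Z^j\Vert^2$ with $\eps$-independent $C$. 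The resulting $C\tau\Vert\Delta^{-1/2}Z^j\Vert^2$ term is then removed by a discrete Gronwall step (which your sketch also omits), and it is precisely the spectral estimate that makes the Gronwall factor $\eps$-independent and simultaneously explains the $\tfrac{\eps^4\tau}{2}$ weight on the gradient term.
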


\begin{proof}
We take $\varphi=(-\Delta)^{-1}Z^j(\omega)$ and $\psi=Z^j(\omega)$ in  \eqref{scheme1b} for fixed $\omega\in\Omega$ and  obtain  $\mathbb{P}$-a.s.
\begin{align}
\label{essen1}
&\frac{1}{2}\left(\Vert \Delta^{-1/2}Z^j\Vert^2-\Vert\Delta^{-1/2}Z^{j-1}\Vert^2+\Vert\Delta^{-1/2}(Z^j-Z^{j-1})\Vert^2\right)+\eps\tau\Vert\nabla Z^j\Vert^2\nonumber\\
&+\frac{\tau}{\eps}\left(f(X^j)-f(X^j_{\mathrm{CH}}), Z^j\right)=\eps^{\gamma}((-\Delta)^{-1}Z^j, \Delta_j\overline{W}).
\end{align}
To handle the term $\left(f(X^j)-f(X^j_{\mathrm{CH}}), Z^j\right)$, we use the fact that $f'(u)=3u^2-1$ (which implies $(f'(u)v,v)\geq -\Vert v\Vert^2$) to obtain
\begin{align*}
\left(f'(X^j_{\mathrm{CH}})Z^j, Z^j\right)
&=(1-\eps^3)(f'(X^j_{\mathrm{CH}})Z^j, Z^j)+\eps^3(f'(X^j_{\mathrm{CH}})Z^j, Z^j)\nonumber\\
&\geq (1-\eps^3)(f'(X^j_{\mathrm{CH}})Z^j, Z^j)-\eps^3\Vert Z^j\Vert^2.
\end{align*}
Using \eqref{nonlinearident} and  the preceding estimate, it follows that
\begin{align}
\label{NonlinS1}
\left(f(X^j)-f(X^j_{\mathrm{CH}}), Z^j\right)&=\left(f(X^j_{\mathrm{CH}})-f(X^j), X^j_{\mathrm{CH}}-X^j\right)\nonumber\\
&=(f'(X^j_{\mathrm{CH}})Z^j, Z^j)+((Z^j)^3, Z^j)+3((Z^j)^3, X^j_{\mathrm{CH}})\nonumber\\
&\geq (1-\eps^3)(f'(X^j_{\mathrm{CH}})Z^j, Z^j)-\eps^3\Vert Z^j\Vert^2+3((Z^j)^3, X^j_{\mathrm{CH}})+\Vert Z^j\Vert^4_{\mathbb{L}^4}.
\end{align}
%Next, we recall the following estimate \cite[Lemma 3.1 (v)]{Banas19}, known as the spectral estimate for the numerical approximation of the deterministic Cahn-Hilliard equation
%\begin{align}
%\label{Spectraldiscrete}
%\inf_{0\leq t\leq T}\inf_{\psi\in\mathbb{H}^1, w=(-\Delta)^{-1}\psi}\frac{\eps\Vert\nabla\psi\Vert^2+\frac{1-\eps^3}{\eps}\left(f'(X^j_{\mathrm{CH}})\psi,\psi\right)}{\Vert \nabla w\Vert^2}\geq -(1-\eps^3)(C_0+1),
%\end{align}
%where the constant $C_0>0$ is independent of $\eps$. 
 Using \lemref{LemmaLubo19} (v) yields
\begin{align}
\label{NonlinS2}
&\eps\Vert \nabla Z^j\Vert^2+\frac{(1-\eps^3)}{\eps}(f'(X^j_{\mathrm{CH}})Z^j, Z^j)\nonumber\\
&=(1-\eps^3)\left(\eps\Vert \nabla Z^j\Vert^2+\frac{(1-\eps^3)}{\eps}(f'(X^j_{\mathrm{CH}})Z^j, Z^j)\right)\\
&\quad+\eps^3\left(\eps\Vert \nabla Z^j\Vert^2+\frac{(1-\eps^3)}{\eps}(f'(X^j_{\mathrm{CH}})Z^j, Z^j)\right)\nonumber\\
&\geq -(C_0+1)\Vert\Delta^{-1/2} Z^j\Vert^2+\eps^3\left(\eps\Vert \nabla Z^j\Vert^2+\frac{(1-\eps^3)}{\eps}(\revd{f'(X^j_{\mathrm{CH}})Z^j, Z^j)}\right)\nonumber\\
&\revd{=}-(C_0+1)\Vert \Delta^{-1/2}Z^j\Vert^2+\eps^4\Vert\nabla Z^j\Vert^2+\eps^2(1-\eps^3)(f'(X^j_{\mathrm{CH}})Z^j, Z^j), \nonumber
\end{align}
where we have used the fact that $\eps\in (0,1)$. 

Substituting   \eqref{NonlinS2} into \eqref{NonlinS1}  and substituting the resulting estimate into \eqref{essen1} yields
\begin{align}
\label{relec1}
&\frac{1}{2}\left(\Vert \Delta^{-1/2}Z^j\Vert^2-\Vert\Delta^{-1/2} Z^{j-1}\Vert^2+\Vert\Delta^{-1/2}(Z^j-Z^{j-1})\Vert^2\right)+\eps^4\tau\Vert\nabla Z^j\Vert^2+\frac{\tau}{\eps}\Vert Z^j\Vert^4_{\mathbb{L}^4}\nonumber\\
&\leq 2\eps^2\tau\Vert Z^j\Vert^2+C\tau\Vert\Delta^{-1/2}Z^j\Vert^2+\frac{3\tau}{\eps}\vert((Z^j)^3, X^j_{\mathrm{CH}})\vert+\eps^{\gamma}((-\Delta)^{-1}Z^j, \Delta_j\overline{W}),
\end{align}
where we have used the fact that $(f'(u)v, v)\geq -\Vert v\Vert^2$, see e.g. \cite[(2.5)]{fp05}.

Using the uniformly boundedness of $X^j_{\mathrm{CH}}$ (cf. \lemref{LemmaLubo19} (iii)), it holds that
\begin{align*}
\frac{3\tau}{\eps}\vert((Z^j)^3, X^j_{\mathrm{CH}})\vert\leq \frac{C\tau}{\eps}\Vert Z^j\Vert^3_{\mathbb{L}^3}.
\end{align*}
Next, using the interpolating inequality $\Vert .\Vert^2\leq \Vert .\Vert_{\mathbb{H}^{-1}}\Vert \nabla .\Vert$ and Young's inequality leads to
\begin{align*}
\eps^2\Vert Z^j\Vert^2\leq \eps^2\Vert \Delta^{-1/2}Z^j\Vert\Vert \nabla Z^j\Vert
\leq  C\Vert \Delta^{-1/2}Z^j\Vert^2+\frac{\eps^4}{2}\Vert \nabla Z^j\Vert^2. 
\end{align*}
Using Cauchy-Schwarz's inequality and \eqref{equiv2}, we obtain
\begin{align*}
\eps^{\gamma}((-\Delta)^{-1}Z^j, \Delta_{j}\overline{W})&\leq\eps^{\gamma}\Vert \Delta^{-1}(Z^j-Z^{j-1})\Vert\Vert \Delta_j\overline{W}\Vert+\eps^{\gamma}((-\Delta)^{-1}Z^{j-1}, \Delta_j\overline{W})\nonumber\\
&\leq\eps^{\gamma}\Vert \Delta^{-1/2}(Z^j-Z^{j-1})\Vert\Vert \Delta_j\overline{W}\Vert+\eps^{\gamma}((-\Delta)^{-1}Z^{j-1}, \Delta_j\overline{W})\nonumber\\
&\leq \revd{\frac{1}{4}}\Vert \Delta^{-1/2}(Z^j-Z^{j-1})\Vert^2+C\eps^{2\gamma}\Vert\Delta_j\overline{W}\Vert^2+\eps^{\gamma}((-\Delta)^{-1}Z^{j-1}, \Delta_j\overline{W}).
\end{align*}
 Substituting  the two preceding estimates into \eqref{relec1} leads to
\begin{align*}
&\frac{1}{2}\left(\Vert \Delta^{-1/2}Z^j\Vert^2-\Vert\Delta^{-1/2} Z^{j-1}\Vert^2+\frac{1}{2}\Vert\Delta^{-1/2}(Z^j-Z^{j-1})\Vert^2\right)+\frac{\eps^4\tau}{2}\Vert\nabla Z^j\Vert^2+\frac{\tau}{\eps}\Vert Z^j\Vert^4_{\mathbb{L}^4}\nonumber\\
&\leq\frac{C\tau}{\eps}\Vert Z^j\Vert^3_{\mathbb{L}^3}+C\tau\Vert\Delta^{-1/2}Z^j\Vert^2+\eps^{\gamma}((-\Delta)^{-1}Z^{j-1}, \Delta_j\overline{W})+C\eps^{2\gamma}\Vert\Delta_j\overline{W}\Vert^2.
\end{align*}
Summing the preceding estimate over $1\leq j\leq l$ and  taking the maximum yields 
\begin{align}
\label{Val1}
&\frac{1}{2}\max_{1\leq j\leq l}\Vert\Delta^{-1/2}Z^j\Vert^2+\frac{\eps^4\tau}{2}\sum_{j=1}^l\Vert\nabla Z^j\Vert^2+\frac{\tau}{\eps}\sum_{j=1}^l\Vert Z^j\Vert^4_{\mathbb{L}^4}+\frac{1}{4}\sum_{j=1}^l\Vert\Delta^{-1/2}(Z^j-Z^{j-1})\Vert^2\nonumber\\
&\leq \frac{C\tau}{\eps}\sum_{j=1}^l\Vert Z^j\Vert^3_{\mathbb{L}^3}+C\tau\sum_{j=1}^l\max_{1\leq i\leq j}\Vert \Delta^{-1/2}Z^i\Vert^2+\eps^{\gamma}\max_{1\leq j\leq l}\left\vert\sum_{i=1}^j((-\Delta)^{-1}Z^{i-1}, \Delta_i\overline{W})\right\vert\\
&\quad+C\eps^{2\gamma}\sum_{j=1}^l\Vert\Delta_j\overline{W}\Vert^2,\nonumber
\end{align}
where we have used the fact that $Z^0=0$. For  $1\leq l\leq J$, we set
\begin{align}
\label{Al}
\mathcal{A}_l&:=\frac{1}{2}\max_{1\leq j\leq l}\Vert\Delta^{-1/2}Z^j\Vert^2+\frac{\eps^4\tau}{2}\sum_{j=1}^l\Vert\nabla Z^j\Vert^2+\frac{\tau}{\eps}\sum_{j=1}^l\Vert Z^j\Vert^4_{\mathbb{L}^4}\nonumber\\
&\quad+\revd{\frac{1}{4}\sum_{j=1}^l\Vert\Delta^{-1/2}(Z^j-Z^{j-1})\Vert^2},\\
\label{Rl}
\mathcal{R}_l&:=\frac{\tau}{\eps}\sum_{j=1}^l\Vert Z^j\Vert^3_{\mathbb{L}^3}+\eps^{\gamma}\max_{1\leq j\leq l}\left\vert\sum_{i=1}^j((-\Delta)^{-1}Z^{i-1}, \Delta_i\overline{W})\right\vert+C\eps^{2\gamma}\sum_{j=1}^l\Vert\Delta_j\overline{W}\Vert^2.
\end{align}
It therefore follows from \eqref{Val1} that
\revd{
\begin{align}
\label{Val2}
\mathcal{A}_l\leq C\mathcal{R}_l+C\tau\sum_{j=1}^l\mathcal{A}_j\quad \mathbb{P}\text{-a.s.}\quad \forall 1\leq l\leq J.
\end{align} 
}
 Applying the implicit discrete Gronwall lemma to \eqref{Val2} yields the desired result, for $\tau$ small enough.
\end{proof}

\begin{remark}
\label{Difficulty}
One of the difficulties in estimating the error $Z^j$ directly from \lemref{lemmaerror1} is  the presence of the cubic term on the right hand side.  To handle this issue, we introduce a discrete stopping time (or stopping index) $1\leq J_{\eps}\leq J$:
\begin{align}
\label{Stopindex}
J_{\eps}:=\inf\left\{1\leq j\leq J:\; \frac{\tau}{\eps}\sum_{i=1}^j\Vert Z^i\Vert^3_{\mathbb{L}^3}>\eps^{\sigma_0}\right\},
\end{align}
where $\sigma_0>0$ is a constant which will be specified later. The purpose of the stopping index $J_{\eps}$ is to identify those $\omega\in \Omega$ for which the cubic term is small enough. We estimate the right-hand side of the inequality in \lemref{lemmaerror1} for $l=J_{\eps}$ on a probability subset $\Omega_2$ (defined in  \eqref{SetOmega2}) on which the cubic term is small enough.
Then we conclude that $J_{\eps}=J$ on $\Omega_2$ and that $\lim_{\eps\rightarrow 0}\mathbb{P}[\Omega_2]=1$.

The term $\frac{\tau}{\eps}\sum_{j=1}^{J_{\eps}-1}\Vert Z^j\Vert^3_{\mathbb{L}^3}$ of $\mathcal{R}_{J_{\eps}}$ in \eqref{Rl} is bounded above by $\eps^{\sigma_0}$.  We denote the remaining part by $\widetilde{\mathcal{R}}_{J_{\eps}}:=\mathcal{R}_{J_{\eps}}-\frac{\tau}{\eps}\sum_{j=1}^{J_{\eps}-1}\Vert Z^j\Vert^3_{\mathbb{L}^3}$, that is, 
\begin{align}
\label{Restetilde}
\widetilde{\mathcal{R}}_{J_{\eps}}=\frac{\tau}{\eps}\Vert Z^{J_{\eps}}\Vert^3_{\mathbb{L}^3}+\eps^{\gamma}\max_{1\leq j\leq J_{\eps}}\left\vert\sum_{i=1}^j((-\Delta)^{-1}Z^{i-1}, \Delta_i\overline{W})\right\vert+C\eps^{2\gamma}\sum_{j=1}^{J_{\eps}}\Vert\Delta_j\overline{W}\Vert^2.
\end{align}
For some $0<\kappa_0<\sigma_0$, we introduce the following subset of $\Omega$:
\begin{align}
\label{SetOmega2}
\Omega_2:=\{\omega\in\Omega:\: \widetilde{\mathcal{R}}_{J_{\eps}}(\omega)\leq \eps^{\kappa_0}\}. 
\end{align}
The set $\Omega_2\subseteq\Omega$ contains those $\omega\in \Omega$ for which the remainder $\widetilde{\mathcal{R}}_{J_{\eps}}$  does not exceed the  threshold $\eps^{\kappa_0}$.  We will  show that for an appropriate $\kappa_0$, the  subset $\Omega_2$ has high  probability as $\eps\rightarrow 0$, that is, $\lim\limits_{\eps\rightarrow 0}\mathbb{P}[\Omega_2]=1$. To sum up, our strategy is the following:
\begin{itemize}
\item[(i)] we estimate $\mathbb{P}[\Omega_2]$ and  the left hand side of \lemref{lemmaerror1} on $\Omega_2$ up to $J_{\eps}$, see \lemref{Omega2lemma}, 
\item[(ii)] we prove that on $\Omega_2$, it holds $J_{\eps}=J$, see \lemref{Mepsilonlemma}, 
\item[(iii)] we use the identity $\mathbb{E}[\mathcal{A}_J]=\mathbb{E}[\displaystyle{1\!\!1_{\Omega_2}}\mathcal{A}_J]+\mathbb{E}[\displaystyle{1\!\!1_{\Omega_2^c}}\mathcal{A}_J]$, (i) and (ii) to obtain  error estimate for $Z^j$, see \lemref{EstiZ}. 
\end{itemize} 
\end{remark}
We show the (i) in  \lemref{Omega2lemma} below under the following additional assumption.
\begin{Assumption} 
\label{assumption2}
Let $\gamma>\frac{5}{2}$, $0<\eps_0\ll 1$, $\eps\in (0, \eps_0)$,  $\tau\leq \frac{1}{2}\eps^3$ and $h=\eps^{\eta}$, with
\begin{align*}
\revd{
0<\eta\leq\min\left\{\frac{2\gamma-3}{2+3d}, \frac{2\gamma-6}{3d} \right\}}.
\end{align*}
\end{Assumption}
\begin{lemma}
\label{Omega2lemma}
Let  \assref{assumption2}  and the assumptions in \lemref{LemmaLubo19} be fulfilled, let $0<\kappa_0<\sigma_0$.
%  $\Vert u^{\eps}_0\Vert_{\mathbb{H}^3}\leq C\eps^{-\mathfrak{p}_{\mathrm{CH}}}$ for some $\mathfrak{p}_{\mathrm{CH}}\geq 0$ and $\tau\leq C\eps^{\mathfrak{l}_\mathrm{CH}}$ with $\mathfrak{l}_{\mathrm{CH}}\geq 3$ from \cite[Lemma 3.1]{Banas19}. 
  Then  it hold that
\begin{itemize}
\item[(i)] $\displaystyle\max\limits_{1\leq i\leq J_{\eps}}\Vert \Delta^{-1/2}Z^i\Vert^2+\frac{\eps^4\tau}{2}\sum_{i=1}^{J_{\eps}}\Vert\nabla Z^i\Vert^2+\frac{\tau}{\eps}\sum_{i=1}^{J_{\eps}}\Vert Z^j\Vert^4_{\mathbb{L}^4}\leq C\eps^{\kappa_0}$ on $\Omega_2$,
\item[(ii)] 
 $\mathbb{P}[\Omega_2]\geq 1-\frac{C}{\eps^{\kappa_0}}\max\left(\eps^{\sigma_0}, \eps^{2\gamma-2d\eta}\tau,\eps^{\gamma-d\eta+\frac{\sigma_0+1}{3}},\frac{\tau^2}{\eps^{4}}\right)$. 
\end{itemize} 
\end{lemma}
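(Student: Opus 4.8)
The plan is to feed the pathwise estimate of \lemref{lemmaerror1} into the stopping-index framework of \rmref{Difficulty}, and then to convert the remaining stochastic quantities into a probability bound via Markov's inequality. I would prove (i) first and (ii) afterwards.

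\emph{Proof of (i).} After its internal application of the discrete Gronwall lemma, \lemref{lemmaerror1} is precisely the statement $\mathcal{A}_l\leq C\mathcal{R}_l$, $\mathbb{P}$-a.s., for all $1\leq l\leq J$, with $\mathcal{A}_l$ and $\mathcal{R}_l$ as in \eqref{Al}--\eqref{Rl}. I would evaluate this at the stopping index $l=J_\eps$ and split off the cubic part, $\mathcal{R}_{J_\eps}=\frac{\tau}{\eps}\sum_{j=1}^{J_\eps-1}\Vert Z^j\Vert_{\mathbb{L}^3}^3+\widetilde{\mathcal{R}}_{J_\eps}$ as in \eqref{Restetilde}. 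By the definition \eqref{Stopindex} of $J_\eps$ the first sum is $\leq\eps^{\sigma_0}$, while on $\Omega_2$ the definition \eqref{SetOmega2} gives $\widetilde{\mathcal{R}}_{J_\eps}\leq\eps^{\kappa_0}$. Since $0<\kappa_0<\sigma_0$ and $\eps<1$ imply $\eps^{\sigma_0}\leq\eps^{\kappa_0}$, we get $\mathcal{R}_{J_\eps}\leq 2\eps^{\kappa_0}$ and hence $\mathcal{A}_{J_\eps}\leq C\eps^{\kappa_0}$ on $\Omega_2$. As the left-hand side of (i) is bounded by $2\mathcal{A}_{J_\eps}$, this proves (i).

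\emph{Proof of (ii).} Because $\Omega_2^c=\{\widetilde{\mathcal{R}}_{J_\eps}>\eps^{\kappa_0}\}$, Markov's inequality gives $\mathbb{P}[\Omega_2^c]\leq\eps^{-\kappa_0}\,\mathbb{E}[\widetilde{\mathcal{R}}_{J_\eps}]$, so everything reduces to bounding the expectations of the three constituents of $\widetilde{\mathcal{R}}_{J_\eps}$ in \eqref{Restetilde}, whose combined contributions yield the four entries of the maximum. For the noise sum I would use $J_\eps\leq J$ and the nonnegativity of the summands to replace $\sum_{j=1}^{J_\eps}$ by $\sum_{j=1}^{J}$, and then \lemref{LemmaBruit0} together with $J\tau=T$ and $h=\eps^\eta$ to obtain the entry $\eps^{2\gamma-2d\eta}\tau$. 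For the single cubic term, writing $\frac{\tau}{\eps}\Vert Z^{J_\eps}\Vert_{\mathbb{L}^3}^3\leq C\big(\frac{\tau}{\eps}\Vert Z^{J_\eps-1}\Vert_{\mathbb{L}^3}^3+\frac{\tau}{\eps}\Vert Z^{J_\eps}-Z^{J_\eps-1}\Vert_{\mathbb{L}^3}^3\big)$, the first summand is pathwise $\leq\eps^{\sigma_0}$ (it is a single term of the cubic sum controlled by \eqref{Stopindex}), producing the leading entry $\eps^{\sigma_0}$, while the increment summand, controlled by a single-step estimate for the scheme \eqref{scheme1b} together with $\tau\leq\frac{1}{2}\eps^3$, contributes the lower-order entry $\tau^2/\eps^4$.

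The main obstacle is the martingale term $\eps^\gamma\,\mathbb{E}\big[\max_{1\leq j\leq J_\eps}|M_j|\big]$ with $M_j:=\sum_{i=1}^j\xi_i$ and $\xi_i:=((-\Delta)^{-1}Z^{i-1},\Delta_i\overline{W})$. The first point is that $(M_j)_j$ is a discrete $(\mathcal{F}_{t_j})$-martingale: $(-\Delta)^{-1}Z^{i-1}$ is $\mathcal{F}_{t_{i-1}}$-measurable, $\Delta_i\overline{W}$ is independent of $\mathcal{F}_{t_{i-1}}$ with zero mean, and since $(-\Delta)^{-1}Z^{i-1}\in\mathbb{L}^2_0$ the mean correction in \eqref{Noiseapprox2} drops out of the pairing. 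As $J_\eps$ is an $(\mathcal{F}_{t_j})$-stopping time, $M_{\cdot\wedge J_\eps}$ is again a martingale, so Doob's $\mathbb{L}^2$-inequality (equivalently the discrete Burkholder--Davis--Gundy inequality) gives $\mathbb{E}[\max_{j\leq J_\eps}|M_j|]\leq C\big(\mathbb{E}\big[\sum_{i=1}^{J_\eps}\mathbb{E}[\xi_i^2\mid\mathcal{F}_{t_{i-1}}]\big]\big)^{1/2}$. Bounding the conditional second moment by Cauchy--Schwarz in the $\mathbb{H}^{-1}$ pairing and \lemref{LemmaBruit0}, namely $\mathbb{E}[\xi_i^2\mid\mathcal{F}_{t_{i-1}}]\leq C\Vert Z^{i-1}\Vert_{\mathbb{H}^{-1}}^2\,\mathbb{E}[\Vert\Delta_i\overline{W}\Vert_{\mathbb{H}^{-1}}^2]$, and then using the interpolation $\Vert\cdot\Vert_{\mathbb{H}^{-1}}\leq C\Vert\cdot\Vert_{\mathbb{L}^3}$ (valid in $d=2,3$), reduces matters to $\tau\sum_{i=1}^{J_\eps}\Vert Z^{i-1}\Vert_{\mathbb{L}^3}^2$. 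This sum is not controlled on its own, but the cubic sum is: by $Z^0=0$ and \eqref{Stopindex}, $\tau\sum_{i=1}^{J_\eps}\Vert Z^{i-1}\Vert_{\mathbb{L}^3}^3\leq\eps^{\sigma_0+1}$. The crucial step is then the discrete Hölder inequality $\sum_i a_i^2\leq(T/\tau)^{1/3}\big(\sum_i a_i^3\big)^{2/3}$ with $a_i=\Vert Z^{i-1}\Vert_{\mathbb{L}^3}$, which converts the cubic control into the fractional power $\eps^{2(\sigma_0+1)/3}$. Combined with the discrete noise moment and multiplied by $\eps^\gamma$, this produces the entry $\eps^{\gamma-d\eta+\frac{\sigma_0+1}{3}}$; taking the maximum of the four contributions and dividing by $\eps^{\kappa_0}$ gives (ii). The delicate part throughout is the bookkeeping of the powers of $\eps$ and of $h=\eps^\eta$ in this martingale estimate, since it is the simultaneous use of Doob/BDG, the $\mathbb{H}^{-1}$--$\mathbb{L}^3$ interpolation, and the stopping-index bound that manufactures the non-integer exponent $\frac{\sigma_0+1}{3}$.
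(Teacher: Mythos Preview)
Your proof of (i) and your overall strategy for (ii) via Markov's inequality are correct and match the paper. Your treatment of the martingale term is in fact somewhat cleaner than the paper's: by applying the discrete BDG inequality directly to the stopped martingale (so that only $Z^0,\dots,Z^{J_\eps-1}$ enter the predictable bracket), you avoid the paper's expansion in nodal basis functions $\phi_l$ and the separate handling of an overshoot term $\|\Delta^{-1/2}Z^{J_\eps}\|^2$ at the final index.

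However, your handling of the single cubic term $\frac{\tau}{\eps}\|Z^{J_\eps}\|_{\mathbb{L}^3}^3$ has a genuine gap. Splitting $Z^{J_\eps}=Z^{J_\eps-1}+(Z^{J_\eps}-Z^{J_\eps-1})$ and asserting that the increment part is controlled by ``a single-step estimate for the scheme'' does not work as stated: the available one-step bounds (e.g.\ \eqref{bound2a} or the increment terms in \eqref{energy1}) control $Z^j-Z^{j-1}$ only in $\mathbb{H}^{-1}$, or involve non-telescoping energy differences $\mathcal{E}(X^j)-\mathcal{E}(X^{j-1})$ that are not individually $O(\tau)$. Neither yields an $\mathbb{L}^3$-bound producing the entry $\tau^2/\eps^4$. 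The paper's argument is different and hinges on \emph{absorption}: from $\|v\|_{\mathbb{L}^3}^3\leq\frac{1}{4}\|v\|_{\mathbb{L}^4}^4+16\|v\|^2$ and the interpolation $\|v\|^2\leq\|v\|_{\mathbb{H}^{-1}}\|\nabla v\|$ one gets
\[
\tfrac{\tau}{\eps}\|Z^{J_\eps}\|_{\mathbb{L}^3}^3\;\leq\;\tfrac{\tau}{4\eps}\|Z^{J_\eps}\|_{\mathbb{L}^4}^4+\tfrac{1}{8}\|\Delta^{-1/2}Z^{J_\eps}\|^2+\tfrac{C\tau^2}{\eps^2}\|\nabla Z^{J_\eps}\|^2,
\]
so that the first two terms on the right are absorbed into $\mathcal{A}_{J_\eps}$ on the left of \lemref{lemmaerror1}, while only $\frac{C\tau^2}{\eps^2}\|\nabla Z^{J_\eps}\|^2$ remains and is bounded in expectation by $C\tau^2/\eps^4$ via the energy estimate of \lemref{MomentLemma}. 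This absorption step is the mechanism that actually produces the $\tau^2/\eps^4$ entry, and it is what your argument is missing; without it the estimate for $\mathbb{E}[\widetilde{\mathcal{R}}_{J_\eps}]$ does not close.
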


\begin{proof}
 The proof of  (i) follows from the definition of the subset $\Omega_2$ \eqref{SetOmega2} and \lemref{lemmaerror1}. It remains to prove (ii). 
We recall that $\mathbb{P}[\Omega_2]=1-\mathbb{P}[\Omega_2^c]$. By Markov's inequality we have $\mathbb{P}[\Omega_2^c]\leq \frac{1}{\eps^{\kappa_0}}\mathbb{E}[\widetilde{\mathcal{R}}_{J_{\eps}}]$. It therefore remains to estimate $\mathbb{E}[\widetilde{\mathcal{R}}_{J_{\eps}}]$.
Using Young's inequality, it follows that $\vert v\vert^3=\vert v\vert^2\vert v\vert\leq \frac{1}{4}\vert v\vert^4+16\vert v\vert^2$ for all $v\in \mathbb{R}$.  This leads to
\begin{align}
\label{Interpol0}
\Vert v\Vert^3_{\mathbb{L}^3}\leq \frac{1}{4}\Vert v\Vert^4_{\mathbb{L}^4}+16\Vert  v\Vert^2,\quad v\in \mathbb{L}^4.
\end{align}
To handle the cubic term  in $\widetilde{\mathcal{R}}_{J_{\eps}}$ \eqref{Restetilde}, we employ \eqref{Interpol0},  the interpolation inequality $\Vert u\Vert^2_{\mathbb{L}^2}\leq \Vert u\Vert_{\mathbb{H}^{-1}}\Vert\nabla u\Vert_{\mathbb{L}^2}$ and  Young's inequality.  This leads to 
\begin{align}
\label{cubic1}
\frac{\tau}{\eps}\Vert Z^{J_{\eps}}\Vert^3_{\mathbb{L}^3}\leq \frac{\tau}{4\eps}\Vert Z^{J_{\eps}}\Vert^4_{\mathbb{L}^4}+\frac{1}{8}\Vert\Delta^{-1/2}Z^{J_{\eps}}\Vert^2+\frac{C\tau^2}{\eps^2}\Vert\nabla Z^{J_{\eps}}\Vert^2. 
\end{align}
From \lemref{lemmaerror1}, splitting the sum involving the cubic terms in  $\mathcal{R}_{J_{\eps}}$ \eqref{Restetilde}, employing  \eqref{cubic1} and using the definition   of $J_{\eps}$ \eqref{Stopindex} yields the following estimate  of $A_{J_{\eps}}$ \eqref{Al}
\begin{align*}
A_{J_{\eps}}
&\leq \frac{\tau}{\eps}\sum_{i=1}^{J_{\eps}-1}\Vert Z^i\Vert^3_{\mathbb{L}^3}+\frac{\tau}{\eps}\Vert Z^{J_{\eps}}\Vert^3_{\mathbb{L}^3}+\eps^{\gamma}\max_{1\leq j\leq J_{\eps}}\left\vert\sum_{i=1}^j((-\Delta)^{-1}Z^{i-1}, \Delta_i\overline{W})\right\vert+C\eps^{2\gamma}\sum_{j=1}^{J_{\eps}}\Vert\Delta_j\overline{W}\Vert^2\nonumber\\
&\leq C\eps^{\sigma_0}+\frac{1}{8}\Vert \Delta^{-1/2}Z^{J_{\eps}}\Vert^2+\frac{\tau}{4\eps}\Vert Z^{J_{\eps}}\Vert^4_{\mathbb{L}^4}+\frac{C\tau^2}{\eps^2}\Vert\nabla Z^{J_{\eps}}\Vert^2\nonumber\\
&\quad+\eps^{\gamma}\max_{1\leq j\leq J_{\eps}}\left\vert\sum_{i=1}^j((-\Delta)^{-1}Z^{i-1}, \Delta_i\overline{W})\right\vert+C\eps^{2\gamma}\sum_{i=1}^{J_{\eps}}\Vert\Delta_i\overline{W}\Vert^2.
\end{align*}
Absorbing  $\frac{1}{8}\Vert \Delta^{-1/2}Z^{J_{\eps}}\Vert^2$ and $\frac{\tau}{4\eps}\Vert Z^{J_{\eps}}\Vert^4_{\mathbb{L}^4}$ in the left hand side of the above estimate, taking the expectation in the resulting inequality and using \lemref{MomentLemma} ii)  yields
\begin{align}
\label{EstiAM1}
\mathbb{E}[\frac{1}{2}A_{J_{\eps}}]
\leq C\eps^{\sigma_0}+C\eps^{2\gamma-d\eta}+\frac{C\tau^2}{\eps^4}+\eps^{\gamma}\mathbb{E}\left[\max_{1\leq j\leq J_{\eps}}\left\vert\sum_{i=1}^j((-\Delta)^{-1}Z^{i-1}, \Delta_i\overline{W})\right\vert\right].
\end{align}
To estimate the last term in \eqref{EstiAM1}, we first use triangle inequality to split it as
\begin{align*}
\mathbb{E}\left[\max_{1\leq j\leq J_{\eps}}\vert\sum_{i=1}^j((-\Delta)^{-1}Z^{i-1}, \Delta_i\overline{W})\vert\right]&\leq \mathbb{E}\left[\max_{1\leq j\leq J_{\eps}}\left\vert\sum_{i=1}^j((-\Delta)^{-1}Z^{i-1}, \Delta_iW)\right\vert\right]\nonumber\\
&\quad+\mathbb{E}\left[\max_{1\leq j\leq J_{\eps}}\left\vert\sum_{i=1}^j\left((-\Delta)^{-1}Z^{i-1}, m(\Delta_iW)\right)\right\vert\right]. 
\end{align*} 
Using the expression of $\Delta_iW$ \eqref{Noiseapprox1},  \lemref{Lemmabasis} and \assref{assumption2} yields
\begin{align}
\label{maxW1}
&\eps^{\gamma}\mathbb{E}\left[\max_{1\leq j\leq J_{\eps}}\left\vert\sum_{i=1}^j((-\Delta)^{-1}Z^{i-1}, \Delta_iW)\right\vert\right]\nonumber\\
&\leq C\eps^{\gamma-\frac{d\eta}{2}}\sum_{l=1}^L\mathbb{E}\left[\max_{1\leq j\leq J_{\eps}}\left\vert\sum_{i=1}^j((-\Delta)^{-1}Z^{i-1}, \phi_l)\Delta_i\beta_l\right\vert\right].
\end{align}
Using the discrete Burkholder-Davis-Gundy inequality \cite[Lemma 3.3]{Banas19}, \eqref{equiv1} and  \eqref{equiv2} yields
\begin{align*}
&\mathbb{E}\left[\max_{1\leq j\leq J_{\eps}}\left\vert\sum_{i=1}^j((-\Delta)^{-1}Z^{i-1}, \phi_l)\Delta_i\beta_l\right\vert\right]\nonumber\\
&\leq C\mathbb{E}\left[\sum_{i=1}^{J_{\eps}+1}\tau((-\Delta)^{-1}Z^{i-1}, \phi_l)^2\right]^{\frac{1}{2}}\leq C\mathbb{E}\left[\sum_{i=1}^{J_{\eps}+1}\tau\Vert(-\Delta)^{-1}Z^{i-1}\Vert^2 \Vert\phi_l\Vert^2\right]^{\frac{1}{2}}\nonumber\\
&\leq Ch^{\frac{d}{2}}\mathbb{E}\left[\sum_{i=1}^{J_{\eps}+1}\tau\Vert\Delta^{-1/2}Z^{i-1}\Vert^2\right]^{\frac{1}{2}} \revd{ \leq} Ch^{\frac{d}{2}}\mathbb{E}\left[\sum_{i=1}^{J_{\eps}}\tau\Vert Z^{i-1}\Vert^2\right]^{\frac{1}{2}}+Ch^{\frac{d}{2}}\mathbb{E}\left[\tau\Vert\Delta^{-1/2}Z^{J_{\eps}}\Vert^2\right]^{\frac{1}{2}}.
\end{align*}
Using  the embedding $\mathbb{L}^3\hookrightarrow\mathbb{L}^2$,  H\"{o}lder's inequality  and the definition of $J_{\eps}$ \eqref{Stopindex} we obtain
\begin{align}
\label{Dadis1}
&\mathbb{E}\left[\max_{1\leq j\leq J_{\eps}}\left\vert\sum_{i=1}^j((-\Delta)^{-1}Z^{i-1}, \phi_l)\Delta_i\beta_l\right\vert\right]\nonumber\\
&\leq Ch^{\frac{d}{2}}\mathbb{E}\left[\tau\left(\sum_{i=1}^{J_{\eps}-1}\Vert Z^i\Vert^3_{\mathbb{L}^3}\right)^{\frac{2}{3}}\left(\sum_{i=1}^{J_{\eps}-1}1^3\right)^{\frac{1}{3}}\right]^{\frac{1}{2}}+Ch^{\frac{d}{2}}\tau^{\frac{1}{2}}\left(\mathbb{E}\Vert \Delta^{-1/2}Z^{J_{\eps}}\Vert^2\right)^{\frac{1}{2}}\nonumber\\
&\leq Ch^{\frac{d}{2}}\eps^{\frac{\sigma_0+1}{3}}+Ch^{\frac{d}{2}}\tau^{\frac{1}{2}}\left(\mathbb{E}\Vert \Delta^{-1/2}Z^{J_{\eps}}\Vert^2\right)^{\frac{1}{2}}.
\end{align}
Substituting \eqref{Dadis1} in \eqref{maxW1}, using \lemref{Lemmabasis} and the fact that $h=\eps^{\eta}$ yields
\begin{align}
\label{Yan1}
\eps^{\gamma}\mathbb{E}\left[\max_{1\leq j\leq J_{\eps}}\left\vert\sum_{i=1}^j((-\Delta)^{-1}Z^{i-1}, \Delta_iW)\right\vert\right]&\leq C\eps^{\gamma-d\eta+\frac{\sigma_0+1}{3}}+ C\eps^{\gamma-d\eta}\tau^{\frac{1}{2}}\left(\mathbb{E}\Vert \Delta^{-1/2}Z^{J_{\eps}}\Vert^2\right)^{\frac{1}{2}}\nonumber\\
&\leq C\eps^{\gamma-d\eta+\frac{\sigma_0+1}{3}}+C\eps^{2\gamma-2d\eta}\tau+\frac{1}{8}\mathbb{E}\Vert\Delta^{-1/2}Z^{J_{\eps}}\Vert^2.
\end{align}
Along the same lines as in the preceding estimate, we obtain 
\begin{align}
\label{Yan2}
&\eps^{\gamma}\mathbb{E}\left[\max\limits_{1\leq j\leq J_{\eps}}\left\vert\sum\limits_{i=1}^j\left((-\Delta)^{-1}Z^{i-1}, m(\Delta_iW)\right)\right\vert\right]\nonumber\\
&\leq  C\eps^{\gamma-d\eta+\frac{\sigma_0+1}{3}}+C\eps^{2\gamma-2d\eta}\tau+\frac{1}{8}\mathbb{E}\Vert\Delta^{-1/2}Z^{J_{\eps}}\Vert^2.
\end{align}  
From the expression of $\mathcal{A}_{J_{\eps}}$ \eqref{Al},  using  \lemref{lemmaerror1}, \eqref{Yan1}, \eqref{Yan2} and  \lemref{MomentLemma} yields
\begin{align}
\label{Yan3}
&\mathbb{E}[\Vert \Delta^{-1/2}Z^{J_{\eps}}\Vert^2]+\frac{\eps^4\tau}{2}\sum_{i=i}^{J_{\eps}}\mathbb{E}[\Vert\nabla Z^i\Vert^2]+\frac{3\tau}{4\eps}\sum_{i=1}^{J_{\eps}}\mathbb{E}[\Vert Z^i\Vert^4_{\mathbb{L}^4}]\nonumber\\
&\leq C\max\left(\eps^{\sigma_0}, \eps^{2\gamma-2d\eta}\tau,\eps^{\gamma-d\eta+\frac{\sigma_0+1}{3}},\frac{\tau^2}{\eps^{4}}\right).
\end{align} 
Substituting \eqref{Yan3} in \eqref{cubic1} and using \lemref{MomentLemma} yields
\begin{align}
\label{Yan4}
\frac{\tau}{\eps}\mathbb{E}\left[\Vert Z^{J_{\eps}}\Vert^3_{\mathbb{L}^3}\right]\leq  C\max\left(\eps^{\sigma_0}, \eps^{2\gamma-2d\eta}\tau,\eps^{\gamma-d\eta+\frac{\sigma_0+1}{3}},\frac{\tau^2}{\eps^{4}}\right).
\end{align}
Substituting \eqref{Yan2} in the expression of $\mathcal{\widetilde{R}}_{J_{\eps}}$ \eqref{Restetilde}, using \eqref{Yan3}, \eqref{Yan4} and \lemref{LemmaBruit0} leads to
\begin{align*}
\mathbb{E}[\mathcal{\widetilde{R}}_{J_{\eps}}]\leq C\max\left(\eps^{\sigma_0}, \eps^{2\gamma-2d\eta}\tau,\eps^{\gamma-d\eta+\frac{\sigma_0+1}{3}},\frac{\tau^2}{\eps^{4}}\right). 
\end{align*}
This completes the proof of the lemma. 
\end{proof}

We prove in \lemref{Mepsilonlemma} below that $J_{\eps}=J$ on $\Omega_2$ and that  $\mathbb{P}[\Omega_2]$ goes to $1$ as $\eps\rightarrow 0$.

\begin{lemma}
\label{Mepsilonlemma}
Let \assref{assumption2} be fulfilled. Assume that for fixed \revl{$0 < \alpha < 7$, $2<  \delta \leq \frac{8}{3}$ the parameters $(\sigma_0, \kappa_0)$ satisfy} 
\begin{align*}
\revd{\sigma_0>\frac{4\delta-7}{\delta-1}+\frac{\alpha(3-\delta)}{\delta-1}}\quad\text{and}\quad \revd{\sigma_0>} \kappa_0>\left(\frac{4-\delta}{3}\right)\sigma_0+\frac{4\delta-7}{3}+\frac{\alpha(3-\delta)}{3}.
\end{align*}
 Then there exists $\eps_0\equiv\eps_0(\sigma_0, \kappa_0)$, such that for every $\eps\in(0, \eps_0)$
 \begin{align*}
 \revd{J_{\eps}(\omega)=J \quad \forall \omega\in \Omega_2.}
\end{align*}  
 
\revd{Moreover, $\lim\limits_{\eps\rightarrow 0}\mathbb{P}[\Omega_2]=1$ if 
\begin{align*}
\gamma>\max\left\{\frac{\kappa_0}{2}+d\eta, \kappa_0+d\eta-\frac{\sigma_0+1}{3},\gamma>\frac{8\delta-14+2\alpha(3-\delta)}{3(\delta-1)}+d\eta-\frac{1}{3}\right\},\; \tau^2\leq \eps^{4+\kappa_0+\beta},
\end{align*}
}
where $\beta>0$ may be arbitrarily small and $\eta$ is as in \assref{assumption2}.
\end{lemma}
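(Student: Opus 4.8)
The plan is to prove the two assertions separately. For the identity $J_\eps = J$ on $\Omega_2$ I would argue by contradiction, exploiting the very definition \eqref{Stopindex} of the stopping index together with the \emph{pathwise} energy bound of \lemref{Omega2lemma}(i), which holds on all of $\Omega_2$ up to the index $J_\eps$. Suppose $\omega\in\Omega_2$ with $J_\eps(\omega)<J$; then by the infimum in \eqref{Stopindex} one has $\frac{\tau}{\eps}\sum_{i=1}^{J_\eps}\|Z^i\|^3_{\mathbb{L}^3}>\eps^{\sigma_0}$. The strategy is to bound this same cubic sum from above by $C\eps^{p}$ with an exponent $p=p(\sigma_0,\kappa_0,\alpha,\delta)$ that, under the stated hypotheses, satisfies $p>\sigma_0$; for $\eps$ small this forces $C\eps^{p}<\eps^{\sigma_0}$, contradicting the previous line, so the defining set in \eqref{Stopindex} is empty on $\Omega_2$ and $J_\eps=J$.

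The heart of the matter is the upper bound on $\frac{\tau}{\eps}\sum_{i=1}^{J_\eps}\|Z^i\|^3_{\mathbb{L}^3}$ in the case $d=3$. First I would isolate the terminal term $\frac{\tau}{\eps}\|Z^{J_\eps}\|^3_{\mathbb{L}^3}$ via \eqref{Interpol0}--\eqref{cubic1} and control it through \lemref{Omega2lemma}(i) exactly as in the proof of that lemma, so that the sum up to $J_\eps-1$ (already $\le\eps^{\sigma_0}$ by \eqref{Stopindex}) is the only genuinely new quantity. The remaining sum I would estimate by the interpolation inequality \cite[Lemma 4.5]{BM24}, which controls $\|Z^i\|_{\mathbb{L}^3}$ by the quantities appearing in \lemref{Omega2lemma}(i), namely $\|Z^i\|_{\mathbb{H}^{-1}}=\|\Delta^{-1/2}Z^i\|$ and $\|Z^i\|_{\mathbb{L}^4}$; here $\delta$ plays the role of the interpolation exponent and $\alpha$ that of a weight produced by a Young's inequality. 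A discrete Hölder inequality in the index $i$ then distributes the resulting powers onto the two controlled quantities $\max_i\|\Delta^{-1/2}Z^i\|^2\le C\eps^{\kappa_0}$ and $\frac{\tau}{\eps}\sum_i\|Z^i\|^4_{\mathbb{L}^4}\le C\eps^{\kappa_0}$, together with the cardinality factor $J_\eps\le T/\tau$ and the a priori bound $\frac{\tau}{\eps}\sum_{i=1}^{J_\eps-1}\|Z^i\|^3_{\mathbb{L}^3}\le\eps^{\sigma_0}$. The Hölder exponents are to be arranged so that all powers of $\tau$ cancel, leaving
\[
\frac{\tau}{\eps}\sum_{i=1}^{J_\eps}\|Z^i\|^3_{\mathbb{L}^3}\le C\eps^{p},\qquad p=\kappa_0+\frac{(\delta-1)\sigma_0-(4\delta-7)-\alpha(3-\delta)}{3}.
\]
With this $p$, the lower bound $3\kappa_0>(4-\delta)\sigma_0+4\delta-7+\alpha(3-\delta)$ is precisely the statement $p>\sigma_0$, while the hypothesis $\sigma_0(\delta-1)>4\delta-7+\alpha(3-\delta)$ is exactly $\tfrac{(4-\delta)\sigma_0+4\delta-7+\alpha(3-\delta)}{3}<\sigma_0$, i.e.\ the condition guaranteeing that the admissible interval for $\kappa_0$ (between the lower bound and $\sigma_0$) is non-empty. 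The main obstacle is precisely this bookkeeping: selecting the split in \cite[Lemma 4.5]{BM24} and the Hölder conjugates so that the $\tau$-powers cancel identically, all intermediate exponents are admissible for $2<\delta\le\frac{8}{3}$ and $0<\alpha<7$ in $d=3$, and the resulting exponent has the displayed form.

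For the second assertion I would start from the quantitative estimate \lemref{Omega2lemma}(ii),
\[
\mathbb{P}[\Omega_2]\ge 1-\frac{C}{\eps^{\kappa_0}}\max\left(\eps^{\sigma_0},\,\eps^{2\gamma-2d\eta}\tau,\,\eps^{\gamma-d\eta+\frac{\sigma_0+1}{3}},\,\frac{\tau^2}{\eps^{4}}\right),
\]
and show that each of the four terms, after division by $\eps^{\kappa_0}$, tends to $0$ as $\eps\to0$. The first gives $\eps^{\sigma_0-\kappa_0}\to0$ since $\sigma_0>\kappa_0$; the second gives $\eps^{2\gamma-2d\eta-\kappa_0}\tau\to0$ since $\gamma>\frac{\kappa_0}{2}+d\eta$ and $\tau\le\frac12\eps^3\to0$ by \assref{assumption2}; the third gives $\eps^{\gamma-d\eta+\frac{\sigma_0+1}{3}-\kappa_0}\to0$ since $\gamma>\kappa_0+d\eta-\frac{\sigma_0+1}{3}$; and the fourth gives $\tau^2/\eps^{4+\kappa_0}\le\eps^{\beta}\to0$ by $\tau^2\le\eps^{4+\kappa_0+\beta}$. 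Finally, the $(\delta,\alpha)$-form of the third $\gamma$-threshold is recovered by substituting the infimal admissible values $\sigma_0=\frac{4\delta-7+\alpha(3-\delta)}{\delta-1}$ and $\kappa_0=\frac{(4-\delta)\sigma_0+4\delta-7+\alpha(3-\delta)}{3}$ into $\gamma>\kappa_0+d\eta-\frac{\sigma_0+1}{3}$: a short computation collapses the right-hand side to $\frac{8\delta-14+2\alpha(3-\delta)}{3(\delta-1)}+d\eta-\frac13$, so that whenever $\gamma$ exceeds this value one may choose $(\sigma_0,\kappa_0)$ near their infima satisfying all constraints, and consequently $\lim_{\eps\to0}\mathbb{P}[\Omega_2]=1$.
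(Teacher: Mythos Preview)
Your overall strategy matches the paper's: a contradiction argument for $J_\eps=J$ on $\Omega_2$ via \cite[Lemma~4.5]{BM24} together with \lemref{Omega2lemma}(i), followed by \lemref{Omega2lemma}(ii) for $\mathbb{P}[\Omega_2]\to 1$. The second part is exactly as in the paper.

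There is, however, one inaccuracy in your description of Step~1. You say the interpolation of \cite[Lemma~4.5]{BM24} controls $\|Z^i\|_{\mathbb{L}^3}$ through $\|Z^i\|_{\mathbb{H}^{-1}}$ and $\|Z^i\|_{\mathbb{L}^4}$ alone. In fact the second term of that interpolation involves the \emph{gradient} norm: the paper obtains
\[
\frac{\tau}{\eps}\sum_{i=1}^{J_\eps}\|Z^i\|^3_{\mathbb{L}^3}\le C\eps^{\sigma_0+\alpha-\kappa_0-1}\tau\sum_{i=1}^{J_\eps}\|Z^i\|^4_{\mathbb{L}^4}
+C\eps^{(\kappa_0+1-\sigma_0-\alpha)(3-\delta)}\tau\sum_{i=1}^{J_\eps}\|\Delta^{-1/2}Z^i\|^{\frac{4-\delta}{2}}\|\nabla Z^i\|^{\frac{3\delta-4}{2}},
\]
and it is precisely the gradient bound $\frac{\eps^4\tau}{2}\sum_{i=1}^{J_\eps}\|\nabla Z^i\|^2\le C\eps^{\kappa_0}$ from \lemref{Omega2lemma}(i) (not the $\mathbb{L}^4$ bound) that is fed into the second term after H\"older in $i$ with exponents $\tfrac{4}{3\delta-4}$, $\tfrac{4}{8-3\delta}$. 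The $\eps^{-4}$ penalty in the final exponent $3\kappa_0-\sigma_0(3-\delta)+7-4\delta-\alpha(3-\delta)$ comes exactly from this gradient factor. Your list of inputs omits $\|\nabla Z^i\|$ and instead includes the a~priori bound $\frac{\tau}{\eps}\sum_{i<J_\eps}\|Z^i\|^3_{\mathbb{L}^3}\le\eps^{\sigma_0}$, which the paper does \emph{not} use at this stage; in $d=3$ there is no direct interpolation of $\mathbb{L}^3$ between $\mathbb{H}^{-1}$ and $\mathbb{L}^4$ that avoids the gradient.

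Two smaller points: there is no need to isolate the terminal term $i=J_\eps$ --- the paper applies the interpolation to the full sum $\sum_{i=1}^{J_\eps}$ directly. And while your displayed exponent $p$ differs numerically from the paper's $3\kappa_0-\sigma_0(3-\delta)+7-4\delta-\alpha(3-\delta)$, both reduce to the same constraint $3\kappa_0>(4-\delta)\sigma_0+4\delta-7+\alpha(3-\delta)$ when compared with $\sigma_0$, so the conclusion is unaffected once the correct interpolation is in place.
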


\begin{proof}
$\mathbf{1}.$
We proceed by contradiction. We assume that $J_{\eps}<J$ on $\Omega_2$ and show that
\begin{align*}
\frac{\tau}{\eps}\sum_{i=1}^{J_{\eps}}\Vert Z^{i}\Vert^3_{\mathbb{L}^3}\leq \eps^{\sigma_0}\quad \text{on}\quad \Omega_2,
\end{align*}
which contradicts  the definition of $J_{\eps}$. 

Using  \cite[Lemma 4.5]{BM24} and \lemref{Omega2lemma} (i), it follows that, on $\Omega_2$ we have
\begin{align*}
\frac{\tau}{\eps}\sum_{i=1}^{J_{\eps}}\Vert Z^i\Vert^3_{\mathbb{L}^3}&\leq C\eps^{\sigma_0+\alpha-\kappa_0-1}\tau\sum_{i=1}^{J_{\eps}}\Vert Z^i\Vert^4_{\mathbb{L}^4}\nonumber\\
&\qquad+C\eps^{(\kappa_0+1-\sigma_0-\alpha)(3-\delta)}\tau\sum_{i=1}^{J_{\eps}}\Vert\Delta^{-1/2} Z^i\Vert^{\frac{4-\delta}{2}}\Vert \nabla Z^i\Vert^{\frac{3\delta-4}{2}}\nonumber\\
&\leq C\eps^{\sigma_0+\alpha}+C\eps^{(\kappa_0+1-\sigma_0-\alpha)(3-\delta)}\max_{1\leq i\leq J_{\eps}}\Vert \Delta^{-1/2}Z^i\Vert^{\frac{4-\delta}{2}}\tau\sum_{i=1}^{J_{\eps}}\Vert \nabla Z^i\Vert^{\frac{3\delta-4}{2}}\nonumber\\
&\leq C\eps^{\sigma_0+\alpha}+C\eps^{(\kappa_0+1-\sigma_0-\alpha)(3-\delta)+\left(\frac{4-\delta}{4}\right)\kappa_0}\tau\sum_{i=1}^{J_{\eps}}(\Vert \nabla Z^i\Vert^2)^{\frac{3\delta-4}{4}}.
\end{align*}
Since for $2<\delta<\frac{8}{3}$ we have $\frac{4}{3\delta-4}>1$ and $\frac{4}{8-3\delta}>1$, using H\"{o}lder's inequality  with exponents $\frac{4}{8-3\delta}>1$ and $\frac{4}{8-3\delta} $; and \lemref{Omega2lemma} (i) leads to
\begin{align*}
\frac{\tau}{\eps}\sum_{i=1}^{J_{\eps}}\Vert Z^i\Vert^3_{\mathbb{L}^3}&\leq C\eps^{\sigma_0+\alpha}+C\eps^{(\kappa_0+1-\sigma_0-\alpha)(3-\delta)+\left(\frac{4-\delta}{4}\right)\kappa_0}\tau\left(\sum_{i=1}^{J_{\eps}}\Vert\nabla Z^i\Vert^2\right)^{\frac{3\delta-4}{4}}\left(\sum_{i=1}^{J_{\eps}}1^{\frac{4}{8-3\delta}}\right)^{\frac{8-3\delta}{4}}\nonumber\\
&\leq C\eps^{\sigma_0+\alpha}+C\eps^{(\kappa_0+1-\sigma_0-\alpha)(3-\delta)+\left(\frac{4-\delta}{4}\right)\kappa_0}(\eps^{-4})^{\frac{3\delta-4}{4}}\eps^{\left(\frac{3\delta-4}{4}\right)\kappa_0}\nonumber\\
&\leq C\eps^{\sigma_0+\alpha}+C\eps^{3\kappa_0-\sigma_0(3-\delta)+7-4\delta-\alpha(3-\delta)}.
\end{align*}
The right hand side of the above inequality is bounded above by $\eps^{\sigma_0}$ for $\eps$ small enough if $3\kappa_0-\sigma_0(3-\delta)+7-4\delta-\alpha(3-\delta)> \sigma_0$, i.e., if $\kappa_0>\left(\frac{4-\delta}{3}\right)\sigma_0+\frac{4\delta-7}{3}+\frac{\alpha(3-\delta)}{3}$. This proves the first statement of the lemma. 

$\mathbf{2}.$
We now prove the second statement. 
 Let us recall that from \lemref{Omega2lemma} (ii) we have
\begin{align*}
\mathbb{P}[\Omega_2]\geq 1-C\eps^{-\kappa_0}\max\left(\eps^{\sigma_0}, \eps^{2\gamma-2d\eta},\eps^{\gamma-d\eta+\frac{\sigma_0+1}{3}}, \frac{\tau^2}{\eps^{4}}\right).
\end{align*}
Hence, to ensure $\lim\limits_{\eps\rightarrow 0}\mathbb{P}[\Omega_2]=1$ we require $\sigma_0>\kappa_0$, $2\gamma-2d\eta-\kappa_0>0$, $\gamma-d\eta+\frac{\sigma_0+1}{3}-\kappa_0>0$ and $\tau^2\leq \eps^{4+\kappa_0+\beta}$ for an arbitrarily small $\beta$. Taking in account the requirement in Step $\mathbf{1}$ about $\kappa_0$, to get $\sigma_0>\kappa_0$ it is enough to require $\sigma_0>\frac{4\delta-7}{\delta-1}+\frac{\alpha(3-\delta)}{\delta-1}$. To get $2\gamma-2d\eta-\kappa_0>0$ and $\gamma-d\eta+\frac{\sigma_0+1}{3}-\kappa_0>0$, it is enough to require $\gamma>\max\left\{\frac{\kappa_0}{2}+d\eta, \kappa_0+d\eta-\frac{\sigma_0+1}{3}\right\}$.  \revd{In addition, by $\mathbf{1}.$, $\kappa_0>\left(\frac{4-\delta}{3}\right)\sigma_0+\frac{4\delta-7}{3}+\frac{\alpha(3-\delta)}{3}$, $\sigma_0>\frac{4\delta-7}{\delta-1}+\frac{\alpha(3-\delta)}{\delta-1}$, which along with $\gamma-d\eta+\frac{\sigma_0+1}{3}-\kappa_0>0$, $\sigma_0>\kappa_0$ implies $\gamma>\frac{8\delta-14+2\alpha(3-\delta)}{3(\delta-1)}+d\eta-\frac{1}{3}$.}
\end{proof}

\revd{
We collect the requirements on  parameters useful to derive an  estimate for $Z^j=X^j-X^j_{\mathrm{CH}}$ in the assumption below.
\begin{Assumption}
\label{assumption2a}
Let $u^{\eps}_0\in\mathbb{H}^3$, $\mathcal{E}(u^{\eps}_0)<C$.    Assume that for fixed $0 < \alpha < 7$, $2<  \delta \leq \frac{8}{3}$ the parameters $(\sigma_0, \kappa_0, \gamma)$ satisfy
\begin{align*}
\sigma_0&>\frac{4\delta-7}{\delta-1}+\frac{\alpha(3-\delta)}{\delta-1},\quad \sigma_0>\kappa_0>\left(\frac{4-\delta}{3}\right)\sigma_0+\frac{4\delta-7}{3}+\frac{\alpha(3-\delta)}{3}, \\
\gamma&>\max\left\{\frac{\kappa_0}{2}+d\eta, \kappa_0+d\eta-\frac{\sigma_0+1}{3},\frac{8\delta-14+2\alpha(3-\delta)}{3(\delta-1)}+d\eta-\frac{1}{3}, \frac{5}{2}\right\}.
\end{align*}
For sufficiently small $\eps_0\equiv (\sigma_0,\kappa_0)>0$ and $\mathfrak{l}_{\mathrm{CH}}\geq 3$ from \lemref{LemmaLubo19}, and arbitrary $0<\beta<\frac{1}{2}$, the time-step  $\tau$ and  the mesh-size  $h$ of the approximation of the noise satisfy
\begin{align*}
\tau\leq C\min\left\{\eps^{\mathfrak{l}_{\mathrm{CH}}}, \eps^{2+\frac{\kappa_0}{2}+\beta}\right\},\quad h=\eps^{\eta}\quad \forall \eps\in(0, \eps_0),
\end{align*}
where $\eta$ is such that
\begin{align*}
0<\eta<\min\left\{\frac{2\gamma-3}{2+3d}, \frac{2\gamma-6}{3d}\right\}.
\end{align*}
\end{Assumption}
}

\begin{lemma}
\label{EstiZ}
Let \assref{assumption2a}  be fulfilled. Then 
\begin{align*}
&\mathbb{E}\left[\max_{1\leq j\leq J}\Vert Z^j\Vert^2_{\mathbb{H}^{-1}}+\eps^4\tau\sum_{j=1}^J\Vert\nabla Z^j\Vert^2+\frac{\tau}{\eps}\sum_{j=1}^J\Vert Z^j\Vert^4_{\mathbb{L}^4}\right]\nonumber\\
&\quad\leq \left(\frac{C}{\eps^{\kappa_0}}\max\left\{\eps^{\sigma_0}, \eps^{2\gamma-2d\eta},\eps^{\gamma-d\eta+\frac{\sigma_0+1}{3}}, \frac{\tau^2}{\eps^{4}}\right\}\right)^{\frac{1}{2}}.
\end{align*}
\end{lemma}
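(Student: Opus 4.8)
The plan is to bound $\mathbb{E}[\mathcal{A}_J]$, where $\mathcal{A}_J$ is the error quantity from \eqref{Al}, and then to carry out the three-step programme of \rmref{Difficulty}. First I would observe that testing \eqref{scheme1b} and its deterministic analogue with $\varphi=1$, together with $m(\Delta_j\overline{W})=0$, gives $m(X^j)=m(X^j_{\mathrm{CH}})=0$; hence $Z^j\in\mathbb{L}^2_0$ and $\Vert Z^j\Vert_{\mathbb{H}^{-1}}=\Vert\Delta^{-1/2}Z^j\Vert$ by \eqref{equiv1}, so the left-hand side of \lemref{EstiZ} is $\le 2\,\mathbb{E}[\mathcal{A}_J]$. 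It thus suffices to estimate $\mathbb{E}[\mathcal{A}_J]=\mathbb{E}[\mathbf 1_{\Omega_2}\mathcal{A}_J]+\mathbb{E}[\mathbf 1_{\Omega_2^c}\mathcal{A}_J]$, with $\Omega_2$ from \eqref{SetOmega2}.

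On $\Omega_2$ the estimate is immediate from the preceding lemmas: \lemref{Mepsilonlemma} gives $J_\eps=J$ on $\Omega_2$ for $\eps$ small, so $\mathcal{A}_J=\mathcal{A}_{J_\eps}$ there, and \lemref{Omega2lemma}(i) yields $\mathcal{A}_{J_\eps}\le C\eps^{\kappa_0}$ pointwise on $\Omega_2$; thus $\mathbb{E}[\mathbf 1_{\Omega_2}\mathcal{A}_J]\le C\eps^{\kappa_0}$. This already lies below the target, since the constraint $\kappa_0>\tfrac{4-\delta}{3}\sigma_0$ of \assref{assumption2a} (with $\delta\le\tfrac83$) forces $3\kappa_0>\sigma_0$, whence $\eps^{\kappa_0}\le\big(\eps^{\sigma_0-\kappa_0}\big)^{1/2}\le\big(\tfrac{C}{\eps^{\kappa_0}}\eps^{\sigma_0}\big)^{1/2}$, which is dominated by the right-hand side of \lemref{EstiZ}.

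On $\Omega_2^c$ no pathwise smallness is available, so I would apply the Cauchy-Schwarz inequality $\mathbb{E}[\mathbf 1_{\Omega_2^c}\mathcal{A}_J]\le(\mathbb{P}[\Omega_2^c])^{1/2}(\mathbb{E}[\mathcal{A}_J^2])^{1/2}$. The probability factor is supplied by \lemref{Omega2lemma}(ii); since its bound differs from the statement only by the harmless factor $\tau\le1$ in the second slot, one has $(\mathbb{P}[\Omega_2^c])^{1/2}\le\big(\tfrac{C}{\eps^{\kappa_0}}\max\{\eps^{\sigma_0},\eps^{2\gamma-2d\eta},\eps^{\gamma-d\eta+\frac{\sigma_0+1}{3}},\tau^2\eps^{-4}\}\big)^{1/2}$, which is exactly the right-hand side of \lemref{EstiZ}. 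For the moment factor I would bound every term of $\mathcal{A}_J$ by the energies: splitting $\Vert Z^j\Vert^4_{\mathbb{L}^4}\le C(\Vert X^j\Vert^4_{\mathbb{L}^4}+\Vert X^j_{\mathrm{CH}}\Vert^4_{\mathbb{L}^4})$, using $\tfrac1\eps\Vert X^j\Vert^4_{\mathbb{L}^4}\le C\mathcal{E}(X^j)+C\eps^{-1}$ and the Poincaré inequality for the gradient and $\mathbb{H}^{-1}$ terms, the uniform bounds $\max_j\mathcal{E}(X^j_{\mathrm{CH}})\le\mathcal{E}(u^\eps_0)$ and $\Vert X^j_{\mathrm{CH}}\Vert_{\mathbb{L}^\infty}\le C$ from \lemref{LemmaLubo19}(i),(iii), and finally $\mathbb{E}[\max_j\mathcal{E}(X^j)^2]\le C$ from \lemref{MomentLemma}(ii), one obtains a bound for $\mathbb{E}[\mathcal{A}_J^2]$ that is polynomial in $\eps^{-1}$.

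The genuine difficulty is this $\Omega_2^c$ term. Cauchy-Schwarz trades the small probability of $\Omega_2^c$ against $\mathbb{E}[\mathcal{A}_J^2]$, and the latter is only polynomially bounded in $\eps^{-1}$: the contribution $\tfrac{\tau}{\eps}\sum_j\Vert Z^j\Vert^4_{\mathbb{L}^4}$ carries an unavoidable factor $\eps^{-1}$ coming from the double-well energy across an $O(\eps)$-wide interface, which cannot be removed without circular use of the very estimate one is proving. Hence the smallness of $\mathbb{P}[\Omega_2^c]$ must outweigh this moment factor, and that is exactly the role of the strong hypotheses of \assref{assumption2a} — the large lower bounds on $\gamma$ and the restrictions $\tau\le C\eps^{2+\kappa_0/2+\beta}$, $h=\eps^{\eta}$. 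I would therefore close the proof by balancing the exponents term by term inside the $\max\{\cdots\}$ of \lemref{Omega2lemma}(ii) — raising the Hölder exponent if an additional power of $\mathbb{P}[\Omega_2^c]$ is needed to absorb the moment — and then adding the $\Omega_2$ and $\Omega_2^c$ estimates to reach the asserted bound.
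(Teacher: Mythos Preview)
Your argument follows the same three–step programme the paper uses: split $\mathbb{E}[\mathcal{A}_J]=\mathbb{E}[\mathbf 1_{\Omega_2}\mathcal{A}_J]+\mathbb{E}[\mathbf 1_{\Omega_2^c}\mathcal{A}_J]$, use $J_\eps=J$ on $\Omega_2$ (\lemref{Mepsilonlemma}), and apply Cauchy--Schwarz on $\Omega_2^c$ together with \lemref{Omega2lemma}(ii) and the higher-moment bound \lemref{MomentLemma}. Two differences of detail are worth flagging.

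On $\Omega_2$ the paper is slightly more direct: rather than the pointwise bound $\mathcal{A}_{J_\eps}\le C\eps^{\kappa_0}$ from \lemref{Omega2lemma}(i), it uses the \emph{expectation} estimate $\mathbb{E}[\mathcal{A}_{J_\eps}]\le C\max\{\eps^{\sigma_0},\eps^{2\gamma-2d\eta},\eps^{\gamma-d\eta+\frac{\sigma_0+1}{3}},\tau^2\eps^{-4}\}$ obtained inside the proof of \lemref{Omega2lemma} (cf.\ \eqref{EstiAM1}, \eqref{Yan3}). Since $\mathbb{E}[\mathbf 1_{\Omega_2}\mathcal{A}_J]=\mathbb{E}[\mathbf 1_{\Omega_2}\mathcal{A}_{J_\eps}]\le\mathbb{E}[\mathcal{A}_{J_\eps}]$, this already sits below the stated right-hand side and makes your side-verification that $3\kappa_0>\sigma_0$ unnecessary.

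On $\Omega_2^c$ the paper does \emph{not} leave the second-moment factor as ``polynomial in $\eps^{-1}$'' to be balanced later; it asserts the uniform bound $\mathbb{E}[\mathcal{A}_J^2]\le C\,\mathbb{E}[\max_j\mathcal{E}(X^j)^2]\le C(\mathcal{E}(u_0^\eps)^2+1)$ via the embeddings $\mathbb{H}^1\hookrightarrow\mathbb{L}^4\hookrightarrow\mathbb{H}^{-1}$, Poincar\'e, and \lemref{MomentLemma}(ii). This is what produces exactly the exponent $\tfrac12$ in the lemma. Your proposed remedy---raising the H\"older exponent to absorb a residual $\eps^{-a}$---would change that exponent and hence would not yield the bound \emph{as stated}; the paper's route avoids this by claiming the moment factor is $\eps$-uniform from the outset.
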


\begin{proof} 
First of all, note that $\mathbb{E}[\mathcal{A}_J]=\mathbb{E}[\displaystyle{1\!\!1_{\Omega_2}}\mathcal{A}_J]+\mathbb{E}[\displaystyle{1\!\!1_{\Omega_2^c}}\mathcal{A}_J]$.  Since from \lemref{Mepsilonlemma} $J_{\eps}=J$ on $\Omega_2$, it follows from \eqref{EstiAM1} that
\begin{align*}
\mathbb{E}[\displaystyle{1\!\!1_{\Omega_2}}\mathcal{A}_J]=\mathbb{E}[\displaystyle{1\!\!1_{\Omega_2}}\mathcal{A}_{J_{\eps}}]=\mathbb{E}[\mathcal{A}_{J_{\eps}}]\leq C\max\left(\eps^{\sigma_0}, \eps^{2\gamma-2d\eta},\eps^{\gamma-d\eta+\frac{\sigma_0+1}{3}}, \frac{\tau^2}{\eps^{4}}\right).
\end{align*}
To bound $\mathbb{E}[\displaystyle{1\!\!1_{\Omega_2^c}}\mathcal{A}_J]$, we use the embeddings $\mathbb{H}^1\hookrightarrow\mathbb{L}^4\hookrightarrow \mathbb{H}^{-1}$, Poincar\'{e}'s inequality, which together with the higher moment estimate, namely \lemref{MomentLemma} implies
\begin{align}
\label{Yan5}
\mathbb{E}[\mathcal{A}_J^2]\leq C\mathbb{E}[\mathcal{E}(X^J)^2]\leq C\left(\mathcal{E}(u^{\eps}_0)^2+1\right).
\end{align}
Next, note that from \lemref{Omega2lemma} (ii) we have
\begin{align}
\label{Yan6}
\mathbb{P}[\Omega_2^c]=1-\mathbb{P}[\Omega_2]\leq \frac{C}{\eps^{\kappa_0}}\max\left(\eps^{\sigma_0}, \eps^{2\gamma-2d\eta},\eps^{\gamma-d\eta+\frac{\sigma_0+1}{3}}, \frac{\tau^2}{\eps^{4}}\right).
\end{align}
Finally using Cauchy-Schwarz's inequality, \eqref{Yan5} and \eqref{Yan6} yields
\begin{align*}
\mathbb{E}[\displaystyle{1\!\!1_{\Omega_2^c}}\mathcal{A}_J]\leq \left(\mathbb{P}[\Omega_2^c]\right)^{\frac{1}{2}}\left(\mathbb{E}[\mathcal{A}_J^2]\right)^{\frac{1}{2}}\leq \left(\frac{C}{\eps^{\kappa_0}}\max\left(\eps^{\sigma_0}, \eps^{2\gamma-2d\eta},\eps^{\gamma-d\eta+\frac{\sigma_0+1}{3}}, \frac{\tau^2}{\eps^{4}}\right)\right)^{\frac{1}{2}}.
\end{align*}
This completes the proof of the lemma. 
\end{proof}

%\subsection{Convergence of $X^j$ to $u$ in the sharp inteface limit}
The next theorem provides an error estimate for the numerical approximation \eqref{scheme1b} and is the main result of this section.
\revd{
We collect the conditions on  parameters required for an estimate of $u(t_j)-X^j$ in the assumption below. These conditions also include Assumptions \ref{assumption1} and \ref{assumption2}.
\begin{Assumption}
\label{assumption2b}
Let the assumptions of \lemref{LemmaLubo19} hold and in addition let 
 $\mathcal{E}(u^{\eps}_0)<C$.     Let $\delta_0>0$ and $\eta_0>0$ from \eqref{HigherSet}. Assume that for fixed $0 < \alpha < 7$, $2<  \delta \leq \frac{8}{3}$ the parameters $(\sigma_0, \kappa_0, \gamma)$ satisfy
\begin{align*}
\sigma_0&>\max\left\{\frac{4\delta-7}{\delta-1}+\frac{\alpha(3-\delta)}{\delta-1}, \frac{(7-\alpha)\delta+6\alpha-8}{\delta-2}\right\},\nonumber\\ \sigma_0&>\kappa_0>\max\left\{\left(\frac{4-\delta}{3}\right)\sigma_0+\frac{4\delta-7}{3}+\frac{\alpha(3-\delta)}{3}, \frac{3}{4}\sigma_0+\frac{1}{4}+2\delta_0+2\eta_0\right\},\\
\gamma&>\max\left\{\frac{\kappa_0}{2}+d\eta, \kappa_0+d\eta-\frac{\sigma_0+1}{3},\frac{(14-2\alpha)\delta+12\alpha-16}{3(\delta-2)}+d\eta-\frac{1}{3}, \frac{5}{2}\right\}.
\end{align*}
For sufficiently small $\eps_0\equiv (\sigma_0,\kappa_0)>0$ and $\mathfrak{l}_{\mathrm{CH}}\geq 3$ from \lemref{LemmaLubo19}, and arbitrary $0<\beta<\frac{1}{2}$, the time-step  $\tau$ and  the mesh-size  $h$ in the approximation of the noise (cf.  \eqref{Noiseapprox1}) respectively satisfy
\begin{align*}
\tau\leq C\min\left\{\eps^{\mathfrak{l}_{\mathrm{CH}}}, \eps^{2+\frac{\kappa_0}{2}+\beta}\right\},\quad h=\eps^{\eta}\quad \forall \eps\in(0, \eps_0),
\end{align*}
where $\eta$ is such that
\begin{align*}
0<\eta<\min\left\{\frac{2\gamma-3}{2+3d}, \frac{2\gamma-6}{3d}\right\}.
\end{align*}
\end{Assumption}
}

\begin{theorem}
\label{mainresult1}
 Let \assref{assumption2b} be fulfilled.  Let $X^j$ be the numerical approximation \eqref{scheme1b} and  $u$ the variational solution to \eqref{model1}. Then  for all $0<\beta<\frac{1}{2}$ the following holds 
\begin{align*}
& \mathbb{E}\left[\max_{1\leq j\leq J}\Vert u(t_j)-X^j\Vert^2_{\mathbb{H}^{-1}}\right]
%{\red +\eps^4\tau\sum_{j=1}^J\mathbb{E}\left[\Vert u(t_j)-X^j\Vert^2_{\mathbb{H}^{2-\frac{d}{2}-\vartheta}}\right]
%+\frac{\tau}{\eps}\sum_{j=1}^J\mathbb{E}\left[\Vert u(t_j)-X^j\Vert^4_{\mathbb{L}^4}\right]\nonumber\\
\\
& \qquad\leq C\max\left\{\left(\frac{1}{\eps^{\kappa_0}}\max\left\{\eps^{\sigma_0}, \eps^{2\gamma-2d\eta},\eps^{\gamma-d\eta+\frac{\sigma_0+1}{3}}, \frac{\tau^2}{\eps^{4}}\right\}\right)^{\frac{1}{2}}, \eps^{\frac{2}{3}\sigma_0}, \frac{\tau^{2-2\beta}}{\eps^{\mathfrak{m}_{\mathrm{CH}}}}, \eps^{4\gamma-4\eta-2}\right\},
\end{align*}
where
% $\mathfrak{m}_{\mathrm{CH}}$ is as in \lemref{LemmaLubo19}, the parameter $\eta$ is as in \assref{assumption2} and the parameters $\sigma_0$ and $\kappa_0$ are as in \lemref{Mepsilonlemma}.
the constant $C>0$ is independent of $\tau$, $h$ and $\eps$.
\end{theorem}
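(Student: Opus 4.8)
The plan is to follow the telescoping decomposition announced just before \lemref{LemmaLubo19},
\begin{align*}
u(t_j)-X^j=\big(u(t_j)-u_{\mathrm{CH}}(t_j)\big)+\big(u_{\mathrm{CH}}(t_j)-X^j_{\mathrm{CH}}\big)+\big(X^j_{\mathrm{CH}}-X^j\big),
\end{align*}
where $u_{\mathrm{CH}}$ is the weak solution of the deterministic Cahn--Hilliard equation and $X^j_{\mathrm{CH}}$ is its implicit-Euler approximation, i.e.\ \eqref{scheme1b} with $\Delta_j\overline{W}\equiv 0$. First I would apply $\Vert a+b+c\Vert^2_{\mathbb{H}^{-1}}\le 3\big(\Vert a\Vert^2_{\mathbb{H}^{-1}}+\Vert b\Vert^2_{\mathbb{H}^{-1}}+\Vert c\Vert^2_{\mathbb{H}^{-1}}\big)$, then take the maximum over $1\le j\le J$ and the expectation; this reduces the statement to bounding the three contributions separately.

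Each difference is then controlled by a result already in hand. By \lemref{analyticCahnHilliard} the stochastic error obeys $\mathbb{E}\big[\max_{1\le j\le J}\Vert u(t_j)-u_{\mathrm{CH}}(t_j)\Vert^2_{\mathbb{H}^{-1}}\big]\le\mathbb{E}\big[\Vert u-u_{\mathrm{CH}}\Vert^2_{L^\infty(0,T;\mathbb{H}^{-1})}\big]\le C\eps^{\frac{2}{3}\sigma_0}$, which is the second term of the maximum. By \lemref{LemmaLubo19}(iv) the purely deterministic time-discretisation error satisfies $\max_{1\le j\le J}\Vert u_{\mathrm{CH}}(t_j)-X^j_{\mathrm{CH}}\Vert^2_{\mathbb{H}^{-1}}\le C\tau^{2-\beta}/\eps^{\mathfrak{m}_{\mathrm{CH}}}$, and since $\tau\le 1$ and $\beta\in(0,\tfrac12)$ this is absorbed into the third term $\tau^{2-2\beta}/\eps^{\mathfrak{m}_{\mathrm{CH}}}$. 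Finally \lemref{EstiZ} estimates $Z^j=X^j-X^j_{\mathrm{CH}}$ and delivers the first term $\big(\eps^{-\kappa_0}\max\{\eps^{\sigma_0},\eps^{2\gamma-2d\eta},\eps^{\gamma-d\eta+\frac{\sigma_0+1}{3}},\tau^2\eps^{-4}\}\big)^{1/2}$. The remaining fourth term $\eps^{4\gamma-4\eta-2}$ is the contribution of the low-regularity discrete noise \eqref{Noiseapprox2}; I would bound it using the noise moment estimates of \lemref{LemmaBruit0} together with the scaling $h=\eps^\eta$.

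Before assembling, I would check that \assref{assumption2b} is compatible with the hypotheses of all three lemmas; as the paper notes, its constraints on $(\sigma_0,\kappa_0,\gamma)$ are built so as to subsume Assumptions \ref{assumption1}, \ref{assumption2} and \ref{assumption2a} (including the requirement $\delta_0+\eta_0\ge\tfrac43\sigma_0+1$ needed for \lemref{analyticCahnHilliard}), while the step-size restriction $\tau\le C\min\{\eps^{\mathfrak{l}_{\mathrm{CH}}},\eps^{2+\kappa_0/2+\beta}\}$ is exactly what \lemref{LemmaLubo19} and \lemref{EstiZ} demand. The substantive difficulty of the whole analysis does not lie in the present theorem but has already been discharged in \lemref{EstiZ}, where the cubic term is tamed through the stopping index $J_\eps$, the solution-dependent set $\Omega_2$, and the spectral estimate \lemref{LemmaLubo19}(v). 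Consequently the main obstacle I anticipate here is bookkeeping rather than analysis: verifying that the exponents $\sigma_0$, $2\gamma-2d\eta$, $\gamma-d\eta+\tfrac{\sigma_0+1}{3}$, $\tfrac23\sigma_0$ and $4\gamma-4\eta-2$, as well as the powers of $\tau$, are simultaneously nonnegative under \assref{assumption2b} so that the right-hand side vanishes as $\eps\to0$, and confirming that the constants coming from the three lemmas remain independent of $\tau$, $h$ and $\eps$. Granting this, the triangle inequality assembles the four bounds into the stated maximum.
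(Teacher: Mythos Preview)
Your proposal is correct and follows essentially the same approach as the paper: the proof there also uses the telescoping decomposition $u(t_j)-X^j=(u(t_j)-u_{\mathrm{CH}}(t_j))+(u_{\mathrm{CH}}(t_j)-X^j_{\mathrm{CH}})+(X^j_{\mathrm{CH}}-X^j)$ and invokes precisely \lemref{analyticCahnHilliard}, \lemref{LemmaLubo19}(iv), and \lemref{EstiZ} for the three contributions. The paper's proof is in fact even more terse than your outline and does not spell out the origin of the fourth term $\eps^{4\gamma-4\eta-2}$ either, so your bookkeeping remarks are, if anything, more explicit than what appears there.
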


\begin{proof} 
We split the error  as follows
\begin{align*}
u(t_j)-X^j= u(t_j)-u_{\mathrm{CH}}(t_j)+u_{\mathrm{CH}}(t_j)-X^j_{\mathrm{CH}}+ X^j_{\mathrm{CH}}-X^j. 
\end{align*}
From \lemref{LemmaLubo19} (iv), we have
\begin{align}
\label{First}
\max_{1\leq j\leq J}\Vert u_{\mathrm{CH}}(t_j)-X^j_{\mathrm{CH}}\Vert^2_{\mathbb{H}^{-1}}\leq \frac{C\tau^{2-2\beta}}{\eps^{\mathfrak{m}_{\mathrm{CH}}+1}}.
\end{align}
By \lemref{EstiZ} we  estimate  $\mathbb{E}\left[\max_{1\leq j\leq J}\Vert X^j-X^j_{\mathrm{CH}}\Vert^2_{\mathbb{H}^{-1}}\right]$ and by
 \lemref{analyticCahnHilliard} we estimate $\mathbb{E}\left[\max_{1\leq j\leq J}\Vert u(t_j)-u_{\mathrm{CH}}(t_j)\Vert^2_{\mathbb{H}^{-1}}\right]$.
\end{proof}

\section{Sharp-interface limit}
\label{SharpLimit}
In this section we show uniform convergence of the numerical approximation \eqref{scheme1b} to its sharp-interface limit
which is the (deterministc) Hele-Shaw/Mullins-Sekerka problem.
The Hele-Shaw problem is defined as follows: Find $v_{\mathrm{MS}}: [0, T]\times\mathcal{D}\rightarrow\mathbb{R}$ and the interface $\{\Gamma_t^{\mathrm{MS}};\; 0\leq t\leq T\}$ such that for all $0<t\leq T$
\begin{align}
\label{HeleShaw1}
\begin{array}{rclllll}
&-\Delta v_{\mathrm{MS}} & = & 0&&\text{in}\quad \mathcal{D}\setminus\Gamma_t^{\mathrm{MS}},\\
&-2\mathcal{V} & = &\left[\partial_{ \normal_{\Gamma}} v_{\mathrm{MS}}\right]_{\Gamma_t^{\mathrm{MS}}}&& \text{on}\quad \Gamma_t^{\mathrm{MS}},\\
&v_{\mathrm{MS}} & = &\alpha \kappa &&\text{on}\quad \Gamma_t^{\mathrm{MS}},\\
&\partial_{\normal} v_{\mathrm{MS}}& = &0&&\text{on}\quad \partial\mathcal{D},\\
&\Gamma^{\mathrm{MS}}_0 & = & \Gamma_{00},
\end{array}
\end{align}
where  $\kappa$ is the mean curvature of the evolving interface $\Gamma_t^{\mathrm{MS}}$, and $\mathcal{V}$ is the velocity in the direction of its normal $\normal_{\Gamma}$, as well as $\left[\partial_{\normal_{\Gamma}} v_{\mathrm{MS}}\right]_{\Gamma_t^{\mathrm{MS}}}(z)=\left(\partial_{\normal} v^{+}_{\mathrm{MS}}-\partial_{\normal} v^{-}_{\mathrm{MS}}\right)(z)$ for all $z\in \Gamma_t^{\mathrm{MS}}$,  where $v^{+}_{\mathrm{MS}}$ and $v^{-}_{\mathrm{MS}}$ are  the restriction of $v_{\mathrm{MS}}$ on $\mathcal{D}_t^{\pm}$ (the exterior/interior of $\Gamma_t^{\mathrm{MS}}$ in $\mathcal{D}$). The constant $\alpha$ in \eqref{HeleShaw1} is chosen as $\alpha=\frac{1}{2}c_F$, where $c_F=\int_{-1}^1\sqrt{2F(s)}ds=\frac{1}{3}2^{\frac{3}{2}}$, and $F$ is the double-well potential.

To overcome the difficulties caused by the low regularity of the considered noise
%\footnote{here I think we should use the strong formulation as in my old paper, and then use "multiply" instead of "we set $\phi=X$"}
we write $X^j = \widetilde{X}^j + \widehat{X}^j$, $j=1,\cdots, J$,
where $\widetilde{X}^j$ is the solution of
\begin{align}
\label{linearscheme}
\begin{array}{rclllll}
(\widetilde{X}^j-\widetilde{X}^{j-1}, \varphi)+\tau(\nabla \widetilde{w}^j, \nabla \varphi)& = &\eps^{\gamma}(\Delta_j\overline{W}, \varphi)&& \forall\varphi\in \mathbb{H}^1,\\
(\widetilde{w}^j, \psi) &= &\eps(\nabla\widetilde{X}^j, \nabla\psi)&& \forall\psi\in\mathbb{H}^1,\\
\widetilde{X}^0 & = &0,
\end{array}
\end{align}  
with $\partial_{\normal}\widetilde{X}^j=\partial_{\normal}\widetilde{w}^j=0$ on $\partial\mathcal{D}$,

and $\widehat{X}^j$ satisfies
\begin{align}
\label{randompdescheme}
\begin{array}{rlllll}
(\widehat{X}^j-\widehat{X}^{j-1}, \varphi)+\tau(\nabla \widehat{w}^j, \nabla\varphi)& = & 0&& \forall\varphi\in\mathbb{H}^1,\\
\eps(\nabla \widehat{X}^j, \nabla\psi)+\displaystyle\frac{1}{\eps}(f(\revl{\widehat{X}^j+\widetilde{X}^j}), \psi) & = &(\widehat{w}^j, \psi)&& \forall\psi\in\mathbb{H}^1,\\
\widehat{X}^0& = & u^{\eps}_0,
\end{array}
\end{align} 
with $\partial_{\normal}\widehat{X}^j=\partial_{\normal}\widehat{w}^j=0$ on $\partial\mathcal{D}$.

We note that in the subsequent derivation of the stronger stability estimates for the solutions of \eqref{linearscheme} and \eqref{randompdescheme} we implicitly assume that their respective (analytically) strong formulation
are well-defined.% (in particular, we make use of the fact that the respective solutions satisfy the homogeneous Nuemann boundary condition).
The existence of the corresponding strong formulations can be justified rigorously by the regularity of the Neumann Laplace operator, cf. \cite[Section 5]{Banas19}.
The unique solvability and measurability of \eqref{linearscheme}, \eqref{randompdescheme} (which will be shown below) ensures that the approximation $X^j = \widetilde{X}^j + \widehat{X}^j$ satisfies the original numerical scheme \eqref{scheme1b}.

 \revl{
 To study the sharp-interface limit of the numerical solution $\{X^j\}_{j=0}^J$ in \eqref{scheme1b} we rewrite$ X^j\pm 1 = (X^j-X^j_{\mathrm{CH}})+(X^j_{\mathrm{CH}}\pm 1)$ and denote $ Z^j:=X^j-X^j_{\mathrm{CH}}$.
Thanks to the well-known result on the sharp-interface limit of the numerical solution $X^j_{\mathrm{CH}}$ of the deterministic Cahn-Hilliard equation (cf. \cite[Theorem 4.2]{fp04}),
it suffices to show that $\lim_{\eps\rightarrow 0} \|Z^j\|_{\mathbb{L}^\infty} = 0$ on a subset of high probability.
% One difficulty to prove that $\Vert Z^j\Vert_{\mathbb{L}^{\infty}}$ goes to $0$ as $\varepsilon$ goes to $0$ is to handle the low regularity of the noise. %(see  \rmref{Motivationtranslation}).
Owing to the low regularity of the (discrete) noise it is not possible to show an estimate for $Z^j$ directly.
Instead we rewrite $Z^j$ as
 \begin{align}
 \label{ErrorZ}
 Z^j=X^j-X^j_{\mathrm{CH}}=(X^j-X^j_{\mathrm{CH}}-\widetilde{X}^j)+\widetilde{X}^j,
\end{align}
and consider the translated difference
 \begin{align}
 \label{translateddifference}
 \widehat{Z}^j:=Z^j-\widetilde{X}^j= X^j-X^j_{\mathrm{CH}}-\widetilde{X}^j=\widehat{X}^j-X^j_{\mathrm{CH}}.
 \end{align}
 The  translated difference $\widehat{Z}^j$ is the discrete counterpart of the continuous one used in \cite{BYZ22,BM24} when dealing with space-time white noise.

Hence, we proceed as follows.
 \begin{enumerate}
 \item[(a)] We provide higher regularity estimates of the discrete stochastic convolution $\widetilde{X}^j$ (cf. \lemref{regularitynoise} in \secref{AprioriEstimates}) and prove that the $\mathbb{L}^{\infty}$-norm of $\widetilde{X}^j$ vanishes
for $\varepsilon\rightarrow 0$, see \eqref{LinfinityXtilde}. In addition, we show a $\tau$-independent $\mathbb{L}^\infty$ bound for the solution $X^j$ on a subset of high probability (cf. \lemref{Linfinitya}).
 \item[(b)] We provide  $\mathbb{L}^{\infty}$-estimate of the translated difference $\widehat{Z}^j$, see \lemref{maintheorem2a}.
 \item[(c)] We use triangle inequality, the $\mathbb{L}^{\infty}$-estimate of $\widehat{Z}^j$ (cf. \lemref{maintheorem2a}) and $\widetilde{X}^j$ (cf. \eqref{LinfinityXtilde}) to prove that on a subset of high probability,
  $\Vert Z^j\Vert_{\mathbb{L}^{\infty}}\rightarrow 0$
  for $\varepsilon\rightarrow 0$ (for suitable scaling of the considered parameters), see \thmref{LinfinityZ}. Finally, \thmref{LinfinityZ} is  used to conclude the sharp-interface limit of the numerical solution $X^j$ in \thmref{maintheorem2}.
\end{enumerate}   
%Before proving  (a), (b) and (c) in Sections \ref{AprioriEstimates} and \ref{SectionLinfinityZ}, we first prove in \secref{WellposedSection} below the well-posedness and the measurability of the numerical solutions $\widehat{X}^j$ in \eqref{randompdescheme} and $\widetilde{X}^j$ in \eqref{linearscheme}.
}

\subsection{Well-posedness of the numerical schemes \eqref{randompdescheme} and \eqref{linearscheme}}
\label{WellposedSection}
In this section we show that there exist unique, $\mathcal{F}_{t_j}$-measurable solutions $\widehat{X}^j$ to \eqref{randompdescheme} and $\widetilde{X}^j$ to \eqref{linearscheme} for $j=1,\cdots,J$. Let us start with the solvability of \eqref{randompdescheme}.
We assume that for all $j=1,\cdots,J$, $\widetilde{X}^j\in L^2(\Omega, \mathbb{H}^1)$ and  is an $\mathcal{F}_{t_{j}}$-measurable random variable. We also assume that $\widehat{X}^{j-1}\in L^2(\Omega; \mathbb{H}^1)$ is an $\mathcal{F}_{t_{j-1}}$-measurable random variable.
%\subsection{Well-posedness and measurability of the numerical scheme  \eqref{randompdescheme}}
%\label{Wellpose1}
%Note that  $\widehat{X}^j$ is $\mathbb{H}^1$-valued, which is an infinite dimensional space. 
%Therefore, we cannot use directly  Brouwer's fixed point theorem as usual (see e.g. \cite[Theorem 1]{FengLiZhang2020}  or \cite[Lemma 3.3]{spme}). Instead,  we use a  convex optimization argument \cite{ciarlet_1989}.

Taking $\varphi=(-\Delta)^{-1}\psi$ in the first equation of \eqref{randompdescheme} yields
\begin{align*}
(\widehat{X}^j-\widehat{X}^{j-1}, \psi)_{-1}+\tau(\widehat{w}^j, \psi)=0. 
\end{align*}
Noting the second equation of \eqref{randompdescheme} we obtain that
\begin{align*}
(\widehat{X}^j-\widehat{X}^{j-1}, \psi)_{-1}+\eps\tau(\nabla \widehat{X}^j, \nabla\psi)+\frac{\tau}{\eps}\left(f(\widehat{X}^j+\widetilde{X}^j), \psi\right)=0\quad \psi\in \mathbb{H}^1,.
\end{align*}
Using that $f(u)=u^3-u$ we deduce
\begin{align*}
(\widehat{X}^j, \psi)_{-1}+\eps\tau(\nabla \widehat{X}^j, \nabla\psi)+\frac{\tau}{\eps}\left((\widehat{X}^j+\widetilde{X}^j)^3, \psi\right)-\frac{\tau}{\eps}\left((\widehat{X}^{j}+\widetilde{X}^j), \psi\right)-(\widehat{X}^{j-1}, \psi)_{-1}=0
\end{align*}
for all $\psi\in \mathbb{H}^1$. 
This motivates the introduction of the following functional
\begin{align}
\label{Convexfunction1}
G(v):=\frac{1}{2}\Vert v-\widehat{X}^{j-1}\Vert^2_{-1}+\frac{\tau}{4\eps}\Vert v+\widetilde{X}^j\Vert^4_{\mathbb{L}^4}+\frac{\eps\tau}{2}\Vert \nabla v\Vert^2-\frac{\tau}{2\eps}\Vert v+\widetilde{X}^j\Vert^2\quad v\in\mathbb{H}^1.
\end{align}
%\revl{Throughout this section we assume that $\widehat{X}^{j-1}\in L^2(\Omega; \mathbb{H}^1)$ and $\widetilde{X}^j\in L^2(\Omega; \mathbb{H}^1)$ are respectively $\mathcal{F}_{t_{j-1}}$- and  $\mathcal{F}_{t_{j}}$-measurable random variables.}
\begin{lemma}
\label{convexlemma}
For $\tau\leq \frac{1}{2}\eps^3$ the mapping $G:\mathbb{H}^1\rightarrow \mathbb{R}$ is coercive and strictly convex.
\end{lemma}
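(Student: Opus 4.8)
The plan is to view $G$ as a sum of four terms: the convex quadratic $\mathbb{H}^{-1}$-term $\tfrac12\Vert v-\widehat{X}^{j-1}\Vert^2_{-1}$, the convex quartic term $\tfrac{\tau}{4\eps}\Vert v+\widetilde{X}^j\Vert^4_{\mathbb{L}^4}$, the convex Dirichlet term $\tfrac{\eps\tau}{2}\Vert\nabla v\Vert^2$, and the \emph{concave} quadratic term $-\tfrac{\tau}{2\eps}\Vert v+\widetilde{X}^j\Vert^2$. The whole proof amounts to showing that this last term is dominated by the others: by the gradient and $\mathbb{H}^{-1}$ terms for convexity, and by the quartic term for coercivity. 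Since $s\mapsto G(v+s\phi)$ is a degree-four polynomial in $s$ for every $v,\phi\in\mathbb{H}^1$, the functional is smooth along rays, so I would deduce strict convexity from strict positivity of the second variation in every direction $\phi\neq0$, uniformly in $v$.

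For strict convexity I would first compute
\begin{align*}
\frac{d^2}{ds^2}\Big|_{s=0}G(v+s\phi)=\Vert\phi\Vert^2_{-1}+\eps\tau\Vert\nabla\phi\Vert^2-\frac{\tau}{\eps}\Vert\phi\Vert^2+\frac{3\tau}{\eps}\int_{\mathcal{D}}(v+\widetilde{X}^j)^2\phi^2\,dx,
\end{align*}
and discard the last (nonnegative) term. Writing $\phi=\bar\phi+m(\phi)$ with $\bar\phi\in\mathbb{L}^2_0$, so that $\Vert\phi\Vert^2_{-1}=\vert\bar\phi\vert^2_{-1}+m(\phi)^2$, $\Vert\nabla\phi\Vert^2=\Vert\nabla\bar\phi\Vert^2$ and $\Vert\phi\Vert^2=\Vert\bar\phi\Vert^2+m(\phi)^2\vert\mathcal{D}\vert$, I would treat the two components separately. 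On the mean-zero part, the interpolation inequality $\Vert\bar\phi\Vert^2\leq\vert\bar\phi\vert_{-1}\Vert\nabla\bar\phi\Vert$ together with Young's inequality (weight tuned to $\eps^2$) gives $\tfrac{\tau}{\eps}\Vert\bar\phi\Vert^2\leq\tfrac{\tau}{4\eps^3}\vert\bar\phi\vert^2_{-1}+\eps\tau\Vert\nabla\bar\phi\Vert^2$, whence the constraint $\tau\leq\tfrac12\eps^3$ leaves $\vert\bar\phi\vert^2_{-1}+\eps\tau\Vert\nabla\bar\phi\Vert^2-\tfrac{\tau}{\eps}\Vert\bar\phi\Vert^2\geq(1-\tfrac{\tau}{4\eps^3})\vert\bar\phi\vert^2_{-1}\geq\tfrac78\vert\bar\phi\vert^2_{-1}$. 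On the mean part I am left with $m(\phi)^2\bigl(1-\tfrac{\tau}{\eps}\vert\mathcal{D}\vert\bigr)$, which is strictly positive since $\tfrac{\tau}{\eps}\vert\mathcal{D}\vert\leq\tfrac12\eps^2\vert\mathcal{D}\vert<1$. Thus the second variation is bounded below by $\tfrac78\vert\bar\phi\vert^2_{-1}+c\,m(\phi)^2>0$ for every $\phi\neq0$, giving strict convexity.

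For coercivity, which actually holds for any $\tau>0$, I would use that on the bounded domain the quartic term absorbs the concave one: Young's inequality gives $\tfrac{\tau}{2\eps}\Vert v+\widetilde{X}^j\Vert^2\leq\tfrac{\tau}{8\eps}\Vert v+\widetilde{X}^j\Vert^4_{\mathbb{L}^4}+C$, so that
\begin{align*}
G(v)\geq\frac12\Vert v-\widehat{X}^{j-1}\Vert^2_{-1}+\frac{\tau}{8\eps}\Vert v+\widetilde{X}^j\Vert^4_{\mathbb{L}^4}+\frac{\eps\tau}{2}\Vert\nabla v\Vert^2-C.
\end{align*}
Since $\Vert v\Vert\leq\vert\mathcal{D}\vert^{1/4}\Vert v\Vert_{\mathbb{L}^4}$, the quartic term grows like $\Vert v\Vert^4$ up to a constant depending on $\widetilde{X}^j$, while the Dirichlet term controls $\Vert\nabla v\Vert^2$; combining these yields $G(v)\geq c\Vert v\Vert^2_{\mathbb{H}^1}-C\to+\infty$ as $\Vert v\Vert_{\mathbb{H}^1}\to\infty$.

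The hard part is the absorption of the concave quadratic term in the convexity estimate: everything hinges on balancing $\tfrac{\tau}{\eps}\Vert\phi\Vert^2$ against $\vert\phi\vert^2_{-1}$ and $\eps\tau\Vert\nabla\phi\Vert^2$ through the $\mathbb{L}^2$–$\mathbb{H}^{-1}$–$\mathbb{H}^1$ interpolation, and it is exactly this balance that forces the time-step restriction $\tau\leq\tfrac12\eps^3$ (the constant-mode component being controlled for free by the standing smallness of $\eps$). Coercivity, by contrast, is driven solely by the quartic term and imposes no restriction on $\tau$.
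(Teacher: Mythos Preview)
Your proof is correct. For strict convexity you follow the same route as the paper --- compute the second variation, drop the nonnegative quartic contribution, and absorb the concave $\frac{\tau}{\eps}\Vert\phi\Vert^2$ term into the $\mathbb{H}^{-1}$ and gradient terms via the interpolation $\Vert\bar\phi\Vert^2\leq\vert\bar\phi\vert_{-1}\Vert\nabla\bar\phi\Vert$ and Young's inequality --- but you are more careful than the paper in splitting $\phi=\bar\phi+m(\phi)$ and treating the constant mode separately (the paper applies the interpolation inequality directly to $\psi$ without this decomposition, which is only valid on mean-zero functions). Your Young weight differs slightly (you exhaust the gradient term and keep $\tfrac78\vert\bar\phi\vert^2_{-1}$, whereas the paper keeps half of each), but both choices work under $\tau\leq\tfrac12\eps^3$.

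For coercivity your argument is genuinely different and simpler: you absorb the concave $\mathbb{L}^2$ term into the quartic $\mathbb{L}^4$ term via Young's inequality, obtaining coercivity with no time-step restriction at all. The paper instead reuses the interpolation trick on $\Vert v-\widehat{X}^{j-1}\Vert^2$, which again needs $\tau\leq\tfrac12\eps^3$. Your route is more natural here since the double-well structure (quartic minus quadratic) is itself coercive, and it cleanly isolates $\tau\leq\tfrac12\eps^3$ as a convexity constraint rather than a coercivity one.
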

\begin{proof}
The first variation of the first term in $G$ is:
\begin{align*}
\frac{d}{ds}\left.\left[\frac{1}{2}\Vert v+s\psi-\widehat{X}^{j-1}\Vert^2_{-1}\right]\right\vert_{s=0}=\left.\left[(v+s\psi-\widehat{X}^{j-1}, \psi)_{-1}\right]\right\vert_{s=0}=(v-\widehat{X}^{j-1}, \psi)_{-1}. 
\end{align*}
The second variation of the first term in $G$ is:
\begin{align*}
\frac{d^2}{ds^2}\left.\left[\frac{1}{2}\Vert v+s\psi-\widehat{X}^{j-1}\Vert^2_{-1}\right]\right\vert_{s=0}=(\psi, \psi)_{-1}>0\quad \psi\neq 0. 
\end{align*}
Analogously, we compute the variations of remaining terms in $G$ and get
\begin{align*}
\frac{d}{ds}G(v+s\psi)\vert_{s=0}=(v-\widehat{X}^{j-1}, \psi)_{-1}+\frac{\tau}{\eps}((v+\widetilde{X}^j)^3, \psi)+\revl{\eps\tau (\nabla v,\nabla\psi)}-\frac{\tau}{\eps}(v+\widetilde{X}^j, \psi).
\end{align*}
The second variation of $G$ is easily computed and one obtains
\begin{align*}
\frac{d^2}{ds^2}G(v+s\psi)\vert_{s=0}=(\psi, \psi)_{-1}+\frac{3\tau}{\eps}\left((v+\widetilde{X}^j)^2, \psi^2\right)+\eps\tau\left(\nabla\psi, \nabla\psi\right)-\frac{\tau}{\eps}(\psi, \psi). 
\end{align*}
Using the interpolation inequality $\Vert.\Vert^2\leq\Vert.\Vert_{\mathbb{H}^{-1}}\Vert\nabla .\Vert$ and Young's inequality  yields
\begin{align}
\label{inter1}
\frac{\tau}{\eps}\Vert \psi\Vert^2\leq \frac{\tau}{\eps}\Vert \psi\Vert_{-1}\Vert \nabla\psi\Vert\leq \frac{1}{2}\Vert \psi\Vert^2_{-1}+\frac{\tau^2}{2\eps^2}\Vert\nabla\psi\Vert^2. 
\end{align}
Therefore, it holds that
\begin{align*}
\frac{d^2}{ds^2}G(v+s\psi)\vert_{s=0}\geq\frac{1}{2}\Vert\psi\Vert^2_{-1}+\tau\left(\eps-\frac{\tau}{2\eps^2}\right)\Vert\nabla\psi\Vert^2.
\end{align*}
Hence, for $\tau\leq \frac{1}{2}\eps^3$ the mapping $G$ is strictly convex.

Next, using triangle and Young's inequalities and \eqref{inter1} (with $\psi=v-\widehat{X}^{j-1}$) yields
\begin{align*}
%\label{Yan7}
\frac{\tau}{2\eps}\Vert v+\widetilde{X}^j\Vert^2&\leq \frac{\tau}{\eps}\Vert v-\widehat{X}^{j-1}\Vert^2+\frac{\tau}{\eps}\Vert \widehat{X}^{j-1}+\widetilde{X}^j\Vert^2\nonumber\\
&\leq \frac{1}{2}\Vert v-\widehat{X}^{j-1}\Vert^2_{-1}+\frac{\tau^2}{2\eps^2}\Vert \nabla[v-\widehat{X}^{j-1}]\Vert^2+\frac{\tau}{\eps}\Vert \widehat{X}^{j-1}+\widetilde{X}^j\Vert^2\nonumber\\
&\leq \frac{1}{2}\Vert v-\widehat{X}^{j-1}\Vert^2_{-1}+\frac{\tau^2}{\eps^2}\Vert \nabla v\Vert^2+\frac{\tau^2}{\eps^2}\Vert \nabla\widehat{X}^{j-1}\Vert^2+\frac{\tau}{\eps}\Vert \widehat{X}^{j-1}+\widetilde{X}^j\Vert^2. 
\end{align*}
From the above we deduce by Poincar\'{e}'s inequality that
\begin{align*}
G(v)\geq \tau\left(\frac{\eps}{2}-\frac{\tau}{\eps^2}\right)\Vert \nabla v\Vert^2-\frac{\tau^2}{\eps^2}\Vert\revd{\nabla\widehat{X}^{j-1}}\Vert^2-\frac{\tau}{\eps}\Vert \widehat{X}^{j-1}+\widetilde{X}^j\Vert^2\geq C_1(\eps)\Vert v\Vert^2_{\mathbb{H}^1}-C_2(\eps),
\end{align*}
for all $v\in\mathbb{H}^1$,
where $0<C_1(\eps)<\infty$ (since  $\tau\leq \frac{\eps^3}{2}$) is a constant which does not dependent on $v$ and $C_2(\eps)=\frac{\tau^2}{\eps^2}\Vert\revl{\nabla\widehat{X}^{j-1}}\Vert^2+\frac{\tau}{\eps}\Vert \widehat{X}^{j-1}+\widetilde{X}^j\Vert^2<\infty$. Therefore $G$ is coercive.
\end{proof}

To show the $\mathcal{F}_{t_j}$-measurability, we make use of the following lemma, which is a straightforward generalization of \cite[Lemma 3.2]{EmmrichSiska2} (or \cite[Lemma 3.8]{Gyongy})
to the infinite dimensional case.

\begin{lemma}
\label{measurabilitylemma}
Let $(S, \Sigma)$ be a measurable space and $V$ a Banach space. Let $\mathbf{f}: S\times V\longrightarrow V$   be a function that is $\Sigma$-measurable in its first argument for every fixed $v\in V$, that is continuous in its second argument for every fixed $s\in S$ and in addition such that for every $s\in S$ the equation $\mathbf{f}(s,v)=0$ has a unique solution $v=g(s)$. Then $g: S\longrightarrow V$ is $\Sigma$-measurable. 
\end{lemma}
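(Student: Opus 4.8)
The plan is to establish measurability of the implicitly defined map $g$ by combining the Carath\'eodory structure of $\mathbf{f}$ with a measurable projection argument. Throughout I exploit that in the intended application $V=\mathbb{H}^1$ is separable and the underlying measurable space is the complete probability space $(\Omega,\mathcal{F},\mathbb{P})$; I therefore first reduce to the case where $V$ is separable and fix a countable dense subset $D=\{v_k\}_{k\in\mathbb{N}}\subset V$.

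First I would prove that $\mathbf{f}$ is \emph{jointly} measurable, i.e. $\Sigma\otimes\mathcal{B}(V)$-measurable. For each $n$ let $q_n:V\to D$ assign to $v$ the dense point $v_{k}$ of smallest index with $\Vert v-v_k\Vert<1/n$, and set $\mathbf{f}_n(s,v):=\mathbf{f}(s,q_n(v))$. Since $q_n$ takes only countably many values and each set $\{v:q_n(v)=v_k\}$ is Borel in $V$, every $\mathbf{f}_n$ is $\Sigma\otimes\mathcal{B}(V)$-measurable (on the slab $S\times\{q_n=v_k\}$ it equals the $\Sigma$-measurable map $s\mapsto\mathbf{f}(s,v_k)$, constant in $v$). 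By continuity of $\mathbf{f}(s,\cdot)$ one has $q_n(v)\to v$ and hence $\mathbf{f}_n(s,v)\to\mathbf{f}(s,v)$ pointwise, so $\mathbf{f}$ is a pointwise limit of jointly measurable maps and is itself jointly measurable. Consequently the solution graph
\[
N:=\{(s,v)\in S\times V:\ \mathbf{f}(s,v)=0\}=\mathbf{f}^{-1}(\{0\})
\]
belongs to $\Sigma\otimes\mathcal{B}(V)$, and by the uniqueness hypothesis $N$ is exactly the graph of $g$.

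Next I would deduce measurability of $g$ from measurability of its graph. For any closed set $C\subseteq V$ the single-valuedness of $g$ gives
\[
g^{-1}(C)=\{s\in S:\ (s,v)\in N\text{ for some }v\in C\}=\pi_S\big(N\cap(S\times C)\big),
\]
where $\pi_S$ is the projection onto $S$, and the goal is to show this set lies in $\Sigma$. In finite dimensions one argues that $g^{-1}(\overline{B}(a,r))=\{s:\inf_{v\in\overline{B}(a,r)\cap D}\Vert\mathbf{f}(s,v)\Vert=0\}$, which is measurable as a countable infimum of $\Sigma$-measurable functions, the implication ``infimum $=0\Rightarrow$ genuine root'' being supplied by compactness of the closed ball together with continuity and uniqueness. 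I expect this to be the main obstacle in the infinite-dimensional setting: closed balls of $V$ are no longer compact, so a vanishing infimum of $\Vert\mathbf{f}(s,\cdot)\Vert$ need not be attained and need not force $g(s)\in C$.

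To overcome this I would invoke the measurable projection theorem: since $V$ is Polish and the relevant measurable space is complete (it is the complete probability space $(\Omega,\mathcal{F},\mathbb{P})$, or its completion), the projection $\pi_S\big(N\cap(S\times C)\big)$ of the set $N\cap(S\times C)\in\Sigma\otimes\mathcal{B}(V)$ again belongs to $\Sigma$. Hence $g^{-1}(C)\in\Sigma$ for every closed $C$, which yields $\Sigma$-measurability of $g$. In the concrete application the compactness gap can alternatively be closed without appealing to the projection theorem, because $g(s)$ is the unique minimizer of the coercive, strictly convex functional $G$ of \lemref{convexlemma}: the sublevel sets of $G(s,\cdot)$ are bounded in $\mathbb{H}^1$ and hence weakly compact, so the relevant infima are attained along the weak topology and the finite-dimensional argument transfers verbatim. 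Either route completes the proof.
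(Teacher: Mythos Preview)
The paper does not actually prove this lemma: it is stated with a pointer to \cite[Lemma 3.2]{EmmrichSiska2} and \cite[Lemma 3.8]{Gyongy} and described as ``a straightforward generalization \ldots\ to the infinite dimensional case''. So there is no proof in the paper to compare against; you are supplying an argument where the authors only cite one.

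Your argument is essentially correct and, in fact, more careful than the paper's one-line appeal to the literature. The joint measurability step for Carath\'eodory maps via countable-valued approximants is standard and fine. The identification of the graph $N=\mathbf{f}^{-1}(\{0\})$ as a $\Sigma\otimes\mathcal{B}(V)$-set, together with uniqueness, reduces measurability of $g$ to measurability of projections, and you are right that this is exactly where the finite-dimensional proof (which uses compactness of closed balls) breaks down. Invoking the measurable projection theorem is the clean fix, and you correctly flag that this requires $(S,\Sigma)$ to be complete and $V$ to be Polish---hypotheses not stated in the lemma but satisfied in the application to $(\Omega,\mathcal{F},\mathbb{P})$ and $\mathbb{H}^1$.

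One small comment on your alternative route via the convex functional $G$: the sentence ``the finite-dimensional argument transfers verbatim'' hides a little work. Weak compactness of closed balls alone is not enough, since $\mathbf{f}(s,\cdot)$ is only strongly continuous; what makes it go through is that $G(s,\cdot)$ is convex and strongly continuous, hence weakly lower semicontinuous, so a minimizing sequence in $\overline{B}(a,r)$ has a weak limit which is again a minimizer and lies in the (weakly closed) ball. With that observation, $g^{-1}(\overline{B}(a,r))=\{s:\inf_{v\in B(a,r)\cap D}G(s,v)=\inf_{v\in D}G(s,v)\}$ is a countable-infimum identity and the argument closes without the projection theorem. You might want to spell this out if you use this route.
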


%\footnote{make this a lemma}
\begin{lemma}
\label{existencescheme}
Let $\tau\leq \frac{1}{2}\eps^3$ and $\widehat{X}^0, \widetilde{X}^j\in L^2(\Omega, \mathbb{H}^1)$. Then there exists a unique $\mathcal{F}_{t_j}$-measurable solution
$(\widehat{X}^j, \widehat{w}^j) \in L^2(\Omega, \mathbb{H}^1)\times L^2(\Omega, \mathbb{H}^1)$ of \eqref{randompdescheme} for $j=1, \cdots, J$.
%\revd{Moreover, $\widehat{X}^j\in L^2(\Omega, \mathbb{H}^1)$, $j=1,\cdots,J$.}
\end{lemma}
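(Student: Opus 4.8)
The plan is to argue by induction on $j$, using the variational structure identified in \eqref{Convexfunction1} to produce $\widehat{X}^j$ and then recovering $\widehat{w}^j$ from the discrete Neumann problem hidden in the first equation of \eqref{randompdescheme}. At the base case $\widehat{X}^0=u^{\eps}_0$ is $\mathcal{F}_0$-measurable and lies in $L^2(\Omega,\mathbb{H}^1)$; at step $j$ I would assume that $\widehat{X}^{j-1}\in L^2(\Omega,\mathbb{H}^1)$ is $\mathcal{F}_{t_{j-1}}$-measurable (hence also $\mathcal{F}_{t_j}$-measurable) and that $\widetilde{X}^j\in L^2(\Omega,\mathbb{H}^1)$ is $\mathcal{F}_{t_j}$-measurable, as granted by the standing hypotheses.

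For fixed $\omega$ I would first obtain $\widehat{X}^j$ as the unique minimizer of $G$ over $\mathbb{H}^1$. By \lemref{convexlemma}, for $\tau\leq\frac12\eps^3$ the functional $G$ is coercive and strictly convex; being in addition continuous on the reflexive space $\mathbb{H}^1$ it is weakly lower semicontinuous, so the direct method of the calculus of variations yields a minimizer, unique by strict convexity. The first-order optimality condition $\frac{d}{ds}G(v+s\psi)|_{s=0}=0$, already computed in the proof of \lemref{convexlemma}, reads
\begin{align*}
(\widehat{X}^j-\widehat{X}^{j-1},\psi)_{-1}+\eps\tau(\nabla\widehat{X}^j,\nabla\psi)+\frac{\tau}{\eps}(f(\widehat{X}^j+\widetilde{X}^j),\psi)=0\quad\forall\psi\in\mathbb{H}^1,
\end{align*}
which is exactly the reduced form of \eqref{randompdescheme} obtained by eliminating $\widehat{w}^j$. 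The pathwise coercivity bound from \lemref{convexlemma} controls $\Vert\widehat{X}^j\Vert_{\mathbb{H}^1}$ in terms of the data, so $\widehat{X}^j\in L^2(\Omega,\mathbb{H}^1)$, which closes the integrability part of the induction.

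Next I would recover $\widehat{w}^j$. Testing the reduced equation with $\psi=1$ gives $m(\widehat{X}^j)=m(\widehat{X}^{j-1})$, so $\widehat{X}^j-\widehat{X}^{j-1}\in\mathbb{L}^2_0$ and the Neumann problem $\tau(\nabla\widehat{w}^j,\nabla\varphi)=-(\widehat{X}^j-\widehat{X}^{j-1},\varphi)$ for all $\varphi\in\mathbb{H}^1$ admits a solution in $\mathbb{H}^1$, unique up to a constant which I fix by imposing $(\widehat{w}^j,1)=\frac1\eps(f(\widehat{X}^j+\widetilde{X}^j),1)$. Testing this Neumann identity with $\varphi=(-\Delta)^{-1}\psi$ for $\psi\in\mathbb{L}^2_0$ and combining with the reduced equation recovers the second equation of \eqref{randompdescheme} on $\mathbb{L}^2_0$, while the chosen normalization handles the mean value; together with the defining Neumann identity this shows that $(\widehat{X}^j,\widehat{w}^j)$ solves the full system \eqref{randompdescheme}.

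Finally, for the $\mathcal{F}_{t_j}$-measurability I would invoke \lemref{measurabilitylemma} with $S=\Omega$, $\Sigma=\mathcal{F}_{t_j}$ and $V=\mathbb{H}^1$. I would define $\mathbf{f}(\omega,v)\in\mathbb{H}^1$ to be the Riesz representative in $\mathbb{H}^1$ of the bounded functional $\psi\mapsto(v-\widehat{X}^{j-1}(\omega),\psi)_{-1}+\eps\tau(\nabla v,\nabla\psi)+\frac\tau\eps(f(v+\widetilde{X}^j(\omega)),\psi)$, so that $\mathbf{f}(\omega,v)=0$ is equivalent to the reduced equation and hence has the unique solution $v=\widehat{X}^j(\omega)$. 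For fixed $v$ the map $\omega\mapsto\mathbf{f}(\omega,v)$ is $\mathcal{F}_{t_j}$-measurable since $\widehat{X}^{j-1},\widetilde{X}^j$ are measurable and the inner products, the cubic Nemytskii map and the Riesz isomorphism are continuous; for fixed $\omega$ the map $v\mapsto\mathbf{f}(\omega,v)$ is continuous, the only nontrivial point being continuity of $v\mapsto f(v+\widetilde{X}^j(\omega))$ from $\mathbb{H}^1$ into $\mathbb{L}^2\hookrightarrow(\mathbb{H}^1)^*$, which holds for $d\leq3$ by the embedding $\mathbb{H}^1\hookrightarrow\mathbb{L}^6$. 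Then \lemref{measurabilitylemma} gives measurability of $\widehat{X}^j$, and that of $\widehat{w}^j$ follows since it is obtained from $\widehat{X}^j$ and the data via the continuous deterministic solution operator of the Neumann problem. The main obstacle is the clean identification of the scheme with the Euler--Lagrange equation of $G$ together with the correct recovery and normalization of $\widehat{w}^j$; once these are in place the measurability is a routine check of the hypotheses of \lemref{measurabilitylemma}, with the cubic nonlinearity controlled by the $d\leq3$ Sobolev embedding.
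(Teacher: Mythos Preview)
Your proposal is correct and follows essentially the same route as the paper: induction on $j$, existence and uniqueness of $\widehat{X}^j$ as the minimizer of the strictly convex coercive functional $G$ from \lemref{convexlemma}, identification of the Euler--Lagrange equation with the reduced scheme, recovery of $\widehat{w}^j$ from the associated Neumann problem, and measurability via \lemref{measurabilitylemma}. Your treatment is in fact slightly more explicit than the paper's in two places --- you spell out the mass conservation $m(\widehat{X}^j)=m(\widehat{X}^{j-1})$ needed for solvability of the Neumann problem and the normalization $(\widehat{w}^j,1)=\frac{1}{\eps}(f(\widehat{X}^j+\widetilde{X}^j),1)$ that pins down the additive constant --- while the paper simply cites Lax--Milgram; for the $L^2(\Omega,\mathbb{H}^1)$ integrability the paper refers back to the energy argument of \lemref{momenta} rather than the coercivity bound, but either route works.
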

\begin{proof}
We proceed by induction and assume that given $\widehat{X}^0=u^{\epsilon}_0\in L^2(\Omega, \mathbb{H}^1)$ there exist unique $\mathcal{F}_{t_k}$-measurable solutions $\widehat{X}^k$, $\widehat{w}^k$ for all $k=1, \dots, j-1$.
Since $G$ is coercive and strictly convex (cf. \lemref{convexlemma}), by the standard theory of convex optimization \cite[Chapter 7]{ciarlet_1989},
$G$ has a unique (bounded) minimizer $\widehat{X}^j \equiv \widehat{X}^j(\omega)$ in $\mathbb{H}^1$. Moreover, from  \cite[Theorem 7.4-4]{ciarlet_1989}, $\widehat{X}^j$ is the unique minimizer of $G$ if and only if it satisfies $\mathbb{P}$-a.s. the Euler equation: $(\mathcal{A}(\widehat{X}^j), \psi)=0$ for all $\psi\in\mathbb{H}^1$, where
\begin{align*}
(\mathcal{A}(v), \psi)&:=\frac{d}{ds}G(v+s\psi)\vert_{s=0}\nonumber\\
&=(v-\widehat{X}^{j-1},\psi)_{-1}+\frac{\tau}{\eps}((v+\widetilde{X}^j)^3, \psi)+\eps\tau(\nabla v,\nabla\psi)-\frac{\tau}{\eps}(v+\widetilde{X}^j, \psi)\nonumber\\
&=(v-\widehat{X}^{j-1}, \psi)_{-1}+\frac{\tau}{\eps}(f(v+\widetilde{X}^j), \psi)+\eps\tau(\nabla v, \nabla\psi).
\end{align*}
Therefore $\widehat{X}^j$ is the unique solution to the variational problem: find $v\in \mathbb{H}^1$ such that 
\begin{align*}
(v-\widehat{X}^{j-1}, \psi)_{-1}+\frac{\tau}{\eps}(f(v+\widetilde{X}^j), \psi)+\eps\tau(\nabla v, \nabla\psi)=0\quad \forall \psi\in \mathbb{H}^1\quad \mathbb{P}\text{-a.s.}
\end{align*}
We consider the following variational problem: find $v\in\mathbb{H}^1$ such that
 %\footnote{we need elliptic regularity below, check and rewrite}
\begin{align}
\label{newscheme2}
(\nabla v,\nabla\varphi)=-\frac{1}{\tau}\left(\widehat{X}^j-\widehat{X}^{j-1}, \varphi\right)\quad 
\forall\varphi\in\mathbb{H}^1\quad \mathbb{P}\text{-a.s.}.
\end{align}
Note that by the Lax-Milgram theorem, the variational problem  \eqref{newscheme2} has a unique solution,
that is, there exists a unique process $\widehat{w}^j$ satisfying $\mathbb{P}$-a.s.
\begin{align}
\label{neweq1}
(\widehat{X}^j-\widehat{X}^{j-1}, \psi)=-\tau(\nabla\widehat{w}^j, \nabla\psi)\quad \psi\in\mathbb{H}^1.
\end{align}
Using the definition of the inner product $(., .)_{-1}$ and the identity \eqref{neweq1}, it holds $\mathbb{P}$-a.s.
\begin{align*}
(\widehat{X}^j-\widehat{X}^{j-1}, \psi)_{-1}&=(\widehat{X}^j-\widehat{X}^{j-1}, (-\Delta)^{-1}\psi)\nonumber\\
&=-\tau(\nabla\widehat{w}^j, \nabla(-\Delta)^{-1}\psi)=-\tau(\widehat{w}^j, \psi)\quad \forall \psi\in\mathbb{H}^1.
\end{align*}
Using the preceding identity it follows that the unique minimizer $\widehat{X}^j$ of the convex function $G$ in \eqref{Convexfunction1} is the unique process satisfying $\mathbb{P}$-a.s.
\begin{align*}
%\label{newscheme1}
\eps(\nabla\widehat{X}^j, \nabla\psi)+\frac{1}{\eps}\left(f(\widehat{X}^j+\widetilde{X}^j), \psi\right)=(\widehat{w}^j, \psi)\quad \psi\in\mathbb{H}^1, 
\end{align*}
where $\widehat{w}^j$ is the unique stochastic process satisfying $\mathbb{P}$-a.s. 
\begin{align*}
(\widehat{X}^j-\widehat{X}^{j-1}, \psi)+\tau(\nabla\widehat{w}^j, \nabla\psi)=0\quad \psi\in\mathbb{H}^1.
\end{align*}
Hence \eqref{randompdescheme} has a unique solution $(\widehat{X}^j, \widehat{w}^j)$. 

%Let $(\widehat{X}^j_{*}, \widehat{w}^j_*)\in\mathbb{H}^1\times\mathbb{H}^1$ be the solution to \eqref{newscheme2}, \eqref{newscheme1}. Then $(\widehat{X}^j_{*}, \widehat{w}^j_{*})$ solve \eqref{randompdescheme}.
%On the other hand,  if $(\widehat{X}^j, \widehat{w}^j)$ is a solution to \eqref{randompdescheme}, then $(\widehat{X}^j, \widehat{w}^j)$ solves \eqref{newscheme1}, \eqref{newscheme2}.
%Hence,  there is a one-to-one correspondence of the respective solution sets {\color{red} for all $\omega \in \Omega$}.\footnote{the whole proof needs to be reformulated taking $\omega$ into account}
%Since \eqref{newscheme1}, \eqref{newscheme2} has a unique solution, it follows that \eqref{randompdescheme} has a unique solution \revd{$(\widehat{X}^j, \widehat{w}^j)$ and $\widehat{X}^j$} satisfies
%\begin{align*}
%(\widehat{X}^j-\widehat{X}^{j-1}, \psi)_{-1}+\eps(\nabla\widehat{X}^j,\nabla\psi)+\frac{1}{\eps}\left(f(\widehat{X}^j+\widetilde{X}^j), \psi\right)=0\quad \psi\in\mathbb{H}^1. 
%\end{align*}
Applying \lemref{measurabilitylemma} with  $(S, \Sigma)=(\Omega, \mathcal{F}_{t_j})$ and $\mathbf{f}: \Omega\times \mathbb{H}^1\longrightarrow\mathbb{H}^1$, given by
\begin{align*}
(\mathbf{f}({\omega}, u), \psi)=\frac{1}{\tau}(u-\widehat{X}^{j-1}(\omega), \psi)_{-1}+\eps(\nabla u, \nabla\psi)+\frac{1}{\eps}\left(f(u+\widetilde{X}^j(\omega)), \psi\right)\quad \forall\psi\in\mathbb{H}^1
\end{align*}
yields the $\mathcal{F}_{t_j}$-measurability of $\widehat{X}^j$. 
The $\mathcal{F}_{t_j}$-measurability of $\widehat{w}^j$ then follows directly from \eqref{neweq1}.
%, we apply \lemref{measurabilitylemma} with $(S, \Sigma)=(\Omega, \mathcal{F}_{t_j})$ and $\mathbf{f}: \Omega\times \mathbb{H}^1\longrightarrow\mathbb{H}^1$, given by
%\begin{align*}
%(\mathbf{f}(u), \psi)=(\nabla u, \nabla\psi)+\frac{1}{\tau}(\widehat{X}^j-\widehat{X}^{j-1}, \psi)\quad \psi\in\mathbb{H}^1.
%\end{align*}
The proof of the fact that $\widehat{X}^j, \widehat{w}^j \in L^2(\Omega, \mathbb{H}^1)$ is analogous to the proof of \lemref{momenta}.
\end{proof}

\begin{remark}
The time step restriction $\tau\leq \frac{1}{2}\eps^3$ for the solvability of the numerical scheme \eqref{randompdescheme} in \thmref{existencescheme} is consistent with the condition for the solvability of the corresponding numerical scheme in the deterministic setting, see, e.g., \cite[Theorem 3.3]{FengLiYukunXing2016} or \cite[Theorem 3.3]{Aristotelous2013}.
\end{remark}

\revd{
\begin{lemma}
For $j=1,\cdots,J$, there exists a unique $\mathcal{F}_{t_j}$-measurable stochastic process $(\widetilde{X}^j, \widetilde{w}^j)$ satisfying $\mathbb{P}$-a.s. \eqref{linearscheme}. Moreover, $\widetilde{X}^j\in L^2(\Omega, \mathbb{H}^1)$, $j=1,\cdots,J$. 
\end{lemma}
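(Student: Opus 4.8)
The plan is to mirror the argument of \lemref{existencescheme}, exploiting that \eqref{linearscheme} is \emph{linear} and therefore markedly simpler. I would proceed by induction on $j$, assuming that $\widetilde{X}^{j-1}\in L^2(\Omega,\mathbb{H}^1)$ is $\mathcal{F}_{t_{j-1}}$-measurable; recall that $\Delta_j\overline{W}$ is $\mathcal{F}_{t_j}$-measurable and satisfies $\mathbb{E}\Vert\Delta_j\overline{W}\Vert^2<\infty$ by \lemref{LemmaBruit0}. The first step is to record discrete mass conservation: taking $\varphi=1$ in the first equation of \eqref{linearscheme} gives $(\widetilde{X}^j-\widetilde{X}^{j-1},1)=\eps^{\gamma}(\Delta_j\overline{W},1)=0$ since $\Delta_j\overline{W}$ is mean-free by \eqref{Noiseapprox2}; together with $\widetilde{X}^0=0$ this yields $m(\widetilde{X}^j)=0$, so $\widetilde{X}^j\in\mathbb{L}^2_0$ and $(-\Delta)^{-1}$ may be applied freely below.

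Next I would eliminate $\widetilde{w}^j$: substituting the second equation of \eqref{linearscheme} and testing the first with $\varphi=(-\Delta)^{-1}\psi$ reduces the scheme to the single linear variational problem of finding $\widetilde{X}^j\in\mathbb{H}^1\cap\mathbb{L}^2_0$ with
\begin{align*}
(\widetilde{X}^j-\widetilde{X}^{j-1},\psi)_{-1}+\eps\tau(\nabla\widetilde{X}^j,\nabla\psi)=\eps^{\gamma}(\Delta_j\overline{W},\psi)_{-1}\quad\forall\,\psi\in\mathbb{H}^1\cap\mathbb{L}^2_0.
\end{align*}
In analogy with \lemref{convexlemma} this is the Euler equation of the quadratic functional
\begin{align*}
\widetilde{G}(v):=\tfrac{1}{2}\Vert v-\widetilde{X}^{j-1}\Vert^2_{-1}+\tfrac{\eps\tau}{2}\Vert\nabla v\Vert^2-\eps^{\gamma}(\Delta_j\overline{W},v)_{-1},
\end{align*}
whose second variation $\Vert\psi\Vert^2_{-1}+\eps\tau\Vert\nabla\psi\Vert^2$ is strictly positive on $\mathbb{L}^2_0\setminus\{0\}$; note that, in contrast to \lemref{convexlemma}, no time-step restriction is needed because the destabilizing $-\tfrac{\tau}{\eps}\Vert\cdot\Vert^2$ term is absent. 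Coercivity on $\mathbb{H}^1\cap\mathbb{L}^2_0$ follows from Poincar\'{e}'s inequality, so $\widetilde{G}$ admits a unique minimizer $\widetilde{X}^j$ (equivalently, Lax--Milgram applies to the bilinear form above).

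Given $\widetilde{X}^j$, I would recover $\widetilde{w}^j\in\mathbb{H}^1\cap\mathbb{L}^2_0$ as the unique solution (Lax--Milgram for the Neumann Laplacian) of $\tau(\nabla\widetilde{w}^j,\nabla\varphi)=\eps^{\gamma}(\Delta_j\overline{W},\varphi)-(\widetilde{X}^j-\widetilde{X}^{j-1},\varphi)$ for all $\varphi\in\mathbb{H}^1$; the compatibility (solvability) condition holds because the right-hand side vanishes at $\varphi=1$, both $\Delta_j\overline{W}$ and $\widetilde{X}^j-\widetilde{X}^{j-1}$ being mean-free. The first equation of \eqref{linearscheme} then holds by construction, and inserting $\varphi=(-\Delta)^{-1}\psi$ and comparing with the reduced Euler equation gives $(\widetilde{w}^j,\psi)=\eps(\nabla\widetilde{X}^j,\nabla\psi)$ for $\psi\in\mathbb{L}^2_0$, hence for all $\psi\in\mathbb{H}^1$ since $\widetilde{w}^j$ is mean-free; this is the second equation.

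For $\mathcal{F}_{t_j}$-measurability I would apply \lemref{measurabilitylemma} with $(S,\Sigma)=(\Omega,\mathcal{F}_{t_j})$, $V=\mathbb{H}^1\cap\mathbb{L}^2_0$ and $\mathbf{f}(\omega,\cdot)$ the (affine, hence continuous) residual of the reduced equation, which is measurable in $\omega$ through $\widetilde{X}^{j-1}$ and $\Delta_j\overline{W}$; measurability of $\widetilde{w}^j$ then follows from its defining Poisson problem. Finally $\widetilde{X}^j\in L^2(\Omega,\mathbb{H}^1)$ is obtained from the energy identity (test the first equation with $\widetilde{w}^j$ and the second with $\widetilde{X}^j-\widetilde{X}^{j-1}$, as in \lemref{momenta}) after bounding $\eps^{\gamma}(\Delta_j\overline{W},\widetilde{w}^j)$ by Young's inequality and taking expectations via \lemref{LemmaBruit0}; here only finiteness is claimed, so a crude estimate suffices. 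I expect the only delicate point to be the zero-mean/compatibility bookkeeping that legitimizes $(-\Delta)^{-1}$ and the Poisson recovery of $\widetilde{w}^j$, together with checking that the recovered pair satisfies \emph{both} equations of \eqref{linearscheme}; linearity removes the convexity and time-step subtleties that were essential in \lemref{existencescheme}.
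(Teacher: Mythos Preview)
Your proposal is correct and follows essentially the same approach as the paper: introduce a quadratic functional whose Euler equation is the reduced scheme, verify strict convexity and coercivity, recover $\widetilde{w}^j$ via Lax--Milgram, and conclude measurability via \lemref{measurabilitylemma}. Your functional $\widetilde{G}$ differs from the paper's only by an additive constant (the paper writes the linear term as $-\tfrac{\eps^\gamma}{2}\Vert\Delta_j\overline{W}+v\Vert^2_{-1}+\tfrac{\eps^\gamma}{2}\Vert v\Vert^2_{-1}$), and your explicit mean-zero bookkeeping together with direct coercivity via Poincar\'{e} is in fact slightly cleaner than the paper's argument, which introduces an artificial side condition $\tfrac{1}{2}-2\eps^{\gamma}\geq 0$.
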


\begin{proof}
The proof goes along the same lines as the proof of \thmref{existencescheme}, hence we only sketch it.
We proceed by induction and assume that there exist unique $\mathcal{F}_{t_k}$-measurable solutions $\widetilde{X}^k$, $\widetilde{w}^k$  for  $k=1, \dots, j-1$. We introduce the following functional
\begin{align}
\label{Convexfunction2}
G(v)=\frac{1}{2}\Vert v-\widetilde{X}^{j-1}\Vert^2_{-1}+\frac{\eps\tau}{2}\Vert \nabla v\Vert^2-\frac{\eps^{\gamma}}{2}\Vert \Delta_j\overline{W}+v\Vert^2_{-1}+\frac{\eps^{\gamma}}{2}\Vert v\Vert^2_{-1}\quad v\in\mathbb{H}^1.
\end{align}
We have 
\begin{align}
\label{Gradian1}
\frac{dG}{ds}(v+s\psi)=(v+s\psi-\widetilde{X}^{j-1}, \psi)_{-1}+\eps\tau\left(\nabla(v+s\psi), \nabla\psi\right)-\eps^{\gamma}(\Delta_j\overline{W}, \psi)_{-1}\quad \forall \psi\in\mathbb{H}^1.
\end{align}
The second variation of the functional $G$ is: 
\begin{align*}
\frac{d^2G}{ds^2}(v+s\psi)=\Vert\psi\Vert^2_{-1}+\epsilon\tau\Vert \nabla\psi\Vert^2>0.
\end{align*}
It follows therefore that $G$ is a strictly convex function. Using triangle and Young's inequalities, we have 
\begin{align*}
\eps^{\gamma}\Vert \Delta_j\overline{W}+v\Vert^2_{-1}&\leq 2\eps^{\gamma}\Vert\Delta_j\overline{W}\Vert^2_{-1}+2\eps^{\gamma}\Vert v\Vert^2_{-1}\nonumber\\
&\leq 2\eps^{\gamma}\Vert\Delta_j\overline{W}\Vert^2_{-1}+4\eps^{\gamma}\Vert \widetilde{X}^{j-1}\Vert^2_{-1}+4\eps^{\gamma}\Vert v-\widetilde{X}^{j-1}\Vert^2_{-1}.
\end{align*}
Using the preceding estimate, it follows that
\begin{align*}
G(v)\geq \left(\frac{1}{2}-2\eps^{\gamma}\right)\Vert v-\widetilde{X}^{j-1}\Vert^2_{-1}+\frac{\eps\tau}{2}\Vert\nabla v\Vert^2-\eps^{\gamma}\Vert\Delta_j\overline{W}\Vert^2_{-1}-2\eps^{\gamma}\Vert \widetilde{X}^{j-1}\Vert^2_{-1}+\frac{\eps^{\gamma}}{2}\Vert v\Vert^2_{-1}.
\end{align*}
Since $0<\eps<1$, choosing $\gamma$ large enough so that $\frac{1}{2}-2\eps^{\gamma}\geq 0$ and using Poincar\'{e}'s inequality, it follows that
\begin{align*}
G(v)&\geq \frac{\eps\tau}{2}\Vert\nabla v\Vert^2-\eps^{\gamma}\Vert\Delta_j\overline{W}\Vert^2_{-1}-2\eps^{\gamma}\Vert \widetilde{X}^{j-1}\Vert^2_{-1}\nonumber\\
&\geq C\Vert v\Vert^2_{\mathbb{H}^1}-\eps^{\gamma}\Vert\Delta_j\overline{W}\Vert^2_{-1}-2\eps^{\gamma}\Vert \widetilde{X}^{j-1}\Vert^2_{-1}.
\end{align*}
Therefore $G$ is coercive. 
By the standard theory of convex optimization (cf. \cite[Chapter 7]{ciarlet_1989}),
the functional $G$ in \eqref{Convexfunction2} has a unique (bounded) minimizer $\widetilde{X}^j \revl{\equiv \widetilde{X}^j(\omega)}$ in $\mathbb{H}^1$. Moreover, from  \cite[Theorem 7.4-4]{ciarlet_1989}, $\widetilde{X}^j$ is the unique minimizer of $G$ if and only if it satisfies $\mathbb{P}$-a.s. the Euler equation: $(\mathcal{A}(\widetilde{X}^j), \psi)=0$ for all $\psi\in\mathbb{H}^1$, where
\begin{align*}
(\mathcal{A}(v), \psi)&:=\frac{d}{ds}G(v+s\psi)\vert_{s=0}.
\end{align*}
Using \eqref{Gradian1}, it follows that the unique minimizer $\widetilde{X}^j$ of the functional in \eqref{Convexfunction2} is the unique stochastic process satisfying $\mathbb{P}$-a.s.
\begin{align}
\label{Newscheme3}
(\widetilde{X}^{j}-\widetilde{X}^{j-1}, \psi)_{-1}+\eps\tau(\nabla \widetilde{X}^{j}, \nabla\psi)-\eps^{\gamma}(\Delta_j\overline{W}, \psi)_{-1}=0\quad \forall \psi\in\mathbb{H}^1. 
\end{align}
Let us consider the following variational problem: find $v\in \mathbb{H}^1$, such that
\begin{align}
\label{newvarprob1} 
\tau(\nabla v, \nabla\psi)=\eps^{\gamma}(\Delta_j\overline{W}, \psi)-(\widetilde{X}^j-\widetilde{X}^{j-1}, \psi)\quad \forall \psi\in\mathbb{H}^1.
\end{align}
Using the Lax-Milgram theorem, it follows that \eqref{newvarprob1} has a unique solution, that is, there exists a unique stochastic process $\widetilde{w}^j$ satisfying $\mathbb{P}$-a.s.
\begin{align*}
(\widetilde{X}^j-\widetilde{X}^{j-1}, \psi)+\tau(\nabla \widetilde{w}^j, \nabla\psi)=\eps^{\gamma}(\Delta_j\overline{W}, \psi)\quad \forall \psi\in\mathbb{H}^1.
\end{align*}
Using the definition of the $(.,.)_{-1}$, it follows from the preceding identity that
\begin{align}
\label{Newscheme4}
(\widetilde{X}^j-\widetilde{X}^{j-1}, \psi)_{-1}+\tau( \widetilde{w}^j, \psi)=\eps^{\gamma}(\Delta_j\overline{W}, \psi)_{-1}\quad \forall \psi\in\mathbb{H}^1.
\end{align}
Combining \eqref{Newscheme3} and \eqref{Newscheme4}, it follows that $(\widetilde{X}^j, \widetilde{w}^j)$ satisfies
\begin{align*}
(\widetilde{w}^j, \psi)=\eps(\nabla\widetilde{X}^j, \nabla \psi)\quad \forall\psi\in\mathbb{H}^1. 
\end{align*}
Since the variational problem: find $v\in\mathbb{H}^1$ such that 
\begin{align}
\label{Newvariational1}
(v, \psi)=\eps(\nabla\widetilde{X}^j, \nabla \psi)\quad \forall\psi\in\mathbb{H}^1
\end{align}
 has a unique solution,  it follows that  $(\widetilde{X}^j, \widetilde{w}^j)$ is the unique solution of \eqref{linearscheme}. Applying \lemref{measurabilitylemma} with  $(S, \Sigma)=(\Omega, \mathcal{F}_{t_j})$ and $\mathbf{f}: \Omega\times \mathbb{H}^1\longrightarrow\mathbb{H}^1$, with
\begin{align*}
(\mathbf{f}(u), \psi)=(u-\widetilde{X}^{j-1}, \psi)_{-1}+\eps\tau(\nabla u, \nabla\psi)-\eps^{\gamma}(\Delta_j\overline{W}, \psi)_{-1}\quad \psi\in\mathbb{H}^1
\end{align*}
implies the $\mathcal{F}_{t_j}$-measurability of $\widetilde{X}^j$. The $\mathcal{F}_{t_j}$-measurability of $\widetilde{w}^j$ follows from the fact that $\widetilde{w}^j$ solves \eqref{Newvariational1}.
The proof of the fact that $\widetilde{X}^j\in L^2(\Omega, \mathbb{H}^1)$ is analogous to the proof of \lemref{momenta}.
\end{proof}
}

%\subsection{$\mathbb{L}^{\infty}$ \textit{a priori} estimate of the numerical approximation $X^j$}
 
\subsection{$\mathbb{L}^\infty$-estimates for the solution of \eqref{linearscheme} and the solution of \eqref{scheme1b}}
\label{AprioriEstimates}
We start by deriving an alternative representation of $\widetilde{X}^j$ which is  more convenient for the subsequent analysis.
We consider a  discrete  process $\{\widetilde{Y}^j\}_{j=0}^J$ such that $\widetilde{Y}^0=0$ and $\{\widetilde{Y}^j\}_{j=1}^J$ satisfies
\begin{align}
\label{Discreteprocess1a}
\widetilde{Y}^j  =(\mathbf{I}+\eps\tau\Delta^2)^{-1}\widetilde{Y}^{j-1}+\eps^{\gamma}(\mathbf{I}+\eps\tau\Delta^2)^{-1}\Delta_j\overline{W}\, \text{ for } j=1,\cdots,J,
\end{align}
along with the boundary condition $\partial_\normal\widetilde{Y}^j = \partial_\normal\Delta \widetilde{Y}^{j}=0$ on $\partial\mathcal{D}$.

Obviously $\widetilde{Y}^j$ is  $\mathcal{F}_{t_{j}}$-measurable. Applying $(\mathbf{I}+\eps\tau\Delta^2)$ in both sides of \eqref{Discreteprocess1a} yields 
\begin{align}
\label{Discreteprocess1}
\widetilde{Y}^j-\widetilde{Y}^{j-1}=-\eps\tau\Delta^2\widetilde{Y}^j+\eps^{\gamma}\Delta_j\overline{W}\quad j=1,\cdots, J.
\end{align}
Setting $\widetilde{v}^j=-\eps\Delta \widetilde{Y}^j$, $j=0,\cdots,J$,
it follows from \eqref{Discreteprocess1} that $(\widetilde{Y}^j, \widetilde{v}^j)$ solves \eqref{linearscheme}. From the uniqueness of solution to \eqref{linearscheme}, it follows that $\widetilde{X}^j=\widetilde{Y}^j$, that is, $\widetilde{X}^0=0$ and
\begin{align}
\label{linearscheme1}
\widetilde{X}^j=(\mathbf{I}+\eps\tau\Delta^2)^{-1}\widetilde{X}^{j-1}+\eps^{\gamma}(\mathbf{I}+\eps\tau\Delta^2)^{-1}\Delta_j\overline{W}\quad \quad j=1,\cdots, J.
\end{align}
Using \eqref{linearscheme1} recursively and noting that $\widetilde{X}^0=0$ we obtain that
\begin{align}
\label{discreteconv}
\widetilde{X}^j=\eps^{\gamma} \sum_{i=0}^{j-1}(\mathbf{I}+\eps\tau\Delta^2)^{-(j-i)}\Delta_{i+1}\overline{W}\quad j=1,\cdots,J.
\end{align}
The above equivalent reformulation of \eqref{linearscheme} has been also used in the literature, see e.g., \cite{Larsson1,Larsson2}.
and can be viewed as the discrete counterpart of the stochastic convolution  $ \eps^{\gamma} \int_0^{t_j}e^{-\eps\Delta^2(t_j-s)}\mathrm{d}W(s)$, cf. \cite[(1.16)]{Debussche1}.
%\footnote{what are the additional conditions for \eqref{discreteconv} to be well defined? uniqueness of $\widetilde{X}^j$}
%\footnote{do we need to work with the strong formulations of the schemes? we need this for the $L^\infty$ bound of $X^j$}

\begin{lemma}
 \label{regularitynoise}
Let $\alpha\in[0, 2)$.   Then for any $p\geq 1$ there exists a constant $C>0$ such that
\begin{itemize}
\item[(i)] $\max\limits_{1\leq j\leq J}\left(\mathbb{E}\left[\Vert \widetilde{X}^j\Vert_{\mathbb{H}^{\alpha}}^{2p}\right]\right)^{\frac{1}{2p}}\leq C\eps^{ \gamma-\frac{\alpha}{4}}h^{-\frac{d}{4}}$.
\item[(ii)]  $\left(\mathbb{E}\left[\max\limits_{1\leq j\leq J}\Vert\widetilde{X}^j\Vert^{2p}_{\mathbb{H}^{\alpha}}\right]\right)^{\frac{1}{2p}} \leq C\eps^{ \gamma-\frac{\alpha}{4}}h^{-\frac{d}{4}}$.
\end{itemize} 
\end{lemma}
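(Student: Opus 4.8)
The plan is to exploit that, for each fixed $j$, $\widetilde{X}^j$ is a \emph{Gaussian} random variable with values in $\mathbb{H}^{\alpha}$: by the convolution representation \eqref{discreteconv} it is a finite linear combination (over the mesh index $l$ and the time levels $i$) of the independent Brownian increments $\Delta_{i+1}\beta_l$, smoothed by the operators $(\mathbf{I}+\eps\tau\Delta^2)^{-(j-i)}$. All its moments are therefore comparable: by the equivalence of Gaussian moments (Kahane--Khintchine/Fernique), $\left(\mathbb{E}[\Vert\widetilde{X}^j\Vert_{\mathbb{H}^{\alpha}}^{2p}]\right)^{1/2p}\leq C_p\left(\mathbb{E}[\Vert\widetilde{X}^j\Vert_{\mathbb{H}^{\alpha}}^{2}]\right)^{1/2}$ for every $p\geq 1$. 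Hence (i) reduces to the single second-moment estimate $\sup_{1\leq j\leq J}\mathbb{E}[\Vert\widetilde{X}^j\Vert_{\mathbb{H}^{\alpha}}^{2}]\leq C\eps^{2\gamma-\alpha/2}h^{-d/2}$, which I would establish first.

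For the second moment I would expand the zero-mean increment $\Delta_{i+1}\overline{W}$ in the Neumann eigenbasis $\{e_k\}_{k\geq 1}$ and use the independence of increments across time levels to diagonalise the covariance, obtaining
\[
\mathbb{E}[\Vert\widetilde{X}^j\Vert_{\mathbb{H}^{\alpha}}^{2}]\simeq\eps^{2\gamma}\sum_{k\geq 1}\lambda_k^{\alpha}\,\sigma_k^2\sum_{m=1}^{j}(1+\eps\tau\lambda_k^2)^{-2m},\qquad \sigma_k^2:=\mathbb{E}[(\Delta_{i+1}\overline{W},e_k)^2].
\]
The variances $\sigma_k^2$ are controlled by \lemref{Lemmabasis} and \lemref{LemmaBruit0}, in particular by the total bound $\sum_k\sigma_k^2=\mathbb{E}\Vert\Delta_{i+1}\overline{W}\Vert^2\leq Ch^{-d}\tau$. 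The geometric sum in $m$ I would handle through the discrete fourth-order smoothing estimate $\lambda_k^{\alpha/2}(1+\eps\tau\lambda_k^2)^{-m}\leq C(\eps\tau m)^{-\alpha/4}$, valid for $0\leq\alpha<2$. Squaring, summing $\sum_{m=1}^{j}m^{-\alpha/2}\simeq j^{1-\alpha/2}=(T/\tau)^{1-\alpha/2}$ against the prefactor $(\eps\tau)^{-\alpha/2}$, the powers of $\tau$ cancel exactly, which is the reason the bound is $\tau$-independent.

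I expect the spectral balancing to be the main obstacle. Because the space-time white noise has only negative Sobolev regularity (below $\mathbb{H}^{-d/2}$), the frequency sum $\sum_k\lambda_k^{\alpha}\sigma_k^2(\cdots)$ does not converge on its own: one must play the frequency weight $\lambda_k^{\alpha}$ off against the discrete smoothing $(1+\eps\tau\lambda_k^2)^{-m}$ and against the fact that the discrete noise $\Delta_{i+1}\overline{W}\in\mathbb{V}_h$ is, morally, band-limited to $\vert k\vert\lesssim h^{-1}$. Feeding only the crude mass bound $\mathbb{E}\Vert\Delta_{i+1}\overline{W}\Vert^2\leq Ch^{-d}\tau$ into the above yields $\eps^{\gamma-\alpha/4}h^{-d/2}$, which is still a factor $h^{-d/4}$ away from the claim; closing this gap forces one to exploit the improved negative-norm bound $\Vert\phi_l\Vert_{\mathbb{H}^{-\beta}}^2\leq Ch^{d+2\beta}$ for the nodal functions and to track the mesh cut-off inside the spectral sum rather than through a lossy operator-norm estimate. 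It is here that the scaling $h=\eps^{\eta}$ of \assref{assumption2} enters.

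Finally, for part (ii) the maximum sits \emph{inside} the expectation, so a uniform-in-$j$ bound does not suffice and a crude union bound would cost an unacceptable power of $J=T/\tau$. I would instead expose a martingale by summation by parts in \eqref{discreteconv}: using $\mathbf{I}-(\mathbf{I}+\eps\tau\Delta^2)^{-1}=\eps\tau\Delta^2(\mathbf{I}+\eps\tau\Delta^2)^{-1}$, one rewrites $\widetilde{X}^j$ as $\eps^{\gamma}(\mathbf{I}+\eps\tau\Delta^2)^{-1}S^j$ plus a convolution remainder, where $S^j:=\sum_{i=1}^{j}\Delta_i\overline{W}$ is a discrete martingale. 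The martingale part is controlled by the discrete Burkholder--Davis--Gundy inequality \cite[Lemma 3.3]{Banas19}, and the remainder is absorbed using the contractivity of $(\mathbf{I}+\eps\tau\Delta^2)^{-1}$ on $\mathbb{H}^{\alpha}$ together with the uniform second-moment control from (i); Gaussianity then upgrades the resulting $L^2(\Omega)$ bound to the $L^{2p}(\Omega)$ bound for all $p\geq 1$.
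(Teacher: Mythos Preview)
Your overall strategy for (i) is sound but more elaborate than what the paper does. The paper writes $\widetilde{X}^j$ as a Hilbert-space-valued stochastic integral $\eps^{\gamma}\sum_l\int_0^T 1_{[0,t_j)}(s)\,\mathcal{S}(t_j-s)\phi_l\,d\beta_l(s)$ (with $\mathcal{S}(t):=(\mathbf{I}+\eps\tau\Delta^2)^{-j}$ for $t\in[t_{j-1},t_j)$) and applies the Burkholder--Davis--Gundy inequality \cite[Theorem~4.36]{DaPratoZabczyk} directly; this already controls all $L^{2p}(\Omega)$ moments, so your Kahane--Khintchine reduction to the second moment is unnecessary. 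The smoothing is then handled by the ready-made operator-norm bound $\|(-\Delta)^{\alpha/2}(\mathbf{I}+\eps\tau\Delta^2)^{-j}\|_{\mathcal{L}(\mathbb{L}^2)}\leq C(\eps t_j)^{-\alpha/4}$ from \cite[(2.10)]{Larsson1}, which is exactly your pointwise estimate $\lambda_k^{\alpha/2}(1+\eps\tau\lambda_k^2)^{-m}\leq C(\eps\tau m)^{-\alpha/4}$ lifted to operator level; no spectral expansion or eigenbasis bookkeeping is needed, and the time sum $\tau\sum_i t_{j-i}^{-\alpha/2}\leq C$ closes for $\alpha<2$ just as you describe.

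Your worry about the $h$-exponent is well founded. The computation you outline---and indeed the paper's own argument---produces $\|\widetilde{X}^j\|^2_{L^{2p}(\Omega,\mathbb{H}^\alpha)}\leq C\eps^{2\gamma-\alpha/2}h^{-d}$, i.e.\ $(\mathbb{E}\|\widetilde{X}^j\|^{2p}_{\mathbb{H}^\alpha})^{1/2p}\leq C\eps^{\gamma-\alpha/4}h^{-d/2}$ rather than the $h^{-d/4}$ printed in the statement. The paper makes \emph{no} attempt at the negative-norm refinement you sketch, and such a refinement would in any case trade the $h$-exponent against the $\eps$-exponent (one picks up an extra $\eps^{-\beta/4}$ for every $h^{\beta/2}$ gained) rather than simply improving $h^{-d/2}$ to $h^{-d/4}$ at the same $\eps$-power. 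Note also that the lemma is stated for general $h$, independent of the coupling $h=\eps^{\eta}$, so that coupling cannot be the mechanism either. You should read $-d/4$ as a misprint for $-d/2$ and not invest effort in closing a gap that the paper's own proof does not close.

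For (ii) the paper dismisses the matter in one line, saying it follows from (i) by Doob's martingale inequality \cite[Theorem~3.9]{DaPratoZabczyk}. This is terse---the sequence $(\widetilde{X}^j)_j$ is a discrete stochastic convolution, not itself a martingale---and your summation-by-parts route exposing the martingale $S^j=\sum_{i\leq j}\Delta_i\overline{W}$ is a legitimate way to make the step rigorous. It is, however, considerably more machinery than the paper deploys.
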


\begin{proof}
 We denote $\mathcal{S}^j:=(\mathbf{I}+\eps\tau\Delta^2)^{-j}$ and $\mathcal{S}(t):=\mathcal{S}^j$ for $t\in[t_{j-1}, t_j)$.  Then we can write $\widetilde{X}^j$ as
\begin{align*}
\widetilde{X}^j&=\eps^{\gamma}\sum_{l=1}^{L}\int_0^T1\!\!1_{[0, t_j)}(s)\mathcal{S}(t_j-s)\phi_l d\beta_{l}(s)-\frac{\eps^{\gamma}}{\vert \mathcal{D}\vert}\sum_{l=1}^{L}\int_0^T1\!\!1_{[0, t_j)}(s)\mathcal{S}(t_j-s)(\phi_l,1) d\beta_{l}(s)\nonumber\\
&=:\widetilde{X}^j_{1}+\widetilde{X}^j_{2}\quad j=1,\cdots,J.
\end{align*}
 For a Banach space $E$, we denote by $\mathcal{L}(E)$  the space of bounded linear operators in $E$ and $\Vert .\Vert_{\mathcal{L}(E)}$  the operator norm in $\mathcal{L}(E)$.
From \cite[(2.10)]{Larsson1} we have   
\begin{align}
\label{Larssondiscrete}
\Vert (-\Delta)^{\frac{\alpha}{2}}\mathcal{S}(t)\Vert_{\mathcal{L}(\mathbb{L}^2)}&=\Vert (-\Delta)^{\frac{\alpha}{2}}(\mathbf{I}+\eps\tau\Delta^2)^{-j}\Vert_{\mathcal{L}(\mathbb{L}^2)}\nonumber\\
&\leq C\eps^{-\frac{\alpha}{4}}t_j^{-\frac{\alpha}{4}}\quad t\in[t_{j-1}, t_j),\quad j=1,\cdots, J,
\end{align}
where the constant $C$ is independent of $t$, $j$, $\tau$ and $\eps$.
Using the equivalence of norms $\Vert u\Vert_{\mathbb{H}^{\alpha}}\approx\Vert(-\Delta)^{\frac{\alpha}{2}}u\Vert_{\mathbb{L}^2}$, $\alpha\in[0, 2)$ (see e.g., \cite[Section 1.2]{Debussche1}), triangle inequality,  the Burkholder-Davis-Gundy inequality \cite [Theorem 4.36]{DaPratoZabczyk}, \eqref{Larssondiscrete} and \lemref{Lemmabasis} yields
\begin{align*}
\Vert \widetilde{X}_{1}^j\Vert^2_{L^{2p}(\Omega, \mathbb{H}^{\alpha})}&\leq C\Vert (-\Delta)^{\frac{\alpha}{2}} \revl{\widetilde{X}_{1}^j}\Vert^2_{L^{2p}(\Omega, \mathbb{L}^2)}\nonumber\\
&\leq \eps^{2\gamma}L\sum_{l=1}^{L}\left\Vert\int_0^T1\!\!1_{[0, t_j)}(s)(-\Delta)^{\frac{\alpha}{2}}\mathcal{S}(t_j-s)\phi_ld\beta_{l}(s)\right\Vert^2_{L^{2p}(\Omega, \mathbb{L}^2)}\nonumber\\
&\leq CL\eps^{2\gamma}\sum_{l=1}^{L}\left(\int_0^T\Vert1\!\!1_{[0, t_j)}(s)(-\Delta)^{\frac{\alpha}{2}}\mathcal{S}(t_j-s)\phi_l\Vert^2ds\right)\nonumber\\
&\leq C\eps^{2\gamma}L\sum_{l=1}^{L}\left(\sum_{i=0}^{J-1}\int_{t_i}^{t_{i+1}}\Vert 1\!\!1_{[0, t_j)}(s)(-\Delta)^{\frac{\alpha}{2}}\mathcal{S}(t_j-s)\Vert^2_{\mathcal{L}(\mathbb{L}^2)}\Vert\phi_l\Vert^2ds\right)\nonumber\\
&\leq C\eps^{2\gamma}h^{d}L^2\left(\tau\sum_{i=0}^{ j-1}\eps^{-\frac{\alpha}{2}}t_{j-i}^{-\frac{\alpha}{2}}\right)\leq C\eps^{2\gamma-\frac{\alpha}{2}}L^2h^d\leq C\eps^{2\gamma-\frac{\alpha}{2}}h^{-d}.
\end{align*}
Along the same lines as above, one obtains 
\begin{align*}
\Vert \widetilde{X}_{2}^j\Vert^2_{L^{2p}(\Omega, \mathbb{H}^{\alpha})}\leq  C\eps^{2\gamma-\frac{\alpha}{2}}h^{-d}.
\end{align*}
Summing the two preceding estimates  completes the proof of (i).

The proof of (ii) follows from (i) by the Doob martingale inequality \cite[Theorem 3.9]{DaPratoZabczyk}.
\end{proof}

We consider the following  subset of $\Omega$:
\begin{align}
\label{SetW}
\Omega_{\widetilde{W}}:=\left\{\omega\in\Omega:\; \max_{1\leq j\leq J}\Vert \widetilde{X}^j(\omega)\Vert_{\mathbb{L}^{\infty}}\leq C\eps^{\gamma-\eta-1} \right\},
\end{align}
where $\eta$ is defined in \assref{assumption2}.
Using \lemref{regularitynoise},  Markov's inequality and the embedding $\mathbb{H}^{\alpha}\hookrightarrow \mathbb{L}^{\infty}$ for $\alpha>\frac{d}{2}$, it follows that   $\lim\limits_{\eps\rightarrow 0}\mathbb{P}[\Omega_{\widetilde{W}}]=1$ if $\gamma>\eta+1$.
\revl{In addition, 
\begin{align}
\label{LinfinityXtilde}
\mathbb{E}\left[\max_{1\leq j\leq J}\Vert \widetilde{X}^j\Vert^r_{\mathbb{L}^{\infty}}\right]\leq  C\varepsilon^{(\gamma-\eta-1)r}\rightarrow 0\quad (\text{as}\; \varepsilon\rightarrow 0)\quad \forall r>0.
\end{align}
}

\revd{
Below we derive a $\mathbb{L}^{\infty}$-estimate for the numerical approximation $X^j$ \eqref{scheme1b}  on a smaller probability space $\Omega_{\mathcal{E}}$, where
\begin{align}
\label{SetOmegainfty}
\Omega_{\mathcal{E}}:=\left\{\omega\in\Omega:\; \max_{0\leq j\leq J} \mathcal{E}(X^j)\leq C\eps^{-\theta}\right\} \quad \text{for some}\; \theta> 0.
\end{align}
Using Chebyshev's inequality (see \cite[Theorem 3.14]{WalshBook2012}) and noting \lemref{moment}, we obtain
\begin{align*}
\mathbb{P}[\Omega_{\mathcal{E}}]=1-\mathbb{P}[\Omega_{\mathcal{E}}^c]\geq 1-\frac{\mathbb{E}\left[\max\limits_{1\leq j\leq J}\mathcal{E}(X^j)\right]}{\eps^{-\theta}}\geq 1-C\eps^{\theta}\rightarrow 1\quad \text{as}\; \eps\rightarrow 0. 
\end{align*}
In the next lemma we state the energy estimate of the numerical solution $\widetilde{X}^j$ \eqref{linearscheme}.
Its proof is a simpler variant of the proof of \lemref{moment}. 
\begin{lemma}
\label{momentatilde}
Let the assumptions of \lemref{moment} be fulfilled. Then   
\begin{align*}
\mathbb{E}\left[\max_{1\leq j\leq J}\mathcal{E}(\widetilde{X}^j)\right]+\frac{\tau}{2}\sum_{j-1}^j\mathbb{E}\Vert\nabla\widetilde{w}^j\Vert^2]\leq C, 
\end{align*}
where $\widetilde{X}^j$ is the numerical solution  in \eqref{linearscheme}.
\end{lemma}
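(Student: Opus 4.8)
The plan is to follow the energy argument of \lemref{momenta}/\lemref{moment}, which simplifies considerably in the linear case \eqref{linearscheme} because the cubic nonlinearity is absent. First I would test the first equation of \eqref{linearscheme} with $\varphi=\widetilde{w}^j$ and the second with $\psi=\widetilde{X}^j-\widetilde{X}^{j-1}$; subtracting the matching term $(\widetilde{X}^j-\widetilde{X}^{j-1},\widetilde{w}^j)$ and using $(\nabla a,\nabla(a-b))=\tfrac12(\Vert\nabla a\Vert^2-\Vert\nabla b\Vert^2+\Vert\nabla(a-b)\Vert^2)$ yields the discrete energy balance
\begin{align*}
&\frac{\eps}{2}\left(\Vert\nabla\widetilde{X}^j\Vert^2-\Vert\nabla\widetilde{X}^{j-1}\Vert^2+\Vert\nabla(\widetilde{X}^j-\widetilde{X}^{j-1})\Vert^2\right)+\tau\Vert\nabla\widetilde{w}^j\Vert^2=\eps^{\gamma}(\Delta_j\overline{W},\widetilde{w}^j).
\end{align*}
Compared with \eqref{bound1}, neither the nonlinear term $\tfrac1\eps(f(X^j),X^j-X^{j-1})$ nor the term $\tfrac{1}{2\eps}\Vert X^j-X^{j-1}\Vert^2$ appears, so the entire chain \eqref{bound2}--\eqref{Warman3} is unnecessary.

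The key simplification on the right-hand side is that the mean part of the noise drops out: taking $\psi=1$ in the second equation of \eqref{linearscheme} gives $(\widetilde{w}^j,1)=\eps(\nabla\widetilde{X}^j,\nabla 1)=0$, whence $\eps^{\gamma}(\Delta_j\overline{W},\widetilde{w}^j)=\eps^{\gamma}(\Delta_jW,\widetilde{w}^j)$ and none of the $m(\Delta_jW)$ terms $A_1,A_2$ of \lemref{momenta} arise. I would then rewrite this term exactly as in \lemref{momenta}, using the second equation with $\psi=\phi_l$ together with the definition \eqref{Noiseapprox1}, to obtain $\eps^{\gamma+1}(\nabla\widetilde{X}^j,\nabla\Delta_jW)$, and split it as $\eps^{\gamma+1}(\nabla(\widetilde{X}^j-\widetilde{X}^{j-1}),\nabla\Delta_jW)+\eps^{\gamma+1}(\nabla\widetilde{X}^{j-1},\nabla\Delta_jW)$. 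The first summand is estimated exactly as $B_{1,1}$ in \eqref{EstimationB11} by Young's inequality and \lemref{Lemmabasis}, producing $\tfrac{\eps}{4}\Vert\nabla(\widetilde{X}^j-\widetilde{X}^{j-1})\Vert^2+C\eps^{2\gamma+1}h^{-2-d}\sum_l\vert\Delta_j\beta_l\vert^2$, where the gradient increment is absorbed on the left; the second summand has vanishing expectation by independence of $\widetilde{X}^{j-1}$ and $\Delta_j\beta_l$.

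Summing over $j$ (with $\widetilde{X}^0=0$), taking the maximum and then the expectation, the martingale term is treated by the discrete Burkholder--Davis--Gundy inequality \cite[Lemma 3.3]{Banas19} as in \lemref{moment}, while $\mathbb{E}\sum_{j=1}^J\sum_{l=1}^L\vert\Delta_j\beta_l\vert^2=TL\leq Ch^{-d}$ by \lemref{Lemmabasis}. This leaves the single driving contribution $C\eps^{2\gamma+1}h^{-2-2d}$, which under \assref{assumption2} (i.e. $h=\eps^{\eta}$ with $\eta$ small and $\gamma\geq\tfrac52$) is bounded by a constant, exactly as in \rmref{remarkmomenta}; this controls $\mathbb{E}[\max_j\tfrac{\eps}{2}\Vert\nabla\widetilde{X}^j\Vert^2]$ and $\tfrac{\tau}{2}\sum_j\mathbb{E}\Vert\nabla\widetilde{w}^j\Vert^2$. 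The remaining potential contribution $\tfrac1\eps\Vert F(\widetilde{X}^j)\Vert_{\mathbb{L}^1}$ to $\mathcal{E}(\widetilde{X}^j)$ is not generated by the energy identity (since $\widetilde{w}^j=-\eps\Delta\widetilde{X}^j$ carries no $f$-term) and would instead be estimated directly from the $\mathbb{L}^2$- and $\mathbb{L}^4$-moment bounds of \lemref{regularitynoise}.

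The main obstacle is essentially bookkeeping: verifying that, once the noise-moment estimates of \lemref{Lemmabasis} and \lemref{LemmaBruit0} are inserted, the accumulated powers of $\eps$ and $h$ combine to an $\eps$-independent constant under the scaling $h=\eps^{\eta}$ of \assref{assumption2}, and that the Burkholder--Davis--Gundy step closes by absorbing the resulting $\max_j\Vert\nabla\widetilde{X}^j\Vert^2$ into the left-hand side through Young's inequality. Because the cubic contributions $B_2$, $A_1$ and the associated $h^{-5d}$, $h^{-11d}$, $h^{-12d}$ terms of \lemref{momenta} and \lemref{MomentLemma} are all absent here, this step is strictly easier than in the nonlinear case, which is precisely why the proof is a simpler variant of that of \lemref{moment}.
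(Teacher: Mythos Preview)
Your approach is correct and matches the paper's own proof, which is simply stated to be ``a simpler variant of the proof of \lemref{moment}''; you have correctly identified all the simplifications (no cubic term, $(\widetilde{w}^j,1)=0$ kills the mean-noise contributions $A_1,A_2$, only the $B_{1}$-type term survives and is closed by the discrete Burkholder--Davis--Gundy inequality exactly as in \lemref{moment}).

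One point deserves a remark. Your plan to control $\tfrac{1}{\eps}\Vert F(\widetilde{X}^j)\Vert_{\mathbb{L}^1}$ via the moment bounds of \lemref{regularitynoise} cannot produce an $\eps$-independent constant: since $F(u)=\tfrac14(u^2-1)^2$ and $\widetilde{X}^j$ is small (of order $\eps^{\gamma-\eta-1}$), one has $\tfrac{1}{\eps}\Vert F(\widetilde{X}^j)\Vert_{\mathbb{L}^1}\approx\tfrac{|\mathcal{D}|}{4\eps}$, which blows up as $\eps\to 0$. This is an imprecision in the lemma's statement rather than a defect in your argument; in the paper \lemref{momentatilde} is only ever invoked through the bounds $\mathbb{E}\bigl[\max_j\tfrac{\eps}{2}\Vert\nabla\widetilde{X}^j\Vert^2\bigr]\leq C$ and $\tau\sum_j\mathbb{E}\Vert\nabla\widetilde{w}^j\Vert^2\leq C$ (see the proofs of the subsequent lemma and of \lemref{EstiXhat}), and both of those are exactly what your energy identity delivers.
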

Numerical scheme \eqref{randompdescheme} can be written in the following equivalent form
\begin{align}
\label{randompdescheme1}
\begin{array}{rlllll}
(d_t\widehat{X}^{j+1}, \varphi)+(\nabla \widehat{w}^{j+1}, \nabla\varphi)& = & 0&& \forall\varphi\in\mathbb{H}^1,\\
\eps(\nabla \widehat{X}^{j+1}, \nabla\psi)+\displaystyle\frac{1}{\eps}(f(X^{j+1}), \psi) & = &(\widehat{w}^{j+1}, \psi)&& \forall\psi\in\mathbb{H}^1,
\end{array}
\end{align} 
where  $d_t\widehat{X}^{j+1}:=(\widehat{X}^{j+1}-\widehat{X}^j)/\tau$ for $j=0,\cdots, J-1$.

Below we estimate the discrete time derivative $d_t\widehat{X}^{j+1}$.
\begin{lemma}
Let the assumptions of \lemref{moment} be fulfilled. Then 
\begin{align*}
\sum_{j=1}^J\tau\mathbb{E}[\Vert d_t\widehat{X}^{j}\Vert^2_{\mathbb{H}^{-1}}]\leq C.
\end{align*}
\end{lemma}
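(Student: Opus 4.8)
The plan is to test the two relations in \eqref{randompdescheme1} with the discrete time-derivative measured in $\mathbb{H}^{-1}$, exploiting that---in contrast to the full scheme \eqref{scheme1b}---the equation for $\widehat X$ carries no explicit noise term, so that $\widehat X^j$ inherits the better time regularity of the random PDE part. First I would observe that testing the first relation of \eqref{randompdescheme1} with $\varphi=1$ gives $m(\widehat X^{j+1})=m(\widehat X^j)=\cdots=m(u^\eps_0)=0$, so $(-\Delta)^{-1}d_t\widehat X^{j+1}$ is well defined. Testing the first relation with $\varphi=(-\Delta)^{-1}d_t\widehat X^{j+1}$ and using $(\nabla\widehat w^{j+1},\nabla(-\Delta)^{-1}v)=(\widehat w^{j+1},v)$ yields $\|d_t\widehat X^{j+1}\|^2_{\mathbb{H}^{-1}}=-(\widehat w^{j+1},d_t\widehat X^{j+1})$; inserting the second relation tested with $\psi=d_t\widehat X^{j+1}$ gives the discrete identity
\begin{align*}
\|d_t\widehat X^{j+1}\|^2_{\mathbb{H}^{-1}}+\eps(\nabla\widehat X^{j+1},\nabla d_t\widehat X^{j+1})+\frac1\eps(f(X^{j+1}),d_t\widehat X^{j+1})=0.
\end{align*}
The key point is that no term $\eps^\gamma\Delta_j\overline W$ appears; the roughness is confined to the argument $X^{j+1}=\widehat X^{j+1}+\widetilde X^{j+1}$ of $f$.

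Multiplying by $\tau$ and summing over $j$, the gradient term telescopes via $(a,a-b)=\tfrac12(|a|^2-|b|^2+|a-b|^2)$ and is bounded below, giving $-\eps\sum_j(\nabla\widehat X^j,\nabla(\widehat X^j-\widehat X^{j-1}))\le\tfrac\eps2\|\nabla\widehat X^0\|^2\le C\mathcal E(u^\eps_0)\le C$. For the cubic term I would write $\widehat X^j-\widehat X^{j-1}=(X^j-X^{j-1})-(\widetilde X^j-\widetilde X^{j-1})$ and treat the two contributions separately. The matching contribution $-\tfrac1\eps\sum_j(f(X^j),X^j-X^{j-1})$ is controlled by the convexity-splitting lower bound \eqref{bound2}: the $\|F(X^j)\|_{\mathbb L^1}$ terms telescope and are bounded by $\mathcal E(X^0)$, the $\|\mathfrak f(X^j)-\mathfrak f(X^{j-1})\|^2$ term has a favourable sign, and the leftover $\tfrac1{2\eps}\sum_j\|X^j-X^{j-1}\|^2$ is estimated by \eqref{bound3} together with the energy bounds $\tau\sum_j\mathbb E\|\nabla w^j\|^2\le C$ and $\eps\sum_j\mathbb E\|\nabla(X^j-X^{j-1})\|^2\le C$ from \lemref{momenta}/\lemref{moment} and the restrictions $\tau\le\tfrac12\eps^3$, $h=\eps^\eta$; this gives a bounded expectation.

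The main obstacle is the mixed contribution $\tfrac1\eps\sum_j(f(X^j),\widetilde X^j-\widetilde X^{j-1})$, which cannot be handled by a triangle inequality $d_t\widehat X=d_tX-d_t\widetilde X$ in $\mathbb{H}^{-1}$, since both $\sum_j\tau\mathbb E\|d_tX^j\|^2_{\mathbb{H}^{-1}}$ and $\sum_j\tau\mathbb E\|d_t\widetilde X^j\|^2_{\mathbb{H}^{-1}}$ diverge as $\tau\to0$ (they encode the rough stochastic convolution); the boundedness of $\sum_j\tau\mathbb E\|d_t\widehat X^j\|^2_{\mathbb{H}^{-1}}$ relies precisely on the cancellation of this roughness in the difference $\widehat X=X-\widetilde X$. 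My plan here is to insert the increment representation $\widetilde X^j-\widetilde X^{j-1}=-\eps\tau\Delta^2\widetilde X^j+\eps^\gamma\Delta_j\overline W$ from \eqref{Discreteprocess1} (equivalently \eqref{linearscheme1}) and to split $f(X^j)=f(X^{j-1})+(f(X^j)-f(X^{j-1}))$. The leading stochastic term $\eps^{\gamma-1}(f(X^{j-1}),\Delta_j\overline W)$ is adapted and has zero expectation, because $\Delta_j\overline W$ has zero spatial mean and is independent of $\mathcal F_{t_{j-1}}$; the remaining pieces are estimated using the regularity estimates for $\widetilde X^j$ (\lemref{regularitynoise} and \eqref{LinfinityXtilde}), the second-moment energy bound $\mathbb E[\mathcal E(X^j)^2]\le C$ of \lemref{MomentLemma}, and the smallness of the prefactor $\eps^{\gamma-1}$ for $\gamma>\tfrac52$. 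Collecting these bounds and taking expectations yields $\sum_{j=1}^J\tau\mathbb E\|d_t\widehat X^j\|^2_{\mathbb{H}^{-1}}\le C$. I expect the careful bookkeeping of the increment $f(X^j)-f(X^{j-1})$ paired against the noise increment, in a way that avoids any $\tau^{-1}$ or $\sqrt{J}$ loss, to be the delicate step.
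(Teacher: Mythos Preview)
Your route is considerably more intricate than the paper's, and it leaves precisely the step you flag as ``delicate'' unresolved. The paper's argument is essentially two lines and never touches the nonlinearity. From the first equation of \eqref{randompdescheme1} alone, by duality,
\[
\Vert d_t\widehat X^{j+1}\Vert_{\mathbb{H}^{-1}}
=\sup_{0\ne\varphi\in\mathbb H^1}\frac{(d_t\widehat X^{j+1},\varphi)}{\Vert\varphi\Vert_{\mathbb H^1}}
=\sup_{0\ne\varphi\in\mathbb H^1}\frac{-(\nabla\widehat w^{j+1},\nabla\varphi)}{\Vert\varphi\Vert_{\mathbb H^1}}
\le C\Vert\nabla\widehat w^{j+1}\Vert .
\]
Then one writes $\widehat w^j=w^j-\widetilde w^j$ and uses the already-available bounds $\tau\sum_j\mathbb E\Vert\nabla w^j\Vert^2\le C$ (\lemref{moment}) and $\tau\sum_j\mathbb E\Vert\nabla\widetilde w^j\Vert^2\le C$ (\lemref{momentatilde}). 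That is the entire proof: no second equation, no $f(X^{j+1})$, no noise increments.

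Your approach---inserting the second equation and splitting $d_t\widehat X=d_tX-d_t\widetilde X$---manufactures the cubic term $\tfrac1\eps(f(X^{j+1}),d_t\widehat X^{j+1})$ and then has to undo the damage. The martingale piece $\eps^{\gamma-1}(f(X^{j-1}),\Delta_j\overline W)$ does vanish in expectation as you say, but the two ``remaining pieces'' are genuinely awkward: the drift contribution $\tau\sum_j(f(X^{j+1}),\Delta^2\widetilde X^{j+1})$ needs either $\mathbb H^4$-type control of $\widetilde X^j$ (which \lemref{regularitynoise} does not give) or, after integrating by parts, $\mathbb H^2$-control of $f(X^{j+1})$ combined with the time-averaged $\mathbb H^3$-bound $\tau\sum_j\mathbb E\Vert\nabla\Delta\widetilde X^j\Vert^2\le C\eps^{-2}$ coming from \lemref{momentatilde}; and the increment piece $\eps^{\gamma-1}\sum_j(f(X^j)-f(X^{j-1}),\Delta_j\overline W)$ forces you to replay the $B_{2,1}$/$B_{2,2}$ bookkeeping from the proof of \lemref{momenta}. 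This may well close under the scalings of \lemref{moment}, but you have not verified it, so as written the proposal is incomplete. The paper's observation that the $\mathbb H^{-1}$ norm of $d_t\widehat X^{j+1}$ is controlled by $\Vert\nabla\widehat w^{j+1}\Vert$ directly, together with the splitting $\widehat w=w-\widetilde w$, bypasses all of this.
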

\begin{proof}
Using the first equation of \eqref{randompdescheme1}, it follows that
\begin{align}
\label{randomscheme2b}
\Vert d_t\widehat{X}^{j+1}\Vert_{\mathbb{H}^{-1}}=\sup_{0\not\equiv\varphi\in \mathbb{H}^1}\frac{(d_t\widehat{X}^{j+1}, \varphi)}{\Vert\varphi\Vert_{\mathbb{H}^1}}=\sup_{0\not\equiv\varphi\in \mathbb{H}^1}\frac{(\nabla\widehat{w}^{j+1}, \nabla\varphi)}{\Vert\varphi\Vert_{\mathbb{H}^1}}\leq C\Vert \nabla\widehat{w}^{j+1}\Vert.
\end{align}
Using \eqref{randomscheme2b}, noting that $\widehat{w}^j=w^j-\widetilde{w}^j$ and using Lemmas  \ref{momentatilde} and \ref{moment}, we obtain
\begin{align*}
\sum_{j=1}^J\tau\mathbb{E}[\Vert d_t\widehat{X}^{j}\Vert^2_{\mathbb{H}^{-1}}]\leq C\tau\sum_{j=1}^J\mathbb{E}[\Vert\nabla\widehat{w}^j\Vert^2]\leq C\tau\sum_{j=1}^J\mathbb{E}[\Vert\nabla w^j\Vert^2]+C\tau\sum_{j=1}^J\mathbb{E}[\Vert\nabla\widetilde{w}^j\Vert^2]\leq C.
\end{align*}
\end{proof}
In the following lemma we provide an estimate of $\Vert \Delta\widehat{X}^j\Vert$ on the probability space $\Omega_{\mathcal{E}}$. To reduce the number of parameters we assume without loss of generality
that the initial condition satisfies $\|u_0^{\eps}\|_{\mathbb{H}^2} \leq C \eps^{-\mathfrak{p}_{\mathrm{CH}}}$ with $2\mathfrak{p}_{\mathrm{CH}} < 5$  (cf. Lemma~\ref{LemmaLubo19}) in the remainder of the paper.
\begin{lemma}
\label{EstiXhat}
Let the assumptions of \lemref{moment} and \lemref{LemmaLubo19} be fulfilled.
Then the following estimates hold
\begin{itemize}
\item[i)] $\mathbb{E}\left[\max\limits_{1\leq j\leq J}1\!\!1_{\Omega_{\mathcal{E}}}\Vert\Delta^{-1}d_t\widehat{X}^j\Vert^2 \right]+\eps\tau\sum\limits_{j=1}^J\mathbb{E}[1\!\!1_{\Omega_{\mathcal{E}}}\Vert d_t\widehat{X}^j\Vert^2]\leq C\eps^{-2\theta-5}$,
\item[ii)] $\mathbb{E}\left[\max\limits_{0\leq j\leq J}1\!\!1_{\Omega_{\mathcal{E}}}\Vert\Delta\widehat{X}^j\Vert^2\right]\leq C\eps^{-2\theta-7}$.
\end{itemize}
\end{lemma}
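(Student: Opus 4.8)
The plan is to derive both bounds from the time-differenced form of \eqref{randompdescheme1}, working on the event $\Omega_{\mathcal{E}}$. The point of $\Omega_{\mathcal{E}}$ is that there the energy bound $\max_j\mathcal{E}(X^j)\le C\eps^{-\theta}$ gives, via $\frac{\eps}{2}\|\nabla X^j\|^2\le\mathcal{E}(X^j)$, the control $\|X^j\|_{\mathbb{H}^1}^2\le C\eps^{-\theta-1}$ and hence (Sobolev embedding $\mathbb{H}^1\hookrightarrow\mathbb{L}^6$ in $d\le 3$) the $\mathbb{L}^6$-control of $X^j=\widehat{X}^j+\widetilde{X}^j$ that is needed to absorb the nonlinearity. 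Since the noise enters only through the already-solved process $\widetilde{X}^j$, every energy identity below is \emph{pathwise}; the indicator $1\!\!1_{\Omega_{\mathcal{E}}}$ is simply carried along and the expectation is taken at the very end, so no Burkholder--Davis--Gundy argument is needed for $\widehat{X}^j$ itself.

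For i) I would subtract \eqref{randompdescheme1} at levels $j+1$ and $j$ and divide by $\tau$, obtaining $(d_td_t\widehat{X}^{j+1},\varphi)+(\nabla d_t\widehat{w}^{j+1},\nabla\varphi)=0$ and $\eps(\nabla d_t\widehat{X}^{j+1},\nabla\psi)+\frac{1}{\eps}(d_tf(X^{j+1}),\psi)=(d_t\widehat{w}^{j+1},\psi)$ for all $\varphi,\psi\in\mathbb{H}^1$, where $d_tg^{j+1}:=(g^{j+1}-g^{j})/\tau$. I would then test the first identity with $(-\Delta)^{-2}d_t\widehat{X}^{j+1}$ and the second with $(-\Delta)^{-1}d_t\widehat{X}^{j+1}$ (both admissible since $d_t\widehat{X}^{j+1}\in\mathbb{L}^2_0$). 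The cross term $(d_t\widehat{w}^{j+1},(-\Delta)^{-1}d_t\widehat{X}^{j+1})$ then cancels between the two, and the discrete product rule $(a-b,a)=\frac12(\|a\|^2-\|b\|^2+\|a-b\|^2)$ applied in the $\|(-\Delta)^{-1}\cdot\|$-inner product produces precisely $\|\Delta^{-1}d_t\widehat{X}^{j+1}\|^2$ telescoping together with $\eps\|d_t\widehat{X}^{j+1}\|^2$, i.e.\ exactly the two quantities on the left-hand side of i). Multiplying by $\tau$ and summing over $j$ leaves the single nonlinear contribution $\frac{\tau}{\eps}\sum_j(d_tf(X^{j+1}),(-\Delta)^{-1}d_t\widehat{X}^{j+1})$ to be controlled.

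Here I would use $f(X^{j+1})-f(X^j)=(X^{j+1}-X^j)g^{j+1}$ with $g^{j+1}:=(X^{j+1})^2+X^{j+1}X^j+(X^j)^2-1$ and $X^{j+1}-X^j=\tau\bigl(d_t\widehat{X}^{j+1}+d_t\widetilde{X}^{j+1}\bigr)$, splitting the nonlinear term into a diagonal part in $d_t\widehat{X}^{j+1}$ and a cross part in $d_t\widetilde{X}^{j+1}$. The diagonal part is bounded by H\"older with $\|g^{j+1}\|_{\mathbb{L}^3}\le C\eps^{-\theta-1}$ on $\Omega_{\mathcal{E}}$, the embedding $\|(-\Delta)^{-1}v\|_{\mathbb{L}^6}\le C\|v\|_{\mathbb{H}^{-1}}=C\|\Delta^{-1/2}v\|$, and the interpolation $\|\Delta^{-1/2}v\|\le\|\Delta^{-1}v\|^{1/2}\|v\|^{1/2}$ (from $\|v\|_{-1}^2=(v,(-\Delta)^{-1}v)\le\|\Delta^{-1}v\|\,\|v\|$); Young's inequality then absorbs a fraction of $\eps\|d_t\widehat{X}^{j+1}\|^2$ into the left-hand side and feeds a term $\tfrac{C}{\eps^{a}}\|\Delta^{-1}d_t\widehat{X}^{j+1}\|^2$ into a discrete Gronwall step. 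The cross part is the main obstacle, since it requires controlling the time increment $d_t\widetilde{X}^{j+1}$ of the discrete stochastic convolution. For this I would use \eqref{linearscheme1}, which yields $d_t\widetilde{X}^{j+1}=-\eps\Delta^2(\mathbf{I}+\eps\tau\Delta^2)^{-1}\widetilde{X}^{j}+\frac{\eps^{\gamma}}{\tau}(\mathbf{I}+\eps\tau\Delta^2)^{-1}\Delta_{j+1}\overline{W}$; the smoothing estimate $\|\Delta^2(\mathbf{I}+\eps\tau\Delta^2)^{-1}\|_{\mathcal{L}(\mathbb{L}^2)}\le C/(\eps\tau)$ tames the biharmonic factor, and crucially the prefactor $\frac{\tau}{\eps}$ cancels the $1/\tau$, leaving only moments of $\|\widetilde{X}^{j}\|$ and $\|\Delta_{j+1}\overline{W}\|$ that are estimated by \lemref{regularitynoise} and \lemref{LemmaBruit0}. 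Taking expectations and applying the discrete Gronwall lemma gives i), the exponent $2\theta$ coming from the squared energy bound and the residual $\eps^{-5}$ from the accumulated $1/\eps$ and interpolation factors.

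For ii) I would not re-run a new energy argument but exploit the elliptic structure of the second equation of \eqref{randompdescheme1}, whose strong form is $-\eps\Delta\widehat{X}^{j+1}=\widehat{w}^{j+1}-\frac{1}{\eps}f(X^{j+1})$ with homogeneous Neumann data, so that $\eps\|\Delta\widehat{X}^{j+1}\|\le\|\widehat{w}^{j+1}\|+\frac{1}{\eps}\|f(X^{j+1})\|$. On $\Omega_{\mathcal{E}}$ the term $\|f(X^{j+1})\|$ is bounded by $C(\|X^{j+1}\|_{\mathbb{L}^6}^3+\|X^{j+1}\|)$, a power of the energy. For $\widehat{w}^{j+1}$ I use $\|\widehat{w}^{j+1}\|\le C\|\nabla\widehat{w}^{j+1}\|+|\mathcal{D}|^{1/2}|m(\widehat{w}^{j+1})|$, where the first equation gives $\|\nabla\widehat{w}^{j+1}\|=\|\Delta^{-1/2}d_t\widehat{X}^{j+1}\|$ and the second with $\psi=1$ gives $m(\widehat{w}^{j+1})=\frac{1}{\eps|\mathcal{D}|}(f(X^{j+1}),1)$. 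The missing ingredient is a maximum-in-time bound for $\|\Delta^{-1/2}d_t\widehat{X}^{j+1}\|$, which I obtain by the same differencing argument as in i) but testing the two differenced equations with $(-\Delta)^{-1}d_t\widehat{X}^{j+1}$ and $d_t\widehat{X}^{j+1}$ respectively; this variant yields $\max_j\mathbb{E}[1\!\!1_{\Omega_{\mathcal{E}}}\|\Delta^{-1/2}d_t\widehat{X}^{j}\|^2]\le C\eps^{-2\theta-5}$. Squaring the elliptic bound, dividing by $\eps^2$, and taking the maximum over $j$ and the expectation on $\Omega_{\mathcal{E}}$ then supplies the extra factor $\eps^{-2}$ and hence the claimed $\eps^{-2\theta-7}$. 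Throughout, the genuinely delicate step is the cross term involving $d_t\widetilde{X}^{j+1}$; everything else is bookkeeping of $\eps$- and $h$-powers controlled by the energy bound on $\Omega_{\mathcal{E}}$ together with \lemref{regularitynoise} and \lemref{LemmaBruit0}.
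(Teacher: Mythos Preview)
Your overall set-up for i) --- applying $d_t$ to \eqref{randompdescheme1} and testing with $(-\Delta)^{-2}d_t\widehat{X}^{j+1}$ and $(-\Delta)^{-1}d_t\widehat{X}^{j+1}$ --- is exactly what the paper does. The divergence is in how the nonlinear remainder is closed, and your closure has a genuine gap.

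\textbf{The Gronwall step destroys the polynomial bound.} After your Young inequality on the diagonal part you feed $\tfrac{C}{\eps^{a}}\|\Delta^{-1}d_t\widehat{X}^{j+1}\|^2$ into a discrete Gronwall. But the Gronwall factor is then $\exp(CT\eps^{-a})$, which is exponential in $\eps^{-1}$ and is incompatible with the claimed bound $C\eps^{-2\theta-5}$. The paper avoids Gronwall entirely: after the same H\"older/Young step it bounds $\|\Delta^{-1}d_t\widehat{X}^{j+1}\|_{\mathbb{L}^6}^2\le C\|d_t\widehat{X}^{j+1}\|_{\mathbb{H}^{-1}}^2\le C\|\nabla\widehat{w}^{j+1}\|^2$ using \eqref{randomscheme2b}, sums in $j$, and invokes the \emph{a~priori} bound $\tau\sum_j\mathbb{E}\|\nabla\widehat{w}^{j}\|^2\le C$ (from $\widehat{w}^j=w^j-\widetilde{w}^j$ together with Lemmas~\ref{moment} and~\ref{momentatilde}). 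This closes i) directly with only the polynomial prefactor $\eps^{-2\theta-5}$.

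\textbf{The cross-term accounting is also off.} Your bound $\|d_t\widetilde{X}^{j+1}\|\le C\tau^{-1}(\|\widetilde{X}^{j}\|+\eps^{\gamma}\|\Delta_{j+1}\overline{W}\|)$ is correct, and the single-step prefactor $\tfrac{\tau}{\eps}$ indeed cancels this $\tau^{-1}$. But you must then sum $J\sim T/\tau$ such terms, each of order $O(1)$ in $\tau$, so the sum re-introduces a factor $\tau^{-1}$ that your moment bounds on $\widetilde{X}^j$ and $\Delta_{j+1}\overline{W}$ do not compensate. The paper sidesteps any direct control of $d_t\widetilde{X}^{j+1}$ precisely because it routes everything through $\|\nabla\widehat{w}^{j+1}\|$. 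You also omit the initial term $\|\Delta^{-1}d_t\widehat{X}^{0}\|^2$; the paper handles it by defining an auxiliary $\widehat{X}^{-1}$ via $\Delta^{-1}d_t\widehat{X}^{0}=-\eps\Delta u_0^{\eps}+\tfrac{1}{\eps}f(u_0^{\eps})$ and the $\mathbb{H}^2$-assumption on $u_0^{\eps}$.

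\textbf{For ii)} your elliptic-estimate route via $-\eps\Delta\widehat{X}^{j+1}=\widehat{w}^{j+1}-\tfrac{1}{\eps}f(X^{j+1})$ would work in principle, but it forces you to manufacture a separate $\max_j\|\Delta^{-1/2}d_t\widehat{X}^{j}\|^2$ bound. The paper takes a shorter path: test \eqref{randompdescheme1} with $\varphi=\widehat{X}^{j+1}$ and $\psi=-\Delta\widehat{X}^{j+1}$, use $(d_t\widehat{X}^{j+1},\widehat{X}^{j+1})=(\Delta^{-1}d_t\widehat{X}^{j+1},\Delta\widehat{X}^{j+1})$, and absorb via Young to get $\eps\|\Delta\widehat{X}^{j+1}\|^2\le \tfrac{1}{\eps}\|\Delta^{-1}d_t\widehat{X}^{j+1}\|^2+\tfrac{2}{\eps}\|\nabla\widehat{X}^{j+1}\|^2$, which is bounded directly by part i) and the energy estimate.
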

\begin{proof}
i) Applying the difference operator  $d_t$ to \eqref{randompdescheme1}, yields for $j=0,\cdots, J-1$
\begin{align}
\label{randompdescheme2}
\begin{array}{rlllll}
(d^2_t\widehat{X}^{j+1}, \varphi)+(\nabla d_t\widehat{w}^{j+1}, \nabla\varphi)& = & 0&& \forall\varphi\in\mathbb{H}^1,\\
\eps(\nabla d_t\widehat{X}^{j+1}, \nabla\psi)+\displaystyle\frac{1}{\eps}\left(d_tf(X^{j+1}), \psi\right) & = &(d_t\widehat{w}^{j+1}, \psi)&& \forall\psi\in\mathbb{H}^1,
\end{array}
\end{align} 
where for $j=0$ we introduce $\widehat{X}^{-1}\in\mathbb{H}^1$, such that $\int_{\mathcal{D}}\widehat{X}^{-1}dx=0$, as the solution of
\begin{align*}
\left(\Delta^{-1}d_t\widehat{X}^0, \varphi\right)= (w^0, \varphi) =\left(-\eps\Delta\widehat{X}^0+\frac{1}{\eps}f(\widehat{X}^0), \varphi\right),
\end{align*}
for all $\varphi\in\{\chi\in\mathbb{H}^1:\; (\chi,1)=0\}$.

Taking $\varphi=\Delta^{-2}d_t\widehat{X}^{j+1}$ and $\psi=-\Delta^{-1}d_t\widehat{X}^{j+1}$ in \eqref{randompdescheme2} and summing the resulting equations, we obtain
\begin{align*}
\frac{1}{2}d_t\Vert \Delta^{-1}d_t\widehat{X}^{j+1}\Vert^2+\frac{\tau}{2}\Vert\Delta^{-1}d_t^2\widehat{X}^{j+1}\Vert^2+\eps\Vert d_t \widehat{X}^{j+1}\Vert^2=\frac{1}{\eps}\left(d_tf(X^{j+1}), \Delta^{-1}d_t\widehat{X}^{j+1}\right).
\end{align*}
 Using the mean value theorem for $d_tf(X^{j+1})$, yields
 \begin{align}
 \label{randomscheme2aaa}
 &\frac{1}{2}d_t\Vert \Delta^{-1}d_t\widehat{X}^{j+1}\Vert^2+\frac{\tau}{2}\Vert\Delta^{-1}d_t^2\widehat{X}^{j+1}\Vert^2+\eps\Vert d_t\widehat{X}^{j+1}\Vert^2\nonumber\\
 &\quad=\frac{1}{\eps}\left(R_f(X^{j+1})d_tX^{j+1}, \Delta^{-1}d_t\widehat{X}^{j+1}\right),
 \end{align}
where 
\begin{align*}
R_f(X^{j+1})=\int_0^1f'\left(sX^j+(1-s)X^{j+1}\right)ds.
\end{align*}

 Using Young's inequality, it follows from \eqref{randomscheme2aaa} that
\begin{align}
\label{randomscheme2a}
\frac{1}{2}d_t\Vert \Delta^{-1}d_t\widehat{X}^{j+1}\Vert^2+\eps\Vert d_t\widehat{X}^{j+1}\Vert^2&\leq \frac{\eps}{2}\Vert d_tX^{j+1}\Vert^2+\frac{C}{\eps^3}\Vert R_f(X^{j+1}) \Delta^{-1}d_t\widehat{X}^{j+1}\Vert^2\nonumber\\
&\leq \frac{\eps}{2}\Vert d_tX^{j+1}\Vert^2+\frac{C}{\eps^3}\Vert R_f(X^{j+1})\Vert^2_{\mathbb{L}^{3}}\Vert \Delta^{-1} d_t\widehat{X}^{j+1}\Vert^2_{\mathbb{L}^6}.
\end{align}
Noting that $f'(u)=u^2-1$ and using the Sobolev embedding $\mathbb{H}^1\hookrightarrow\mathbb{L}^6$, we obtain
\begin{align*}
\Vert R_f(X^{j+1})\Vert^2_{\mathbb{L}^{3}}&\leq C\int_0^1\left(s\Vert X^j\Vert^4_{\mathbb{L}^6}+(1-s)\Vert X^{j+1}\Vert^4_{\mathbb{L}^6}+1\right)ds\nonumber\\
&\leq C\int_0^1\left(s\Vert\nabla X^j\Vert^4+(1-s)\Vert\nabla X^{j+1}\Vert^4+1\right)ds\nonumber\\
&\leq C\eps^{-2}\left(\mathcal{E}(X^j)^2+\mathcal{E}(X^{j+1})^2+1\right).
\end{align*}
Substituting the preceding estimate into \eqref{randomscheme2a}, using the Sobolev embedding $\mathbb{H}^1\hookrightarrow\mathbb{L}^6$ and Poincar\'{e} inequality, yields
\begin{align*}
\frac{1}{2}d_t\Vert \Delta^{-1}d_t\widehat{X}^{j+1}\Vert^2+\frac{\eps}{2}\Vert d_t\widehat{X}^{j+1}\Vert^2\leq C\eps^{-5}\left(\mathcal{E}(X^j)^2+\mathcal{E}(X^{j+1})^2+1\right)\Vert d_t\widehat{X}^{j+1}\Vert^2_{\mathbb{H}^{-1}}
\end{align*}
Noting \eqref{SetOmegainfty}, it follows from the preceding estimate that
\begin{align}
\label{randomscheme2aa}
\frac{1}{2}1\!\!1_{\Omega_{\mathcal{E}}} d_t\Vert \Delta^{-1}d_t\widehat{X}^{j+1}\Vert^2+\frac{\eps}{2}1\!\!1_{\Omega_{\mathcal{E}}}\Vert d_t\widehat{X}^{j+1}\Vert^2\leq C\eps^{-2\theta-5}1\!\!1_{\Omega_{\mathcal{E}}}\Vert d_t\widehat{X}^{j+1}\Vert^2_{\mathbb{H}^{-1}}
\end{align}

Substituting \eqref{randomscheme2b} into \eqref{randomscheme2aa}, summing the resulting estimate over $j=0, \cdots,k$ and multiplying by $\tau$, we obtain
\begin{align*}
\frac{1}{2}1\!\!1_{\Omega_{\mathcal{E}}}\Vert\Delta^{-1}d_t\widehat{X}^k\Vert^2+\frac{\eps\tau}{2}\sum_{j=1}^k1\!\!1_{\Omega_{\mathcal{E}}}\Vert d_t\widehat{X}^j\Vert^2\leq C\eps^{-2\theta-5}\tau\sum_{j=1}^k1\!\!1_{\Omega_{\mathcal{E}}}\Vert\nabla\widehat{w}^{j}\Vert^2+\frac{1}{2}1\!\!1_{\Omega_{\mathcal{E}}}\Vert \Delta^{-1}d_t\widehat{X}^0\Vert^2. 
\end{align*}
Taking the maximum over $1\leq k\leq J$, taking the expectation, noting that $\widehat{w}^j=w^j-\widetilde{w}^j$, using Lemmas \ref{moment}  and \ref{momentatilde}, yields
\begin{align*}
\mathbb{E}\left[\max_{1\leq j\leq J}1\!\!1_{\Omega_{\mathcal{E}}}\Vert\Delta^{-1}d_t\widehat{X}^j\Vert^2 \right]+\eps\tau\sum_{j=1}^J\mathbb{E}[1\!\!1_{\Omega_{\mathcal{E}}}\Vert d_t\widehat{X}^j\Vert^2]&\leq C\eps^{-2\theta-5}\tau\sum_{j=1}^J\mathbb{E}\left[\Vert\nabla\widehat{w}^{j}\Vert^2\right]+C\eps^{-\mathfrak{p}_{\mathrm{CH}}}\nonumber\\
&\leq C\eps^{-2\theta-5}.
\end{align*}

ii)
Taking $\varphi=\widehat{X}^{j+1}$ and $\psi=-\Delta\widehat{X}^{j+1}$ in \eqref{randompdescheme1} and summing the resulting equations, we obtain
\begin{align}
\label{randomscheme2c}
\eps\Vert\Delta\widehat{X}^{j+1}\Vert^2+(d_t\widehat{X}^{j+1}, \widehat{X}^{j+1})+\frac{1}{\eps}\left(f(X^{j+1})\nabla\widehat{X}^{j+1}, \nabla\widehat{X}^{j+1}\right)=0.
\end{align}
Using Young's inequality, we obtain
\begin{align}
\label{randomscheme2d}
(d_t\widehat{X}^{j+1}, \widehat{X}^{j+1})&=(d_t\widehat{X}^{j+1}, \Delta^{-1}\Delta\widehat{X}^{j+1})=(\Delta^{-1}d_t\widehat{X}^{j+1}, \Delta\widehat{X}^{j+1})\nonumber\\
&\leq \frac{\eps}{2}\Vert\Delta\widehat{X}^{j+1}\Vert^2+\frac{1}{2\eps}\Vert\Delta^{-1}d_t\widehat{X}^{j+1}\Vert^2.
\end{align}
Substituting \eqref{randomscheme2d} into \eqref{randomscheme2c} and using the fact that $-(f'(u)v,v)\leq \Vert v\Vert^2$, yields
\begin{align*}
\eps\Vert\Delta\widehat{X}^{j+1}\Vert^2&\leq \frac{1}{\eps}\Vert\Delta^{-1}d_t\widehat{X}^{j+1}\Vert^2-\frac{2}{\eps}\left(f(X^{j+1})\nabla\widehat{X}^{j+1}, \nabla\widehat{X}^{j+1}\right)\nonumber\\
&\leq \frac{1}{\eps}\Vert\Delta^{-1}d_t\widehat{X}^{j+1}\Vert^2+\frac{2}{\eps}\Vert\nabla \widehat{X}^{j+1}\Vert^2.
\end{align*}
Taking the maximum over $j=0, \cdots, J-1$, taking the expectation, using part i), noting that $\widehat{X}^j=X^j-\widetilde{X}^j$, using  Lemmas \ref{moment}  and \ref{momentatilde}, we obtain 
\begin{align*}
\eps\mathbb{E}\left[\max_{0\leq j\leq J}1\!\!1_{\Omega_{\mathcal{E}}}\Vert\Delta\widehat{X}^j\Vert^2\right]\leq \frac{1}{\eps}\mathbb{E}\left[\max_{0\leq j\leq J}1\!\!1_{\Omega_{\mathcal{E}}}\Vert\Delta^{-1}d_t\widehat{X}^{j}\Vert^2\right]+\frac{2}{\eps}\mathbb{E}\left[\max_{1\leq j\leq J}\Vert\nabla\widehat{X}^j\Vert^2\right]\leq C\eps^{-2\theta-6}.
\end{align*}
\end{proof}
\begin{lemma}
\label{Linfinitya}
Let the assumptions of \lemref{moment} be fulfilled. Let $X^j$ be the solution to \eqref{scheme1b}. Then it holds
\begin{align*}
\mathbb{E}\left[\max_{1\leq j\leq J}1\!\!1_{\Omega_{\mathcal{E}}}\Vert X^j\Vert^2_{\mathbb{L}^{\infty}}\right]\leq C\eps^{-2\theta-7}.
\end{align*}
\end{lemma}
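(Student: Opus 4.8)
The plan is to use the splitting $X^j=\widehat{X}^j+\widetilde{X}^j$ and the triangle inequality
$\Vert X^j\Vert_{\mathbb{L}^{\infty}}\leq \Vert\widehat{X}^j\Vert_{\mathbb{L}^{\infty}}+\Vert\widetilde{X}^j\Vert_{\mathbb{L}^{\infty}}$,
controlling the two contributions separately: the translated part $\widehat{X}^j$ via the $\mathbb{H}^2$-type estimate of \lemref{EstiXhat} (ii), and the discrete stochastic convolution $\widetilde{X}^j$ via the $\mathbb{L}^{\infty}$-bound \eqref{LinfinityXtilde}. Squaring and using $\Vert X^j\Vert_{\mathbb{L}^{\infty}}^2\leq 2\Vert\widehat{X}^j\Vert_{\mathbb{L}^{\infty}}^2+2\Vert\widetilde{X}^j\Vert_{\mathbb{L}^{\infty}}^2$, it then suffices to bound the expectation of the maximum of each term over $1\leq j\leq J$.

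First I would treat $\widehat{X}^j$. Taking $\varphi=1$ in the first equation of \eqref{randompdescheme} gives $(\widehat{X}^j-\widehat{X}^{j-1},1)=0$, so $m(\widehat{X}^j)=m(\widehat{X}^0)=m(u^{\eps}_0)=0$ for all $j$; moreover $\widehat{X}^j$ satisfies the homogeneous Neumann boundary condition $\partial_{\normal}\widehat{X}^j=0$. Hence elliptic regularity for the Neumann Laplacian together with Poincar\'{e}'s inequality yields $\Vert\widehat{X}^j\Vert_{\mathbb{H}^2}\leq C\Vert\Delta\widehat{X}^j\Vert$, and since $d\in\{2,3\}$ the Sobolev embedding $\mathbb{H}^2\hookrightarrow\mathbb{L}^{\infty}$ gives $\Vert\widehat{X}^j\Vert_{\mathbb{L}^{\infty}}\leq C\Vert\widehat{X}^j\Vert_{\mathbb{H}^2}\leq C\Vert\Delta\widehat{X}^j\Vert$. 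Applying \lemref{EstiXhat} (ii) then produces
\begin{align*}
\mathbb{E}\left[\max_{1\leq j\leq J}1\!\!1_{\Omega_{\mathcal{E}}}\Vert\widehat{X}^j\Vert_{\mathbb{L}^{\infty}}^2\right]\leq C\,\mathbb{E}\left[\max_{0\leq j\leq J}1\!\!1_{\Omega_{\mathcal{E}}}\Vert\Delta\widehat{X}^j\Vert^2\right]\leq C\eps^{-2\theta-7},
\end{align*}
which is exactly the target order.

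For $\widetilde{X}^j$ I would simply drop the indicator ($1\!\!1_{\Omega_{\mathcal{E}}}\leq 1$) and invoke \eqref{LinfinityXtilde} with $r=2$, giving $\mathbb{E}[\max_{1\leq j\leq J}\Vert\widetilde{X}^j\Vert_{\mathbb{L}^{\infty}}^2]\leq C\eps^{2(\gamma-\eta-1)}$; since $\gamma>\eta+1$ this exponent is nonnegative, so for $\eps\in(0,\eps_0)$ it is bounded by $C\leq C\eps^{-2\theta-7}$, being dominated by the (negative-power) $\widehat{X}^j$-contribution. Combining the two bounds yields the claimed estimate $\mathbb{E}[\max_{1\leq j\leq J}1\!\!1_{\Omega_{\mathcal{E}}}\Vert X^j\Vert_{\mathbb{L}^{\infty}}^2]\leq C\eps^{-2\theta-7}$. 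The only point that really needs care is the chain $\Vert\widehat{X}^j\Vert_{\mathbb{L}^{\infty}}\leq C\Vert\Delta\widehat{X}^j\Vert$: it relies on the mean-zero property, the Neumann regularity of the strong formulation of \eqref{randompdescheme}, and crucially on the restriction $d\leq 3$ for the embedding $\mathbb{H}^2\hookrightarrow\mathbb{L}^{\infty}$; everything else is a routine combination of already-established $\mathbb{H}^2$ and $\mathbb{L}^{\infty}$ bounds.
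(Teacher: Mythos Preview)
Your proposal is correct and follows essentially the same approach as the paper: split $X^j=\widehat{X}^j+\widetilde{X}^j$, use the Sobolev embedding $\mathbb{H}^2\hookrightarrow\mathbb{L}^{\infty}$ together with elliptic regularity and \lemref{EstiXhat}~(ii) for $\widehat{X}^j$, and \eqref{LinfinityXtilde} for $\widetilde{X}^j$. Your additional justification of the mean-zero property of $\widehat{X}^j$ and the explicit remark that $d\leq 3$ is needed for the embedding are helpful clarifications, but the argument is otherwise identical to the paper's.
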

\begin{proof}
Using triangle inequality, \eqref{LinfinityXtilde}, the Sobolev embedding $\mathbb{H}^2\hookrightarrow\mathbb{L}^{\infty}$, the elliptic regularity of the Laplace operator and \lemref{EstiXhat} ii), we obtain
\begin{align*}
\mathbb{E}\left[\max_{1\leq j\leq J}1\!\!1_{\Omega_{\mathcal{E}}}\Vert X^j\Vert^2_{\mathbb{L}^{\infty}}\right]&\leq \mathbb{E}\left[\max_{1\leq j\leq J}1\!\!1_{\Omega_{\mathcal{E}}}\Vert \widehat{X}^j\Vert^2_{\mathbb{L}^{\infty}}\right]+\mathbb{E}\left[\max_{1\leq j\leq J}1\!\!1_{\Omega_{\mathcal{E}}}\Vert \widetilde{X}^j\Vert^2_{\mathbb{L}^{\infty}}\right]\nonumber\\
&\leq C\mathbb{E}\left[\max_{1\leq j\leq J}1\!\!1_{\Omega_{\mathcal{E}}}\Vert \Delta\widehat{X}^j\Vert^2\right]+C\eps^{(\gamma-\eta-1)2}\leq C\eps^{-2\theta-7}.
\end{align*}
\end{proof}
}

\revd{
We introduce the following subset of $\Omega$
\begin{align}
\label{SetXaa}
\Omega_{\infty}:=\left\{\omega\in\Omega:\; \max_{1\leq j\leq J}1\!\!1_{\Omega_{\mathcal{E}}}\Vert X^j\Vert_{\mathbb{L}^{\infty}}\leq \kappa\right\},\quad \text{where}\; \kappa=\eps^{-\theta-4}.
\end{align}
It follows by Markov's inequality that
\begin{align*}
\mathbb{P}[\Omega_{\infty}]=1-\mathbb{P}[\Omega_{\infty}^c]\geq 1-\frac{\mathbb{E}\left[\max\limits_{1\leq j\leq J}1\!\!1_{\Omega_{\mathcal{E}}}\Vert X^j\Vert^{2}_{\mathbb{L}^{\infty}}\right]}{\kappa^{2}}. 
\end{align*}
Noting \lemref{Linfinitya} we deduce that  $\lim\limits_{\eps\rightarrow 0}\mathbb{P}[\Omega_{\infty}]=1$. 

We also introduce the following subset of $\Omega$
\begin{align}
\label{SetXa}
\Omega_{\kappa, J}=\Omega_{\infty}\cap\Omega_{\mathcal{E}}.
\end{align}
From the identity $\Omega_{\infty}=(\Omega_{\infty}\cap\Omega_{\mathcal{E}})\cup(\Omega_{\infty}\cap\Omega_{\mathcal{E}}^c)$ we get that $\mathbb{P}[\Omega_{\mathcal{E}}\cap\Omega_{\infty}]=\mathbb{P}[\Omega_{\infty}]-\mathbb{P}[\Omega_{\infty}\cap\Omega_{\mathcal{E}}^c]\geq \mathbb{P}[\Omega_{\infty}]-\mathbb{P}[\Omega_{\mathcal{E}}^c]$. Since $\lim\limits_{\eps\rightarrow 0}\mathbb{P}[\Omega_{\mathcal{E}}^c]=0$ we conclude that $\lim\limits_{\eps\rightarrow 0}\mathbb{P}[\Omega_{\kappa,J}]=\lim\limits_{\eps\rightarrow 0}\mathbb{P}[\Omega_{\mathcal{E}}\cap\Omega_{\infty}]=1$. 

Along the same lines as above, we have $\lim\limits_{\eps\rightarrow 0}\mathbb{P}[\Omega_{\widetilde{W}}\cap\Omega_{\kappa, J}]=1$.

%From the identity $\Omega_{\kappa, J}=(\Omega_{\kappa, J}\cap\Omega_{\widetilde{W}})\cup(\Omega_{\kappa, J}\cap\Omega_{\widetilde{W}}^c)$ we get that $\mathbb{P}[\Omega_{\widetilde{W}}\cap\Omega_{\kappa, J}]=\mathbb{P}[\Omega_{\kappa, J}]-\mathbb{P}[\Omega_{\kappa, J}\cap\Omega_{\widetilde{W}}^c]\geq \mathbb{P}[\Omega_{\kappa, J}]-\mathbb{P}[\Omega_{\widetilde{W}}^c]$. Since $\lim\limits_{\eps\rightarrow 0}\mathbb{P}[\Omega_{\widetilde{W}}^c]=0$ we conclude that $\lim\limits_{\eps\rightarrow 0}\mathbb{P}[\Omega_{\widetilde{W}}\cap\Omega_{\kappa, J}]=1$.
}

\subsection{ $\mathbb{L}^{\infty}$-error estimate}
\label{SectionLinfinityZ}
In this section we derive an estimate of
the error  ${Z}^j = X^j - X_{\mathrm{CH}}^j$ in the $\mathbb{L}^\infty$-norm on the subset $\Omega_{\widetilde{W}}\cap\Omega_{\kappa, J}$, see \thmref{LinfinityZ} below.
\revl{The estimate is obtained by using the identity $X^j = \widehat{X}^j + \widetilde{X}^j$ and splitting
${Z}^j = \widehat{X}^j + \widetilde{X}^j - X_{\mathrm{CH}}^j = \widehat{Z}^j + \widetilde{X}^j$, see \eqref{ErrorZ}, \eqref{translateddifference}.
We use \eqref{LinfinityXtilde} to control the perturbation $\widetilde{X}^j$ and the error $\widehat{Z}^j$ is estimated in several steps below.}

We start by estimating $\widehat{Z}^j$ in stronger norms on the subset $\Omega_{\widetilde{W}}$ in \lemref{LemmaNorm2} below.
\revl{
To ensure that the right-hand side in the estimate in the lemma vanishes for $\eps \rightarrow 0$,
we require additional conditions to be satisfied.
In comparison to  Assumptions \ref{assumption1} and \ref{assumption2} required for \thmref{mainresult1} we need
a smaller time-step size $\tau$ and larger value of $\sigma_0$, which implies that only larger value of $\gamma$ are admissible .}
\begin{Assumption}
\label{assumption3}
Let Assumptions \ref{assumption1} and \ref{assumption2} hold. In addition, assume that
\begin{align*}
\sigma_0>\kappa_0+4\mathfrak{n}_{\mathrm{CH}}+16,\quad \gamma>\max\left\{6d\eta+\kappa_0+4\mathfrak{n}_{\mathrm{CH}}+47, 6d\eta+\kappa_0+4\mathfrak{n}_{\mathrm{CH}}+45\right\},
\end{align*}
and that the time-step satisfies
\begin{align*}
\tau\leq C\min\left\{\eps^{\mathfrak{l}_{\mathrm{CH}}}, \eps^{\frac{\kappa_0}{2}+2\mathfrak{n}_{\mathrm{CH}}+10}, \eps^{2+\frac{\kappa_0}{2}+\beta}\right\}\quad \eps\in (0, \eps_0),
\end{align*}
\revl{for sufficiently small $\eps_0\equiv\eps_0(\sigma_0, \kappa_0)>0$,  $\mathfrak{l}_{\mathrm{CH}}\geq 3$  and an arbitrarily $0<\beta<\frac{1}{2}$).}
\end{Assumption}

\begin{lemma}
\label{LemmaNorm2}
Let  \assref{assumption3}  be fulfilled.  Then there exists a constant  $C$ such that
\begin{align*}
&\mathbb{E}\left[\max_{1\leq j\leq J}1\!\!1_{\Omega_{\widetilde{W}}}\Vert\widehat{Z}^j\Vert^2\right]+\mathbb{E}\left[\sum_{j=1}^J1\!\!1_{\Omega_{\widetilde{W}}}\Vert\widehat{Z}^j-\widehat{Z}^{j-1}\Vert^2+\eps\tau\sum_{j=1}^J1\!\!1_{\Omega_{\widetilde{W}}}\Vert\Delta\widehat{Z}^j\Vert^2\right]\nonumber\\
&\quad+\frac{\tau}{\eps}\sum_{j=1}^J\mathbb{E}\left[1\!\!1_{\Omega_{\widetilde{W}}}\Vert\widehat{Z}^j\nabla\widehat{Z}^j\Vert^2+1\!\!1_{\Omega_{\widetilde{W}}}\Vert X^j_{\mathrm{CH}}\nabla\widehat{Z}^j\Vert^2\right]
\\
&\leq \mathcal{F}_1(\tau, d, \eps; \sigma_0, \kappa_0, \gamma, \eta)
\\
&:=\left(\frac{C}{\eps^{\kappa_0+4\mathfrak{n}_{\mathrm{CH}}+16}}\max\left(\eps^{\sigma_0}, \eps^{2\gamma-2d\eta},\eps^{\gamma-d\eta+\frac{\sigma_0+1}{3}}, \frac{\tau^2}{\eps^{4}}\right)\right)^{\frac{1}{2}}\nonumber\\
&\quad+C\max\left\{ \eps^{2\gamma-2\eta-7}, \eps^{6\gamma-2\eta-d\eta-7}, \eps^{6\gamma-\frac{3d\eta}{2}-\frac{9}{2}}, \eps^{\frac{17}{2}\gamma-2d\eta-5},\eps^{\frac{\gamma}{2}-5},\eps^{\frac{7}{2}\gamma-d\eta-3}, \eps^{2\gamma-\frac{d\eta}{2}-2\mathfrak{n}_{\mathrm{CH}}-\frac{3}{2}}\right\}.
\end{align*}
\end{lemma}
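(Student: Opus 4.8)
The plan is to turn the translated difference $\widehat{Z}^j=\widehat{X}^j-X^j_{\mathrm{CH}}$ into a discrete energy identity and then to exploit the pathwise smallness of $\widetilde{X}^j$ on $\Omega_{\widetilde{W}}$ together with the already-established $\mathbb{H}^{-1}$-type bound of \lemref{EstiZ}. Subtracting the strong form of the deterministic scheme (i.e. \eqref{randompdescheme} with $\widetilde{X}^j\equiv0$ and $f(X^j_{\mathrm{CH}})$) from \eqref{randompdescheme}, the pair $(\widehat{Z}^j,\delta w^j)$ with $\delta w^j:=\widehat{w}^j-w^j_{\mathrm{CH}}=-\eps\Delta\widehat{Z}^j+\tfrac1\eps g^j$, $g^j:=f(\widehat{X}^j+\widetilde{X}^j)-f(X^j_{\mathrm{CH}})$, solves $(\widehat{Z}^j-\widehat{Z}^{j-1},\varphi)+\tau(\nabla\delta w^j,\nabla\varphi)=0$. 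First I would test with $\varphi=\widehat{Z}^j$, use $(a-b,a)=\tfrac12(\|a\|^2-\|b\|^2+\|a-b\|^2)$ and integrate by parts (the Neumann conditions $\partial_\normal\widehat{Z}^j=\partial_\normal\Delta\widehat{Z}^j=0$ remove the boundary contribution, turning $-(\nabla\Delta\widehat{Z}^j,\nabla\widehat{Z}^j)$ into $\|\Delta\widehat{Z}^j\|^2$), giving the pathwise identity
\begin{align*}
\tfrac12\big(\|\widehat{Z}^j\|^2-\|\widehat{Z}^{j-1}\|^2+\|\widehat{Z}^j-\widehat{Z}^{j-1}\|^2\big)+\eps\tau\|\Delta\widehat{Z}^j\|^2=-\frac{\tau}{\eps}\big(\nabla g^j,\nabla\widehat{Z}^j\big).
\end{align*}
Since $\widehat{Z}^0=0$ and both $\widehat{X}^j$, $X^j_{\mathrm{CH}}$ are mean-zero, summation over $j$ reproduces the left-hand side of the claim, and the whole lemma reduces to extracting the coercive terms from $-\tfrac{\tau}{\eps}(\nabla g^j,\nabla\widehat{Z}^j)$ and controlling the rest.

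The second step is the algebraic decomposition $g^j=\big[f(\widehat{X}^j)-f(X^j_{\mathrm{CH}})\big]+\big[f(\widehat{X}^j+\widetilde{X}^j)-f(\widehat{X}^j)\big]$. Using that $f$ is cubic, the first bracket equals $f'(X^j_{\mathrm{CH}})\widehat{Z}^j+3X^j_{\mathrm{CH}}(\widehat{Z}^j)^2+(\widehat{Z}^j)^3$; taking its gradient and pairing with $\nabla\widehat{Z}^j$ produces, after moving to the left-hand side, the two desired coercive contributions $\tfrac{3\tau}{\eps}\|X^j_{\mathrm{CH}}\nabla\widehat{Z}^j\|^2$ and $\tfrac{3\tau}{\eps}\|\widehat{Z}^j\nabla\widehat{Z}^j\|^2$, a genuinely bad term $-\tfrac{\tau}{\eps}\|\nabla\widehat{Z}^j\|^2$ coming from the $-1$ in $f'$, a cross term $\tfrac{6\tau}{\eps}(X^j_{\mathrm{CH}}\nabla\widehat{Z}^j,\widehat{Z}^j\nabla\widehat{Z}^j)$ (split by Young between the two good terms), and the mixed terms carrying $\nabla X^j_{\mathrm{CH}}$. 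The second bracket equals $f'(\widehat{X}^j)\widetilde{X}^j+3\widehat{X}^j(\widetilde{X}^j)^2+(\widetilde{X}^j)^3$, and is pure perturbation since every summand carries a factor $\widetilde{X}^j$ (or $\nabla\widetilde{X}^j$ after differentiation).

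The principal obstacle is the simultaneous control of three error families without generating an exponential-in-$\eps^{-1}$ Gronwall constant. For the bad term $-\tfrac{\tau}{\eps}\|\nabla\widehat{Z}^j\|^2$ I would \emph{not} interpolate naively against $\|\Delta\widehat{Z}^j\|$ (this would feed $\tfrac{\tau}{\eps^3}\|\widehat{Z}^j\|^2$ into Gronwall); instead I use the three-space interpolation $\|\nabla v\|\le C\|v\|_{\mathbb{H}^{-1}}^{1/3}\|v\|_{\mathbb{H}^2}^{2/3}$ and Young's inequality to write it as a small multiple of $\eps\tau\|\Delta\widehat{Z}^j\|^2$ plus a term proportional to $\tfrac{\tau}{\eps^{q}}\|\widehat{Z}^j\|^2_{\mathbb{H}^{-1}}$, which is then bounded \emph{directly} via $\widehat{Z}^j=Z^j-\widetilde{X}^j$ using the $\mathbb{H}^{-1}$-estimate of \lemref{EstiZ} and the moment bounds of \lemref{regularitynoise}, bypassing Gronwall. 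The $\nabla X^j_{\mathrm{CH}}$-mixed terms are handled by the $\mathbb{L}^\infty$-bound (\lemref{LemmaLubo19}(iii)) and the $\mathbb{H}^2$-bound $\|X^j_{\mathrm{CH}}\|_{\mathbb{H}^2}\le C\eps^{-\mathfrak{n}_{\mathrm{CH}}}$ (\lemref{LemmaLubo19}(ii)) together with the embeddings $\mathbb{H}^1\hookrightarrow\mathbb{L}^6$, $\mathbb{H}^2\hookrightarrow W^{1,6}$ valid in $d=2,3$, Hölder and Young; this is the source of the factors $\eps^{-\mathfrak{n}_{\mathrm{CH}}}$ and $\eps^{-2\mathfrak{n}_{\mathrm{CH}}}$ (hence $4\mathfrak{n}_{\mathrm{CH}}$ after the final squaring) in $\mathcal{F}_1$. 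For the $\widetilde{X}^j$-perturbation I use the defining bound $\|\widetilde{X}^j\|_{\mathbb{L}^\infty}\le C\eps^{\gamma-\eta-1}$ on $\Omega_{\widetilde{W}}$ (cf. \eqref{SetW}), controlling $\nabla\widetilde{X}^j$ only through $\|\widetilde{X}^j\|_{\mathbb{H}^\alpha}$, $\alpha\in[0,2)$, from \lemref{regularitynoise} (norms of order $\eps^{\gamma-\alpha/4-\eta d/4}$); the delicate balancing is that each resulting $\eps$-power — e.g. $\eps^{2\gamma-2\eta-7}$, $\eps^{\frac{17}{2}\gamma-2d\eta-5}$, $\eps^{2\gamma-\frac{d\eta}{2}-2\mathfrak{n}_{\mathrm{CH}}-\frac32}$ — must stay nonnegative after dividing by the $\eps^{-1}$ and $\eps^{-\mathfrak{n}_{\mathrm{CH}}}$ weights, which is exactly what the lower bounds on $\gamma$, $\sigma_0$ and the upper bound on $\tau$ in \assref{assumption3} are designed to guarantee.

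Finally, arguing pathwise on $\Omega_{\widetilde{W}}$ (whose indicator is independent of $j$ and thus commutes with the summation and the discrete Gronwall lemma), I would sum the identity over $j=1,\dots,J$, multiply by $1\!\!1_{\Omega_{\widetilde{W}}}$, take expectations, and insert: \lemref{EstiZ} for all contributions routed through $Z^j$ (producing the first summand $\big(C\eps^{-\kappa_0-4\mathfrak{n}_{\mathrm{CH}}-16}\max\{\eps^{\sigma_0},\eps^{2\gamma-2d\eta},\eps^{\gamma-d\eta+\frac{\sigma_0+1}{3}},\tau^2\eps^{-4}\}\big)^{1/2}$ of $\mathcal{F}_1$), and \lemref{regularitynoise} together with \eqref{LinfinityXtilde} for the purely noise-driven contributions (the remaining maximum of $\eps$-powers). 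A mild discrete Gronwall step then absorbs the only residual term carrying $\tau\,\|\widehat{Z}^j\|^2$ with an $O(1)$ coefficient, closing the estimate with an $\eps$-independent constant $C$.
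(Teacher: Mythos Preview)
Your skeleton is correct and matches the paper's: test the error equation with $\widehat{Z}^j$, obtain the energy identity with $\eps\tau\|\Delta\widehat{Z}^j\|^2$ on the left, split the nonlinearity as $[f(\widehat{X}^j)-f(X^j_{\mathrm{CH}})]+[f(\widehat{X}^j+\widetilde{X}^j)-f(\widehat{X}^j)]$, and extract the two coercive contributions $\tfrac{\tau}{\eps}\|\widehat{Z}^j\nabla\widehat{Z}^j\|^2$, $\tfrac{\tau}{\eps}\|X^j_{\mathrm{CH}}\nabla\widehat{Z}^j\|^2$ from the first bracket. The remaining difficulty is the residual $\tfrac{C\tau}{\eps^{1+2\mathfrak{n}_{\mathrm{CH}}}}\|\nabla\widehat{Z}^j\|^2$ term, and here your route differs from the paper's.

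You propose a three-space interpolation $\|\nabla\widehat{Z}^j\|\le C\|\widehat{Z}^j\|_{\mathbb{H}^{-1}}^{1/3}\|\widehat{Z}^j\|_{\mathbb{H}^2}^{2/3}$ followed by Young, absorbing $\|\Delta\widehat{Z}^j\|^2$ to the left and routing the $\mathbb{H}^{-1}$ piece through \lemref{EstiZ}. This is correct, but the paper avoids the detour: it simply writes $\|\nabla\widehat{Z}^j\|^2\le 2\|\nabla Z^j\|^2+2\|\nabla\widetilde{X}^j\|^2$ and observes that \lemref{EstiZ} already controls the \emph{gradient} quantity $\eps^4\tau\sum_j\|\nabla Z^j\|^2$ (not only the $\mathbb{H}^{-1}$ norm), while \lemref{regularitynoise} with $\alpha=1$ handles $\|\nabla\widetilde{X}^j\|^2$. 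This direct splitting yields the specific exponent $\kappa_0+4\mathfrak{n}_{\mathrm{CH}}+16$; your interpolation would work but with slightly worse $\mathfrak{n}_{\mathrm{CH}}$-dependence. For the perturbation bracket, the paper also stays one derivative lower than you suggest: rather than estimating $\|\nabla[f(\widehat{X}^j+\widetilde{X}^j)-f(\widehat{X}^j)]\|$, it integrates by parts once more to pair with $-\Delta\widehat{Z}^j$ and applies Cauchy--Schwarz/Young to obtain $\tfrac{\eps\tau}{4}\|\Delta\widehat{Z}^j\|^2+\tfrac{C\tau}{\eps^3}\|f(\widehat{X}^j+\widetilde{X}^j)-f(\widehat{X}^j)\|^2$, which only needs the $\mathbb{L}^2$ norm of the bracket (estimated via the $\mathbb{L}^\infty$-bound on $\widetilde{X}^j$ on $\Omega_{\widetilde{W}}$ and energy moments of $\widehat{X}^j$). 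Finally, no discrete Gronwall step is needed: after summation every residual term is bounded \emph{directly} by \lemref{EstiZ}, \lemref{regularitynoise}, or \lemref{MomentLemma}, so the closing Gronwall you mention can be dropped.
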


\begin{proof} 
Recall that $\widehat{Z}^j=\widehat{X}^j-X^j_{\mathrm{CH}}$. Since $\widehat{X}^j$ satisfies \eqref{randompdescheme}, we deduce that $\widehat{Z}^j$ satisfies
\begin{align}
\label{scheme4} 
(\widehat{Z}^j-\widehat{Z}^{j-1}, \varphi)&=\tau(\nabla(\widehat{w}^j-w^j_{\mathrm{CH}}), \nabla\varphi) & \varphi\in\mathbb{H}^1\\
\label{scheme5}
(\widehat{w}^j-w^j_{\mathrm{CH}}, \psi)+\eps(\nabla \widehat{Z}^j, \nabla\psi)&=\frac{1}{\eps}(f(\widehat{X}^j+\widetilde{X}^j)-f(X^j_{\mathrm{CH}}), \psi) & \psi\in\mathbb{H}^1. 
\end{align}
Taking $\varphi=\widehat{Z}^j$ in \eqref{scheme4}, $\psi=\Delta\widehat{Z}^j$ in \eqref{scheme5}, integrating by parts and summing the resulting equations  yields
\begin{align}
\label{Norm2}
&\frac{1}{2}\left(\Vert \widehat{Z}^j\Vert^2-\Vert\widehat{Z}^{j-1}\Vert^2+\Vert\widehat{Z}^j-\widehat{Z}^{j-1}\Vert^2\right)+\eps\tau\Vert\Delta\widehat{Z}^j\Vert^2\nonumber\\
&+\frac{\tau}{\eps}\left(f(\widehat{X}^j+\widetilde{X}^j)-f(X^j_{\mathrm{CH}}), -\Delta\widehat{Z}^j\right)=0.
\end{align}
Splitting the term involving the nonlinearity in two parts, using Cauchy-Schwarz and Young's inequalities in the first  term yields
\begin{align}
\label{Roman0}
&\frac{\tau}{\eps}\left(f(\widehat{X}^j+\widetilde{X}^j)-f(X^j_{\mathrm{CH}}), -\Delta\widehat{Z}^j\right)\nonumber\\
&=\frac{\tau}{\eps}\left(f(\widehat{X}^j+\widetilde{X}^j)-f(\widehat{X}^j), -\Delta\widehat{Z}^j\right)+\frac{\tau}{\eps}\left(f(\widehat{X}^j)-f(X^j_{\mathrm{CH}}), -\Delta\widehat{Z}^j\right)\\
&\leq \frac{\eps\tau}{4}\Vert\Delta\widehat{Z}^j\Vert^2+\frac{C\tau}{\eps^3}\Vert f(\widehat{X}^j+\widetilde{X}^j)-f(\widehat{X}^j)\Vert^2+\frac{\tau}{\eps}\left(f(\widehat{X}^j)-f(X^j_{\mathrm{CH}}), -\Delta\widehat{Z}^j\right). \nonumber
\end{align}
Along the same lines as in \cite[Page 533]{Banas19}, one obtains
\begin{align}
\label{Roman1}
\frac{\tau}{\eps}\left(f(\widehat{X}^j)-f(X^j_{\mathrm{CH}}), -\Delta\widehat{Z}^j\right)\geq \frac{\tau}{2\eps}\left[\Vert\widehat{Z}^j\nabla\widehat{Z}^j\Vert^2+\Vert X^j_{\mathrm{CH}}\nabla \widehat{Z}^j\Vert^2\right]-\frac{C\tau}{\eps^{1+2\mathfrak{n}_{\mathrm{CH}}}}\Vert\nabla\widehat{Z}^j\Vert^2.
\end{align}
Using the identity \eqref{nonlinearident}, Young's inequality, the Sobolev embeddings $\mathbb{H}^1\hookrightarrow\mathbb{L}^q$ ($1\leq q\leq 6$) and Poincar\'{e}'s inequality, it follows that
\begin{align}
\label{Roman2}
&\Vert f(\widehat{X}^j+\widetilde{X}^j)-f(\widehat{X}^j) \Vert^2=\Vert 3\widetilde{X}^j(\widehat{X}^j)^2-\widetilde{X}^j+(\widetilde{X}^j)^3-3(\widetilde{X}^j)^2\widehat{X}^j\Vert^2\nonumber\\
&\leq C\Vert \widetilde{X}^j(\widehat{X}^j)^2\Vert^2+C\Vert\widetilde{X}^j\Vert^2+C\Vert(\widetilde{X}^j)^3\Vert^2+C\Vert(\widetilde{X}^j)^2\widehat{X}^j\Vert^2\nonumber\\
&\leq C\Vert\widetilde{X}^j\Vert^2_{\mathbb{L}^{\infty}}\Vert\widehat{X}^j\Vert^4_{\mathbb{L}^4}+C\Vert\widetilde{X}^j\Vert^2+ C\Vert \widetilde{X}^j\Vert^6_{\mathbb{L}^6}+C\eps^{-\frac{\gamma}{2}}\Vert \widetilde{X}^j\Vert^8_{\mathbb{L}^{\infty}}+C\eps^{\frac{\gamma}{2}}\Vert\widehat{X}^j\Vert^4\\
&\leq C\Vert\widetilde{X}^j\Vert^2_{\mathbb{L}^{\infty}}\Vert\nabla\widehat{X}^j\Vert^4+C\eps^{-\frac{\gamma}{2}}\Vert\widetilde{X}^j\Vert^8_{\mathbb{L}^{\infty}}+C\Vert\widetilde{X}^j\Vert^6_{\mathbb{H}^1}+C\eps^{\frac{\gamma}{2}}\Vert\widehat{X}^j\Vert^4.\nonumber
\end{align}
Substituting \eqref{Roman2} and \eqref{Roman1} in \eqref{Roman0}, substituting the resulting estimate in \eqref{Norm2}, summing over $1\leq j\leq J$,  multiplying both sides by $1\!\!1_{\Omega_{\widetilde{W}}}$, taking the maximum,  the expectation in both sides,    recalling the definition of $\Omega_{\widetilde{W}}$ \eqref{SetW} and using \lemref{MomentLemma}  leads to
\begin{align*}
&\mathbb{E}\left[\max_{1\leq j\leq J}1\!\!1_{\Omega_{\widetilde{W}}}\Vert \widehat{Z}^j\Vert^2\right]+\sum_{j=1}^J\mathbb{E}\left[1\!\!1_{\Omega_{\widetilde{W}}}\Vert \widehat{Z}^j-\widehat{Z}^{j-1}\Vert^2\right]+\frac{\eps\tau}{4}\sum_{j=1}^J\mathbb{E}\left[1\!\!1_{\Omega_{\widetilde{W}}}\Vert\Delta\widehat{Z}^j\Vert^2\right]\nonumber\\
&\quad+\frac{\tau}{2\eps}\sum_{j=1}^J\mathbb{E}\left[1\!\!1_{\Omega_{\widetilde{W}}}\Vert \widehat{Z}^j\nabla\widehat{Z}^j\Vert^2\right]+\frac{\tau}{2\eps}\sum_{j=1}^J\mathbb{E}\left[1\!\!1_{\Omega_{\widetilde{W}}}\Vert X^j_{\mathrm{CH}}\nabla\widehat{Z}^j\Vert^2\right]\nonumber\\
&\leq\frac{C\tau}{\eps^3}\sum_{j=1}^J\mathbb{E}\left[\Vert \widetilde{X}^j\Vert^2_{\mathbb{L}^{\infty}}\Vert \nabla\widehat{X}^j\Vert^4\right]+\frac{C\tau}{\eps^3}\sum_{j=1}^J\mathbb{E}\left[\Vert\widetilde{X}^j\Vert^6_{\mathbb{H}^1}\right]+\frac{C\tau}{\eps^{3+\frac{\gamma}{2}}}\sum_{j=1}^J\mathbb{E}\left[1\!\!1_{\Omega_{\widetilde{W}}}\Vert \widetilde{X}^j\Vert^8_{\mathbb{L}^{\infty}}\right]\nonumber\\
&\quad+\frac{C\tau}{\eps^{3-\frac{\gamma}{2}}}\sum_{j=1}^J\mathbb{E}\left[\Vert\widehat{X}^j\Vert^4\right]+\frac{C\tau}{\eps^{1+2\mathfrak{n}_{\mathrm{CH}}}}\sum_{j=1}^J\mathbb{E}\left[\Vert\nabla\widehat{Z}^j\Vert^2\right]\nonumber\\
&\leq C\left(\eps^{2\gamma-2\eta-7}+\eps^{6\gamma-2\eta-d\eta-7}+\eps^{6\gamma-\frac{3d\eta}{2}-\frac{9}{2}}+\eps^{\frac{17}{2}\gamma-2d\eta-5}+\eps^{\frac{\gamma}{2}-5}+\eps^{\frac{7}{2}\gamma-d\eta-3}\right)\nonumber\\
&\quad+\frac{C\tau}{\eps^{1+2\mathfrak{n}_{\mathrm{CH}}}}\sum_{j=1}^J\mathbb{E}[\Vert\nabla\widehat{Z}^j\Vert^2], 
\end{align*}
where at the last step we used the inequalities $\Vert\widehat{X}^j\Vert^4=\Vert X^j-\widetilde{X}^j\Vert^4\leq 8\Vert X^j\Vert^4+ 8\Vert\widetilde{X}^j\Vert^4$,  $\Vert\nabla\widehat{X}^j\Vert^4\leq  8\Vert\nabla X^j\Vert^4+ 8\Vert\nabla\widetilde{X}^j\Vert^4$, Poincar\'{e}'s inequality $\Vert X^j\Vert\leq C_{\mathcal{D}}\Vert\nabla X^j\Vert$, Lemmas \ref{MomentLemma} and \ref{regularitynoise} with $\alpha=0,1$ and noting that $h=\eps^{\eta}$. 

Using the inequality $\Vert\nabla\widehat{Z}^j\Vert^2\leq 2\Vert \nabla Z^j\Vert^2+2\Vert \nabla\widetilde{X}^j\Vert^2$, Lemmas \ref{EstiZ} and \ref{regularitynoise} with $\alpha=1$ and noting again that $h=\eps^{\eta}$  yields the desired result. 
\end{proof}

Using \lemref{LemmaNorm2} we estimate
the error $\widehat{Z}^j$ in stronger norms on a smaller probability space $\Omega_{\widetilde{W}}\cap\Omega_{\kappa, J}$.
\begin{lemma}
\label{EstinablaZ}
Let \assref{assumption3} hold. Then the following error estimate holds
\begin{align*}
&\mathbb{E}\left[\max_{1\leq j\leq J}1\!\!1_{\Omega_{\widetilde{W}}\cap\Omega_{\kappa, J}}\Vert\nabla\widehat{Z}^j\Vert^2\right]+\sum_{j=1}^J\mathbb{E}\left[1\!\!1_{\Omega_{\widetilde{W}}\cap\Omega_{\kappa, J}}\Vert\widehat{Z}^j-\widehat{Z}^{j-1}\Vert^2+\eps\tau1\!\!1_{\Omega_{\widetilde{W}}\cap\Omega_{\kappa, J}}\Vert\nabla\Delta\widehat{Z}^j\Vert^2\right]\nonumber\\
&\leq \frac{C(1+\kappa^4+\eps^{-2\mathfrak{n}_{\mathrm{CH}}})}{\eps^3}\eps^{2\gamma-\frac{1}{2}-\frac{d\eta}{2}}
   + C\left(\frac{(1+\kappa^2)}{\eps^4}\eps^{-2\mathfrak{n}_{\mathrm{CH}}}+\frac{C(1+\kappa^2)}{\eps^2}\right)\mathcal{F}_1(\tau, d, \eps, \sigma_0, \kappa_0, \gamma, \eta)\nonumber\\
&=:\mathcal{F}_2(\tau, d, \eps; \sigma_0, \kappa_0, \gamma, \eta).
\end{align*}
\end{lemma}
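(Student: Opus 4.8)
The plan is to test the error equations \eqref{scheme4}--\eqref{scheme5} so that the $\mathbb{H}^1$-type norms appearing in the statement are generated on the left-hand side, and then to control the resulting nonlinear term by quantities that are \emph{already} estimated in \lemref{LemmaNorm2}, \lemref{EstiZ} and \lemref{regularitynoise}, rather than by a crude $\mathbb{L}^\infty$-bound. First I would pass to the (analytically) strong form of \eqref{scheme5}, namely $\widehat{w}^j-w^j_{\mathrm{CH}}=\eps\Delta\widehat{Z}^j+\frac{1}{\eps}\big(f(X^j)-f(X^j_{\mathrm{CH}})\big)$, using $\widehat{X}^j+\widetilde{X}^j=X^j$. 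Testing \eqref{scheme4} with $\varphi=-\Delta\widehat{Z}^j$, integrating by parts (Neumann boundary conditions) and inserting the strong form yields the discrete identity
\begin{equation*}
\frac12\big(\Vert\nabla\widehat{Z}^j\Vert^2-\Vert\nabla\widehat{Z}^{j-1}\Vert^2+\Vert\nabla(\widehat{Z}^j-\widehat{Z}^{j-1})\Vert^2\big)+\eps\tau\Vert\nabla\Delta\widehat{Z}^j\Vert^2=-\frac{\tau}{\eps}\big(\nabla(f(X^j)-f(X^j_{\mathrm{CH}})),\nabla\Delta\widehat{Z}^j\big),
\end{equation*}
where I used $(\nabla(a-b),\nabla a)=\frac12(\Vert\nabla a\Vert^2-\Vert\nabla b\Vert^2+\Vert\nabla(a-b)\Vert^2)$. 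A Young inequality then splits off $\frac{\eps\tau}{2}\Vert\nabla\Delta\widehat{Z}^j\Vert^2$ (absorbed on the left) and leaves $\frac{C\tau}{\eps^3}\Vert\nabla(f(X^j)-f(X^j_{\mathrm{CH}}))\Vert^2$ to estimate. The $\mathbb{L}^2$-increment $\Vert\widehat{Z}^j-\widehat{Z}^{j-1}\Vert^2$ in the statement is recovered from $\Vert\nabla(\widehat{Z}^j-\widehat{Z}^{j-1})\Vert^2$ by Poincaré (both iterates are mean-zero), or simply carried over from \lemref{LemmaNorm2} since $\Omega_{\widetilde{W}}\cap\Omega_{\kappa,J}\subseteq\Omega_{\widetilde{W}}$.

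The heart of the argument is the bound on $\Vert\nabla(f(X^j)-f(X^j_{\mathrm{CH}}))\Vert^2$. I would write $\nabla(f(X^j)-f(X^j_{\mathrm{CH}}))=f'(X^j)\nabla Z^j+(f'(X^j)-f'(X^j_{\mathrm{CH}}))\nabla X^j_{\mathrm{CH}}$ with $Z^j=\widehat{Z}^j+\widetilde{X}^j$, and expand $f'=3(\cdot)^2-1$ around the splitting $X^j=X^j_{\mathrm{CH}}+\widehat{Z}^j+\widetilde{X}^j$. The key is that the quadratic factors generate precisely the weighted quantities $\widehat{Z}^j\nabla\widehat{Z}^j$ and $X^j_{\mathrm{CH}}\nabla\widehat{Z}^j$ whose space-time $\mathbb{L}^2$-sums are already bounded by $\mathcal{F}_1$ in \lemref{LemmaNorm2}. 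The surrounding factors are handled by the available $\mathbb{L}^\infty$-bounds: $\Vert X^j\Vert_{\mathbb{L}^\infty}\leq\kappa$ on $\Omega_{\kappa,J}$ (cf. \eqref{SetXaa}), $\Vert\widetilde{X}^j\Vert_{\mathbb{L}^\infty}\leq C\eps^{\gamma-\eta-1}$ on $\Omega_{\widetilde{W}}$ (cf. \eqref{SetW}, \eqref{LinfinityXtilde}), and $\Vert X^j_{\mathrm{CH}}\Vert_{\mathbb{L}^\infty}\leq C$, $\Vert\nabla X^j_{\mathrm{CH}}\Vert_{\mathbb{L}^6}\leq C\Vert X^j_{\mathrm{CH}}\Vert_{\mathbb{H}^2}\leq C\eps^{-\mathfrak{n}_{\mathrm{CH}}}$ from \lemref{LemmaLubo19}(ii)--(iii), combined with Hölder and $\mathbb{H}^1\hookrightarrow\mathbb{L}^6$. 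This is exactly where the prefactors $(1+\kappa^2)/\eps^2$ and $(1+\kappa^2)\eps^{-2\mathfrak{n}_{\mathrm{CH}}}/\eps^4$ multiplying $\mathcal{F}_1$ arise, as well as the contribution $\eps^{2\gamma-\frac12-\frac{d\eta}{2}}$, which is $\max_j\mathbb{E}\Vert\widetilde{X}^j\Vert_{\mathbb{H}^1}^2$ from \lemref{regularitynoise} with $\alpha=1$ and $h=\eps^\eta$, decorated by the $(1+\kappa^4+\eps^{-2\mathfrak{n}_{\mathrm{CH}}})/\eps^3$ factor from $f'(X^j)\nabla\widetilde{X}^j$ and the cross terms with $\nabla X^j_{\mathrm{CH}}$.

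Finally I would sum over $j$, multiply by $1\!\!1_{\Omega_{\widetilde{W}}\cap\Omega_{\kappa,J}}$, take the maximum and the expectation, insert the prior bounds $\mathcal{F}_1$ (for the weighted $\widehat{Z}^j$-terms, for $\max_j\Vert\widehat{Z}^j\Vert^2$ and for $\eps\tau\sum_j\Vert\Delta\widehat{Z}^j\Vert^2$) from \lemref{LemmaNorm2}, the $Z^j$-bounds from \lemref{EstiZ}, and the $\widetilde{X}^j$-moments from \lemref{regularitynoise} and \eqref{LinfinityXtilde}. The terms carrying a factor of $\widetilde{X}^j$ come with positive powers of $\eps$ (thanks to the large lower bound on $\gamma$ in \assref{assumption3}), and the linear $-\nabla\widehat{Z}^j$ contribution is treated via the interpolation $\Vert\nabla\widehat{Z}^j\Vert^2\leq\Vert\widehat{Z}^j\Vert\,\Vert\Delta\widehat{Z}^j\Vert$ together with the $\Vert\widehat{Z}^j\Vert$- and $\Vert\Delta\widehat{Z}^j\Vert$-bounds already in $\mathcal{F}_1$; any residual bare $\tau\sum_j\Vert\nabla\widehat{Z}^j\Vert^2$ then carries a small coefficient, so the implicit discrete Gronwall lemma (as in \lemref{lemmaerror1}) closes the estimate with a bounded Gronwall factor, producing $\mathcal{F}_2$.

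I expect the main obstacle to be exactly the avoidance of a Gronwall factor of the form $\exp\!\big(CT\eps^{-3}(1+\kappa^4)\big)$: the naive bound $\Vert f'(X^j)\nabla Z^j\Vert\leq\Vert f'(X^j)\Vert_{\mathbb{L}^\infty}\Vert\nabla Z^j\Vert$ produces a bare $\frac{C\tau}{\eps^3}(1+\kappa^4)\Vert\nabla\widehat{Z}^j\Vert^2$, whose Gronwall constant is exponentially large in $\eps^{-1}$ and would destroy the estimate. The remedy is the structural expansion of $\nabla(f(X^j)-f(X^j_{\mathrm{CH}}))$ described above, which trades these dangerous terms for the already space-time-summable weighted quantities from \lemref{LemmaNorm2} and for genuinely small $\widetilde{X}^j$-contributions, leaving only Gronwall-safe remainders.
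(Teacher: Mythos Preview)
Your proposal is correct and follows the same global scheme as the paper (test \eqref{scheme4} with $\varphi=-\Delta\widehat{Z}^j$, absorb $\Vert\nabla\Delta\widehat{Z}^j\Vert^2$ via Young, and reduce everything to quantities already controlled by \lemref{LemmaNorm2}, \lemref{EstiZ} and \lemref{regularitynoise}), but two ingredients are handled differently than in the paper.

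\emph{Decomposition of the nonlinearity.} You expand via the chain rule $\nabla(f(X^j)-f(X^j_{\mathrm{CH}}))=f'(X^j)\nabla Z^j+(f'(X^j)-f'(X^j_{\mathrm{CH}}))\nabla X^j_{\mathrm{CH}}$ and then expand $(X^j)^2$ around $X^j_{\mathrm{CH}}$. The paper instead telescopes through the intermediate point $X^j_{\mathrm{CH}}+\widetilde{X}^j$, writing the difference as $[f(\widehat{X}^j+\widetilde{X}^j)-f(X^j_{\mathrm{CH}}+\widetilde{X}^j)]+[f(X^j_{\mathrm{CH}}+\widetilde{X}^j)-f(X^j_{\mathrm{CH}})]$ and applying the algebraic identity \eqref{nonlinearident} to each bracket. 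The paper's splitting has the advantage that the second bracket involves only the well-controlled quantities $X^j_{\mathrm{CH}},\widetilde{X}^j$, while all $\widehat{Z}^j$-dependence is isolated in the first bracket; your expansion achieves the same bookkeeping but with slightly more cross terms.

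\emph{Bare $\Vert\nabla\widehat{Z}^j\Vert^2$ terms.} Here the paper does \emph{not} use Gronwall or interpolation at all: it simply observes $\widehat{Z}^j=Z^j-\widetilde{X}^j$, so that $\tau\sum_j\Vert\nabla\widehat{Z}^j\Vert^2\leq 2\tau\sum_j\Vert\nabla Z^j\Vert^2+2\tau\sum_j\Vert\nabla\widetilde{X}^j\Vert^2$, and bounds the first sum directly by $\eps^{-4}$ times the right-hand side of \lemref{EstiZ} and the second by \lemref{regularitynoise} with $\alpha=1$. This is cleaner than your interpolation $\Vert\nabla\widehat{Z}^j\Vert^2\leq\Vert\widehat{Z}^j\Vert\,\Vert\Delta\widehat{Z}^j\Vert$ followed by Cauchy--Schwarz in $j$ (which also works and yields the same $\mathcal{F}_1$-multiplied bound). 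With either device the estimate closes \emph{without} any Gronwall step: your closing invocation of ``implicit discrete Gronwall'' is unnecessary, and you should drop it---all bare $\Vert\nabla\widehat{Z}^j\Vert^2$ terms carry coefficients of order $\eps^{-3-2\mathfrak{n}_{\mathrm{CH}}}(1+\kappa^2)$ (not ``small''), so a genuine Gronwall argument here would produce an exponentially large factor in $\eps^{-1}$ and destroy the estimate. The estimate is saved precisely because these terms are treated as \emph{data} via \lemref{EstiZ} (or, in your variant, via the $\mathcal{F}_1$-bounds of \lemref{LemmaNorm2}).
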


\begin{proof} 
Taking $\varphi=-\Delta\widehat{Z}^j(\omega)$ in \eqref{scheme4}, $\psi=\Delta^2\widehat{Z}^j(\omega)$ in \eqref{scheme5}, with $\omega\in \Omega$ fixed, integrating by parts and summing the resulting equations yields
\begin{align}
\label{scheme6}
&\frac{1}{2}\left(\Vert\nabla\widehat{Z}^j\Vert^2-\Vert\nabla\widehat{Z}^{j-1}\Vert^2+\Vert\nabla(\widehat{Z}^j-\widehat{Z}^{j-1})\Vert^2\right)+\eps\tau\Vert\nabla\Delta\widehat{Z}^j\Vert^2\nonumber\\
&=\frac{\tau}{\eps}\left(\nabla\left(f(\widehat{X}^j+\widetilde{X}^j)-f(X^j_{\mathrm{CH}})\right), \nabla\Delta\widehat{Z}^j\right).
\end{align}
Splitting the term involving $f$ and using Cauchy-Schwarz's inequality leads to
\begin{align}
\label{Sam6}
&\frac{\tau}{\eps}\left(\nabla\left(f(\widehat{X}^j+\widetilde{X}^j)-f(X^j_{\mathrm{CH}})\right), \nabla\Delta\widehat{Z}^j\right)\nonumber\\
&=\frac{\tau}{\eps}\left(\nabla\left(f(\widehat{X}^j+\widetilde{X}^j)-f(X^j_{\mathrm{CH}}+\widetilde{X}^j)\right), \nabla\Delta\widehat{Z}^j\right)\nonumber\\
&\quad+\frac{\tau}{\eps}\left(\nabla\left(f(X^j_{\mathrm{CH}}+\widetilde{X}^j)-f(X^j_{\mathrm{CH}})\right), \nabla\Delta\widehat{Z}^j\right)\nonumber\\
&\leq \frac{\eps\tau}{4}\Vert \nabla\Delta\widehat{Z}^j\Vert^2+\frac{C\tau}{\eps^3}\left\Vert\nabla\left(f(\widehat{X}^j+\widetilde{X}^j)-f(X^j_{\mathrm{CH}}+\widetilde{X}^j)\right)\right\Vert^2\\
&\quad+\frac{C\tau}{\eps^3}\left\Vert \nabla\left(f(X^j_{\mathrm{CH}}+\widetilde{X}^j)-f(X^j_{\mathrm{CH}})\right)\right\Vert^2=: \frac{\eps\tau}{4}\Vert \nabla\Delta\widehat{Z}^j\Vert^2+I+II.\nonumber
\end{align}
Let us start with the estimate of $II$. 
Using the identity \eqref{nonlinearident} and the elementary inequality $(a+b+c+d)^2\leq 4a^2+4b^2+4c^2+4d^2$ leads to
\begin{align}
\label{Vidal1}
II=&\frac{C\tau}{\eps^3}\Vert\nabla(f(X^j_{\mathrm{CH}}+\widetilde{X}^j)-f(X^j_{\mathrm{CH}}))\Vert^2=\frac{C\tau}{\eps^3}\int_{\mathcal{D}}\vert \nabla(f(X^j_{\mathrm{CH}}+\widetilde{X}^j)-f(X^j_{\mathrm{CH}}))\vert^2dx\nonumber\\
\leq& \frac{C\tau}{\eps^3}\int_{\mathcal{D}}\vert\nabla(\widetilde{X}^j(X^j_{\mathrm{CH}})^2)\vert^2dx+\frac{C\tau}{\eps^3}\int_{\mathcal{D}}\vert \nabla\widetilde{X}^j\vert^2dx+\frac{C\tau}{\eps^3}\int_{\mathcal{D}}\vert\nabla((\widetilde{X}^j)^3)\vert^2dx\\
&+\frac{C\tau}{\eps^3}\int_{\mathcal{D}}\vert\nabla((\widetilde{X}^j)^2X^j_{\mathrm{CH}})\vert^2dx=:II_1+II_2+II_3+II_4.\nonumber
\end{align}
Using the bounds $\Vert X^j_{\mathrm{CH}}\Vert_{\mathbb{L}^{\infty}}\leq C$ and $\mathcal{E}(X^j_{\mathrm{CH}})=\displaystyle\frac{\eps}{2}\Vert \nabla X^j_{\mathrm{CH}}\Vert^2+\frac{1}{\eps}\Vert F(X^i_{\mathrm{CH}})\Vert_{\mathbb{L}^1}\leq C$ (see  \lemref{LemmaLubo19} (i) \& (iii)), we estimate
\begin{align}
\label{Vidal2}
II_1&=\frac{C\tau}{\eps^3}\int_{\mathcal{D}}\vert\nabla(\widetilde{X}^j(X^j_{\mathrm{CH}})^2)\vert^2dx\nonumber\\
&\leq \frac{C\tau}{\eps^3}\int_{\mathcal{D}}\vert\nabla\widetilde{X}^j\vert^2\vert X^j_{\mathrm{CH}}\vert^4dx+\frac{C\tau}{\eps^3}\int_{\mathcal{D}}\vert\widetilde{X}^jX^j_{\mathrm{CH}}\nabla X^j_{\mathrm{CH}}\vert^2dx\\
&\leq \frac{C\tau}{\eps^3}\Vert X^j_{\mathrm{CH}}\Vert^4_{\mathbb{L}^{\infty}}\Vert\nabla\widetilde{X}^j\Vert^2+\frac{C\tau}{\eps^3}\Vert \widetilde{X}^j\Vert^2_{\mathbb{L}^{\infty}}\Vert X^j_{\mathrm{CH}}\Vert^2_{\mathbb{L}^{\infty}}\Vert \nabla X^j_{\mathrm{CH}}\Vert^2\nonumber\\
&\leq C\tau\eps^{-3}\Vert\nabla\widetilde{X}^j\Vert^2+C\tau\eps^{-4}\Vert\widetilde{X}^j\Vert^2_{\mathbb{L}^{\infty}}. \nonumber
\end{align}
We easily estimate $II_3$ as follows
\begin{align}
\label{Vidal3}
II_3=\frac{C\tau}{\eps^3}\int_{\mathcal{D}}\vert\nabla[(\widetilde{X}^j)^3]\vert^2dx\leq C\tau\eps^{-3}\int_{\mathcal{D}}\vert\widetilde{X}^j\vert^4\vert\nabla \widetilde{X}^j\vert^2dx\leq C\tau\eps^{-3}\Vert \widetilde{X}^j\Vert^4_{\mathbb{L}^{\infty}}\Vert \nabla\widetilde{X}^j\Vert^2.
\end{align}
Using   again \lemref{LemmaLubo19} (iii) \& (i), we estimate $II_4$ as follows 
\begin{align}
\label{Vidal4}
II_4&=\frac{C\tau}{\eps^3}\int_{\mathcal{D}}\vert\nabla[(\widetilde{X}^j)^2X^j_{\mathrm{CH}}]\vert^2dx\leq \frac{C\tau}{\eps^3}\int_{\mathcal{D}}\vert X^j_{\mathrm{CH}}\widetilde{X}^j\nabla\widetilde{X}^j\vert^2dx+\frac{C\tau}{\eps^3}\int_{\mathcal{D}}\vert   \widetilde{X}^j\vert^4\vert\nabla X^j_{\mathrm{CH}}\vert^2dx\nonumber\\
&\leq C\tau\eps^{-3}\Vert X^j_{\mathrm{CH}}\Vert^2_{\mathbb{L}^{\infty}}\Vert \widetilde{X}^j\Vert^2_{\mathbb{L}^{\infty}}\Vert\nabla\widetilde{X}^j\Vert^2+C\tau\eps^{-3}\Vert \widetilde{X}^j\Vert^4_{\mathbb{L}^{\infty}}\Vert\nabla X^j_{\mathrm{CH}}\Vert^2\\
&\leq C\tau\eps^{-3}\Vert \widetilde{X}^j\Vert^2_{\mathbb{L}^{\infty}}\Vert\nabla\widetilde{X}^j\Vert^2+C\tau\eps^{-4}\Vert \widetilde{X}^j\Vert^4_{\mathbb{L}^{\infty}}.\nonumber
\end{align}
Substituting \eqref{Vidal2}--\eqref{Vidal4} in \eqref{Vidal1} and noting that $II_2\leq C\tau\eps^{-3}\Vert\nabla\widetilde{X}^j\Vert^2$, leads to
\begin{align*}
II=&\frac{C\tau}{\eps^3}\Vert\nabla[f(X^j_{\mathrm{CH}}+\widetilde{X}^j)-f(X^j_{\mathrm{CH}})]\Vert^2\nonumber\\
\leq& C\tau\eps^{-3}\Vert\nabla\widetilde{X}^j\Vert^2+C\tau\eps^{-3}\Vert \widetilde{X}^j\Vert^2+ C\tau\eps^{-3}\Vert \widetilde{X}^j\Vert^4_{\mathbb{L}^{\infty}}\Vert \nabla\widetilde{X}^j\Vert^2+C\tau\eps^{-3}\Vert \widetilde{X}^j\Vert^2_{\mathbb{L}^{\infty}}\Vert\nabla\widetilde{X}^j\Vert^2\nonumber\\
&+C\tau\eps^{-4}\Vert \widetilde{X}^j\Vert^4_{\mathbb{L}^{\infty}}+C\tau\eps^{-4}\Vert \widetilde{X}^j\Vert^2_{\mathbb{L}^{\infty}}.
\end{align*}
Multiplying both sides of the above estimate by $1\!\!1_{\Omega_{\widetilde{W}}}$, using the embedding $\mathbb{L}^{\infty}\hookrightarrow\mathbb{L}^2$ and noting the definition of $\Omega_{\widetilde{W}}$  (i.e., $1\!\!1_{\Omega_{\widetilde{W}}}\Vert \widetilde{X}^j\Vert_{\mathbb{L}^{\infty}}\leq C\eps^{\gamma-\eta-1}$, cf. \eqref{SetW}) yields
\begin{align}
\label{Abdo2}
1\!\!1_{\Omega_{\widetilde{W}}}II\leq C\tau\left(\eps^{2\gamma-2\eta-6}+\eps^{4\gamma-4\eta-8}\right)+C\tau\left(\eps^{-3}+\eps^{2\gamma-2\eta-5}+\eps^{4\gamma-4\eta-7}\right)1\!\!1_{\Omega_{\widetilde{W}}}\Vert\nabla\widetilde{X}^j\Vert^2.
\end{align}
 Recalling that $\widehat{Z}^j=\widehat{X}^j-X^j_{\mathrm{CH}}$, using  \eqref{nonlinearident} and Young's inequality yields
\begin{align*}
I=&\frac{C\tau}{\eps^3}\Vert\nabla(f(\widehat{X}^j+\widetilde{X}^j)-f(X^j_{\mathrm{CH}}+\widetilde{X}^j))\Vert^2\nonumber\\
=&\frac{C\tau}{\eps^3}\int_{\mathcal{D}}\vert \nabla(f(\widehat{X}^j+\widetilde{X}^j)-f(X^j_{\mathrm{CH}}+\widetilde{X}^j))\vert^2dx\nonumber\\
=&\frac{C\tau}{\eps^3}\int_{\mathcal{D}}\left\vert\nabla\left(-3\widehat{Z}^j(X^j_{\mathrm{CH}}+\widetilde{X}^j)^2+\widehat{Z}^j-(\widehat{Z}^j)^3-3(\widehat{Z}^j)^2(X^j_{\mathrm{CH}}+\widetilde{X}^j)\right)\right\vert^2dx\nonumber\\
\leq& \frac{C\tau}{\eps^3}\int_{\mathcal{D}}\left\vert\nabla\left(\widehat{Z}^j(X^j_{\mathrm{CH}}+\widetilde{X}^j)^2\right)\right\vert^2dx+\frac{C\tau}{\eps^3}\int_{\mathcal{D}}\vert\nabla\widehat{Z}^j\vert^2dx+\frac{C\tau}{\eps^3}\int_{\mathcal{D}}\vert\nabla((\widehat{Z}^j)^3)\vert^2dx\nonumber\\
&+ \frac{C\tau}{\eps^3}\int_{\mathcal{D}}\vert\nabla((\widehat{Z}^j)^2(X^j_{\mathrm{CH}}+\widetilde{X}^j))\vert^2dx.
\end{align*}
Consequently
\begin{align}
\label{Recal1}
I\leq& \frac{C\tau}{\eps^3}\Vert X^j_{\mathrm{CH}}\Vert^2_{\mathbb{L}^{\infty}}\Vert\nabla \widehat{Z}^j\Vert^2+\frac{C\tau}{\eps^3}\Vert \widetilde{X}^j\Vert^2_{\mathbb{L}^{\infty}}\Vert \nabla\widehat{Z}^j\Vert^2+\frac{C\tau}{\eps^3}\Vert\nabla\widehat{Z}^j\Vert^2\nonumber\\
&+ \frac{C\tau}{\eps^3}\int_{\mathcal{D}}\vert \widehat{Z}^j\vert^2\vert \widehat{Z}^j\nabla\widehat{Z}^j\vert^2dx+\frac{C\tau}{\eps^3}\int_{\mathcal{D}}\vert\widehat{Z}^j\nabla\widehat{Z}^j (X^j_{\mathrm{CH}}+\widetilde{X}^j)\vert^2dx\\
&+\frac{C\tau}{\eps^3}\int_{\mathcal{D}}\vert \widehat{Z}^j(X^j_{\mathrm{CH}}+\widetilde{X}^j)(\nabla X^j_{\mathrm{CH}}+\nabla\widetilde{X}^j)\vert^2dx+\frac{C\tau}{\eps^3}\int_{\mathcal{D}}\vert (\widehat{Z}^j)^2(\nabla X^j_{\mathrm{CH}}+\nabla\widetilde{X}^j)\vert^2dx\nonumber\\
=:&\frac{C\tau}{\eps^3}\Vert X^j_{\mathrm{CH}}\Vert^2_{\mathbb{L}^{\infty}}\Vert\nabla \widehat{Z}^j\Vert^2+\frac{C\tau}{\eps^3}\Vert \widetilde{X}^j\Vert^2_{\mathbb{L}^{\infty}}\Vert \nabla\widehat{Z}^j\Vert^2+\frac{C\tau}{\eps^3}\Vert\nabla\widehat{Z}^j\Vert^2+I_1+I_2+I_3+I_4.\nonumber
\end{align}
Using triangle inequality, we split $I_3$ as follows
\begin{align}
\label{Abdo1}
I_3=&\frac{C\tau}{\eps^3}\int_{\mathcal{D}}\vert \widehat{Z}^j(X^j_{\mathrm{CH}}+\widetilde{X}^j)(\nabla X^j_{\mathrm{CH}}+\nabla\widetilde{X}^j)\vert^2dx\nonumber\\
\leq& \frac{C\tau}{\eps^3}\int_{\mathcal{D}}\vert \widehat{Z}^j\vert^2\vert X^j_{\mathrm{CH}}\vert^2\vert \nabla X^j_{\mathrm{CH}}\vert^2dx+\frac{C\tau}{\eps^3}\int_{\mathcal{D}}\vert\widehat{Z}^j\vert^2\vert X^j_{\mathrm{CH}}\vert^2\vert\nabla\widetilde{X}^j\vert^2dx\\
&+\frac{C\tau}{\eps^3}\int_{\mathcal{D}}\vert\widehat{Z}^j\vert^2\vert\widetilde{X}^j\vert^2\vert\nabla X^j_{\mathrm{CH}}\vert^2dx+\frac{C\tau}{\eps^3}\int_{\mathcal{D}}\vert\widehat{Z}^j\vert^2\vert \widetilde{X}^j\vert^2\vert \nabla\widetilde{X}^j\vert^2dx\nonumber\\
=:&I_{3,1}+I_{3,2}+I_{3,3}+I_{3,4}.\nonumber
\end{align}
Using the uniform boundedness of $X^j_{\mathrm{CH}}$ (see  \lemref{LemmaLubo19} (iii)), Cauchy-Schwarz's inequality, the  embedding $\mathbb{H}^1\hookrightarrow \mathbb{L}^4$, Poincar\'{e}'s inequality and  \lemref{LemmaLubo19} (ii) yields
\begin{align}
\label{Vend0}
I_{3,1}=&\frac{C\tau}{\eps^3}\int_{\mathcal{D}}\vert \widehat{Z}^j\vert^2\vert X^j_{\mathrm{CH}}\vert^2\vert \nabla X^j_{\mathrm{CH}}\vert^2dx\leq \frac{C\tau}{\eps^3}\Vert X^j_{\mathrm{CH}}\Vert^2_{\mathbb{L}^{\infty}}\int_{\mathcal{D}}\vert\widehat{Z}^j\vert^2\vert \nabla X^j_{\mathrm{CH}}\vert^2dx\nonumber\\
\leq& \frac{C\tau}{\eps^3}\Vert X^j_{\mathrm{CH}}\Vert^2_{\mathbb{L}^{\infty}}\Vert \widehat{Z}^j\Vert^2_{\mathbb{L}^4}\Vert \nabla X^j_{\mathrm{CH}}\Vert^2_{\mathbb{L}^4}\leq \frac{C\tau}{\eps^3} \Vert \nabla\widehat{Z}^j\Vert^2\Vert  X^j_{\mathrm{CH}}\Vert^2_{\mathbb{H}^2}\leq C\tau\eps^{-2\mathfrak{n}_{\mathrm{CH}}-3} \Vert \nabla\widehat{Z}^j\Vert^2.
\end{align}
Using again the uniform boundedness of $X^j_{\mathrm{CH}}$ (see  \lemref{LemmaLubo19} (iii)) yields
\begin{align}
\label{Vend1}
I_{3,2}\leq C\tau\eps^{-3} \Vert X^j_{\mathrm{CH}}\Vert^2_{\mathbb{L}^{\infty}}\Vert\widehat{Z}^j\Vert^2_{\mathbb{L}^{\infty}}\Vert \nabla\widetilde{X}^j\Vert^2\leq C\tau\eps^{-3}\Vert\widehat{Z}^j\Vert^2_{\mathbb{L}^{\infty}}\Vert \nabla\widetilde{X}^j\Vert^2.
\end{align}
Along the same lines as in \eqref{Vend0}  we obtain
\begin{align}
\label{Vend2}
I_{3,3}=\frac{C\tau}{\eps^3}\int_{\mathcal{D}}\vert\widehat{Z}^j\vert^2\vert\widetilde{X}^j\vert^2\vert\nabla X^j_{\mathrm{CH}}\vert^2dx\leq C\tau\eps^{-2n_{\mathrm{CH}}-3}\Vert\widetilde{X}^j\Vert^2_{\mathbb{L}^{\infty}}\Vert \nabla\widehat{Z}^j\Vert^2.
\end{align}
Similarly we estimate
\begin{align}
\label{Vend3}
I_{3,4}=\frac{C\tau}{\eps^3}\int_{\mathcal{D}}\vert\widehat{Z}^j\vert^2\vert\widetilde{X}^j\vert^2\vert\nabla\widetilde{X}^j\vert^2dx\leq C\tau\eps^{-3}\Vert \widehat{Z}^j\Vert^2_{\mathbb{L}^{\infty}}\Vert \widetilde{X}^j\Vert^2_{\mathbb{L}^{\infty}}\Vert\nabla\widetilde{X}^j\Vert^2.
\end{align}
Substituting \eqref{Vend0}--\eqref{Vend3} in \eqref{Abdo1} yields
\begin{align*}
I_3\leq C\tau\eps^{-2\mathfrak{n}_{\mathrm{CH}}-3}(1+\Vert\widetilde{X}^j\Vert^2_{\mathbb{L}^{\infty}})\Vert\nabla\widehat{Z}^j\Vert^2+C\tau\eps^{-3}\Vert\widehat{Z}^j\Vert^2_{\mathbb{L}^{\infty}}(1+\Vert\widetilde{X}^j\Vert^2_{\mathbb{L}^{\infty}})\Vert\nabla\widetilde{X}^j\Vert^2.
\end{align*}
Multiplying both sides of the preceding estimate by $1\!\!1_{\Omega_{\widetilde{W}}\cap\Omega_{\kappa, J}}$, using \eqref{SetXa} and \eqref{SetW}, we get
\begin{align}
\label{Vend4}
1\!\!1_{\Omega_{\widetilde{W}}\cap\Omega_{\kappa, J}}I_3=&1\!\!1_{\Omega_{\widetilde{W}}\cap\Omega_{\kappa, J}}\frac{C\tau}{\eps^3}\int_{\mathcal{D}}\vert \widehat{Z}^j(X^j_{\mathrm{CH}}+\widetilde{W}^j_{\Delta})(\nabla X^j_{\mathrm{CH}}+\nabla\widetilde{X}^j)\vert^2dx\nonumber\\
\leq& C\tau\eps^{-2\mathfrak{n}_{\mathrm{CH}}-3}(1+\eps^{2\gamma-2\eta-2})1\!\!1_{\Omega_{\widetilde{W}}\cap\Omega_{\kappa, J}}\Vert \nabla\widehat{Z}^j\Vert^2\\
&+C\tau\eps^{-3}(\kappa^2+1) (1+\eps^{2\gamma-2\eta-2})1\!\!1_{\Omega_{\widetilde{W}}\cap\Omega_{\kappa, J}}\Vert \nabla\widetilde{X}^j\Vert^2, \nonumber
\end{align}
where we  used the fact that $\widehat{Z}^j=\widehat{X}^j-X^j_{\mathrm{CH}}$ and  \lemref{LemmaLubo19} (iii). 
Similarly, we obtain
\begin{align}
\label{Vend5}
1\!\!1_{\Omega_{\widetilde{W}}\cap\Omega_{\kappa, J}}I_1&=1\!\!1_{\Omega_{\widetilde{W}}\cap\Omega_{\kappa, J}}\frac{C\tau}{\eps^3}\int_{\mathcal{D}}\vert \widehat{Z}^j\vert^2\vert \widehat{Z}^j\nabla\widehat{Z}^j\vert^2dx\leq C\tau\eps^{-3}1\!\!1_{\Omega_{\widetilde{W}}\cap\Omega_{\kappa, J}}\Vert \widehat{Z}^j\Vert^2_{\mathbb{L}^{\infty}}\Vert\widehat{Z}^j\nabla\widehat{Z}^j\Vert^2\nonumber\\
&\leq C\tau\eps^{-3}(1+\kappa^2)1\!\!1_{\Omega_{\widetilde{W}}\cap\Omega_{\kappa, J}}\Vert \widehat{Z}^j\nabla\widehat{Z}^j\Vert^2. 
\end{align}
Noting the definitions of $\Omega_{\widetilde{W}}$ \eqref{SetW} and $\Omega_{\kappa, J}$ \eqref{SetXa},  using the uniform boundedness of $X^j_{\mathrm{CH}}$ (see  \lemref{LemmaLubo19} (iii)), it follows that
\begin{align}
\label{Vend6}
1\!\!1_{\Omega_{\widetilde{W}}\cap\Omega_{\kappa, J}}I_2&=1\!\!1_{\Omega_{\widetilde{W}}\cap\Omega_{\kappa, J}}\frac{C\tau}{\eps^3}\int_{\mathcal{D}}\vert\widehat{Z}^j\nabla\widehat{Z}^j(X^j_{\mathrm{CH}}+\widetilde{X}^j)\vert^2dx\nonumber\\
&\leq 1\!\!1_{\Omega_{\widetilde{W}}\cap\Omega_{\kappa, J}}C\tau\eps^{-3}\left(\Vert X^j_{\mathrm{CH}}\Vert^2_{\mathbb{L}^{\infty}}+\Vert\widetilde{X}^j\Vert^2_{\mathbb{L}^{\infty}}\right)\Vert\widehat{Z}^j\nabla\widehat{Z}^j\Vert^2\\
&\leq C\tau\eps^{-3}(1+\eps^{2\gamma-2\eta-2})1\!\!1_{\Omega_{\widetilde{W}}\cap\Omega_{\kappa, J}}\Vert \widehat{Z}^j\nabla\widehat{Z}^j\Vert^2. \nonumber
\end{align}
Arguing as in \eqref{Vend0}, using the embedding $\mathbb{H}^1\hookrightarrow\mathbb{L}^4$ and Poincar\'{e}'s inequality we deduce that
\begin{align}
\label{Vend7}
1\!\!1_{\Omega_{\widetilde{W}}\cap\Omega_{\kappa, J}}I_4&=1\!\!1_{\Omega_{\widetilde{W}}\cap\Omega_{\kappa, J}}\frac{C\tau}{\eps^3}\int_{\mathcal{D}}\vert(\widehat{Z}^j)^2(\nabla X^j_{\mathrm{CH}}+\nabla\widetilde{X}^j)\vert^2dx\nonumber\\
&\revl{\leq} 1\!\!1_{\Omega_{\widetilde{W}}\cap\Omega_{\kappa, J}}\frac{C\tau}{\eps^3}\int_{\mathcal{D}}\vert\widehat{Z}^j\vert^4(\vert \nabla X^j_{\mathrm{CH}}\vert+\vert \nabla\widetilde{X}^j\vert)^2dx\nonumber\\
&\leq C1\!\!1_{\Omega_{\widetilde{W}}\cap\Omega_{\kappa, J}}\frac{C\tau}{\eps^3}\Vert  X^j_{\mathrm{CH}}\Vert^2_{\mathbb{H}^2}\Vert\widehat{Z}^j\Vert^2_{\mathbb{L}^{\infty}}\Vert \nabla\widehat{Z}^j\Vert^2+1\!\!1_{\Omega_{\widetilde{W}}\cap\Omega_{\kappa, J}}\frac{C\tau}{\eps^3}\Vert \widehat{Z}^j\Vert^4_{\mathbb{L}^{\infty}}\Vert \nabla\widetilde{X}^j\Vert^2\\
&\leq C\tau\kappa^2\eps^{-2\mathfrak{n}_{\mathrm{CH}}-3}1\!\!1_{\Omega_{\widetilde{W}}\cap\Omega_{\kappa, J}}\Vert \nabla\widehat{Z}^j\Vert^2+C\tau\eps^{-3}\kappa^41\!\!1_{\Omega_{\widetilde{W}}\cap\Omega_{\kappa, J}}\Vert\nabla\widetilde{X}^j\Vert^2. \nonumber
\end{align}
Substituting \eqref{Vend4}--\eqref{Vend7} in \eqref{Recal1},   using the fact  $0<\eps<1$, noting that from \assref{assumption3} we have $\gamma-\eta-1\geq 0$, $2\gamma-2\eta-3\geq 0$, $4\gamma-4\eta-5\geq 0$ (which implies  $\eps^{\gamma-\eta-1}+\eps^{2\gamma-2\eta-3}+\eps^{4\gamma-4\eta-5}\leq 1$), we obtain
\begin{align}
\label{Vend8}
1\!\!1_{\Omega_{\widetilde{W}}\cap\Omega_{\kappa, J}}I=&1\!\!1_{\Omega_{\widetilde{W}}\cap\Omega_{\kappa, J}}\frac{C\tau}{\eps^3}\Vert\nabla(f(\widehat{X}^j+\widetilde{X}^j)-f(X^j_{\mathrm{CH}}+\widetilde{X}^j))\Vert^2\nonumber\\
\leq& C\tau\eps^{-2\mathfrak{n}_{\mathrm{CH}}-3}(1+\kappa^2)1\!\!1_{\Omega_{\widetilde{W}}\cap\Omega_{\kappa, J}}\Vert \nabla\widehat{Z}^j\Vert^2\\
&+C\tau\eps^{-3}(1+\kappa^4)1\!\!1_{\Omega_{\widetilde{W}}\cap\Omega_{\kappa, J}}\Vert \nabla\widetilde{X}^j\Vert^2+C\tau\eps^{-3}(1+\kappa^2)1\!\!1_{\Omega_{\widetilde{W}}\cap\Omega_{\kappa, J}}\Vert \widehat{Z}^j\nabla\widehat{Z}^j\Vert^2.\nonumber
\end{align}
Adding \eqref{Vend8} and \eqref{Abdo2}, summing the resulting estimate over $j=1,\cdots, J$ and using the fact that $\gamma-\eta-1\geq 0$ (see \assref{assumption3})yelds
\begin{align}
\label{Sam5}
&\frac{\tau}{\eps^3}\sum_{j=1}^J1\!\!1_{\Omega_{\widetilde{W}}\cap\Omega_{\kappa, J}}\Vert \nabla(f(\widehat{X}^j+\widetilde{X}^j)-f(X^j_{\mathrm{CH}}+\widetilde{X}^j))\Vert^2\nonumber\\
&\quad+\frac{\tau}{\eps^3}\sum_{j=1}^J1\!\!1_{\Omega_{\widetilde{W}}\cap\Omega_{\kappa, J}}\Vert\nabla(f(X^j_{\mathrm{CH}}+\widetilde{X}^j)-f(X^j_{\mathrm{CH}}))\Vert^2\nonumber\\
&\leq \frac{C(1+\kappa^2)\tau}{\eps^3}\eps^{-2\mathfrak{n}_{\mathrm{CH}}}\sum_{j=1}^J1\!\!1_{\Omega_{\widetilde{W}}\cap\Omega_{\kappa, J}}\Vert\nabla\widehat{Z}^j\Vert^2\\
&\quad+\frac{C(1+\kappa^4+\eps^{-2\mathfrak{n}_{\mathrm{CH}}})\tau}{\eps^3}\sum_{j=1}^J1\!\!1_{\Omega_{\widetilde{W}}\cap\Omega_{\kappa, J}}\Vert\nabla\widetilde{X}^j\Vert^2\nonumber\\
&\quad+\frac{C(1+\kappa^2)\tau}{\eps^3}\sum_{j=1}^J 1\!\!1_{\Omega_{\widetilde{W}}\cap\Omega_{\kappa, J}}\Vert \widehat{Z}^j\nabla\widehat{Z}^j\Vert^2.\nonumber
\end{align}
Summing \eqref{Sam6} over $j=1,\cdots, J$, multiplying by $1\!\!1_{\Omega_{\widetilde{W}}\cap\Omega_{\kappa, J}}$, taking the expectation,  using \eqref{Sam5}, Poincar\'{e}'s inequality yields
\begin{align}
\label{Sam7a} 
&\frac{\tau}{\eps}\sum_{j=1}^J\mathbb{E}\left[1\!\!1_{\Omega_{\widetilde{W}}\cap\Omega_{\kappa, J}}\left(\nabla[f(\widehat{X}^j+\widetilde{X}^j)-f(X^j_{\mathrm{CH}})], \nabla\Delta\widehat{Z}^j\right)\right]\nonumber\\
&\leq \frac{\eps\tau}{4}\sum_{j=1}^J\mathbb{E}\left[1\!\!1_{\Omega_{\widetilde{W}}\cap\Omega_{\kappa, J}}\Vert\nabla\Delta\widehat{Z}^j\Vert^2\right]+\frac{C(1+\kappa^2)\tau}{\eps^3}\eps^{-2\mathfrak{n}_{\mathrm{CH}}}\sum_{j=1}^J\mathbb{E}\left[1\!\!1_{\Omega_{\widetilde{W}}\cap\Omega_{\kappa, J}}\Vert\nabla\widehat{Z}^j\Vert^2\right]\\
&\quad+\frac{C(1+\kappa^4+\eps^{-2\mathfrak{n}_{\mathrm{CH}}})\tau}{\eps^3}\sum_{j=1}^J\mathbb{E}\left[1\!\!1_{\Omega_{\widetilde{W}}\cap\Omega_{\kappa, J}}\Vert\nabla\widetilde{X}^j\Vert^2\right]\nonumber\\
&\quad+\frac{C(1+\kappa^2)\tau}{\eps^3}\sum_{j=1}^J \mathbb{E}\left[1\!\!1_{\Omega_{\widetilde{W}}\cap\Omega_{\kappa, J}}\Vert \widehat{Z}^j\nabla\widehat{Z}^j\Vert^2\right].\nonumber
\end{align}
Using  Lemmas \ref{EstiZ}, \ref{LemmaNorm2} and  \ref{regularitynoise}  with $\alpha=1$ and recalling that $h=\eps^{\eta}$ yields
\begin{align}
\label{Sam7}
&\frac{\tau}{\eps}\sum_{j=1}^J\mathbb{E}\left[1\!\!1_{\Omega_{\widetilde{W}}\cap\Omega_{\kappa, J}}\left(\nabla[f(\widehat{X}^j+\widetilde{X}^j)-f(X^j_{\mathrm{CH}})], \nabla\Delta\widehat{Z}^j\right)\right]\nonumber\\
&\leq  \frac{\eps\tau}{4}\sum_{j=1}^J\mathbb{E}\left[1\!\!1_{\Omega_{\widetilde{W}}\cap\Omega_{\kappa, J}}\Vert\nabla\Delta\widehat{Z}^j\Vert^2\right]+\frac{C(1+\kappa^4+\eps^{-2n_{\mathrm{CH}}})}{\eps^3}\eps^{2\gamma-\frac{1}{2}-\frac{d\eta}{2}}\\
&\quad+\frac{C(1+\kappa^2)}{\eps^4}\eps^{-2\mathfrak{n}_{\mathrm{CH}}}\mathcal{F}_1(\tau, s, \eps, \sigma_0, \kappa_0, \gamma, \eta)+\frac{C(1+\kappa^2)}{\eps^2}\mathcal{F}_1(\tau, d, \eps, \sigma_0, \kappa_0, \gamma, \eta).\nonumber
\end{align}
Hence, summing \eqref{scheme6} over $j$, multiplying by $1\!\!1_{\Omega_{\widetilde{W}}\cap\Omega_{\kappa, J}}$, taking the maximum,  the expectation, using \eqref{Sam7} and absorbing the term
 $ \mathbb{E}\left[1\!\!1_{\Omega_{\widetilde{W}}\cap\Omega_{\kappa, J}}\Vert\nabla\Delta\widehat{Z}^j\Vert^2\right]$ in the left-hand side, yields that
\begin{align*}
&\mathbb{E}\left[\max_{1\leq j\leq J}1\!\!1_{\Omega_{\widetilde{W}}\cap\Omega_{\kappa, J}}\Vert\nabla\widehat{Z}^j\Vert^2\right]+\sum_{j=1}^J\mathbb{E}\left[1\!\!1_{\Omega_{\widetilde{W}}\cap\Omega_{\kappa, J}}\Vert\widehat{Z}^j-\widehat{Z}^{j-1}\Vert^2+\eps\tau 1\!\!1_{\Omega_{\widetilde{W}}\cap\Omega_{\kappa, J}}\Vert\nabla\Delta\widehat{Z}^j\Vert^2\right]\nonumber\\
&\leq \frac{C(1+\kappa^4+\eps^{-2\mathfrak{n}_{\mathrm{CH}}})}{\eps^3}\eps^{2\gamma-\frac{1}{2}-\frac{d\eta}{2}}+C\left\{\frac{(1+\kappa^2)}{\eps^4}\eps^{-2\mathfrak{n}_{\mathrm{CH}}}+\frac{C(1+\kappa^2)}{\eps^2}\right\}\mathcal{F}_1(\tau, d, \eps, \sigma_0, \kappa_0, \gamma, \eta).
\end{align*}
\end{proof}

{To ensure that the the right-hand side in the estimate in \lemref{EstinablaZ} vanishes for $\eps\rightarrow 0$ we require the following assumption.}
\begin{Assumption}
\label{assumption4}
Let \assref{assumption3} hold and assume in addition that
  $\sigma_0, \kappa_0, \gamma, \eta$ and $\tau$ are such that
\begin{align}
\label{F2}
\lim_{\eps\rightarrow 0}\mathcal{F}_2(\tau, d, \eps; \sigma_0, \kappa_0, \gamma, \eta)=0,
\end{align}
where $\mathcal{F}_2(\tau,d, \eps; \sigma_0, \kappa_0, \gamma, \eta)$ is defined in \lemref{EstinablaZ}.
\end{Assumption}
\begin{remark}
%\footnote{this remark refers to the assumption below not to 
%\assref{assumption4} or?}
\label{Scenario1}
\revd{A strategy to identify  admissible quadruples $(\sigma_0, \kappa_0, \gamma, \tau)$ which meet \assref{assumption4} is as follows: 
\begin{itemize}
\item[(1)] \assref{assumption3} establishes $\lim\limits_{\eps\rightarrow 0}\mathcal{F}_1(\tau, d, \eps; \sigma_0, \kappa_0, \gamma, \eta)=0$, which appears as a factor in the second term on the right-hand side in \lemref{EstinablaZ}.
\item[(2)] The leading factor in $\mathcal{F}_2$ (cf. in \lemref{EstinablaZ}) is 
\begin{align*}
\eps^{-2\mathfrak{n}_{\mathrm{CH}}}\frac{\kappa^2}{\eps^4}\leq \eps^{-4-2\mathfrak{n}_{\mathrm{CH}}-2\theta-8}.
\end{align*}
To meet \eqref{F2}, we therefore require that
\begin{align}
\label{F2a}
\eps^{-4-2\mathfrak{n}_{\mathrm{CH}}-2\theta-8}\mathcal{F}_1(\tau,d, \eps; \sigma_0, \kappa_0, \gamma, \eta)\rightarrow 0\quad \text{as}\; \eps\rightarrow 0.
\end{align}
This can be achieved by taking  $\tau=\eps^{\varrho}$, with $\varrho>0$ sufficiently large.  This implies that only larger values of $\gamma$ and $\sigma_0$ are admissible.
\item[(3)] We  proceed analogously for the first term on the right hand-side in \lemref{EstinablaZ}. 
\end{itemize}
}
\end{remark}
%\footnote{changed thm to lemma, check all references}

\revl{Using the result of \lemref{EstinablaZ} we deduce a $\mathbb{L}^\infty$-estimate for the corresponding error.}
\begin{lemma}
\label{maintheorem2a}
\revd{
Let  \assref{assumption4}  hold \revd{and let $d<p<q\leq 6$}.    Then it holds  that
\begin{align*}
\mathbb{E}\left[\max_{1\leq j\leq J}1\!\!1_{\Omega_{\widetilde{W}}\cap\Omega_{\kappa,J}}\Vert\widehat{Z}^j\Vert^{\frac{p}{q}}_{\mathbb{L}^{\infty}}\right]\leq C\eps^{\frac{(2-p)}{2(q-2)}\left(2\theta+7+2\mathfrak{n}_{\mathrm{CH}}\right)}\left(\mathcal{F}_2(\tau, d,\eps; \sigma_0, \kappa_0, \gamma, \eta)\right)^{\frac{q-p}{q(q-2)}},
\end{align*}
}
where $\eta$ is as in \assref{assumption2}.
\end{lemma}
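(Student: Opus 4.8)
The plan is to obtain the $\mathbb{L}^{\infty}$ bound by interpolating between a ``large'' $\mathbb{H}^2$-estimate and the ``small'' $\mathbb{H}^1$-estimate already available for $\widehat{Z}^j$, and then passing to expectations via Hölder's inequality, exploiting that the accompanying probability factor is at most one. First I would assemble the two ingredients. Since $\widehat{X}^j$ and $X^j_{\mathrm{CH}}$ both have vanishing spatial mean, so does $\widehat{Z}^j=\widehat{X}^j-X^j_{\mathrm{CH}}$, and Poincaré's inequality together with \lemref{EstinablaZ} yields the weak bound
\begin{align*}
\mathbb{E}\left[\max_{1\leq j\leq J}1\!\!1_{\Omega_{\widetilde{W}}\cap\Omega_{\kappa,J}}\Vert\widehat{Z}^j\Vert_{\mathbb{H}^1}^2\right]\leq C\,\mathcal{F}_2(\tau,d,\eps;\sigma_0,\kappa_0,\gamma,\eta).
\end{align*}
For the strong bound I would combine \lemref{EstiXhat} (ii), which controls $\Vert\Delta\widehat{X}^j\Vert$, with the deterministic estimate $\Vert X^j_{\mathrm{CH}}\Vert_{\mathbb{H}^2}\leq C\eps^{-\mathfrak{n}_{\mathrm{CH}}}$ from \lemref{LemmaLubo19} (ii); using elliptic regularity of the Neumann Laplacian (together with Poincaré and the energy bounds of \lemref{moment}/\lemref{MomentLemma}) to absorb lower-order terms, and noting that $\Omega_{\kappa,J}\subseteq\Omega_{\mathcal{E}}$ so that $1\!\!1_{\Omega_{\widetilde{W}}\cap\Omega_{\kappa,J}}\leq 1\!\!1_{\Omega_{\mathcal{E}}}$, this gives
\begin{align*}
\mathbb{E}\left[\max_{1\leq j\leq J}1\!\!1_{\Omega_{\widetilde{W}}\cap\Omega_{\kappa,J}}\Vert\widehat{Z}^j\Vert_{\mathbb{H}^2}^2\right]\leq C\left(\eps^{-2\theta-7}+\eps^{-2\mathfrak{n}_{\mathrm{CH}}}\right)\leq C\eps^{-(2\theta+7+2\mathfrak{n}_{\mathrm{CH}})}.
\end{align*}

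Next I would interpolate. On the bounded domain $\mathcal{D}$, for $d<p<q\leq 6$ a Gagliardo--Nirenberg interpolation inequality of the form $\Vert v\Vert_{\mathbb{L}^\infty}\leq C\Vert v\Vert_{\mathbb{H}^2}^{a}\Vert v\Vert_{\mathbb{L}^q}^{1-a}$ holds with the (non scale-critical but admissible) choice $a=\frac{q(p-2)}{p(q-2)}\in(0,1)$; combined with the Sobolev embedding $\mathbb{H}^1\hookrightarrow\mathbb{L}^q$ this yields $\Vert\widehat{Z}^j\Vert_{\mathbb{L}^\infty}\leq C\Vert\widehat{Z}^j\Vert_{\mathbb{H}^2}^{a}\Vert\widehat{Z}^j\Vert_{\mathbb{H}^1}^{1-a}$. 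Raising to the power $p/q$, taking the maximum over $j$ and multiplying by $1\!\!1_{\Omega_{\widetilde{W}}\cap\Omega_{\kappa,J}}$, I would take expectations and apply Hölder's inequality with exponents $r_1=\frac{2q}{ap}$ and $r_2=\frac{2q}{(1-a)p}$, for which $\frac{1}{r_1}+\frac{1}{r_2}=\frac{p}{2q}<1$; the Hölder deficit $1-\frac{p}{2q}$ is harmless, since the leftover factor is $\bigl(\mathbb{E}[1\!\!1_{\Omega_{\widetilde{W}}\cap\Omega_{\kappa,J}}]\bigr)^{1-p/(2q)}\leq 1$. Each of the two resulting factors is then exactly a second moment, to which the two bounds above apply.

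Finally I would collect exponents: the $\mathbb{H}^2$-factor contributes $\bigl(\eps^{-(2\theta+7+2\mathfrak{n}_{\mathrm{CH}})}\bigr)^{ap/(2q)}$ with $\frac{ap}{2q}=\frac{p-2}{2(q-2)}$, producing $\eps^{\frac{2-p}{2(q-2)}(2\theta+7+2\mathfrak{n}_{\mathrm{CH}})}$, while the $\mathbb{H}^1$-factor contributes $\mathcal{F}_2^{(1-a)p/(2q)}$ with $\frac{(1-a)p}{2q}=\frac{q-p}{q(q-2)}$, which is precisely the claimed power of $\mathcal{F}_2$. I expect the main obstacle to be twofold: verifying that the prescribed exponent $a=\frac{q(p-2)}{p(q-2)}$ exceeds the scale-critical Gagliardo--Nirenberg exponent for every admissible pair $(p,q)$, so that the interpolation inequality is licensed on the bounded domain exactly under the hypothesis $d<p<q\leq 6$; and correctly assembling the $\mathbb{H}^2$-moment estimate by splitting $\widehat{Z}^j$ into its stochastic part $\widehat{X}^j$ (handled by \lemref{EstiXhat}) and its deterministic part $X^j_{\mathrm{CH}}$ (handled by \lemref{LemmaLubo19}), keeping the indicator on $\Omega_{\mathcal{E}}$ throughout. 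The remaining exponent arithmetic in the Hölder step is routine once $a$ is fixed.
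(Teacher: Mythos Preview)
Your proposal is correct and follows essentially the same strategy as the paper: interpolate $\Vert\widehat{Z}^j\Vert_{\mathbb{L}^\infty}$ between an $\mathbb{H}^1$-type quantity (controlled by \lemref{EstinablaZ}) and an $\mathbb{H}^2$-type quantity (controlled via $\widehat{Z}^j=\widehat{X}^j-X^j_{\mathrm{CH}}$, \lemref{EstiXhat}~ii) and \lemref{LemmaLubo19}~ii)), then pass to expectations via H\"older. The exponent bookkeeping you carry out coincides with the paper's.

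The only notable difference is how the interpolation inequality is obtained, and this is precisely the point you flag as the ``main obstacle''. Rather than invoking a Gagliardo--Nirenberg inequality $\Vert v\Vert_{\mathbb{L}^\infty}\leq C\Vert v\Vert_{\mathbb{H}^2}^{a}\Vert v\Vert_{\mathbb{L}^q}^{1-a}$ with $a=\tfrac{q(p-2)}{p(q-2)}$ and then checking admissibility of $a$, the paper assembles the same inequality from three elementary pieces: the Sobolev embedding $\mathbb{W}^{1,p}\hookrightarrow\mathbb{L}^\infty$ (which is exactly where the hypothesis $p>d$ enters), the Lebesgue interpolation $\Vert\nabla v\Vert_{\mathbb{L}^p}\leq\Vert\nabla v\Vert_{\mathbb{L}^2}^{\lambda}\Vert\nabla v\Vert_{\mathbb{L}^q}^{1-\lambda}$ for $2<p<q$, and $\mathbb{H}^1\hookrightarrow\mathbb{L}^q$ for $q\leq 6$ together with elliptic regularity to replace $\Vert\nabla v\Vert_{\mathbb{L}^q}$ by $\Vert\Delta v\Vert$. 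This construction makes the constraint $d<p<q\leq 6$ transparent and removes the need to verify any scale-critical threshold, so your stated obstacle dissolves. The remaining H\"older splitting in the paper is organized as two successive applications (first with exponents $\tfrac{q-2}{q-p}$, $\tfrac{q-2}{p-2}$, then with exponents $q$, $\tfrac{q}{q-1}$) rather than your single three-exponent step, but the outcome is identical.
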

\begin{proof} 
\revd{
 Using the Sobolev embedding $\mathbb{W}^{1,p}\hookrightarrow \mathbb{L}^{\infty}$ $(p>d)$ (cf. \cite[Corollary 9.14]{Brezis2010functional}), and the interpolation inequality (cf. \cite[Proposition 6.10]{Folland})
\begin{align*}
\Vert u\Vert_{\mathbb{L}^{q'}}\leq \Vert u\Vert^{\lambda}_{\mathbb{L}^{p'}}\Vert u\Vert^{1-\lambda}_{\mathbb{L}^{r'}},\quad p'<q'<r',\quad \lambda=\frac{p'}{q'}\frac{r'-q'}{r'-p'}, \quad u\in\mathbb{L}^{r'},
\end{align*}
we obtain by using Poincar\'{e}'s inequality
\begin{align*}
\Vert X^j\Vert_{\mathbb{L}^{\infty}}\leq C\Vert \nabla X^j\Vert_{\mathbb{L}^p}\leq C\Vert \nabla X^j\Vert_{\mathbb{L}^2}^{\frac{2(q-p)}{p(q-2)}}\Vert\nabla X^j\Vert^{\frac{q(p-2)}{p(q-2)}}_{\mathbb{L}^q}\quad d<p<q.
\end{align*}
Using the Sobolev embedding $\mathbb{H}^1\hookrightarrow\mathbb{L}^q$ $(q\leq 6)$ and  the elliptic regularity, we obtain
\begin{align}
\label{Gagliardo1a}
\Vert X^j\Vert^{\frac{p}{q}}_{\mathbb{L}^{\infty}}\leq C\Vert \nabla X^j\Vert_{\mathbb{L}^2}^{\frac{2(q-p)}{q(q-2)}}\Vert\Delta X^j\Vert^{\frac{p-d}{q-2}}\quad d<p<q\leq 6.
\end{align}

Using the inequality \eqref{Gagliardo1a},   H\"{o}lder's inequality with exponents $\frac{q-2}{q-p}$ and $\frac{q-2}{p-2}$, Cauchy-Schwarz's inequality, noting that $\Omega_{\kappa,J}=\Omega_{\infty}\cap\Omega_{\mathcal{E}}$, using Lemmas \ref{EstiXhat} and \ref{LemmaLubo19} iii), we obtain
\begin{align*}
&\mathbb{E}\left[\max_{1\leq j\leq J}1\!\!1_{\Omega_{\widetilde{W}}\cap\Omega_{\kappa,J}}\Vert \widehat{Z}^j\Vert^{\frac{p}{q}}_{\mathbb{L}^{\infty}}\right]\nonumber\\
&\quad\leq C\mathbb{E}\left[\max_{1\leq j\leq J}1\!\!1_{\Omega_{\widetilde{W}}\cap\Omega_{\kappa,J}}\Vert \nabla \widehat{Z}^j\Vert_{\mathbb{L}^2}^{\frac{2(q-p)}{q(q-2)}}\Vert\Delta \widehat{Z}^j\Vert^{\frac{p-2}{q-2}}\right]\nonumber\\
&\quad\leq C\mathbb{E}\left[\max_{1\leq j\leq J}1\!\!1_{\Omega_{\widetilde{W}}\cap\Omega_{\kappa,J}}\Vert \nabla \widehat{Z}^j\Vert_{\mathbb{L}^2}^{\frac{2}{q}}\right]^{\frac{q-p}{q-2}}\mathbb{E}\left[\max_{1\leq j\leq J}1\!\!1_{\Omega_{\widetilde{W}}\cap\Omega_{\kappa, J}}\Vert\Delta \widehat{Z}^j\Vert\right]^{
\frac{p-2}{q-2}}\nonumber\\
&\quad\leq C\eps^{\frac{(2-p)}{2(q-2)}\left(2\theta+7+2\mathfrak{n}_{\mathrm{CH}}\right)}\mathbb{E}\left[\max_{1\leq j\leq J}1\!\!1_{\Omega_{\widetilde{W}}\cap\Omega_{\kappa, J}}\Vert \nabla \widehat{Z}^j\Vert_{\mathbb{L}^2}^{\frac{2}{q}}\right]^{\frac{q-p}{q-2}}.
\end{align*} 
Using  H\"{o}lder's inequality with exponents $q$ and $\frac{q}{q-1}$ and \lemref{EstinablaZ},  we obtain 
\begin{align*}
\mathbb{E}\left[\max_{1\leq j\leq J}1\!\!1_{\Omega_{\widetilde{W}}\cap\Omega_{\kappa,J}}\Vert \widehat{Z}^j\Vert^{\frac{p}{q}}_{\mathbb{L}^{\infty}}\right]&\leq C\eps^{\frac{(2-p)}{2(q-2)}\left(2\theta+7+2\mathfrak{n}_{\mathrm{CH}}\right)}\mathbb{E}\left[\max_{1\leq j\leq J}1\!\!1_{\Omega_{\widetilde{W}}\cap\Omega_{\infty}}\Vert \nabla \widehat{Z}^j\Vert_{\mathbb{L}^2}^{2}\right]^{\frac{q-p}{q(q-2)}}\nonumber\\
&\leq C\eps^{\frac{(2-p)}{2(q-2)}\left(2\theta+7+2\mathfrak{n}_{\mathrm{CH}}\right)}\left(\mathcal{F}_2(\tau, d,\eps; \sigma_0, \kappa_0, \gamma, \eta)\right)^{\frac{q-p}{q(q-2)}}.
\end{align*}
}
\end{proof}

\revl{To establish convergence of the numerical scheme to the sharp-interface limit  \eqref{HeleShaw1} we require that the right-hand side in the above $\mathbb{L}^\infty$-estimate
vanishes for $\eps\rightarrow 0$.
To this end, we impose the following assumption, which is stronger than \assref{assumption4}.}
\begin{Assumption}
\label{assumption5}
Let \assref{assumption4} be fulfilled. Let $\sigma_0, \kappa_0, \gamma, \eta$ and $\tau$ be such that 
\revd{
\begin{align}
\label{F4}
\lim_{\eps\rightarrow 0}\left[\eps^{\frac{(2-p)}{2(q-2)}\left(2\theta+7+2\mathfrak{n}_{\mathrm{CH}}\right)}\left(\mathcal{F}_2(\tau, d,\eps; \sigma_0, \kappa_0, \gamma, \eta)\right)^{\frac{q-p}{q(q-2)}}\right]=0.
\end{align}
}
\end{Assumption}

\begin{remark}
\revl{To identify admissible $(\sigma_0, \kappa_0, \gamma, \tau, \eta)$ which meet \eqref{F4}, it is enough to limit ourselves to a discussion of the leading term inside the maximum which defines $\mathcal{F}_2$. 
 To meet \eqref{F4}, we have to ensure that for some $d<p<q\leq 6$
\begin{align*}
\eps^{\frac{(2-p)}{2(q-2)}\left(2\theta+7+2\mathfrak{n}_{\mathrm{CH}}\right)}\left[\eps^{-4-2\mathfrak{n}_{\mathrm{CH}}-2\theta-8}\mathcal{F}_1(\tau, d,\eps; \sigma_0, \kappa_0, \gamma, \eta)\right]^{\frac{q-p}{q(q-2)}}\rightarrow 0\quad (\eps\rightarrow 0).
\end{align*}
This can be achieved by taking  $\tau=\eps^{\varrho}$, with $\varrho>0$ sufficiently large.  This implies that only larger values of $\gamma$ and $\sigma_0$ are admissible.
}
\end{remark}

\revl{

\revl{The theorem below, which provides the estimate of the error $Z^j=X^j-X^j_{\mathrm{CH}}$ in the $\mathbb{L}^\infty$-norm,
is a crucial ingredient in the convergence proof of the sharp-interface limit and is a straightforward consequence of \lemref{maintheorem2a}.
}
\begin{theorem}
\label{LinfinityZ}
Let \assref{assumption5} be fulfilled {and let $d<p<q\leq 6$}. Then
\begin{align*}
\mathbb{E}\left[\max_{1\leq j\leq J}1\!\!1_{\Omega_{\widetilde{W}}\cap\Omega_{\kappa, J}}\Vert Z^j\Vert^{\frac{p}{q}}_{\mathbb{L}^{\infty}}\right]\rightarrow 0 \quad (\text{as}\; \varepsilon\rightarrow 0).
\end{align*}
\end{theorem}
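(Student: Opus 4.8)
The plan is to exploit the decomposition $Z^j=\widehat{Z}^j+\widetilde{X}^j$ from \eqref{ErrorZ} and \eqref{translateddifference}, which splits the error into the translated difference $\widehat{Z}^j$ (already controlled in the $\mathbb{L}^{\infty}$-norm by \lemref{maintheorem2a}) and the discrete stochastic convolution $\widetilde{X}^j$ (controlled by \eqref{LinfinityXtilde}). First I would apply the triangle inequality in $\mathbb{L}^{\infty}$ to get $\Vert Z^j\Vert_{\mathbb{L}^{\infty}}\leq \Vert\widehat{Z}^j\Vert_{\mathbb{L}^{\infty}}+\Vert\widetilde{X}^j\Vert_{\mathbb{L}^{\infty}}$. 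Since $d<p<q\leq 6$ with $d\in\{2,3\}$, the exponent $p/q$ satisfies $0<p/q<1$, so the map $t\mapsto t^{p/q}$ is subadditive on $[0,\infty)$ and hence
\begin{align*}
\Vert Z^j\Vert^{\frac{p}{q}}_{\mathbb{L}^{\infty}}\leq \Vert\widehat{Z}^j\Vert^{\frac{p}{q}}_{\mathbb{L}^{\infty}}+\Vert\widetilde{X}^j\Vert^{\frac{p}{q}}_{\mathbb{L}^{\infty}}.
\end{align*}
Multiplying by $1\!\!1_{\Omega_{\widetilde{W}}\cap\Omega_{\kappa,J}}$, taking the maximum over $1\leq j\leq J$ and then the expectation yields
\begin{align*}
\mathbb{E}\left[\max_{1\leq j\leq J}1\!\!1_{\Omega_{\widetilde{W}}\cap\Omega_{\kappa,J}}\Vert Z^j\Vert^{\frac{p}{q}}_{\mathbb{L}^{\infty}}\right]\leq \mathbb{E}\left[\max_{1\leq j\leq J}1\!\!1_{\Omega_{\widetilde{W}}\cap\Omega_{\kappa,J}}\Vert\widehat{Z}^j\Vert^{\frac{p}{q}}_{\mathbb{L}^{\infty}}\right]+\mathbb{E}\left[\max_{1\leq j\leq J}\Vert\widetilde{X}^j\Vert^{\frac{p}{q}}_{\mathbb{L}^{\infty}}\right],
\end{align*}
where in the last term I dropped the indicator using $1\!\!1_{\Omega_{\widetilde{W}}\cap\Omega_{\kappa,J}}\leq 1$.

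Next I would show that both terms on the right-hand side vanish as $\eps\rightarrow 0$. For the first term, \lemref{maintheorem2a} provides the bound by $C\eps^{\frac{(2-p)}{2(q-2)}(2\theta+7+2\mathfrak{n}_{\mathrm{CH}})}\left(\mathcal{F}_2(\tau, d,\eps; \sigma_0, \kappa_0, \gamma, \eta)\right)^{\frac{q-p}{q(q-2)}}$, and \assref{assumption5}, precisely the limit \eqref{F4}, forces this quantity to tend to $0$. For the second term I would invoke the $\mathbb{L}^{\infty}$-bound \eqref{LinfinityXtilde} with $r=p/q>0$, which gives $\mathbb{E}\left[\max_{1\leq j\leq J}\Vert\widetilde{X}^j\Vert^{\frac{p}{q}}_{\mathbb{L}^{\infty}}\right]\leq C\eps^{(\gamma-\eta-1)\frac{p}{q}}$; since the standing assumptions enforce $\gamma>\eta+1$ (in fact $\gamma$ is taken much larger), the exponent is strictly positive and this bound also tends to $0$. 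Adding the two vanishing estimates completes the proof.

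I do not expect a genuine obstacle here, since all the analytic difficulty has been absorbed into the earlier lemmas and the statement is essentially a packaging step. The only points requiring minor care are (i) verifying that the subadditivity inequality is applicable, which hinges on $p<q$ together with $d\geq 2$ guaranteeing $p/q<1$, and (ii) checking that the parameter regime of \assref{assumption5} simultaneously drives \eqref{F4} to zero and keeps $\gamma-\eta-1>0$, so that the stochastic-convolution contribution from \eqref{LinfinityXtilde} vanishes as well. Both are immediate consequences of the accumulated assumptions, so no new estimates are needed.
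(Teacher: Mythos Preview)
Your proposal is correct and follows essentially the same approach as the paper: decompose $Z^j=\widehat{Z}^j+\widetilde{X}^j$, apply the triangle inequality together with the subadditivity of $t\mapsto t^{p/q}$, and then invoke \lemref{maintheorem2a} with \assref{assumption5} for the $\widehat{Z}^j$-contribution and \eqref{LinfinityXtilde} for the $\widetilde{X}^j$-contribution. The paper's proof is slightly terser (it writes the inequality with a generic constant $C$ rather than spelling out the subadditivity), but the argument is identical.
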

\begin{proof}
 Noting $Z^j=X^j-X^j_{\mathrm{CH}}-\widetilde{X}^j+\widetilde{X}^j=\widehat{Z}^j+\widetilde{X}^j$ and using triangle inequality, we obtain
\begin{align*}
\max_{1\leq j\leq J}1\!\!1_{\Omega_{\widetilde{W}}\cap\Omega_{\kappa, J}}\Vert Z^j\Vert^{\frac{p}{q}}_{\mathbb{L}^{\infty}}\leq C\max_{1\leq j\leq J}1\!\!1_{\Omega_{\widetilde{W}}\cap\Omega_{\kappa, J}}\Vert \widehat{Z}^j\Vert^{\frac{p}{q}}_{\mathbb{L}^{\infty}}+C\max_{1\leq j\leq J}1\!\!1_{\Omega_{\widetilde{W}}\cap\Omega_{\kappa, J}}\Vert \widetilde{X}^j\Vert^{\frac{p}{q}}_{\mathbb{L}^{\infty}}.
\end{align*}
We take expectation in the above inequality and use \lemref{maintheorem2a}, \assref{assumption5} and \eqref{LinfinityXtilde} to estimate the right-hand side to conclude the proof.
\end{proof}
}

\subsection{Convergence to the sharp-interface limit}
For each $\eps\in (0, \eps_0)$ we consider the piecewise affine time-interpolation of the solution $\{X^j\}_{j=0}^J$ of the numerical scheme  \eqref{scheme1b} as
\begin{align}
\label{scheme1f}
X^{\eps, \tau}(t):=\frac{t-t_{j-1}}{\tau}X^j+\frac{t_j-t}{\tau}X^{j-1}\quad \text{for } t_{j-1}\leq t\leq t_j.
\end{align}
Let $\Gamma_{00}\subset\mathcal{D}$ be a smooth closed curve \revl{ if $d=2$ or a smooth closed surface if $d=3$}, and $(v_{\mathrm{MS}}, \Gamma^{\mathrm{MS}})$ be a smooth solution of \eqref{HeleShaw1} starting from $\Gamma_{00}$, where $\Gamma^{\mathrm{MS}}:=\cup_{0\leq t\leq T}\{t\}\times\Gamma^{\mathrm{MS}}_t$. Let $d(t,x)$ be the signed distance function to $\Gamma_t^{\mathrm{MS}}$ such that $d(t,x)<0$ in $\mathcal{I}_t^{\mathrm{MS}}$ (the inside of $\Gamma_t^{\mathrm{MS}}$) and $d(t,x)>0$ on $\mathcal{O}_t^{\mathrm{MS}}:=\mathcal{D}\setminus(\Gamma_t^{\mathrm{MS}}\cap\mathcal{I}_t^{\mathrm{MS}})$, the outside of $\Gamma_t^{\mathrm{MS}}$. In other words, the inside $\mathcal{I}_t^{\mathrm{MS}}$ and outside $\mathcal{O}_t^{\mathrm{MS}}$ of  $\Gamma_t^{\mathrm{MS}}$ are defined as
\begin{align*}
\mathcal{I}_t^{\mathrm{MS}}:=\left\{(t,x)\in\overline{\mathcal{D}}_T:\; d(t,x)<0\right\},\quad \mathcal{O}_t^{\mathrm{MS}}:=\left\{(t,x)\in\overline{\mathcal{D}}_T:\; d(t,x)>0\right\}.
\end{align*}
For the numerical interpolant $X^{\eps, \tau}$ we denote the zero level set at time $t$ by $\Gamma_t^{\eps,\tau}$, that is,
\begin{align*}
\Gamma_t^{\eps, \tau}:=\left\{x\in\mathcal{D}:\; X^{\eps,\tau}(t,x)=0\},\quad 0\leq t\leq T\right\}. 
\end{align*}

In order to establish the convergence in probability of iterates $\{X^j\}_{j=0}^J$ in the sets $\mathcal{I}^{\mathrm{MS}}$ and $\mathcal{O}^{\mathrm{MS}}$, we further need the following requirement. 

\begin{Assumption}
\label{assumption6}
Let $\mathcal{D}\subset\mathbb{R}^d$ be a smooth domain. There exists a classical solution $(v_{\mathrm{MS}}, \Gamma^{\mathrm{MS}})$ of \eqref{HeleShaw1} evolving from $\Gamma_{00}\subset\mathcal{D}$, such that $\Gamma_t^{\mathrm{MS}}\subset\mathcal{D}$ for all $t\in[0, T]$. 
\end{Assumption}

Under \assref{assumption6}, it was proved in \cite[Theorem 5.1]{abc94} that there exits a family of smooth functions $\{u^{\eps}_0\}_{0\leq \eps\leq 1}$, which are uniformly bounded in $\eps$ and $(t, x)$ and such that if $u^{\eps}_{\mathrm{CH}}$ is the solution to the deterministic Cahn-Hilliard equation (i.e., Eq. \eqref{model1} with $W\equiv 0$) with initial value $u^{\eps}_0$. Then
\begin{itemize}
\item[(i)] $\lim\limits_{\eps\rightarrow 0} u^{\eps}_{\mathrm{CH}}(t,x)=\left\{\begin{array}{ll}
+1,\quad \text{if}\; (t,x)\in \mathcal{O}^{\mathrm{MS}},\\
-1\quad \text{if}\; (t,x)\in \mathcal{I}^{\mathrm{MS}},
\end{array}
\text{uniformly on compacts subsets of $\mathcal{D}_T$},
\right.$
\item[(ii)] $\lim\limits_{\eps\rightarrow 0}\left(\frac{1}{\eps}f(u^{\eps}_{\mathrm{CH}})-\eps\Delta u^{\eps}_{\mathrm{CH}}\right)(t,x)=v^{\mathrm{MS}}(t,x)$ uniformly on $\mathcal{D}_T$.
\end{itemize}

The theorem below establishes uniform convergence of the numerical approximation \eqref{scheme1b}
in probability on the space-time sets $\mathcal{I}^{\mathrm{MS}}$, $\mathcal{O}^{\mathrm{MS}}$.
\begin{theorem}
\label{maintheorem2}
Let Assumptions \ref{assumption5} and \ref{assumption6} be fulfilled.
 %{\color{red} Let also the requirements in \lemref{EstinablaZ} be %fulfilled.
%(this requiremnents are included in \ref{assumption5})}  
\revl{Let $\{X^{\eps, \tau}\}_{0\leq \eps\leq\eps_0}$ in \eqref{scheme1f} be obtained from the solutions of \eqref{scheme1b}}. Then it hold that
\begin{itemize}
\item[(i)] $\lim\limits_{\eps\rightarrow 0}\mathbb{P}\left[\{\Vert X^{\eps, \tau}-1\Vert_{C(\mathcal{A})}>\alpha\quad \text{for all}\; \mathcal{A}\Subset\mathcal{O}^{\mathrm{MS}}\}\right]=0$ for all $\alpha>0$,
\item[(ii)] $\lim\limits_{\eps\rightarrow 0}\mathbb{P}\left[\{\Vert X^{\eps, \tau}+1\Vert_{C(\mathcal{A})}>\alpha\quad \text{for all}\; \mathcal{A}\Subset\mathcal{I}^{\mathrm{MS}}\}\right]=0$ for all $\alpha>0$, 
\end{itemize}
\revl{ where $C(\mathcal{A})$ is the space of continuous functions on $\mathcal{A}$.}
\end{theorem}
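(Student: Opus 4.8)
The plan is to reduce the stochastic sharp-interface convergence to the already-established \emph{deterministic} convergence of $X^j_{\mathrm{CH}}$ together with the $\mathbb{L}^{\infty}$-error estimate of \thmref{LinfinityZ}. I focus on part (i); part (ii) is completely analogous, with $+1$ replaced by $-1$ and $\mathcal{O}^{\mathrm{MS}}$ replaced by $\mathcal{I}^{\mathrm{MS}}$. Fix $\alpha>0$, a compact set $\mathcal{A}\Subset\mathcal{O}^{\mathrm{MS}}$, and let $X^{\eps,\tau}_{\mathrm{CH}}$ denote the piecewise affine time-interpolant (cf. \eqref{scheme1f}) of the deterministic iterates $\{X^j_{\mathrm{CH}}\}_{j=0}^J$. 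I would begin from the triangle inequality
\begin{align*}
\Vert X^{\eps,\tau}-1\Vert_{C(\mathcal{A})}\leq \Vert X^{\eps,\tau}-X^{\eps,\tau}_{\mathrm{CH}}\Vert_{C(\mathcal{A})}+\Vert X^{\eps,\tau}_{\mathrm{CH}}-1\Vert_{C(\mathcal{A})}.
\end{align*}

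First I would dispose of the deterministic term. By \assref{assumption6} and \cite[Theorem 5.1]{abc94}, $u^{\eps}_{\mathrm{CH}}\to +1$ uniformly on compact subsets of $\mathcal{O}^{\mathrm{MS}}$; combined with the sharp-interface limit of the deterministic numerical scheme (cf. \cite[Theorem 4.2]{fp04}), one obtains $\Vert X^{\eps,\tau}_{\mathrm{CH}}-1\Vert_{C(\mathcal{A})}\to 0$ as $\eps\to 0$. Hence there is $\eps_1>0$ such that $\Vert X^{\eps,\tau}_{\mathrm{CH}}-1\Vert_{C(\mathcal{A})}<\alpha/2$ for all $\eps\in(0,\eps_1)$. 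Since this term is deterministic, for such $\eps$ the event $\{\Vert X^{\eps,\tau}-1\Vert_{C(\mathcal{A})}>\alpha\}$ is contained in $\{\Vert X^{\eps,\tau}-X^{\eps,\tau}_{\mathrm{CH}}\Vert_{C(\mathcal{A})}>\alpha/2\}$.

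Next I would treat the stochastic term via the high-probability subset $\Omega_{\widetilde{W}}\cap\Omega_{\kappa,J}$, recalling that $\lim_{\eps\to 0}\mathbb{P}[\Omega_{\widetilde{W}}\cap\Omega_{\kappa,J}]=1$. Splitting according to this set gives
\begin{align*}
\mathbb{P}\big[\Vert X^{\eps,\tau}-1\Vert_{C(\mathcal{A})}>\alpha\big]\leq \mathbb{P}\big[(\Omega_{\widetilde{W}}\cap\Omega_{\kappa,J})^c\big]+\mathbb{P}\big[\{\Vert X^{\eps,\tau}-X^{\eps,\tau}_{\mathrm{CH}}\Vert_{C(\mathcal{A})}>\tfrac{\alpha}{2}\}\cap\Omega_{\widetilde{W}}\cap\Omega_{\kappa,J}\big].
\end{align*}
The first probability tends to zero. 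For the second, since $X^{\eps,\tau}-X^{\eps,\tau}_{\mathrm{CH}}$ is the time-interpolant of $Z^j=X^j-X^j_{\mathrm{CH}}$ and the interpolant is a convex combination of consecutive iterates, its supremum over $\mathcal{A}\subset\mathcal{D}$ is bounded by $\max_{1\leq j\leq J}\Vert Z^j\Vert_{\mathbb{L}^{\infty}}$. Applying Markov's inequality with exponent $p/q$ for some $d<p<q\leq 6$ yields
\begin{align*}
\mathbb{P}\big[\{\Vert X^{\eps,\tau}-X^{\eps,\tau}_{\mathrm{CH}}\Vert_{C(\mathcal{A})}>\tfrac{\alpha}{2}\}\cap\Omega_{\widetilde{W}}\cap\Omega_{\kappa,J}\big]\leq \Big(\tfrac{2}{\alpha}\Big)^{\frac{p}{q}}\,\mathbb{E}\Big[\max_{1\leq j\leq J}1\!\!1_{\Omega_{\widetilde{W}}\cap\Omega_{\kappa,J}}\Vert Z^j\Vert^{\frac{p}{q}}_{\mathbb{L}^{\infty}}\Big],
\end{align*}
whose right-hand side tends to zero by \thmref{LinfinityZ}. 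Collecting the two bounds proves (i), and the identical argument (with the interior level set) gives (ii).

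The probabilistic splitting itself is routine; the delicate point, which I expect to require the most care, is the transfer of the deterministic numerical sharp-interface result \cite[Theorem 4.2]{fp04} to the present discretization. One must verify that the parameter scalings imposed in \assref{assumption5} (namely $\tau=\eps^{\varrho}$ with $\varrho$ large, $h=\eps^{\eta}$, and the constraints on $\gamma,\sigma_0,\kappa_0$) are admissible for that result, so that indeed $X^{\eps,\tau}_{\mathrm{CH}}\to\pm 1$ uniformly on compact subsets. The genuinely hard analytic work has already been absorbed into \thmref{LinfinityZ}, so the present step is essentially a deterministic-convergence check combined with the Markov-inequality estimate above.
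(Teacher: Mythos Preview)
Your proposal is correct and follows essentially the same approach as the paper: the same decomposition $X^{\eps,\tau}\pm 1=(X^{\eps,\tau}-X^{\eps,\tau}_{\mathrm{CH}})+(X^{\eps,\tau}_{\mathrm{CH}}\pm 1)$, the same deterministic input \cite[Theorem~4.2]{fp04}, the same probabilistic splitting on $\Omega_{\widetilde{W}}\cap\Omega_{\kappa,J}$, and the same application of Markov's inequality together with \thmref{LinfinityZ}. Your presentation is slightly more explicit (making the $\alpha/2$ splitting and the convex-combination bound for the interpolant precise), but the argument is the same.
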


\begin{proof}
We decompose our error as follows: $X^{\eps, \tau}\pm 1=(X^{\eps, \tau}-X^{\eps, \tau}_{\mathrm{CH}})+(X^{\eps, \tau}_{\mathrm{CH}}\pm 1)$, where $X^{\eps, \tau}_{\mathrm{CH}}$ is the piecewise  affine interpolant of $\{X^j_{\mathrm{CH}}\}_{j=0}^J$. We also write $\overline{\mathcal{D}}_T\setminus\Gamma=\mathcal{I}^{\mathrm{MS}}\cup\mathcal{O}^{\mathrm{MS}}$. From \cite[Theorem 4.2]{fp04}, the piecewise affine interpolant satisfies
\begin{itemize}
\item[(i')] $X^{\eps, \tau}_{\mathrm{CH}}\longrightarrow +1$ uniformly on compact subsets of $\mathcal{O}^{\mathrm{MS}}$ (as $\eps\rightarrow 0$),
\item[(ii')] $X^{\eps, \tau}_{\mathrm{CH}}\longrightarrow -1$ uniformly on compact subsets of $\mathcal{I}^{\mathrm{MS}}$ (as $\eps\rightarrow 0$). 
\end{itemize}
Since $\lim\limits_{\eps\rightarrow 0}\mathbb{P}[\Omega_{\kappa, J}\cap\Omega_{\widetilde{W}}]=1$, it  holds that $\lim\limits_{\eps\rightarrow 0}\mathbb{P}[(\Omega_{\kappa, J}\cap\Omega_{\widetilde{W}})^c]=0$. Using Chebyshev's inequality (see \cite[Theorem 3.14]{WalshBook2012}) and \revl{ \thmref{LinfinityZ}}, it follows \revd{for $d<p<q\leq 6$} that
\begin{align}
\label{Yan8} 
\mathbb{P}\left[\max_{1\leq j\leq J}\Vert Z^j\Vert_{\mathbb{L}^{\infty}}>\alpha\right]&\leq \mathbb{P}\left[\{\max_{1\leq j\leq J}\Vert Z^j\Vert_{\mathbb{L}^{\infty}}>\alpha\}\cap\Omega_{\kappa, J}\cap\Omega_{\widetilde{W}}\right]+\mathbb{P}\left[(\Omega_{\kappa, J}\cap\Omega_{\widetilde{W}})^c\right]\nonumber\\
&\leq \frac{1}{\alpha^{\frac{p}{q}}}\mathbb{E}\left[\max_{1\leq j\leq J}1\!\!1_{\Omega_{\widetilde{W}}\cap\Omega_{\kappa, J}}\Vert Z^j\Vert^{\frac{p}{q}}_{\mathbb{L}^{\infty}}\right]+ \mathbb{P}\left[(\Omega_{\kappa, J}\cap\Omega_{\widetilde{W}})^c\right]\\
&\longrightarrow 0 \; (\text{as}\; \eps\rightarrow 0).\nonumber
\end{align}
Using \eqref{Yan8} together with (i') and (ii') completes the proof of the theorem. 
\end{proof}

The following corollary gives the convergence in probability (for $\eps\rightarrow 0$) of the zero level set $\{\Gamma_t^{\eps, \tau}; \; t\geq 0\}$ to the interface $\Gamma_t^{\mathrm{MS}}$ of the Hele-Shaw/Mullins-Sekerka problem \eqref{HeleShaw1}. 

\begin{corollary}
Let the assumptions in \thmref{maintheorem2} be fulfilled
and \revl{let $\{X^{\eps, \tau}\}_{0\leq \eps\leq\eps_0}$ in \eqref{scheme1f} be obtained from the solutions of \eqref{scheme1b}}.
Then it holds that
\begin{align*}
\lim_{\eps\rightarrow 0}\mathbb{P}\left[\left\{\sup_{(t,x)\in[0, T]\times\Gamma_t^{\eps,\tau}}\mathrm{dist}(x, \Gamma_t^{\mathrm{MS}})>\alpha\right\}\right]=0\quad \text{for all}\; \alpha>0.
\end{align*}
\end{corollary}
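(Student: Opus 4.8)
The plan is to deduce the corollary directly from \thmref{maintheorem2} by a purely set-theoretic containment argument at the level of events, with no further PDE analysis required. Fix $\alpha>0$ and introduce the closed ``$\alpha$-shell''
\[
\mathcal{S}_\alpha:=\left\{(t,x)\in[0,T]\times\overline{\mathcal{D}}:\; \mathrm{dist}(x,\Gamma_t^{\mathrm{MS}})\geq\alpha\right\}.
\]
First I would observe that $\mathrm{dist}(x,\Gamma_t^{\mathrm{MS}})\geq\alpha>0$ forces the signed distance to satisfy $\vert d(t,x)\vert\geq\alpha$, so that $\mathcal{S}_\alpha$ splits into the two disjoint closed pieces $\mathcal{S}_\alpha^{\mathcal{O}}:=\mathcal{S}_\alpha\cap\{d\geq\alpha\}$ and $\mathcal{S}_\alpha^{\mathcal{I}}:=\mathcal{S}_\alpha\cap\{d\leq-\alpha\}$. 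Using \assref{assumption6} (which keeps $\Gamma_t^{\mathrm{MS}}\subset\mathcal{D}$ uniformly away from $\partial\mathcal{D}$), these pieces are compact subsets of the open space-time regions $\mathcal{O}^{\mathrm{MS}}$ and $\mathcal{I}^{\mathrm{MS}}$, i.e. $\mathcal{S}_\alpha^{\mathcal{O}}\Subset\mathcal{O}^{\mathrm{MS}}$ and $\mathcal{S}_\alpha^{\mathcal{I}}\Subset\mathcal{I}^{\mathrm{MS}}$.

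Next comes the central observation. If at some time $t$ a point $x\in\Gamma_t^{\eps,\tau}$ satisfies $\mathrm{dist}(x,\Gamma_t^{\mathrm{MS}})>\alpha$, then $X^{\eps,\tau}(t,x)=0$ while $(t,x)\in\mathcal{S}_\alpha^{\mathcal{O}}$ or $(t,x)\in\mathcal{S}_\alpha^{\mathcal{I}}$. In the former case $\vert X^{\eps,\tau}(t,x)-1\vert=1$, whence $\Vert X^{\eps,\tau}-1\Vert_{C(\mathcal{S}_\alpha^{\mathcal{O}})}\geq 1>\tfrac12$; in the latter $\Vert X^{\eps,\tau}+1\Vert_{C(\mathcal{S}_\alpha^{\mathcal{I}})}\geq1>\tfrac12$. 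Consequently the event $E_\alpha:=\{\sup_{(t,x):\,x\in\Gamma_t^{\eps,\tau}}\mathrm{dist}(x,\Gamma_t^{\mathrm{MS}})>\alpha\}$ is contained in the union
\[
\left\{\Vert X^{\eps,\tau}-1\Vert_{C(\mathcal{S}_\alpha^{\mathcal{O}})}>\tfrac12\right\}\cup\left\{\Vert X^{\eps,\tau}+1\Vert_{C(\mathcal{S}_\alpha^{\mathcal{I}})}>\tfrac12\right\}.
\]

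Finally, I would apply \thmref{maintheorem2} (i) with the compact set $\mathcal{A}=\mathcal{S}_\alpha^{\mathcal{O}}\Subset\mathcal{O}^{\mathrm{MS}}$ and threshold $\tfrac12$, and \thmref{maintheorem2} (ii) with $\mathcal{A}=\mathcal{S}_\alpha^{\mathcal{I}}\Subset\mathcal{I}^{\mathrm{MS}}$; each asserts that the probability of the corresponding set tends to $0$ as $\eps\to0$. A union bound then gives $\mathbb{P}[E_\alpha]\to0$, which is exactly the claim. The argument is essentially routine; the only points requiring care are the compact-containment in the first step (one must check that $\mathcal{S}_\alpha^{\mathcal{O}}$ and $\mathcal{S}_\alpha^{\mathcal{I}}$ are closed, bounded, and bounded away from both $\Gamma^{\mathrm{MS}}$ and $\partial\mathcal{D}$) and reading the quantifier ``for all $\mathcal{A}$'' in \thmref{maintheorem2} as a statement holding for each fixed admissible $\mathcal{A}$, so that it may be invoked for our particular choices. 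I expect no genuine analytical obstacle here, since all the heavy lifting — the uniform $\mathbb{L}^\infty$-convergence in probability of $Z^j$ — is already carried out in \thmref{LinfinityZ} and packaged into \thmref{maintheorem2}.
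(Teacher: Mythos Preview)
Your argument is correct and is precisely the kind of argument the paper has in mind: the paper's own proof is a one-line reference, stating that owing to \thmref{maintheorem2} the proof ``goes along the same lines as that of \cite[Corollary 5.8]{Banas19}'', and what you have written is exactly that standard argument spelled out. No further comparison is needed.
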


\begin{proof}
Owing to \thmref{maintheorem2}, the proof  goes along the same lines as that of \cite[Corollary 5.8]{Banas19}. 
\end{proof}

\section*{Acknowledgement}
Funded by the Deutsche Forschungsgemeinschaft (DFG, German Research Foundation) -- Project-ID 317210226 -- SFB 1283.

\bibliographystyle{plain}
\bibliography{references}

\end{document}